\numberwithin{equation}{section}
\theoremstyle{definition}
\newtheorem{Theorem}{Theorem}[section]
\newtheorem{Proposition}{Proposition}[section]
\newtheorem{Lemma}{Lemma}[section]
\newtheorem{Corollary}{Corollary}[section]
\numberwithin{equation}{section}
\title[Global Schrodinger map flows]
{
Global Schr\"odinger map flows to K\"ahler manifolds with small data in critical Sobolev spaces: II. High dimensions. }
\author[Z. Li ]
{Ze Li}
\address{Ze Li
\newline\indent
School of Mathematics and Statistics, Ningbo University
\newline\indent
Ningbo, 315000, Zhejiang, P.R. China
}
\email{rikudosennin@163.com}
\keywords{Schr\"odinger map flow,  critical Sobolev space, global regularity, general targets}
\begin{document}

\begin{abstract}
In this paper, we prove that the Schr\"odinger map flows from $\Bbb R^d$ with $d\ge 3$ to compact K\"ahler manifolds with small initial data in critical Sobolev spaces are global. This is a companion work of our previous paper  \cite{LIZE} where the energy critical case $d=2$ was solved.   In the first part of this paper, for heat flows from $\Bbb R^d$ ($d\ge 3$) to Riemannian manifolds with small data in critical Sobolev spaces, we prove the decay estimates of moving frame dependent quantities in the caloric gauge setting, which is of independent interest and may be applied to other problems. In the second part,  with a key bootstrap-iteration  scheme in our previous  work \cite{LIZE}, we apply these decay estimates to the study of Schr\"odinger map flows by choosing caloric gauge.  This work with our previous work solves the open problem raised by Tataru.
\end{abstract}
\maketitle

\section{Introduction }

Let $(\mathcal{M},g)$ be a Riemannian manifold, and $(\mathcal{N},J,h)$ be a K\"ahler manifold. Given a map $u:\Bbb R^d\to \mathcal{M}$, the Dirichlet energy $\mathcal{E}(u)$ is defined by
\begin{align}\label{hia1}
\mathcal{E}(u)=\frac{1}{2}\int_{\Bbb R^d}|du|^2dx.
\end{align}
The heat flow is the gradient flow of energy functional $\mathcal{E}(u)$.  A map $u(x,t):\Bbb R^d\times [0,\infty)  \to \mathcal{M}$ is called heat flow (of harmonic maps) if $u$ satisfies
\begin{align}\label{hia2}
\begin{cases}
u_t =\tau(u)\\
u\upharpoonright_{t=0} = u_0.
\end{cases}
\end{align}
Here, the tension field $\tau(u)$ is defined by
\begin{align*}
\tau(u)=\sum^{d}_{j=1}\nabla_j\partial_ju
\end{align*}
where $\nabla$ denotes the induced covariant derivative on the pullback bundle $u^*T\mathcal{M}$.

The Hamiltonian analogy of heat flows is the so-called Schr\"odinger map flow. A map $u(x,t):\Bbb R^d\times \Bbb R\to \mathcal{N}$ is called Schr\"odinger map flow (SL) if $u$ satisfies
\begin{align}\label{hia3}
\begin{cases}
u_t =J\tau(u)\\
u\upharpoonright_{t=0} = u_0.
\end{cases}
\end{align}

(\ref{hia2}) is related to the liquid crystal theory (see e.g. \cite{huangFLhuang} ), while (\ref{hia3}) plays a fundamental role in solid-state physics (\cite{huangLLhuang}).

In our previous work \cite{LIZE}, we proved the global well-posedness of 2D Schr\"odinger map flows into compact K\"ahler manifolds with small energy initial data.  We noticed that to prove global well-posedness it is essential to firstly establish the parabolic decay estimates of differential fields and connection coefficients associated with heat flows in the caloric gauge setting. Hence, in the first part of this paper, we prove
these decay estimates for heat flows with small data in critical Sobolev spaces with $d\ge 3$.  In the second part, we apply decay estimates to prove global existence of SL by bootstrap-iteration argument, dynamic separations involved in our previous work \cite{LIZE} and some adaptations to high dimensions.

We briefly recall the following non-exhaustive list of works on Cauchy problems and dynamic behaviors of SL. More details and references can be found in \cite{LIZE}. The local Cauchy theory of SL was developed by Sulem-Sulem-Bardos \cite{huangSSBhuang}, Ding-Wang \cite{huangDWhuang}, McGahagan \cite{huangMchuang}. The global theory for Cauchy problem was pioneered by Chang-Shatah-Uhlenbeck \cite{huangCSUhuang}, Bejenaru \cite{huangBhuang}, Ionescu-Kenig \cite{huangIK1huang,huangIKhuang}. The small data global theory in critical Sobolev spaces for target $\Bbb S^2$ was completed by Bejenaru-Ionescu-Kenig \cite{huangBIKhuang}($d\ge 4$) and Bejenaru-Ionescu-Kenig-Tataru \cite{huangBIKThuang} ($d\ge 2$). In the other direction, for large data in equivariant classes, global theories on stability/instability and threshold scattering were studied by works of Gustafson, Kang, Tsai, Nakanish \cite{huangGKT1huang,huangGNThuang} and Bejenaru, Ionescu, Kenig, Tataru \cite{huangBIKT2huang, huangBIKT3huang}.  The singularity formulation in equivariant class was achieved by  Merle-Raphael-Rodnianski \cite{huangMPRhuang} and Perelman \cite{huangPhuang}.  And for large data without equivariant assumptions, Rodnianski-Rubinstein-Staffilani \cite{huangRRShuang} studied global regularity for $d=1$ and Dodson, Smith \cite{huangDShuang,huangS2huang} studied the conditional global regularity for solutions with controlling dispersed norms for $d=2$.

One of the main part of this paper is the decay estimates of moving frame dependent quantities of heat flows in the caloric gauge setting. The $d=2$ case was established by Tao \cite{huangTaohuang} for the $\Bbb H^{n}$ target and by Smith \cite{huangSmithhuang} for general targets below threshold. Since $d\ge 3$ is energy supercritical, generally, these decay estimates could be expected only for small data. When $d\ge 3$ is even, the issue is relatively easy in the small data case. In fact, Bochner inequalities, bootstrap and Sobolev inequalities will suffice. The case when $d$ is odd requires much more efforts. On one side, the involved quantities such as connection coefficients, curvature terms, depend on both frames and the map itself, and geometric inequalities such as Bochner inequalities only provide bounds for covariant derivatives which are of integer orders. On the other side, the critical Sobolev spaces for odd dimensions are of fractional order. The conflict becomes prominent while bounding curvature dependent quantities. To solve this problem, we use parallel transport and difference characterization of Besov spaces. In fact, the difference characterization enables us to avoid directly apply fractional derivatives to geometric quantities. And the parallel transport enables us to compare geometric quantities at different points of the manifold.

The core result of  this paper is the global existence of solutions to SL with small data in critical Sobolev spaces. In order to state it, we introduce several notations.
Suppose that $\mathcal{N}$ is isometrically embedded into $\Bbb R^{N}$. Denote the embedding map by $\mathcal{P}$. Given a point $Q\in \mathcal{N}$, define the extrinsic Sobolev space $H^{k}_Q$ by
\begin{align*}
{ {H}^{k }_Q}:=\{u:\Bbb R^d\to \Bbb R^M\mid u(x)\in \mathcal{N}\mbox{ } a.e. {\mbox{  }}{x\in\Bbb R^d}, \| u-Q\|_{H^{k}(\Bbb R^d)}<\infty\},
\end{align*}
with the metric $d_{Q}(f,g)=\|f-g\|_{H^k}$. Let
\begin{align*}
\mathcal{Q}(\Bbb R^d,\mathcal{N}):=\bigcap^{\infty}_{k=1}{ {H}^{k }_Q}.
\end{align*}

Our main theorems are as follows:
\begin{Theorem}\label{XS2}
Let $\mathcal{N}$ be a compact K\"ahler manifold which is isometrically embedded into $\Bbb R^{N}$. Let $Q\in\mathcal{N}$ be a fixed given point.
Let $u_0\in  \mathcal{{Q}}(\Bbb R^d,\mathcal{N})$ with $d\ge 3$. There exists a sufficiently small constant $\epsilon_* >0$ depending only on $d$ such that if $u_0$  satisfies
\begin{align}
\|u_0\|_{\dot{H}^{\frac{d}{2}}_{x}}\le \epsilon_*,
\end{align}
then (\ref{hia3}) with initial data $u_0$ evolves into a global unique solution $u\in  C(\Bbb R;\mathcal{Q}(\Bbb R^d,\mathcal{N}))$. Moreover, as $t\to\infty$ the solution
$u(t)$ converges to the constant map $Q$ in the sense that
\begin{align}\label{FFF}
\lim\limits_{t\to\infty}\|u(t)-Q\|_{L^{\infty}_{x}}=0.
\end{align}
\end{Theorem}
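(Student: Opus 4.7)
The plan is to realize $u(t)$ as the $s=0$ slice of a two-parameter family $\tilde u(x,s,t)$ obtained by running the heat flow \eqref{hia2} in a parabolic variable $s$ from each snapshot $u(\cdot,t)$, and to work in the \emph{caloric gauge}. Since the heat-flow decay estimates announced in the first part of the paper force $\tilde u(\cdot,s,t)\to Q$ as $s\to\infty$, one can fix a constant orthonormal frame at infinity and parallel transport it backward in $s$ to obtain a frame $\{e_a(x,s,t)\}$ on $\tilde u^*T\mathcal{N}$ with $A_s\equiv 0$. Writing the differentiated fields $\psi_i^{a}=\langle e_a,\partial_i\tilde u\rangle$ and connection coefficients $A_\alpha^{ab}=\langle\nabla_\alpha e_b,e_a\rangle$ and restricting to $s=0$, the Schr\"odinger map flow \eqref{hia3} becomes a magnetic nonlinear Schr\"odinger system for $\psi_i$ with magnetic potential $A_j$, temporal potential $A_t$, and source terms containing the curvature $F$ of the pulled-back connection. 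In the caloric gauge, $A_j(\cdot,0,t)$ and $F(\cdot,0,t)$ are recovered by integrating the heat-flow evolution of the connection from $s=\infty$ down to $s=0$, so that Part~1's parabolic decay estimates directly translate into spatial control of these objects in terms of $\psi$.

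Following the bootstrap--iteration scheme of our companion paper \cite{LIZE}, I would fix an arbitrary lifetime $[0,T]$ and let $U(T)$ be the sum of $L^\infty_t\dot H^{d/2-1}_x$ together with the suite of $d$-dimensional critical Strichartz norms suited to the magnetic NLS. The initial bound $U(0)\lesssim \epsilon_*$ follows from $\|u_0\|_{\dot H^{d/2}}\le\epsilon_*$ via Bochner and Sobolev. Under a bootstrap hypothesis $U(T)\le\delta$, Part~1 of the paper yields
\begin{align*}
\|A_j(\cdot,0,t)\|_{\mathrm{crit}}+\|A_t(\cdot,0,t)\|_{\mathrm{crit}}+\|F(\cdot,0,t)\|_{\mathrm{crit}}\lesssim U(T)^{2},
\end{align*}
so the magnetic and curvature data are quadratically small. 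Feeding these into Strichartz estimates for $i\partial_t+\Delta+2iA\!\cdot\!\nabla+\ldots$, together with the dynamic separation that isolates high-order nonlinearities (as developed in \cite{LIZE} and adapted to high dimensions), recovers $U(T)\le C\epsilon_*+C\delta^2$, improving the hypothesis once $\epsilon_*,\delta$ are small.

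With the a priori bound closed, the theorem follows by approximation and continuity. I would first approximate $u_0\in\mathcal{Q}(\Bbb R^d,\mathcal{N})$ by smooth data retaining the $\dot H^{d/2}$ smallness, invoke the local well-posedness of SL into K\"ahler targets (McGahagan \cite{huangMchuang}) to produce smooth solutions on some $[0,T_*)$, and use the bootstrap to exclude finite-time breakdown, obtaining $u\in C([0,\infty);\mathcal{Q})$. Uniqueness in the class is handled by extrinsic energy estimates using the embedding $\mathcal{P}$. For the scattering statement \eqref{FFF}, the global Strichartz control on $\psi$ together with $\partial_t\mathcal{P}(u)=J\tau(u)$ expressed in the caloric frame gives $\|u(t)-Q\|_{L^\infty}\le\int_t^\infty\|\partial_\sigma u\|_{L^\infty}\,d\sigma\to 0$.

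The principal obstacle, and the reason the paper is split into two parts, is closing the bootstrap at critical regularity when $d$ is odd: the critical exponent $d/2$ is then non-integer, while intrinsic Bochner-type inequalities along the heat flow only control covariant derivatives of \emph{integer} order, and direct application of fractional derivatives to geometric quantities meets the ambiguity that $A$ and $F$ depend both on the frame and on the point of $\mathcal N$. The resolution I would adopt is exactly the one advertised: use parallel transport to compare geometric quantities at distinct points of $\mathcal N$ and a difference-quotient characterization of Besov norms, bypassing the need to differentiate the manifold-valued data fractionally. This upgrades the integer-order parabolic decay estimates of Part~1 to the fractional Besov/Sobolev bounds needed to drive the magnetic Strichartz estimates through the nonlinear terms, thereby closing the bootstrap and completing the proof.
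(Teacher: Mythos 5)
Your road map matches the paper's strategy at the conceptual level: heat flow in $s$ from each snapshot $u(\cdot,t)$, caloric gauge with $A_s\equiv 0$, reduction of SL to a gauged magnetic NLS for the differential fields, parabolic control of $A$ and the curvature recovered by integrating backward from $s=\infty$, and — crucially — the parallel-transport/Besov-difference device to handle the non-integer exponent $d/2$ when $d$ is odd. That much is the paper's approach.

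There is, however, a genuine gap in how the bootstrap is closed. The scheme you describe — a single bootstrap hypothesis $U(T)\le\delta$, quadratic smallness of $A_j,A_t,F$, and a closing estimate $U(T)\le C\epsilon_*+C\delta^2$ — only controls the \emph{critical} norm uniformly in $T$. But to "exclude finite-time breakdown" you appeal to the subcritical local well-posedness theory of Ding--Wang and McGahagan, which requires uniform control of $u(t)-Q$ in $H^{\sigma}_x$ with $\sigma>\tfrac{d}{2}+1$ (roughly). Critical bounds alone do not propagate such subcritical regularity. The paper resolves this with the explicit multi-step iteration in Section~6: starting from $\sigma\in[0,\vartheta]$ in Proposition~\ref{01}, one re-runs the entire caloric-gauge/Strichartz machinery on the range $\sigma\in[\vartheta,2\vartheta]$ in Proposition~\ref{02}, using the previously obtained estimates as new inputs, and then inducts to $\sigma\in[j\vartheta,(j+1)\vartheta]$ in Proposition~\ref{03}. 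The frequency envelopes $c^{(j)}_k(\sigma)$ are defined recursively precisely to track the loss incurred at each step. Without this iteration your bound stops at the critical level and the finite-time blow-up criterion is not excluded; this is not a cosmetic refinement but the engine that converts critical smallness into propagation of arbitrarily high Sobolev regularity (the paper explicitly notes one needs $K=2$, i.e.\ $\sigma\in[0,2\vartheta]$, to land above the local well-posedness threshold).

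A second, smaller issue: your claimed proof of the asymptotics $\|u(t)-Q\|_{L^\infty}\to 0$ via $\int_t^\infty\|\partial_t u\|_{L^\infty}\,d\tau$ does not work — for Schr\"odinger map flow, $\partial_t u = J\tau(u)$ is second-order in $x$ and its $L^\infty_x$ norm is not integrable in $t$ under critical control. The convergence in \eqref{FFF} comes instead from the dispersive decay encoded in the $L^{p_d}_xL^\infty_t$ and lateral components of the $F_k$/$G_k$ norms of the differential fields, following the mechanism of Section~7.4 of \cite{LIZE}, not from an $L^1_t$ integration of the time derivative.
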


The following theorem  proves the uniform bounds and well-posedness results analogous to that of \cite{huangBIKThuang}.
\begin{Theorem}\label{Two}
Let $d\ge 3$, $\sigma_1\ge \frac{d}{2}$. Let $\mathcal{N}$ be a compact K\"ahler manifold which is isometrically embedded into $\Bbb R^N$, and let $Q\in\mathcal{N}$ be a given point.
 There exists a sufficiently small constant $\epsilon_{\sigma_1,d} >0$ depending only on $\sigma_1,d$ such that for any initial data $u_0\in \mathcal{{Q}}(\Bbb R^d,\mathcal{N})$ with $\|u_0-Q\|_{\dot{H}^{\frac{d}{2}}}\le \epsilon_{\sigma_1,d}$, the global solution $u=S_{Q}(t)u_0\in  C(\Bbb R;\mathcal{Q}(\Bbb R^d, \mathcal{N}))$  constructed in Theorem  \ref{XS2}  satisfies the uniform bounds
\begin{align}\label{XcDfg}
\sup_{t\in \Bbb R} \| u(t)-Q \|_{ {\dot H}^{\sigma}_x \cap {\dot H}^1_x}\le C_{\sigma}(\|u_0-Q\|_{H^{\sigma}_x}), \mbox{ }\forall \sigma\in [\frac{d}{2},\sigma_1].
\end{align}
In addition, for any $\sigma\in[\frac{d}{2},\sigma_1]$, the operator $S_{Q}$ admits a continuous extension
\begin{align*}
S_{Q}:\mathfrak{B}^{\sigma}_{\epsilon_{\sigma_1,d}}\to    C(\Bbb R; {\dot{H}}^{\sigma}_Q\cap {\dot{H}}^{\frac{d}{2}-1}_Q),
\end{align*}
where we denote
\begin{align*}
\mathfrak{B}^{\sigma}_{\epsilon}:=\{f\in \dot{H}^{\frac{d}{2}-1}_Q\cap {\dot{H}}^{\sigma}_Q: \|f-Q\|_ {\dot{H}^{\frac{d}{2}}}\le \epsilon\}.
\end{align*}
\end{Theorem}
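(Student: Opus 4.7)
The strategy is to upgrade Theorem \ref{XS2} by exploiting persistence of regularity intrinsic to the caloric gauge bootstrap-iteration scheme already used to construct $u=S_Q(t)u_0$. Once $\|u_0-Q\|_{\dot H^{d/2}}\le \epsilon_{\sigma_1,d}$, the small-data estimates control the differential fields $\psi_m$ and connection coefficients $A_m$ of the caloric gauge uniformly in time at the critical level; higher Sobolev regularity is then propagated linearly, since every nonlinearity in the gauge-dressed system carries at least one factor of size $O(\epsilon_{\sigma_1,d})$.

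\emph{Step 1 (uniform higher-order bounds).} Rerun the bootstrap-iteration of Theorem \ref{XS2} with the master norm enlarged to level $\sigma\in[d/2,\sigma_1]$. On the heat-flow side, invoke the decay estimates proved in the first part of the paper at fractional order $\sigma$, which rely on parallel transport and the difference characterization of Besov spaces and are valid for all $\sigma\ge d/2$. On the Schr\"odinger side, the equation for $\psi_m$ is schematically $(i\partial_t+\Delta)\psi_m=\mathcal F(\psi,A)$ with each term of $\mathcal F$ carrying at least one factor of size $\lesssim \epsilon_{\sigma_1,d}$; a linear bootstrap at level $\sigma$ therefore closes and yields
\begin{align*}
\sup_{t\in\Bbb R}\bigl(\|\psi_m(t)\|_{\dot H^{\sigma-1}}+\|A_m(t)\|_{\dot H^{\sigma-1}}\bigr)\le C_\sigma(\|u_0-Q\|_{H^\sigma}).
\end{align*}
Translating from the gauge variables back to $u-Q$ via the extrinsic embedding and Moser-type nonlinear estimates produces the claimed $\dot H^\sigma$ bound in \eqref{XcDfg}. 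The $\dot H^1$ factor is supplied by conservation of the Dirichlet energy along the Schr\"odinger map flow, which gives $\|u(t)-Q\|_{\dot H^1}=\|u_0-Q\|_{\dot H^1}\le C(\|u_0-Q\|_{H^\sigma})$ since $\sigma\ge d/2\ge 3/2>1$.

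\emph{Step 2 (continuous extension).} To build $S_Q$ on $\mathfrak B^\sigma_{\epsilon_{\sigma_1,d}}$, first establish Lipschitz continuity at the sub-critical level $\dot H^{d/2-1}$, then upgrade by the uniform bounds of Step 1. For two regular initial data $u_0^{(1)},u_0^{(2)}$ with respective solutions $u^{(j)}$, compare the associated caloric-gauge fields $\psi^{(j)},A^{(j)}$. The difference $\delta\psi=\psi^{(1)}-\psi^{(2)}$ satisfies a linear Schr\"odinger-type equation whose coefficients are the small quantities $\psi^{(j)},A^{(j)}$; the dynamic separation device from \cite{LIZE} isolates the resonant nonlinear interactions and yields
\begin{align*}
\sup_{t\in\Bbb R}\|u^{(1)}(t)-u^{(2)}(t)\|_{\dot H^{d/2-1}}\lesssim \|u_0^{(1)}-u_0^{(2)}\|_{\dot H^{d/2-1}}.
\end{align*}
Approximating a general $f\in\mathfrak B^\sigma_{\epsilon_{\sigma_1,d}}$ by $\{f_n\}\subset \mathcal Q(\Bbb R^d,\mathcal N)$ produces a Cauchy sequence in $C(\Bbb R;\dot H^{d/2-1}_Q)$; combined with weak-$\ast$ compactness in $\dot H^\sigma$ supplied by Step 1, this yields the desired element of $C(\Bbb R;\dot H^\sigma_Q\cap \dot H^{d/2-1}_Q)$.

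\emph{Main obstacle.} The principal difficulty is not the propagation of integer-order norms, which is standard, but rather closing both steps at \emph{fractional} $\sigma$ when $d$ is odd. Curvature and connection coefficients in the caloric gauge depend nonlinearly on the map, so applying $|\nabla|^{\sigma-d/2}$ creates commutator and endpoint issues of the kind flagged in the introduction. Handling these requires redeploying the parallel-transport and Besov difference-characterization technology from the first part not merely to bound heat-flow quantities but also to justify Leibniz-type manipulations inside the Schr\"odinger iteration and inside the difference estimate. Once those technical lemmas are in place, the bootstrap at level $\sigma$ and the dynamic-separation comparison argument run uniformly in $\sigma\in[d/2,\sigma_1]$, which is exactly what the theorem requires.
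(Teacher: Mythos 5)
Your outline tracks the paper's approach closely: Step 1 is exactly the paper's Section 6 iteration (Propositions 6.1--6.3), where frequency envelopes $c^{(j)}_k(\sigma)$ are advanced by increments of $\vartheta$ per iteration and then translated to $\dot H^\sigma$ bounds on $u-Q$ via the extrinsic embedding, with the $\dot H^1$ piece coming from energy conservation; Step 2 (a subcritical Lipschitz estimate at $\dot H^{d/2-1}$ plus the uniform critical-level bounds) is the strategy the paper delegates to [Section 7.3, \cite{LIZE}], itself modeled on \cite{huangBIKThuang}, and your remark that parallel transport and Besov difference characterization must be redeployed inside the Schr\"odinger iteration is the paper's stated main difficulty.

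One step as you have written it would not actually close, however. You assert that weak-$\ast$ compactness in $\dot H^\sigma$, combined with the Cauchy property in $C(\Bbb R;\dot H^{d/2-1})$, ``yields the desired element of $C(\Bbb R;\dot H^\sigma_Q\cap\dot H^{d/2-1}_Q)$.'' Weak-$\ast$ compactness only identifies a candidate limit in $L^\infty_t\dot H^\sigma$; it gives neither strong $\dot H^\sigma$-continuity in $t$ nor convergence of $S_Q f_n$ to $S_Q f$ in the $\dot H^\sigma$ topology, which is what the claimed continuity of $S_Q:\mathfrak B^\sigma_\epsilon\to C(\Bbb R;\dot H^\sigma_Q\cap\dot H^{d/2-1}_Q)$ requires. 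What actually upgrades the subcritical Cauchy property to strong $\dot H^\sigma$ convergence is the \emph{frequency-envelope} form of the Step 1 bound: since $\sup_t 2^{\sigma k}\|P_k(u^{(n)}(t)-Q)\|_{L^2_x}\lesssim c^{(n)}_k(\sigma)$ with $\{c^{(n)}_k(\sigma)\}$ uniformly $\ell^2$ and with tails controlled by those of $f$ once $f_n\to f$ in $H^\sigma$, the high-frequency part of $\|u^{(n)}(t)-u^{(m)}(t)\|_{\dot H^\sigma}$ is uniformly small, while the finitely many low frequencies are handled by the $\dot H^{d/2-1}$ Lipschitz estimate. Replace the appeal to weak-$\ast$ compactness with this dyadic splitting and the step closes; as stated it is a non sequitur.
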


{\bf{Remark 1.1}}
Tataru raised the proof  of global well-posedness for small initial data in the critical Sobolev spaces for general compact K\"ahler targets as an open  problem in the survey report \cite{Khuanghuang}. Our previous work \cite{LIZE} solved the case $d=2$. Here, Theorem \ref{XS2} solves the case $d\ge 3$.

{\bf{Remark 1.2}}
We also prove the  global regularity of heat flows for $d\ge 3$  with data of small $\dot{H}^{\frac{d}{2}}$ norms.
For the global regularity of heat flows with small data in energy supercritical dimensions, we recall the remarkable results obtained by Struwe \cite{huangstruwehuang}. Struwe \cite{huangstruwehuang} proved in $d\ge 3$ that for any initial data $v_0$ satisfying $\|d v_0\|_{L^{\infty}_x(\Bbb R^{d})}\le K$ there exists $\epsilon(K)$ such that if $\mathcal{E}(v_0)\le \epsilon (K)$ then the solution of heat flow equation is global and converges to constant map as time goes to infinity. In our work, we impose no smallness condition on the energy but require critical Sobolev norms to be small. These two settings are two different styles of giving small data.

{\bf{Notations.}} We fix two constants $\vartheta=1-\frac{1}{10^{10}}$, $\delta=\frac{1}{d10^{100}}$. The notation $A\lesssim B$ means there exists some $C>0$ such that $A\le CB$. We denote $P_{k}$ with $k\in\Bbb Z$ the Littlewood-Paley projection with Fourier multiplier supported in the frequency annual $\{2^{k-1}\le |\eta| \le 2^{k+1}\}$.

$\mathcal{N}$ denotes the compact target K\"ahler manifold. And
we will also use $\mathcal{M}$  to denote a closed Riemannian manifold in the heat flow part.
The connections of $T\mathcal{M}$  and  $u^{*}T\mathcal{M}$ are denoted by $\widetilde{\nabla}$ and ${\nabla}$  respectively. Without confusions, we also denote connections of $T\mathcal{N}$  and  $u^{*}T\mathcal{N}$ by $\widetilde{\nabla}$ and ${\nabla}$  respectively. Let $\bf{R}$ denote the curvature tensor of $\mathcal{M}$ or $\mathcal{N}$.

\subsection{Caloric Gauges}

We recall some background materials on gauges.
\subsubsection{Moving frames}
In this subsection, we make the convention that Roman indices range in $\{1,...,m\}$ or $\{1,...,d\}$ according to the context. Let $\Bbb I=[0,\infty)\times [-T,T]$.
Let $\mathcal{M}$ be an m-dimensional Riemannian manifold and $v:\Bbb I\times \Bbb R^d\to \mathcal{M}$ be a smooth map. Let $\{e_i(s,t,x)\}^{m}_{i=1}$ be global orthonormal frames  for $v^*(T\mathcal{M})$. Then $v$ induces scaler fields $\{ \psi_j\}^{d+1}_{j=0}$ which are defined on $\Bbb I\times \Bbb R^d$ and take values in $\Bbb R^{m}$:
\begin{align}
\psi_j^{l} = \left\langle {\partial _jv,{e_l}} \right\rangle,\mbox{  }l=1,...,m.
\end{align}
where and in the following we make the convention that $j=0$ refers to $t\in[-T,T]$, $j_{d+1}$  refers to $s\in [0,\infty)$, and $j=1,...,d$ refers to $x_{j}\in \Bbb R$ respectively. Conversely, sections of the trivial vector bundle $(\Bbb I\times\Bbb R^d;\Bbb R^m)$ yield sections of $v^{*}T\mathcal{M}$:
\begin{align*}
\varphi\in \Gamma((\Bbb I\times\Bbb R^d;\Bbb R^m)) \dashrightarrow \varphi{\bf e}:=\varphi^{l}e_{l}\in \Gamma (v^{*}T\mathcal{M})\\
\varphi:= ( \langle X,e_{1}\rangle,..., \langle X,e_{m}\rangle) \in \Gamma((\Bbb I\times\Bbb R^d;\Bbb R^m)) \dashleftarrow X\in \Gamma (v^{*}T\mathcal{M}).
\end{align*}
The map $v$ induces a covariant derivative on the trivial vector bundle $([0,T]\times\Bbb R^d;\Bbb R^m)$ by
\begin{align*}
D_i\psi^{l}=\partial_i \psi^{l}+\sum^{m}_{q=1}\left([A_i]^l_q\right)\psi^q,
\end{align*}
where the induced connection coefficient matrices are defined by $[A_i]^q_p= \left\langle \nabla _ie_p,{e_q} \right\rangle$. The following identities are very often used throughout the paper:
\begin{align}
&\emph{torsion free identity} \mbox{  } \mbox{  } \mbox{  }D_{\mu}\psi_{\nu}=D_{\nu}\psi_{\mu}\\
&\emph{commutator identity}\mbox{  }([D_i,D_j]\varphi){\bf e} = {\bf R}(\partial_{i} v,\partial_{j} v)(\varphi{\bf e}).\label{b780}
\end{align}
(\ref{b780}) will be schematically written as
\begin{align*}
[D_i,D_j] = \partial_i  A_j-\partial_j A_i+[ A_i, A_j] =\mathcal{R}(\psi_i,\psi_j).
\end{align*}

The choices for gauges are very important for geometric dispersive PDEs. In the study of wave maps, Yang-Mills, Schr\"odigner map flows, several gauges are often used: Coulomb gauge (see  e.g. \cite{huangNSU2huang,huangKrhuang} for wave maps), caloric gauge (see \cite{huangTaohuang,huangSmithhuang,huangLawriehuang} for wave maps and \cite{huangOhhuang,huangOThuang,huangOT1huang} for hyperbolic Yang-Mills), microlocal gauge (see \cite{huangTaohuang,huangTataruhuang} for wave maps).

\subsubsection{Gauges for Schr\"odinger map flows }

If the target manifold is K\"ahler say $2n$-dimensional manifold $\mathcal{N}$ with complex structure $J$ and metric $h$, it is convenient to work with trivial vector bundle over $\Bbb I\times \Bbb R^d$ with fiber $\Bbb C^{n}$ instead of $\Bbb R^{2n}$. In this case, the moving frame is chosen to be $\{e_{i},Je_{i}\}^{n}_{i=1}$ and the induced scaled valued fields on $(\Bbb I\times \Bbb R^d;\Bbb C^n)$ are
\begin{align*}
\phi^{\gamma}_{i}=\psi^{\gamma}_{i}+\sqrt{-1}\psi^{\gamma+n}_i,\mbox{  }i=0,...,d; \mbox{ }\gamma=1,...,n.
\end{align*}
Let the $\Bbb C^{n}$ valued function $\varphi$ be a section of  $(\Bbb I\times \Bbb R^d;\Bbb C^n)$, then it induces a section of $v^{*}T\mathcal{N}$ via
\begin{align*}
\varphi {\bf e}:=\varphi^{\gamma}e_{\gamma}+\varphi^{\gamma+n}Je_{\gamma}.
\end{align*}
The induced derivative on the complex vector bundle $([0,T]\times \Bbb R^d;\Bbb C^n)$  is $D_i=\partial_i+{A_i}$,
where $\{A_i\}$ denote the induced connection coefficient matrices which are defined by
\begin{align*}
{A_{i}}^{\gamma}_{\beta}=[A_{i}]^{\gamma}_{\beta}+\sqrt{-1}[A_i]^{\gamma+n}_{\beta},\mbox{  }i=0,...,d; \mbox{ }\gamma,\beta=1,...,n.
\end{align*}
The torsion free identity and the commutator identity are recalled as follows:
\begin{align}
D_i\phi_j&=D_j\phi_i\label{ccccccccbb1}\\
([D_i,D_j]\varphi){\bf e}&=\left[\left(\partial_i  A_j-\partial_j A_i+[ A_i, A_j]\right)\varphi\right]{\bf e}=\mathbf{R}(\partial_iu, \partial_j u)(\varphi {\bf e}).\label{ccccccccxxxxbbbb2}
\end{align}

The heat flow equation shows the heat tension filed $\phi_s$ satisfies $\phi_s=\sum^{d}_{i=1}D_i\phi_i$. And in the heat flow direction, the differential fields $\{\phi_j\}^d_{j=1}$ satisfy
\begin{align}\label{b3}
\partial_s\phi_j= \sum^{d}_{i=1}D_iD_i\phi_j + \sum^{d}_{i=1}\mathcal{R}(\phi_j,\phi_i)\phi_i.
\end{align}
At the heat initial time $s=0$, under the Schr\"odinger map flow evolution the differential fields $\{\phi_j\}^d_{j=1}$ satisfy
\begin{align}
-\sqrt{-1}D_t\phi_j&=  \sum^{d}_{i=1}D_iD_i\phi_j+\sum^{d}_{i=1} \mathcal{R}(\phi_j,\phi_i)\phi_i.\label{b4}
\end{align}
And for $i=0,1,...,d$, $s>0$, the connection coefficients can be written as
\begin{align}
&[{A_i}]^{p}_{q}(s,t,x)= \int_s^\infty\langle {\mathbf{R}(v(s'))\left( {{\partial _s}v ,{\partial _i}v } \right)} e_{p},e_{q}\rangle ds'.\label{edf}.
\end{align}

Let $E$ be a manifold. Let $D$ be the equipped connection. Suppose that $\mathbb{T}$ is a type $(0,r)$ tensor on $E$. The $k$-th covariant derivative   of $\Bbb T$ is of $(0,r+k)$ type. And we denote
\begin{align*}
(D^{1}\Bbb T)&(X_1;Y_1,...,Y_r):=(D_{X_1}\Bbb T)(Y_1,...,Y_r)\\
(D^{k}\Bbb T)&(X_1,...,X_k;Y_1,...,Y_r)
:=\left[D_{X_k}(D^{k-1}\Bbb T)\right](X_1,...,X_{k-1};Y_1,...,Y_r)
\end{align*}
where $\{X_j\},$ $\{Y_{i}\}^{r}_{i=1}$ are vector fields on $E$.

\subsection {Road map for the proof of Theorem \ref{XS2}}

Let us outline the proof for Theorem 1.1.
The start point is the decay estimates of moving frame dependent quantities under Tao's caloric gauge.
\begin{Proposition}\label{1XS}
Given $d\ge 3$, let $L\ge (d+100)2^{d+100}$ be a fixed integer. Let $\mathcal{M}$ be an m-dimensional closed Riemannian manifold. And let $\mathcal{P}:\mathcal{M}\to \Bbb R^{M}$ be an isometric embedding. Let $v(s,x)$ be the solution of heat flow (\ref{hia2}) with initial data $v_0\in \mathcal{Q}(\Bbb R^d,\mathcal{M})$.
There exists a sufficiently small  constant $\epsilon_1>0$ depending only on $L,d$  such that
if
\begin{align}\label{ccBOOT}
\|v_0\|_{\dot{H}^{\frac{d}{2}}_{x}}\le \epsilon_1,
\end{align}
then $v$ is global with respect to $s\in\Bbb R^+$ and converges uniformly to $Q$ as $s\to\infty$. And there exists a unique Tao's caloric gauge $\{e_{l}\}^{m}_{l=1}$ for which
\begin{align}\label{FD}
\nabla_s e_{l}=0,\mbox{  }\lim_{s\to\infty} e_{l}=e^{\infty}_l, \mbox{ }l=1,...,m,
\end{align}
where $\{e^{\infty}_l\}$ are the given frames for $T_{Q}\mathcal{M}$.
 Denote the connection coefficients and differential fields under the caloric gauge by $\{A_{i}\}^{d}_{i=1}$ and $\{\psi_i\}^{d}_{i=1}$ respectively. Then we have
\begin{align}
\|\partial^{j}_xv\|_{\dot{H}^{\frac{d}{2}}_{x}}&\lesssim s^{-\frac{j}{2}}\epsilon_1\label{zzFD}\\
\|\partial^{j'}_xv\|_{L^{\infty}_{x}}&\lesssim s^{-\frac{j}{2}}\epsilon_1\label{1.8a}\\
\|\partial^{j}_x(d\mathcal{P}(e_{l})-\chi^{\infty}_{l})\|_{\dot{H}^{\frac{d}{2}}_{x}}&\lesssim s^{-\frac{j}{2}}\epsilon_1;\mbox{ }
\sum^{d}_{i=1}\|\partial^{j}_xA_i\|_{\dot{H}^{\frac{d}{2}-1}_{x}} \lesssim s^{-\frac{j}{2}}\epsilon_1; \mbox{ } \nonumber \\
\sum^{d}_{i=1}\|\partial^{j}_x\phi_i\|_{\dot{H}^{\frac{d}{2}-1}_{x}}&\lesssim s^{-\frac{j}{2}}\epsilon_1; \mbox{  }
\sum^{d}_{i=1}\|\partial^{j}_x\phi_i\|_{L^{\infty}_{x}} \lesssim s^{-\frac{j+1}{2}}\epsilon_1\nonumber
\end{align}
for any $0\le j\le L$,  $1\le j'\le L$, where $\chi^{\infty}_l=\lim_{s\to\infty}d\mathcal{P}(e_{l})$ for $l=1,...,m.$
\end{Proposition}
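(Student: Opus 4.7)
The plan splits naturally into three stages. Stage one establishes global existence for the heat flow and the decay estimates (\ref{zzFD})--(\ref{1.8a}) for $v$ itself. Stage two constructs Tao's caloric gauge from $s=\infty$ and derives integer-order bounds on the frame, the connection coefficients $A_i$, and the differential fields $\psi_i$. Stage three upgrades these bounds to the fractional Sobolev norms that actually appear in (\ref{zzFD}); this is where the real difficulty lies when $d$ is odd.

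For stage one, I would work with the extrinsic formulation $\partial_s v-\Delta v=\Gamma(v)(\nabla v,\nabla v)$ and run a bootstrap on the scale-invariant quantities $M_j(s):=s^{j/2}\|\partial_x^j v(s)\|_{\dot H^{d/2}}$ for $0\le j\le L$. Local existence in a parabolic space adapted to $\dot H^{d/2}_Q$ is standard; Duhamel's formula combined with the semigroup smoothing $\|\partial_x^j e^{s\Delta}f\|_{\dot H^{d/2}}\lesssim s^{-j/2}\|f\|_{\dot H^{d/2}}$ and product estimates of the form $\|fg\|_{\dot H^{d/2}}\lesssim \|f\|_{\dot H^{d/2}}\|g\|_{\dot H^{d/2}\cap L^\infty}$ closes the bootstrap once $\epsilon_1$ is small. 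The pointwise decay (\ref{1.8a}) follows by Sobolev embedding from the next few $M_j$, and the uniform convergence $v(s,\cdot)\to Q$ is extracted from $\|\partial_x v(s)\|_{L^\infty}\lesssim s^{-1/2}\epsilon_1$ together with the spatial vanishing of $v-Q$.

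Stage two exploits the convergence $v(s)\to Q$: one prescribes an orthonormal basis $\{e_l^\infty\}$ of $T_Q\mathcal{M}$ and solves the parallel-transport ODE $\nabla_s e_l=0$ backward from $s=\infty$, which gives existence, uniqueness and smoothness of the caloric frame by standard ODE theory. The representation (\ref{edf}) writes $A_i(s)$ as $\int_s^\infty \langle\mathbf{R}(v)(\partial_s v,\partial_i v)e_p,e_q\rangle\,ds'$; inserting $\|\partial_s v(s')\|_\bullet\sim (s')^{-1}\epsilon_1$ and $\|\partial_i v(s')\|_\bullet\sim (s')^{-1/2}\epsilon_1$ from stage one and integrating in $s'$ yields the integer-order analogue of the $A_i$ bound. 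The frame increment $d\mathcal{P}(e_l)-\chi_l^\infty$ is likewise an $s'$-integral of a bilinear expression in $\partial_s v$ and the frame, and $\psi_i=\langle\partial_i v,e\rangle$ inherits its decay directly from $\partial_i v$.

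The main obstacle is stage three in odd dimensions, where the critical index $\tfrac{d}{2}$ (and $\tfrac{d}{2}-1$ in some of the norms) is fractional. The quantities $A_i$, $\mathbf{R}(v)$ and $d\mathcal{P}(e_l)$ depend nonlinearly on $v(x)$ through the embedding and the curvature tensor of $\mathcal{M}$, and their fractional Sobolev norms cannot be read off from Bochner inequalities, which only furnish integer-order covariant derivatives. Following the hint already set up in the introduction, I would pass to the difference characterization
\[
\|f\|_{\dot H^{\sigma}}^2 \sim \iint \frac{|f(x)-f(y)|^2}{|x-y|^{d+2\sigma}}\,dx\,dy,\qquad 0<\sigma<1,
\]
and control $|f(x)-f(y)|$ for moving-frame-dependent $f$ by parallel-transporting along the geodesic in $\mathcal{M}$ from $v(y)$ to $v(x)$. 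This turns the fractional norm into a line integral of integer-order covariant derivatives of $v$, which stage two controls. Iterating this device for each $\partial_x^j$ via the commutator identities, and tracking the $s^{-j/2}$ weights carefully, should close the full set of estimates and complete the proof.
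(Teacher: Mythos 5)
Your outline follows the paper's architecture in its main strokes: bootstrap-and-Duhamel for the heat flow, caloric gauge constructed by solving $\nabla_s e_l = 0$ from $s = \infty$, and parallel transport plus the difference characterization of $\dot H^{1/2}$ to handle fractional norms of geometric quantities when $d$ is odd. However, there is a genuine gap at the junction between your stage one and stage two that the paper spends an entire subsection (``Non-critical theory for heat flows'') repairing.

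The scale-invariant estimates from stage one only give $\|\partial_s v(s)\|_{L^\infty_x} \lesssim s^{-1}\epsilon_1$, which is not integrable near $s=\infty$: one cannot conclude from these alone that $v(s,\cdot)$ converges as $s\to\infty$, so neither the limit point $Q$ nor the limit frame $\{e_l^\infty\}$ is available, and the integral formula $A_i(s)=\int_s^\infty\mathcal R(\psi_s,\psi_i)\,ds'$ does not converge. The paper resolves this by proving a parallel set of \emph{energy-dependent} estimates of the form $\|\partial_s v\|_{L^\infty_x}\lesssim s^{-\frac{d}{2}-1}\|dv_0\|_{L^2_x}^2$, $\|\partial_x^j A_x\|_{L^2_x}\lesssim s^{-\frac{j}{2}-\frac{d}{4}+\frac12}\|dv_0\|_{L^2_x}$, etc.\ (Lemmas in Section 3.2), which decay fast enough to give convergence and to show $A_x^\infty = 0$ after the gauge rotation. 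These subcritical bounds carry a $\|dv_0\|_{L^2}$ factor rather than $\epsilon_1$ and are thus \emph{not} small, so they cannot close a scale-invariant argument by themselves; their role is only to initialize the bootstrap hypotheses (\ref{X1})--(\ref{X3}) on $[s_*,\infty)$ for $s_*$ sufficiently large. The critical estimates of your stage three are then used as the \emph{improvement step}, pushing $s_*$ down to $0$. Your proposal omits both the necessity of the energy-dependent layer and the continuous-induction-in-$s_*$ structure, and without it the caloric gauge cannot even be constructed, so the argument as written does not run.

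A second, smaller point: the parallel-transport step must be applied not just to $d\mathcal{P}(e_l)$ but to the full hierarchy of covariant derivatives of the curvature tensor $\widetilde\nabla^k\mathbf{R}$ evaluated on frames (the quantities $\mathcal{G}',\mathcal{G}'',\dots$ in the paper), because those are what appear inside the connection integral (\ref{edf}) and inside the heat/Schr\"odinger evolution equations. The difference $|\triangle_h \mathcal G'|$ must be bounded by products of quantities like $C_{(i)}=\max_{y}|\nabla_x^i\partial_x v|$, $D_{(j)}=\max_y|\partial_x^j A_x|$ and first differences of $v$, $A_x$, $d\mathcal P(e)$; the case analysis by whether some index exceeds $d/2-1$ (which decides whether you interpolate from $L^\infty$ or land exactly at the endpoint Sobolev exponent) is the actual work of Lemma \ref{kx1} in the paper. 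Your sketch correctly identifies the device but substantially understates the bookkeeping required to make it close.
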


Proposition \ref{1XS} serves as the cornerstone for our proof of Theorem 1.1. In fact, it is the starting point and engine of our iteration scheme.

\subsubsection{ {\bf Outline  of Proof}}\label{DFvv}

Suppose that we have a solution $u\in C([-T,T];\mathcal{Q}(\Bbb R^d,\mathcal{N}))$ for SL with initial data $u_0$. Let $\{c_{k}(\sigma)\}$, $\{c_{k}\}$ be frequency envelopes associated with $u_0$:
\begin{align}
2^{\frac{d}{2}k}\|P_{k}u_0\|_{L^2_x}&\le c_{k}\\
2^{\frac{d}{2}k+\sigma k}\|P_{k}u_0\|_{L^2_x}&\le c_{k}(\sigma).
\end{align}
Let $v(s,t):\Bbb R^d\to \mathcal{N}$ be the heat flow initiated from $u(t)$.
Let $\{\phi_i\}^{d}_{i=0}$ and $\{A_{i}\}^{d}_{i=0}$ be differential fields and connection coefficients associated with $v$ under the caloric gauge. (The index 0 refers to $t$)
Assume that $u$ satisfies
\begin{align}
2^{\frac{d}{2}k}\|P_{k}u\|_{L^{\infty}_tL^2_x}&\le \epsilon^{-\frac{1}{2}}c_{k}\label{no1}\\
2^{\frac{d-2}{2}k }\|P_{k}\phi_x(\upharpoonright_{s=0})\|_{G_{k}(T)}&\le \epsilon^{-\frac{1}{2}}c_{k} \label{no2}.
\end{align}
{\bf Step 1. Before iteration.}{ In Step 1, we assume $\sigma \in[0,\vartheta]$, where $\vartheta=1-\frac{1}{10^{10}}$ is fixed throughout the paper.} \\

{\bf Step 1.1. Parabolic estimates along the heat direction. }
Let $\{b_{k}(\sigma)\}$ and $\{b_{k}\}$ be frequency envelopes of $\{\phi_i\upharpoonright_{s=0}\}^{m}_{i=1}$ in $G_{k}(T)$ norm:
\begin{align}
b_{k}(\sigma):=\sum^{d}_{i=1}\sup_{k'\in\Bbb Z}2^{-\delta|k-k'|}2^{\frac{d}{2}k'-k'}2^{\sigma k'}\|P_{k'}\phi_{i}\upharpoonright_{s=0}\|_{G_{k'}(T)},
\end{align}
and $b_{k}:=b_k(0)$.
Then the connection coefficients $\{A_i\}^{d}_{i=1}$ satisfy
\begin{align}\label{noc3}
\sum^{d}_{i=1}2^{\frac{d}{2}k-k}2^{\sigma k}\|P_{k}A_{i}(s)\|_{F_{k}(T)\bigcap S^{\omega}_{k}(T)}\lesssim (1+s2^{2k})^{-4}b_{k,s}(\sigma)
\end{align}
(\ref{noc3}) is essential to derive parabolic estimates for all other differential fields, especially it implies
\begin{align}
2^{\frac{d}{2}k-k}2^{\sigma k}\|P_{k}A_{t}\upharpoonright_{s=0}\|_{L^2(T)}&\lesssim \epsilon b_{k}(\sigma)\label{no4}\\
\sum^{d}_{i=1}2^{\frac{d}{2}k-k}2^{\sigma k}\|P_{k}A_{i}\upharpoonright_{s=0}\|_{L^{p_{d}}(T)}&\lesssim \epsilon b_{k}(\sigma).\label{no5}
\end{align}

{\bf Step 1.2. Estimates along the Schr\"odinger  direction. } By studying (\ref{b4}) and linear estimates in $G_{k} \mbox{ }v.s.\mbox{ } N_{k}$ spaces for linear Schr\"odinger equation established by \cite{huangBIKThuang}, (\ref{no4}) and (\ref{no5}) give
\begin{align}\label{nco5}
b_{k}(\sigma)\lesssim c_{k}(\sigma)
\end{align}
for all $\sigma \in[0,\vartheta]$. \\
{\bf Step 2. Iteration.}{ In Step 2, we assume $\sigma \in[0,2\vartheta]$.}

{\bf Step 2.1. Parabolic estimates along the heat direction. }
Let $\{b^{(1)}_{k}(\sigma)\}$ and $\{b^{(1)}_{k}\}$ be
\begin{align}\label{no3}
b^{(1)}_{k}(\sigma)=\left\{
                      \begin{array}{ll}
                        b_{k}(\sigma), & \hbox{ }{\rm if}\mbox{ }\sigma\in [0,\vartheta]\\
                        b_{k}(\sigma)+c_{k}( {\sigma}-\vartheta )c_{k}(\vartheta), & \hbox{ }{\rm if}\mbox{ }\sigma\in (1,2\vartheta]
                      \end{array}
                    \right.
\end{align}
Then the connection coefficients $\{A_i\}^{d}_{i=1}$ satisfy
\begin{align}\label{ano3}
\sum^{d}_{i=1}2^{\frac{d}{2}k-k}2^{\sigma k}\|P_{k}A_{i}(s)\|_{F_{k}(T)\bigcap S^{\omega}_{k}(T)}\lesssim (1+s2^{2k})^{-4}b^{(1)}_{k,s}(\sigma),
\end{align}
and  it implies
\begin{align}
2^{\frac{d}{2}k-k}2^{\sigma k}\|P_{k}A_{t}\upharpoonright_{s=0}\|_{L^2(T)}&\lesssim \epsilon b^{(1)}_{k}(\sigma)\label{ano4}\\
\sum^{d}_{i=1}2^{\frac{d}{2}k-k}2^{\sigma k}\|P_{k}A_{i}\upharpoonright_{s=0}\|_{L^{p_{d}}(T)}&\lesssim \epsilon b^{(1)}_{k}(\sigma).\label{ano5}
\end{align}
{\bf Step 2.2. Estimates along the Schr\"odinger  direction. } By (\ref{b4}), (\ref{ano4}) and (\ref{ano5}) show
\begin{align}
b_{k}(\sigma)\lesssim c_{k}(\sigma)+c_{k}(\vartheta)c_{k}(\sigma-\vartheta)
\end{align}
as well for arbitrary $\sigma \in[\vartheta,2\vartheta]$.
Thus, as a corollary of embedding $G_{k}(T)\hookrightarrow L^{\infty}_tL^2_x$, we get
\begin{align}\label{oicj1}
\sum^{d}_{i=1}2^{\sigma k}2^{{\frac{d}{2}k-k}}\|P_{k}\phi_{i}\|_{L^{\infty}_tL^2_x}\lesssim c^{(1)}_{k}(\sigma).
\end{align}
{\bf Step 3. Global regularity.} Doing iteration for $K$ times gives
\begin{align}\label{oipcj1}
\sum^{d}_{i=1}2^{\sigma k}2^{{\frac{d}{2}k-k}}\|P_{k}\phi_{i}\|_{L^{\infty}_tL^2_x}\lesssim c^{(K)}_{k}(\sigma).
\end{align}
with $\sigma\in[0,K\vartheta]$.
And transforming (\ref {oipcj1}) to bounds of $u$ gives
\begin{align}\label{oicj2}
\|u(t)\|_{L^{\infty}_t{\dot H}^{1}_x\bigcap{\dot H}^{L}_x}\lesssim C(\|u_0\|_{H^{L}_Q}).
\end{align}
for $L=\frac{d}{2}+K$.
By the local Cauchy theory of \cite{huangDWhuang,huangMchuang},  by taking $K=2$ we see $u$ is globally smooth provided $u_0\in \mathcal{Q}(\Bbb R^d,\mathcal{N})$.

\subsection{ Reduction to decay of heat flows }
   {\it Main ideas and overview of first time iteration.}
 The new difficulty in the general targets case is that the curvature term in the gauged equation depends not only on the differential fields but also the map $u$ itself. And the connection coefficients also depend on curvatures. This difficulty  was overcome by synthetically using kind of dynamic separation and iteration argument. Here, the dynamic separation may be seen as the ``freezing coefficient method" originally developed in the elliptic PDEs.

The curvature terms emerge in (\ref{edf}), (\ref{b3}), (\ref{b4}). The equation (\ref{edf}) is used to control connection coefficients (see (\ref{no3})), while  (\ref{b3}) and  (\ref{b4}) are used to track the evolution of differential fields along heat and Schr\"odinger direction respectively.

We will follow the framework of our previous work \cite{LIZE}, especially the bootstrap-iteration scheme. But in  dimensions $d\ge 3$, arguments of \cite{LIZE} can be largely simplified. The following scheme is a simplification of  [Section 4,5,6, \cite{LIZE}].

{\it Refined dynamical separation.} The curvature term $\mathcal{R}(\phi_s,\phi_i)$ in (\ref{edf}) can be schematically written as
\begin{align}\label{HHJnm}
\sum (\phi_x\diamond\phi_s)\langle {\bf R}(e_{j_0},e_{j_1})e_{j_2},e_{j_3}\rangle,
\end{align}
where the notation $f\diamond g$ for two $\Bbb C^n$ valued functions $f,g$ means a $\Bbb C^n$ valued function whose components are linear combinations of  $f^{\beta}g^{\alpha} $,  $f^{\alpha}\overline{g^{\beta}}$, $ \overline{f^{\alpha}}\overline{g^{\beta}}$, $\overline{f^{\alpha}} {g^{\beta}}$  for some $\alpha,\beta=1,..,n$.
The $ \{\phi_i\}^{d+1}_{i=1}$ part in (\ref{HHJnm}) will be controlled by bootstrap assumption. Denote the remainder part by
\begin{align}
\mathcal{G}(s)=\sum\langle {\bf R}(e_{l_0},e_{l_1})e_{l_2},e_{l_3}\rangle(s).
\end{align}
By caloric gauge condition, $\mathcal{G}$ can be expanded as
\begin{align*}
&\langle {\bf R}(e_{l_0},e_{l_1})e_{l_2},e_{l_3}\rangle(s)=\Gamma^{\infty}+\mathcal{U}_{0}+\mathcal{U}_{1}
\end{align*}
where we denote
\begin{align*}
\mathcal{U}_{0}&:=\Xi^{\infty}_l\int^{\infty}_s\sum^{2}_{i=1}(\partial_i\phi_i)^{l}ds'\\
\mathcal{U}_{1}&:=\Xi^{\infty}_l\int^{\infty}_s
\sum^{2}_{i=1}(A_i\phi_i)^{l}ds'+\int^{\infty}_s\sum^{2}_{i=1}(\partial_i\phi_i)^{l}\left(\mathcal{G}'\right)_lds'
+\sum\int^{\infty}_s(A_i\phi_i)^{l}(\widetilde{s})\left(\mathcal{G}'\right)_ld\widetilde{s}\\
(\mathcal{G}')_l&:=(\widetilde{\nabla}{\bf R})({e_l};e_{l_0},e_{l_1},e_{l_2},e_{l_3})-\Xi^{\infty}_l.
\end{align*}
and $\Gamma^{\infty}, \{\Xi^{\infty}_l\}$ are constant vectors.

Denote
\begin{align*}
h_{k}(\sigma):=\sup_{i\in\{1,...,d\},k'\in\Bbb Z}2^{-\delta|k-k'|}(1+s2^{2k'})^{4} 2^{\frac{d}{2}k'}2^{\sigma k'}\|P_{k'}\phi_i\|_{F_{k}}.
\end{align*}

In Step 1 of Section \ref{DFvv}, we additionally assume
\begin{align}\label{KKey}
2^{\frac{d}{2}k}\|P_k\mathcal{U}_1\|_{F_{k}}\lesssim (1+2^{2k}s)^{-5}h_{k}.
\end{align}
The key for Step 1 is to prove (\ref{noc3}).

{\it{Proof of  (\ref{noc3})}.}
With (\ref{KKey}) in hand, we observe  that for (\ref{noc3}) it suffices to prove the parabolic estimates of $P_{k}\mathcal{G}'$:
\begin{align}
2^{\frac{d}{2}k}\|P_{k}\mathcal{G}'\|_{L^{\infty}_tL^{2}_x}&\lesssim_{L} \|u\|_{L^{\infty}_t\dot{H}^{\frac{d}{2}}_x}(1+s2^{2k})^{-20}\label{ji00}\\
2^{\frac{d}{2}k}\|P_{k}\mathcal{G}'\|_{L^{\infty}_tL^2\cap L^{p_{d}}_{t,x}}&\lesssim_{L} 2^{-\sigma k} h_{k}(\sigma) (1+s2^{2k})^{-20} \label{D1}\\
2^{\frac{d}{2}k}\|P_{k}\partial_t\mathcal{G}'\|_{L^{p_d}_{t,x}}&\lesssim 2^{-\sigma k+2k} h_{k}(\sigma)\label{D2}
\end{align}
for $\sigma\in [0,\vartheta]$.  (\ref{ji00})  will be proved in Section 3 and Section 4 by geodesic parallel transport and difference characterization of Besov spaces. (\ref{D1}) and (\ref{D2}) will be proved in Section 5. Among them, (\ref{D2}) is the most difficult.

At the end of Step 1 of Section \ref{DFvv}, (\ref{KKey})  will be improved to
\begin{align}
2^{\frac{d}{2}k}\|P_k\mathcal{U}_1\|_{F_{k}}\lesssim \varepsilon (1+2^{2k}s)^{-4}2^{-\sigma k}h_{k}(\sigma),\mbox{ }\forall \sigma\in[0,\vartheta].\label{1.37a}
\end{align}
which by bootstrap shows (\ref{KKey}) holds. And thus (\ref{noc3}) follows.

{\it{Proof of  (\ref{D1}), (\ref{D2})}.}   By caloric gauge condition, $\mathcal{G}'$ can be further decomposed as
\begin{align*}
(\mathcal{G}')_{l}(s)&=\sum\left(\int^{\infty}_{s}\phi^{p}_s(s')(\widetilde{\nabla}^2{\bf R})({e_l}, e_{p};e_{l_0},e_{l_1},e_{l_2},e_{l_3})(s')ds'\right)d{s}'\\
&=\sum\left(\int^{\infty}_{s}\phi^{p}_s(s')d{s}'\right)\Omega^{\infty}_{lp}+\sum \int^{\infty}_{s}\phi^{p}_s(s')(({\mathcal{G}}^{''})_{lp}-\Omega^{\infty}_{lp})d{s}',
\end{align*}
where we denote
\begin{align*}
(\mathcal{G}^{''})_{lp}= (\widetilde{\nabla}^2{\bf R})({e_l},{e_{p}};e_{l_0},e_{l_1},e_{l_3},e_{l_4})-\Omega^{\infty}_{lp}
\end{align*}
Applying bilinear Littlewood-Paley decompositions reduces the proof of  (\ref{D1})-(\ref{D2}) to verify
\begin{align}
2^{\frac{d}{2}k-2k }\|P_{k}\phi_s\|_{L^{\infty}_tL^{2}_x\bigcap L^{p_{d}}_{t,x}}&\lesssim (1+s2^{2k})^{-M}2^{-\sigma k}h_{k}(\sigma)\label{ji1}\\
2^{\frac{d}{2}k }\|P_{k}({\mathcal{G}}^{''})\|_{L^{\infty}_tL^{2}_x\bigcap L^{p_{d}}_{t,x}}&\lesssim (1+s2^{2k})^{-M}\|u\|_{L^{\infty}_t\dot{H}^{\frac{d}{2}}_x}\label{ji2}\\
2^{\frac{d}{2}k-2k }(\|P_{k}\phi_t\|_{L^{p_d}_{t,x}}+\|P_{k}A_t\|_{L^{p_{d}}_{t,x}})&\lesssim 2^{-\sigma k}h_{k}(\sigma). \label{ji3}
\end{align}

{\it{Proof of  (\ref{oicj2}).}} Let $K=0$ in (\ref{oicj2}). To pass from the bounds for moving frame dependent quantities stated in (\ref{oicj1}) to the bounds of $u$ itself stated in (\ref{oicj2}), the key is to deduce frequency bounds for frames:
\begin{align}
2^{ {\frac{d}{2}k+\sigma k} }\left\|P_{k}\left((d\mathcal{P})(e_l)-\chi^{\infty}_l\right)\right\|_{L^{\infty}_tL^2_x}&\lesssim_{M}(1+s 2^{2k})^{-M}h_{k}(\sigma).\label{jii3}
\end{align}
for $\sigma\in[0,\vartheta]$.
Moreover, (\ref{ji1}) is a corollary of (\ref{jii3}) and
\begin{align}
2^{\frac{dk}{2}+\sigma k}\|P_{k}\partial_sv\|_{L^{\infty}_tL^2_x\bigcap L^{p_{d}}_{t,x}}\lesssim 2^{2k}(1+s 2^{2k})^{-L}h_{k}(\sigma)\label{ji4}.
\end{align}
Furthermore, using dynamic separation and bilinear Littlewood-Paley decomposition, for $\sigma\in[0,\vartheta]$, (\ref{jii3}) reduces to (\ref{ji4}) and
\begin{align}
&2^{ {\frac{dk}{2}} }\left\|P_{k}\left(({\bf D}d\mathcal{P})(e_{p};e_l)\right)\right\|_{L^{\infty}_tL^2_x\bigcap L^{p_{d}}_{t,x}}\lesssim_{M}  (1+s 2^{2k})^{-M}\|u\|_{L^{\infty}_t\dot{H}^{\frac{d}{2}}_x}.\label{ji5}
\end{align}
To prove (\ref{ji4}), except for using the heat flow equation, one also needs
\begin{align}
&2^{ {\frac{dk}{2}} }\left\|P_{k} DS^{l}_{ij}(v) \right\|_{L^{\infty}_tL^2_x\bigcap L^{p_{d}}_{t,x}}\lesssim (1+s 2^{2k})^{-M}\|u\|_{L^{\infty}_t\dot{H}^{\frac{d}{2}}_x}\label{ji6}.
\end{align}

{\it{Logic graph.}} We summarize the above reduction process as the following graph for convenience:
\begin{align*}
{\bf I.} &(\ref{KKey})\xrightarrow{+Eq. (\ref{edf})}  (\ref{noc3}) \xrightarrow{+Eq. (\ref{b3})} (\ref{no4}),(\ref{no5})\xrightarrow{+Eq. (\ref{b4})}(\ref{nco5})\\
{\bf II.}&\begin{tabular}{|c|}
  \hline
  (\ref{ji1})\\
(\ref{ji2})\\
  (\ref{ji3}) \\
  \hline
\end{tabular}
\rightarrow\begin{tabular}{|c|}
  \hline
  (\ref{D1})\\
(\ref{D2})\\
  \hline
\end{tabular}
\xrightarrow{+(\ref{noc3})}
  (\ref{1.37a})\\
{\bf III.} &\mbox{ } {\rm Results\mbox{ } in \mbox{ }Step\mbox{ } 1} +(\ref{1.37a})\xrightarrow{+Eq. (\ref{edf})}
\begin{tabular}{|c|}
  \hline
  (\ref{ano4})\\
  (\ref{ano5}) \\
  \hline
\end{tabular}
\xrightarrow[{+(\ref{nco5} )}]{{Eq. (\ref{b4})}}{{(\ref{oicj1})}}\\
{\bf IV.}&(\ref{ji6})\xrightarrow{+{\rm HF\mbox{ } Eq.}}(\ref{ji4})\xrightarrow{ {+ (\ref{ji5})}}(\ref{jii3})\left\{
                                                           \begin{array}{ll}
                                                             \xrightarrow{ {+ (\ref{ji4})}}(\ref{ji1}), & \hbox{ } \\
                                                              \xrightarrow{+(\ref{oicj1})}(\ref{oicj2})&\hbox{ }
                                                           \end{array}
                                                         \right.
\end{align*}
Therefore, it suffices to prove (\ref{ji00}), (\ref{ji2})-(\ref{ji3}),  (\ref{ji5})-(\ref{ji6}). Here, I and II are parts of the zeroth SL iteration, III belongs to the first time SL iteration, IV describes heat flow iteration.

{\it Remarks on iteration.} The above is just a toy model for iteration scheme up to once time. We emphasize that the key and the engine for heat flow iteration is to improve estimates of $\partial_s v$ step by step, and the key for SL iteration is to improve  (\ref{KKey}) after each time iteration. The true scheme is a sophisticated combination of bootstrap and the above iteration.

\noindent{\bf Remarks on frequency envelopes.} The notion of frequency envelopes introduced by Tao is very convenient in doing frequency estimates and becomes standard in the study of dispersive PDEs.
Due to the iteration argument used here, we need emphasize the ``order" of envelopes applied in our previous work \cite{LIZE}: We say a positive $\ell^2$ summable sequence  $\{a_{k}\}_{k\in\Bbb Z}$ is  a frequency envelope
of $\delta$ order if
\begin{align}
a_{k}\le a_{j}2^{\delta|k-j|}, \mbox{  }\forall k,j\in\Bbb Z.
\end{align}
Throughout the paper, all the frequency envelopes are assumed to be of $\delta$ order except $\{c^{(j)}_{k},c_{k},c_{k}(\sigma)\}_{j\in\Bbb N,k\in\Bbb Z}$. If we want to reach $\sigma=[K\vartheta]+1$ in Theorem 1.1, then  $\{c^{(j)}_{k}\}$, $\{c_{k}\}$, $\{c_{k}(\sigma)\}$, with ${j\in [0,K+1]}$ are defined to be $\frac{1}{2^{K+1}}\delta$ order.

\section{Preliminaries  on Function Spaces}

We recall the spaces developed by \cite{huangBIKThuang,huangLPhuang,huangIKhuang}.
Given a unite vector ${\vec e}\in \Bbb S^{d-1}$ we denote its orthogonal complement of $\Bbb R^d$ by $ \vec{e}^\bot$. The lateral space $L^{p,q}_{\vec e}$ is defined by
\begin{align}
\|g\|_{L^{p,q}_{ \vec{ e}}}=\left(\int_{\Bbb R}\left(\int_{{  \vec{e}}^{\bot}\times\Bbb R}\left|g(t,x_1{\vec e }+x')\right|^qdx'dt\right)^{\frac{p}{q}}dx_1\right)^{\frac{1}{p}},
\end{align}
with modifications if $p=\infty$ or $q=\infty$.
Given $T\in \Bbb R$, $k\in \Bbb Z$, define $I_k:=\{\eta\in \Bbb R^d: 2^{k-1}\le  |\eta|\le 2^{k+1}\}$ and
\begin{align*}
L^2_k(T):=\{g\in L^2([-T,T]\times \Bbb R^d): \widehat{g}(t,\eta) =0,\mbox{  }{\rm if} \mbox{  }(t,\eta)\in \Bbb R\times I_{k}\backslash [-T,T]\times \Bbb R^d\}.
\end{align*}
The main work function spaces designed by \cite{huangBIKThuang} are $F_{k}(T),G_k(T),N_k(T),S^{\omega}_{k}(T)$.
In our work, the concrete definitions of $N_{k},G_{k}$ are not needed. We just recall $F_{k},S^{\omega}_{k}$ here, see \cite{huangBIKThuang} for the original detailed construction. Let $p_d=\frac{2d+4}{d}$. Define the exponents $2_{\omega} $ and $p_{d,\omega}$ via
\begin{align}
\frac{1}{2_{\omega}}-\frac{1}{2}=\frac{1}{p_{d,\omega}}-\frac{1}{p_d}=\frac{\omega}{d}.
\end{align}
Let $S^{\omega}_k(T)$, $F_k(T)$ denote the normed space of functions in $L^2_k(T)$ for which the corresponding norm
\begin{align}
\|g\|_{S^{\omega}_k(T)}&:=2^{k\omega}\left(\|g\|_{L^{\infty}_tL^{2_\omega}_x}+\|g\|_{L^{p_d}_tL^{p_{d,\omega}}_x}
+2^{-\frac{kd}{d+2}}\|g\|_{L^{p_{d,\omega}}_xL^{\infty}_t}\right)\\
\|\psi\|_{F_k(T)}&:=\|\psi\|_{L^{\infty}_tL^2_x}+\|\psi\|_{L^{p_d}_{t,x}}+2^{-\frac{kd}{d+2}}\|\psi\|_{L^{p_d}_xL^{\infty}_t}+2^{-\frac{k(d-1)}{2}}\sup_{\vec{e} \in \Bbb S^{d-1}}\|\psi\|_{L^{2,\infty}_{ \vec{e}}}
\end{align}
is finite.

The following bilinear estimates are used to control the curvature terms.
\begin{Lemma}\label{1bi1}
If $|k_1-k|\le 4$, then
\begin{align}\label{2bi2}
\|P_k( {P_{\le k-4}g }P_{k_1} f)\|_{F_k(T)}\lesssim \|P_{\le k-4}g \|_{L^{\infty}}\|P_{k_1} f\|_{F_{k_1}(T)}.
\end{align}
If $|k_2-k_1|\le 8$, $k_1,k_2\ge k-4$, then
\begin{align}\label{3bi3}
\|P_k( P_{k_1}f P_{k_2} g)\|_{F_k(T)}\lesssim \left(2^{\frac{1}{2}(d -1)(k_1-k)}+2^{\frac{d}{d+2}(k_1-k)}\right)\|P_{k_2}g \|_{L^{\infty}}\|P_{k_1} f\|_{F_{k_1}(T)}
\end{align}
If $|k_2-k|\le 4$, $k_1\le k-4$, then
\begin{align}
&\|P_k( P_{k_1}f P_{k_2} g)\|_{F_k(T)}\lesssim 2^{\frac{d-1}{2}(k_1-k)} \|P_{k_1}f \|_{F_{k_1}(T)}\|P_{k_2} g\|_{L^{\infty}}\nonumber\\
&+2^{-\frac{d}{d+2}k}\|P_{k_1}f \|_{L^{\infty}}\|P_{k_2} g\|_{L^{p_d}_xL^{\infty}_t}+ \|P_{k_1}f \|_{L^{\infty}}\|P_{k_2} g\|_{L^{p_d}\bigcap L^{\infty}_tL^2_x}.\label{4bi4}
\end{align}
\end{Lemma}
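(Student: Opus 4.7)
The plan is to prove the three estimates by unfolding the norm $\|\cdot\|_{F_k(T)}$ into its four constituent pieces $L^{\infty}_t L^2_x$, $L^{p_d}_{t,x}$, $L^{p_d}_x L^{\infty}_t$ (weighted by $2^{-kd/(d+2)}$) and $L^{2,\infty}_{\vec{e}}$ (weighted by $2^{-k(d-1)/2}$), and applying Hölder in each piece with an appropriate allocation of the two factors. Throughout, the outer $P_k$ is dealt with using that the Littlewood--Paley projection is a bounded Fourier multiplier on each of the ``ambient'' spaces $L^{\infty}_t L^2_x$, $L^{p_d}_{t,x}$, $L^{p_d}_x L^{\infty}_t$; boundedness on the lateral space $L^{2,\infty}_{\vec{e}}$ follows from a standard convolution kernel bound uniformly in $\vec{e}$. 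This reduces each case to estimating the bare product.

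For (\ref{2bi2}), the low-frequency factor sits in $L^{\infty}$, and pointwise multiplication by an $L^{\infty}$ function preserves each of the four component norms (with no loss, since $|k_1-k|\le 4$ makes the weights comparable). For (\ref{3bi3}), I would put $P_{k_1}f$ in the ``main'' factor of each component norm and place $P_{k_2}g$ in $L^{\infty}$; this yields $\|P_{k_1}f\|_{L^{\infty}_tL^2_x}$ and $\|P_{k_1}f\|_{L^{p_d}_{t,x}}$ directly, while for the two weighted pieces one has $\|P_{k_1}f\|_{L^{p_d}_x L^{\infty}_t}$ and $\|P_{k_1}f\|_{L^{2,\infty}_{\vec{e}}}$, whose weights in $F_{k_1}$ are $2^{-k_1 d/(d+2)}$ and $2^{-k_1(d-1)/2}$; the mismatch with the $F_k$ weights $2^{-kd/(d+2)}$ and $2^{-k(d-1)/2}$ produces precisely the gains $2^{d(k_1-k)/(d+2)}$ and $2^{(d-1)(k_1-k)/2}$.

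For (\ref{4bi4}) the output is at frequency $k\sim k_2$, but now $P_{k_1}f$ is genuinely low-frequency, so Bernstein can freely upgrade its integrability. The first term on the right collects the lateral $L^{2,\infty}_{\vec{e}}$ contribution via $\|P_{k_1}f\|_{L^{2,\infty}_{\vec{e}}}\cdot\|P_{k_2}g\|_{L^{\infty}}$ and, after reinstating the $F_k$ weight $2^{-k(d-1)/2}$, yields the factor $2^{(d-1)(k_1-k)/2}$. The second term handles $L^{p_d}_x L^{\infty}_t$ by swapping roles and taking $P_{k_1}f$ in $L^{\infty}$; since $P_{k_1}$ has low-frequency support, no further loss is incurred. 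The third term absorbs the two non-weighted pieces $L^{\infty}_t L^2_x$ and $L^{p_d}_{t,x}$ by placing $P_{k_1}f$ in $L^{\infty}$ and $P_{k_2}g$ in the respective Lebesgue space.

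The main obstacle is the lateral component in case (\ref{4bi4}), since $L^{2,\infty}_{\vec{e}}$ is anisotropic and the projection $P_k$ is not a $1$-dimensional multiplier; one needs that $P_k$ acts boundedly on $L^{2,\infty}_{\vec{e}}$ uniformly in $\vec{e}$, which is proved by fiber-wise convolution kernel estimates applied in the direction $\vec{e}$ together with the Minkowski inequality transverse to $\vec{e}$. A secondary nuisance is that the $P_k$-frequency-localization of a product must be justified even though the two factors do not both live in the annulus $I_k$: in the low-high and high-low cases this follows from the support calculus of convolutions, and in the high-high case the projection $P_k$ is essentially free since the product already Fourier-localizes to $|\xi|\lesssim 2^{k_1+O(1)}$.
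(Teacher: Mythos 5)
Your proof is correct and takes essentially the same route as the paper: each of the four block norms of $F_k(T)$ is estimated by H\"older with the appropriate allocation of the two factors, and for (\ref{4bi4}) the paper records exactly the three splittings you describe ($P_{k_1}f$ in $L^{2,\infty}_{\vec{e}}$ with $P_{k_2}g$ in $L^{\infty}$ for the lateral block, and $P_{k_1}f$ in $L^{\infty}$ against the remaining norms of $P_{k_2}g$). The paper treats (\ref{2bi2}) and (\ref{3bi3}) as immediate from the definition of $F_k(T)$, which is the weight-bookkeeping you carry out, and it also implicitly uses the $L^{2,\infty}_{\vec{e}}$-boundedness of $P_k$ that you flag.
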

\begin{proof}
(\ref{2bi2})-(\ref{3bi3}) follow directly by definition of $F_{k}(T)$.
For (\ref{4bi4}), we used
\begin{align*}
\|P_k( P_{k_1}f P_{k_2} g)\|_{L^{2,\infty}_{\vec e}}&\le \|P_{k_1}f \|_{L^{2,\infty}_{\vec{e}}}\|P_{k_2} g\|_{L^{\infty}}\\
\|P_k( P_{k_1}f P_{k_2} g)\|_{L^{p_d}_{x}L^{\infty}_t}&\le \|P_{k_1}f \|_{L^{\infty}}\|P_{k_2} g\|_{L^{p_d}_{x}L^{\infty}_t}\\
\|P_k( P_{k_1}f P_{k_2} g)\|_{L^{p_d}\cap L^{\infty}_tL^2_x}&\le \|P_{k_1}f \|_{L^{\infty}}\|P_{k_2} g\|_{L^{p_d}\cap L^{\infty}_tL^2_x}.
\end{align*}
\end{proof}

The following two trilinear estimates are used to study the extrinsic formulation of heat flow equations.
\begin{Lemma}[\cite{huangBIKThuang,LIZE}]\label{EH}
Let $F:\Bbb R^M\to \Bbb R$ be a smooth function and $v: \Bbb R^{d}\times [-T,T]\to \Bbb R^M$ be a smooth map.
Define
\begin{align*}
\beta_k&=\sum_{|k'-k|\le 20}2^{\frac{d}{2}k'}\|P_{k'}v\|_{L^{\infty}_t{L^2_x}}\\
\alpha_k&=\sum_{|k'-k|\le 20}2^{\frac{d}{2}k'}\| P_{k'}(F(v))\|_{L^{\infty}_tL^2_x}.
\end{align*}
Assume that $\|v\|_{L^{\infty}_x}\lesssim 1$ and $\sup_{k\in \Bbb Z}\beta_k\le 1$. Then
\begin{align}
&2^{\frac{d}{2}k}\|P_{k }F(v)(\partial_x v,\partial_x v)\|_{L^{\infty}_tL^2_x}\lesssim 2^{k}\beta_k \sum_{k_1\le k}\beta_{k_1}2^{k_1}+\sum_{k_2\ge k}2^{-d|k-k_2|}2^{2k_2}\beta^2_{k_2}\nonumber\\
&+ \alpha_{k}(\sum_{k_1\le k}2^{k_1}\beta_{k_1})^2+\sum_{k_2\ge k}2^{d(k-k_2)}2^{k_2}\alpha_{k_2}\beta_{k_2}(\sum_{k_1\le k_2}\beta_{k_1}2^{k_1}).
\end{align}
\end{Lemma}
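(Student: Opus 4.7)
The plan is to perform a trilinear Littlewood-Paley decomposition of $F(v)(\partial_x v,\partial_x v)$ and bound each frequency-interaction case by Hölder and Bernstein. Write
\[
P_k\bigl[F(v)(\partial_x v,\partial_x v)\bigr] = \sum_{k_0,k_1,k_2} P_k\bigl[(P_{k_0}F(v))(P_{k_1}\partial_x v)(P_{k_2}\partial_x v)\bigr];
\]
for $P_k$ to contribute, the two largest of $\{k_0,k_1,k_2\}$ must be comparable up to a fixed constant $C$. The hypothesis $\|v\|_{L^\infty}\lesssim 1$ together with $F\in C^\infty$ give $\|P_{\le k_0}F(v)\|_{L^\infty}\lesssim 1$, while the definitions supply $\|P_{k_0}F(v)\|_{L^2}\lesssim 2^{-dk_0/2}\alpha_{k_0}$ and $\|P_{k_1}\partial_x v\|_{L^2}\lesssim 2^{k_1-dk_1/2}\beta_{k_1}$; Bernstein then yields $\|P_{k_1}\partial_x v\|_{L^\infty}\lesssim 2^{k_1}\beta_{k_1}$.

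After symmetrizing in the two $\partial_x v$ slots, four cases remain. \textit{(i) Low-high-low}, $k_0,k_1\le k-C$, $k_2\sim k$: place $P_{k_2}\partial_x v$ in $L^\infty_tL^2_x$, $P_{k_1}\partial_x v$ in $L^\infty$, and $P_{\le k_0}F(v)$ in $L^\infty$; multiplying by $2^{dk/2}$ and summing over $k_1\le k$ produces the first term on the right-hand side. \textit{(ii) Low-high-high}, $k_0\le k-C$, $k_1\sim k_2\ge k$: apply the high-high-to-low Bernstein estimate $\|P_k(fg)\|_{L^2}\le 2^{dk/2}\|f\|_{L^2}\|g\|_{L^2}$ to the two $\partial_x v$ factors and combine with $\|P_{\le k_0}F(v)\|_{L^\infty}\lesssim 1$; this supplies the decay $2^{-d|k-k_2|}$ in the second term. \textit{(iii) High-low-low}, $k_0\sim k$, $k_1,k_2\le k-C$: place $P_{k_0}F(v)$ in $L^2$ and each $P_{k_i}\partial_x v$ in $L^\infty$ via Bernstein, producing the third term. \textit{(iv) High-high-low}, $k_0\sim k_2\ge k$, $k_1\le k_2$: use the high-high-to-low trick on $(P_{k_0}F(v))(P_{k_2}\partial_x v)$ estimated in $L^1$ with $P_{k_1}\partial_x v$ in $L^\infty$, which after summing over $k_1\le k_2$ and $k_2\ge k$ yields the fourth term.

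The individual estimates are routine Bernstein calculations; the main obstacle is the bookkeeping, namely verifying that cases (i)-(iv) (with the symmetry $k_1\leftrightarrow k_2$) exhaust every admissible frequency configuration and tracking the Bernstein factors $2^{dk/2}$ so they match the envelope normalizations $\alpha_k=2^{dk/2}\|P_kF(v)\|_{L^2}$ and $\beta_k=2^{dk/2}\|P_kv\|_{L^2}$. The smallness hypothesis $\sup_k\beta_k\le 1$ enters only through the composition $F(v)$, where it guarantees a Moser-type bound that justifies the estimate on $\|P_{k_0}F(v)\|_{L^2}$ implicit in the definition of $\alpha_k$.
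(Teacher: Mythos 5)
Your trilinear Littlewood–Paley decomposition is the same approach as the paper (the paper cites \cite{huangBIKThuang} for Lemma~\ref{EH} itself, but the appendix proof of the parallel Lemma~\ref{EH2} runs exactly the trichotomy you describe, just grouped as $I_1,I_2,I_3$ rather than four cases), and the four cases you isolate produce exactly the four terms on the right-hand side. Two small imprecisions worth flagging, neither of which breaks the argument: in case (ii) the low $F(v)$ frequency should run over $k_0\lesssim k_2$ rather than $k_0\le k-C$ (the intermediate range $k\le k_0\le k_2-C$ is still ``low'' relative to the high-high pair $k_1\sim k_2$, and is handled by the same uniform bound $\|P_{\le k_2-C}F(v)\|_{L^\infty}\lesssim 1$), so cases (ii) and (iv) are correctly split by whether $k_0\sim k_2$ or $k_0\ll k_2$; and your closing remark about $\sup_k\beta_k\le 1$ enabling a Moser-type bound is off the mark — the $L^2$ bound $\|P_{k_0}F(v)\|_{L^2}\le 2^{-dk_0/2}\alpha_{k_0}$ is literally the definition of $\alpha_{k_0}$, not an estimate to be derived, and the smallness hypothesis on $\beta_k$ plays no visible role in this particular proof.
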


\begin{Lemma}\label{EH2}
Let $F:\Bbb R^M\to \Bbb R$ be a smooth function and $v: \Bbb R^{d}\times [-T,T]\to \Bbb R^M$ be a smooth map.
Define
\begin{align*}
\widetilde{\beta}_k&=\sum_{|k'-k|\le 30}2^{\frac{d}{2}k'}\|P_{k'}v\|_{L^{\infty}_t{L^2_x}\bigcap L^{p_{d}}_{t,x}}\\
\widetilde{\alpha}_k&=\sum_{|k'-k|\le 30}2^{\frac{d}{2}k'}\|  P_{k'}(F(v))\|_{L^{\infty}_t{L^2_x}\bigcap L^{p_{d}}_{t,x}}.
\end{align*}
Assume that $\|v\|_{L^{\infty}_x}\lesssim 1$. Then
\begin{align}
&2^{\frac{d}{2}k}\|P_{k }F(v)(\partial_x v,\partial_x v)\|_{L^{\infty}_tL^2_x\bigcap L^{p_{d}}_{t,x}}\lesssim 2^{k}\widetilde{\beta}_k \sum_{k_1\le k}\widetilde{\beta}_{k_1}2^{k_1}+\sum_{k_2\ge k}2^{-d|k-k_2|}2^{2k_2}\widetilde{\beta}^2_{k_2}\nonumber\\
&+ \widetilde{\alpha}_{k}(\sum_{k_1\le k}2^{k_1}\widetilde{\beta}_{k_1})^2+\sum_{k_2\ge k}2^{d(k-k_2)}2^{k_2}\widetilde{\alpha}_{k_2}\widetilde{\beta}_{k_2}\sum_{k_1\le k_2}2^{k_1}\widetilde{\beta}_{k_1}.
\end{align}
\end{Lemma}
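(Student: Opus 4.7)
The plan is to mirror the proof of Lemma \ref{EH} (following \cite{huangBIKThuang,LIZE}), but to pair every $L^\infty_t L^2_x$ H\"older step with its $L^{p_d}_{t,x}$ analogue. Since $\widetilde\beta_k$ and $\widetilde\alpha_k$ are by definition the intersection $L^\infty_t L^2_x\cap L^{p_d}_{t,x}$ norms weighted by $2^{dk'/2}$, the bookkeeping is symmetric between the two components of the intersection and no new structural estimate is needed beyond paraproducts, H\"older, and Bernstein. The notable difference in hypotheses between the two lemmas is the absence of the smallness assumption $\sup_k\beta_k\le 1$; this is harmless because the inequality is genuinely trilinear and homogeneous in the frequency envelopes.

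First I would carry out a standard paraproduct decomposition,
\begin{align*}
P_k\bigl[F(v)(\partial_i v)(\partial_j v)\bigr] = \sum_{k_0,k_1,k_2} P_k\bigl[P_{k_0}F(v)\cdot P_{k_1}\partial_i v\cdot P_{k_2}\partial_j v\bigr],
\end{align*}
and split the sum according to which index among $\{k_0,k_1,k_2\}$ is the largest: either (i) exactly one index is $\sim k$ with the other two $\le k$ (high--low--low), or (ii) the two largest indices are comparable and both $\ge k - O(1)$ (high--high). In each configuration, I apply H\"older in \emph{both} $L^\infty_t L^2_x$ and $L^{p_d}_{t,x}$: the factor at the top frequency is placed in the target norm (so it contributes $\widetilde\alpha_{k'}$ or $2^{k'}\widetilde\beta_{k'}$ depending on whether it is $F(v)$ or $\partial_x v$), while the remaining two factors are put in $L^\infty_{t,x}$ via Bernstein using $\|P_{k'}F(v)\|_{L^\infty_{t,x}}\lesssim 2^{dk'/2}\|P_{k'}F(v)\|_{L^\infty_t L^2_x}\lesssim \widetilde\alpha_{k'}$ and $\|P_{k'}\partial_x v\|_{L^\infty_{t,x}}\lesssim 2^{k'}\widetilde\beta_{k'}$. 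Summing the dyadic series reproduces, term by term, the four pieces on the right: $2^k\widetilde\beta_k\sum_{k_1\le k}2^{k_1}\widetilde\beta_{k_1}$ (one $\partial v$ at the top, the other $\partial v$ low); $\sum_{k_2\ge k}2^{-d|k-k_2|}2^{2k_2}\widetilde\beta_{k_2}^2$ (high--high between the two $\partial v$'s), the gain $2^{-d|k-k_2|}$ coming from Bernstein on $P_k$ acting on a product with spectrum at scale $2^{k_2}$; $\widetilde\alpha_k\bigl(\sum_{k_1\le k}2^{k_1}\widetilde\beta_{k_1}\bigr)^2$ ($F(v)$ at the top, both derivatives low); and $\sum_{k_2\ge k}2^{d(k-k_2)}2^{k_2}\widetilde\alpha_{k_2}\widetilde\beta_{k_2}\sum_{k_1\le k_2}2^{k_1}\widetilde\beta_{k_1}$ (high--high between $F(v)$ and one derivative).

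The main technical obstacle is the high--high interaction in the $L^{p_d}_{t,x}$ norm, where extracting the gain $2^{-d|k-k_2|}$ requires pure spatial Bernstein rather than the mixed spacetime estimate used for the $L^\infty_t L^2_x$ part. I would first write
\begin{align*}
\|P_k(fg)\|_{L^{p_d}_{t,x}}\lesssim 2^{dk(\tfrac12-\tfrac1{p_d})}\|P_k(fg)\|_{L^{p_d}_t L^2_x},
\end{align*}
then H\"older in space-time placing one high-frequency factor in $L^{p_d}_{t,x}$ and the other in $L^\infty_{t,x}$ via Bernstein $2^{dk_2/2}$, and combine the resulting exponents; balancing $2^{dk/2}\cdot 2^{-dk/p_d}\cdot 2^{dk_2/2}$ against the $2^{-dk_2/p_d}$ absorbed into the $L^{p_d}_{t,x}$ norm of the kept factor reproduces exactly the declared weight $2^{-d(k_2-k)}$. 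Once this space--time version of the Bernstein-loss step is in hand, the remaining dyadic summations are identical to those of Lemma \ref{EH}, and assembling the $L^\infty_t L^2_x$ and $L^{p_d}_{t,x}$ bounds yields the stated intersection-norm estimate.
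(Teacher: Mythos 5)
Your overall strategy coincides with the paper's: the proof in the Appendix is also a trilinear paraproduct decomposition of $P_k\bigl[F(v)\,\partial_xv\,\partial_xv\bigr]$, sorted by which frequency is dominant, estimated by H\"older together with Bernstein, and the paper further shortens the bookkeeping by invoking Lemma~\ref{EH} to handle the $L^{\infty}_tL^2_x$ component and treating only $L^{p_d}_{t,x}$. So the route is the same, and your decomposition into (i) one-high and (ii) two-comparable-highs matches the paper's $I_1,I_2,I_3$.

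However, your key technical step for the high--high piece in $L^{p_d}_{t,x}$ is not quite right as written, and as stated it does not give the declared weight $2^{-d|k-k_2|}$. You propose to pass through
\begin{align*}
\|P_k(fg)\|_{L^{p_d}_{t,x}}\lesssim 2^{dk\left(\frac12-\frac1{p_d}\right)}\|P_k(fg)\|_{L^{p_d}_tL^2_x},
\end{align*}
then H\"older one high factor into $L^{p_d}_{t,x}$ and the other into an $L^\infty_t L^{r}_x$ space via Bernstein at frequency $2^{k_2}$. Chasing the exponents, the second step forces $\tfrac1r=\tfrac12-\tfrac1{p_d}$, and Bernstein $L^2_x\to L^r_x$ at frequency $2^{k_2}$ costs $2^{dk_2/p_d}$; combined with the first step the total prefactor is $2^{dk/2}\cdot 2^{-d(k-k_2)/p_d}$, which is \emph{larger} than $2^{dk/2}$ by $2^{d(k_2-k)/p_d}$ when $k_2\ge k$. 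This yields only $2^{-d(1-\frac1{p_d})|k-k_2|}2^{2k_2}\widetilde\beta_{k_2}^2$, a strictly weaker decay than the $2^{-d|k-k_2|}$ in the statement. The intermediate space $L^{p_d}_tL^2_x$ is the wrong choice. The correct and cheaper step, which is what the paper does for the second subcase of $I_3$, is: H\"older directly to $L^{p_d}_tL^{q}_x$ with $\tfrac1q=\tfrac12+\tfrac1{p_d}$ by putting one factor in $L^\infty_tL^2_x$ and the other in $L^{p_d}_{t,x}$ (no Bernstein at the high frequency $k_2$ is needed), and then Bernstein the outer projection $P_k:L^{q}_x\to L^{p_d}_x$, which gains exactly
\begin{align*}
2^{dk\left(\frac1q-\frac1{p_d}\right)}=2^{dk/2}.
\end{align*}
Multiplying by the overall $2^{dk/2}$ in front and by $\|P_{k_1}\partial_xv\|_{L^\infty_tL^2_x}\|P_{k_2}\partial_xv\|_{L^{p_d}_{t,x}}\sim 2^{2k_2-dk_2}\widetilde\beta_{k_2}^2$ then gives $2^{-d(k_2-k)}2^{2k_2}\widetilde\beta_{k_2}^2$ as required. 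The same device (outer Bernstein into $L^{p_d}_tL^q_x$, $\tfrac1q=\tfrac12+\tfrac1{p_d}$) also handles your high--high interaction between $F(v)$ and a derivative, giving the $2^{d(k-k_2)}$ factor in the fourth term. With this correction the rest of the dyadic bookkeeping is indeed the straightforward analogue of Lemma~\ref{EH}, as you say, and your remark that the hypothesis $\sup_k\beta_k\le 1$ is not needed here is consistent with the Appendix proof, which uses only $\|F(v)\|_{L^\infty_{t,x}}\lesssim 1$.
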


We also recall the general form of fractional Leibnitz rule (Kato-Ponce inequality), see \cite{Grafakos} and the reference therein.
\begin{Lemma}
Let $\frac{1}{2}<r< \infty$, and $1<p_1,q_1,p_2,q_2\le \infty $  with $\frac{1}{r}=\frac{1}{p_1}+\frac{1}{q_1}=\frac{1}{p_2}+\frac{1}{q_2}$. Given $s>\max(0,\frac{d}{r}-d)$ or $s\in 2\Bbb N$, for $ f,g\in \mathcal{S}(\Bbb R^d)$  we have
\begin{align}
\| |\nabla|^s(fg)\|_{ L^r_x}\lesssim \|g\|_{L^{p_1}_x}\||\nabla|^{s}f\|_{L^{q_1}_x}+\|f\|_{L^{p_2}_x}\||\nabla|^{s}g\|_{L^{q_2}_x}
\end{align}
\end{Lemma}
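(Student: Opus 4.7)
This is the classical Kato-Ponce fractional Leibniz inequality, so I would argue via Littlewood-Paley and Bony's paraproduct decomposition. Write
\begin{align*}
fg = \Pi_1(f,g) + \Pi_2(f,g) + \Pi_3(f,g),
\end{align*}
where $\Pi_1 := \sum_k (P_{<k-4}f)\cdot P_k g$ is the low-high paraproduct, $\Pi_2$ is its mirror high-low piece, and $\Pi_3 := \sum_{|k-j|\le 4}P_k f\cdot P_j g$ is the high-high (diagonal) piece. The three pieces are handled by different mechanisms; $\Pi_1$ and $\Pi_2$ are symmetric and will produce the two terms on the right-hand side.

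For $\Pi_1$, the output frequency is $\sim 2^k$, so $|\nabla|^s$ essentially costs $2^{ks}$ on $P_kg$ only. I would combine vector-valued H\"older, the pointwise bound $|P_{<k-4}f|\lesssim Mf$ (with $M$ the Hardy-Littlewood maximal operator), the Fefferman-Stein vector-valued maximal inequality, and the square-function characterisation of $L^r$ (valid for $r>1/2$) to get
\begin{align*}
\bigl\||\nabla|^s\Pi_1\bigr\|_{L^r}\lesssim \Bigl\|\Bigl(\sum_k |Mf|^2\bigl|2^{ks}P_kg\bigr|^2\Bigr)^{1/2}\Bigr\|_{L^r}\lesssim \|f\|_{L^{p_1}}\bigl\||\nabla|^s g\bigr\|_{L^{q_1}}.
\end{align*}
The symmetric treatment of $\Pi_2$ delivers the second term on the right-hand side. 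No restriction on $s$ beyond $s\ge 0$ enters here.

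The genuine obstacle is the high-high diagonal $\Pi_3$. Here the two inputs sit at a common scale $2^k$ while the output can be at any frequency $\le 2^{k+O(1)}$, so the naive $2^{ks}$-derivative count placed on the output is wasted. My plan is to decompose in the output: for the block $P_{k_0}\Pi_3$ at output scale $k_0$, only $k\ge k_0-O(1)$ contribute, and H\"older yields
\begin{align*}
\bigl\|P_{k_0}|\nabla|^s\Pi_3\bigr\|_{L^r}\lesssim \sum_{k\ge k_0-O(1)}2^{(k_0-k)s}\|P_k f\|_{L^{p_1}}\bigl\|2^{ks}P_k g\bigr\|_{L^{q_1}}.
\end{align*}
For $r\ge 1$ one sums in $\ell^r$ over $k_0$ and concludes by the Littlewood-Paley characterisation; no further restriction on $s$ is needed. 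For $r\in(1/2,1)$, one must pass through the Hardy space $H^r$ (whose atomic decomposition replaces naive $\ell^r$ summation), pay a Bernstein loss at the output frequency, and match it against the gain $2^{(k_0-k)s}$; the sharp threshold for this matching is precisely $s>d/r-d$, which is where the stated hypothesis enters. This small-$r$ summation is the single most delicate step of the whole argument.

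The alternative hypothesis $s\in 2\Bbb N$ short-circuits the fractional analysis entirely: since $|\nabla|^s=(-\Delta)^{s/2}$ is then a local differential operator, the exact Leibniz identity
\begin{align*}
(-\Delta)^{s/2}(fg)=\sum_{|\alpha|+|\beta|=s}c_{\alpha\beta}\,\partial^{\alpha}f\,\partial^{\beta}g
\end{align*}
reduces the claim to H\"older applied termwise together with Gagliardo-Nirenberg interpolation for mixed-derivative pairs with $|\alpha|,|\beta|<s$, with no restriction tying $r$ to $s$ apart from the stated H\"older balance. Since the entire scheme is standard, I would simply cite Grafakos for the technical bookkeeping in $H^r$ rather than reproduce it.
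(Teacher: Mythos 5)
The paper does not prove this lemma at all: it is stated as a \emph{recalled} result, with the entire ``proof'' consisting of the citation to Grafakos--Oh, \emph{The Kato--Ponce Inequality}, Comm.\ PDE (2014). Your sketch is essentially the correct outline of how that reference establishes the inequality (paraproduct decomposition, maximal-function and square-function control on the low-high/high-low pieces, Hardy-space $H^r$ analysis on the high-high piece for $r<1$, and the constraint $s>d/r-d$ emerging precisely from the diagonal summation), and since you close by saying you would cite Grafakos for the $H^r$ bookkeeping, your endpoint coincides with what the paper actually does.

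Two small points of precision. In your treatment of $\Pi_1$ you write ``the square-function characterisation of $L^r$ (valid for $r>1/2$)''; for $r\le 1$ the square function characterises the Hardy space $H^r$, not $L^r$, and one uses the embedding $H^r\hookrightarrow L^r$ together with the dyadic frequency localisation of the paraproduct blocks --- you clearly know this from your $\Pi_3$ discussion, but the $\Pi_1$ sentence as written conflates the two. And your $s\in 2\mathbb{N}$ reduction to the exact Leibniz expansion of $(-\Delta)^{s/2}$ followed by H\"older and Gagliardo--Nirenberg interpolation is correct, with the interpolations performed on $f$ and $g$ separately in the exponents $p_i,q_i>1$, so the restriction $r>1/2$ plays no role there --- consistent with the lemma's statement that for $s\in 2\mathbb{N}$ no lower bound on $s$ in terms of $d/r-d$ is imposed.
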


\section{Decay estimates of heat flows}

We have seen in Section 1.2 that the whole proof reduces to prove decay estimates of heat flows such as (\ref{ji00}), (\ref{ji2})-(\ref{ji3}),  (\ref{ji5})-(\ref{ji6}). The $L^{p_{d}}$ part especially (\ref{ji3}) requires more efforts than $L^{\infty}_tL^{2}_x$. Thus we leave them into later sections.

In this section, we prove decay estimates for quantities related to caloric gauge for small data heat flows from $\Bbb R^{d}$ with $d\ge 3$ in critical Sobolev spaces. This part is of independent interest and can be applied to other problems.

\subsection{Global evolution of Heat flows}

In this subsection we prove that $v$ is global and satisfies (\ref{zzFD}) stated in Theorem \ref{1XS}.
Suppose that the target manifold $\mathcal{M}$ is isometrically embedded into $\Bbb R^m$. Let  $\{S^{q}_{kj}\}$ denote the second fundamental form of the embedding $\mathcal{M}\hookrightarrow \Bbb R^{m}$. Then the heat flow equation is written as
\begin{align}
\partial_s v^{q}-\Delta_{\Bbb R^d} v^{q}=\sum^{d}_{i=1}S^{q}_{ab}\partial_{i}v^{a}\partial_{i}v^{b},
\end{align}
where $\{q,a,b\}$ run over $\{1,...,m\}$.

\begin{Lemma}\label{j1}
Let $d\ge 3$, and $L'\ge (d+200)2^{d+100}$ be a fixed integer. Assume that $v_0\in \mathcal{Q}(\Bbb R^d,\mathcal{M})$. Denote $\{\gamma_k(\sigma)\}$ the frequency envelope of $v_0$:
\begin{align}
\gamma_{j}(\sigma):=\sum_{j_1\in\Bbb Z}2^{-\delta|j-j_1|}2^{(\frac{d}{2}+\sigma )j_1}\|P_{j_1}v_0\|_{L^2_x},
\end{align}
and denote $\gamma_k(0)$ by $\gamma_k$.
There exists $0<\epsilon_1\ll 1$ depending only on $L,d$  such that if
\begin{align}\label{77}
\|v_0\|_{\dot{H}^{\frac{d}{2}}}\le\epsilon_1,
\end{align}
then the heat flow initiated from $v_0$ is global and satisfies
\begin{align}
\sup_{s\in[0,\infty)}2^{\frac{d}{2}k}(1+s^{\frac{1}{2}}2^{k})^{L'}\|P_{k}v\|_{L^2_x}&\lesssim_{L'}\gamma_k\label{3.4a}\\
\sup_{s\in[0,\infty)}2^{(\frac{d}{2}+\sigma)k}(1+s2^{2k})^{l}\|P_{k}v\|_{L^2_x}&\lesssim_{L'}\gamma_k(\sigma),\mbox{ }0\le l\le \frac{1}{2}(L'-1)\label{FuC}
\end{align}
provided that $\sigma\in [0,\vartheta]$, $s\ge 0$, $k\in\Bbb Z$.
Particularly, (\ref{zzFD}) and (\ref{1.8a})  hold.
\end{Lemma}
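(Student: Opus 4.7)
The plan is to run a frequency-envelope bootstrap on the extrinsic Duhamel form
\[
P_k v(s)=e^{s\Delta}P_k v_0+\int_0^s e^{(s-s')\Delta}P_k\bigl(S_{ab}(v)\,\partial_i v^a\,\partial_i v^b\bigr)(s')\,ds',
\]
driven by the parabolic kernel bound $\|e^{\tau\Delta}P_k f\|_{L^2_x}\lesssim_N(1+\tau 2^{2k})^{-N}\|P_k f\|_{L^2_x}$ and by the trilinear estimate of Lemma \ref{EH}. On the maximal interval of smooth existence I assume
\[
(1+s2^{2k})^{l}\,2^{(\frac{d}{2}+\sigma)k}\|P_k v(s)\|_{L^2_x}\le C_*\gamma_k(\sigma),\qquad \sigma\in[0,\vartheta],\ 0\le l\le\tfrac{L'-1}{2},
\]
for a large absolute $C_*$ to be chosen; note that (\ref{3.4a}) is the $\sigma=0$ version of this after rewriting $(1+s^{1/2}2^k)^{L'}\sim(1+s2^{2k})^{L'/2}$.

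The linear piece $e^{s\Delta}P_k v_0$ satisfies the desired bound with constant $1$ directly from the kernel estimate and the definition of $\gamma_k(\sigma)$. For the nonlinear piece I would apply Lemma \ref{EH} with $F=S_{ab}(\cdot)$; since $v$ lives in the compact manifold $\mathcal M$ and may be taken to satisfy $\|v-Q\|_{L^\infty}\lesssim 1$, a composition/Moser argument yields $\alpha_k\lesssim\beta_k$ under the bootstrap. Feeding the bootstrap into Lemma \ref{EH}, absorbing the high-high term by the global smallness $\sum_j\gamma_j^2\le\epsilon_1^2$, and checking that the $\delta$-order envelope property of $\gamma_k(\sigma)$ lets one transfer the decay from the dominant input frequency to the output frequency $k$, produces
\[
2^{(\frac{d}{2}+\sigma)k}\|P_k[S(v)\partial v\,\partial v](s')\|_{L^2_x}\lesssim \epsilon_1 C_*^{2}\,2^{2k}(1+s'2^{2k})^{-l}\gamma_k(\sigma).
\]
Inserting this into the Duhamel integral and splitting $s'\in[0,s/2]\cup[s/2,s]$---on the first range the kernel supplies the $(1+s2^{2k})^{-l}$ decay and $\int_0^{s/2}e^{-c(s-s')2^{2k}}2^{2k}\,ds'\lesssim 1$, on the second the bootstrap supplies the decay and the kernel integrates to $2^{-2k}$---closes the bootstrap provided $C\epsilon_1 C_*^{2}\le C_*/2$.

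Global existence then follows by continuity from the uniform $\dot H^{d/2}$ bound. Bernstein combined with the envelope gives
\[
\|\partial^{j'}_x v\|_{L^\infty_x}\lesssim\sum_k 2^{j'k+\frac{d}{2}k}\|P_k v\|_{L^2_x}\lesssim\epsilon_1\sum_k 2^{j'k}(1+s2^{2k})^{-l}\gamma_k,
\]
and splitting the sum at $2^k\sim s^{-1/2}$ with $l>j'/2$ recovers the $s^{-j'/2}\epsilon_1$ decay (\ref{1.8a}); the $\dot H^{d/2}$ bound (\ref{zzFD}) at $j=0$ is immediate from the envelope with $\sigma=0$, $l=0$, and the $j\ge 1$ cases follow by the same Bernstein/envelope argument with larger $l$. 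The main obstacle will be the bookkeeping of polynomial decay through the trilinear step: one must verify case by case---high-low, low-high, and high-high frequency interactions---that $P_k[S(v)\partial v\,\partial v]$ inherits exactly the $2^{2k}$ derivative loss (to be absorbed by the heat kernel integration) together with the decay factor $(1+s'2^{2k})^{-l}$ at the output frequency $k$, which is precisely what the $\delta$-order envelope is designed to guarantee.
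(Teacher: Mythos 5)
Your proposal follows the same high-level strategy as the paper: write $P_kv$ in Duhamel form with the heat kernel bound $\|e^{\tau\Delta}P_k\|_{L^2\to L^2}\lesssim_N(1+\tau 2^{2k})^{-N}$, feed the nonlinearity through the trilinear estimate of Lemma~\ref{EH}, run a frequency-envelope bootstrap, and recover the pointwise decay (\ref{zzFD}), (\ref{1.8a}) from the envelope bounds by Bernstein. Two points of comparison are worth flagging. First, the bootstrap is organized differently: you bootstrap directly on $(1+s2^{2k})^l 2^{(\frac{d}{2}+\sigma)k}\|P_kv\|_{L^2}\le C_*\gamma_k(\sigma)$, whereas the paper bootstraps on the physical-space decay quantities $s^{j/2}\|\partial^j_xv\|_{\dot H^{d/2}}$ and $s^{j'/2}\|\partial^{j'}_xv\|_{L^\infty}$ over the maximal interval $[0,\bar s)$ (with $\epsilon_1^{1/2}$), derives the envelope bounds as a by-product in its Step~2, and then uses Bernstein and parabolic smoothing to upgrade the physical-space constants from $\epsilon_1^{1/2}$ to $\epsilon_1$ and push $\bar s=\infty$. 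Your one-stage bootstrap is cleaner, and the $C_*$-absorption and Duhamel splitting $[0,s/2]\cup[s/2,s]$ you describe do close it, provided the nonlinear envelope estimate is available.

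Second — and this is the substantive caveat — your step ``a composition/Moser argument yields $\alpha_k\lesssim\beta_k$'' is precisely where the paper spends the bulk of its effort. Step~1 of the paper's proof is devoted to establishing the decay $\|\partial_x^j(S_{ab}(v)-S_{ab}(Q))\|_{\dot H^{d/2}_x}\lesssim s^{-j/2}\epsilon_1^{1/2}$, and for odd $d$ this requires the chain-rule expansion, the fractional Leibnitz rule, interpolation between $L^2$ and $L^\infty$ derivative bounds, and a delicate two-case split according to whether some $\alpha_{k'}\ge d/2$; this is not a throwaway step. It is true that the frequency-envelope Moser estimate you invoke (if $v-Q$ has slowly varying envelope $\gamma_k$ at regularity $d/2$ with $\|v\|_{L^\infty}\lesssim 1$ and $\sup_j\gamma_j\lesssim 1$, then $F(v)-F(Q)$ has envelope $\lesssim\gamma_k$) is standard in the wave-map and Schr\"odinger-map literature, so your route can be made rigorous by citing it; but you should be aware that this is not free, that it is exactly what Step~1 of the paper accomplishes by a different (physical-space, fractional Leibnitz) method, and that for odd $d$ the fractional regularity $d/2$ is the genuine source of difficulty in this lemma — so a one-line dismissal of this step misrepresents where the work lies.
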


\begin{proof}
Let $\bar{s}>0$ be the maximal time such that for all $s\in[0,\bar{s})$, $0\le   j\in L'$, $1\le j'\le L'$, there holds
\begin{align}
s^{\frac{j}{2}}\|\partial^{j}_xv\|_{\dot{H}^{\frac{d}{2}}_{x}}&\le C\epsilon^{\frac{1}{2}}_1\label{FD2}\\
s^{\frac{j'}{2}}\|\partial^{j'}_xv\|_{L^{\infty}_{x}}&\le C\epsilon^{\frac{1}{2}}_2.\label{aFD2}
\end{align}
{\bf Step 1.}
We first verify that
\begin{align}\label{FD1}
\| \partial^{j}_x\left(S^{q}_{ab}(v)-S^{q}_{ab}(Q) \right)\|_{\dot{H}^{\frac{d}{2}}_{x}}\le C_j s^{-\frac{j}{2}}\epsilon^{\frac{1}{2}}_1.
\end{align}
If $d$ is even, (\ref{FD1}) follows by chain rules, Sobolev embedding and (\ref{FD2}), (\ref{aFD2}). The case when $d$ is odd requires slightly more efforts. Let $d=2d_0+1$ with $d_0\in\Bbb N$. Then by chain rule we have
\begin{align}\label{FD3}
\| \partial^{j}_x\left(S^{q}_{jl}(v)-S^{q}_{jl}(Q) \right)\|_{\dot{H}^{\frac{d}{2}}_{x}}\le
\sum_{0\le l, l'\le j+d_0}\sum_{|\alpha_1|+...+|\alpha_{l}|=j+d_0}
\|\mathcal{S}^{(l')}(v)\partial^{\alpha_{1}}_{x}v...\partial^{\alpha_{l}}_{x}v\|_{\dot{H}^{\frac{1}{2}}_{x}}
\end{align}
where we denote the $l'$ order derivatives of $\{S^{q}_{jl}\}$ by $\mathcal{S}^{(l')}(v)$ for simplicity. Then fractional Leibnitz formula shows
\begin{align}
&\|\mathcal{S}^{(l')}(v)\partial^{\alpha_{1}}_{x}v...\partial^{\alpha_{l}}_{x}v\|_{\dot{H}^{\frac{1}{2}}_{x}}
\nonumber\\
&\lesssim \|\partial^{\alpha_{1}}_{x}v...
\partial^{\alpha^{l}}_{x}v\|_{L^{r_1}_{x}}\|\mathcal{S}^{(l')}(v)\|_{\dot{B}^{\frac{1}{2},r_2}_{x}}
+\|\partial^{\alpha_{1}}_{x}v...
\partial^{\alpha_{l}}_{x}v\|_{\dot{H}^{\frac{1}{2}}_{x}}\|\mathcal{S}^{(l')}(v)\|_{L^{\infty}_{x}},\label{hbiu}
\end{align}
where $r_1,r_2\in(2,\infty)$ are taken as
\begin{align} \label{vfcr54}
\frac{1}{d}\left(\frac{d}{2}-\frac{1}{2}\right)=\frac{1}{2}-\frac{1}{r_2},\mbox{ }r_1=\frac{2d}{d-1}.
\end{align}  For $0< |\beta|\le \frac{d}{2}$, by Sobolev embedding
\begin{align}\label{iujn}
\||\nabla_{x}|^{\beta }v\|_{L^{\beta^*}_{x}}\lesssim \| v\|_{\dot{H}^{\frac{d}{2}}_{x}}, \mbox{ }\beta^*=\frac{d}{\beta}.
\end{align}
Similarly, for $\alpha=d_{0}+n$ with $n\in\Bbb N$, by Sobolev embedding
\begin{align}\label{oij}
\|\partial^{d_0+n}_{x}v\|_{L^{\frac{2d}{d-1}}_{x}}\lesssim \|\partial^{ n }_{x}v\|_{\dot{H}^{{\frac{d}{2}}}_{x}}.
\end{align}
{\bf Case 1.}
Assume that among all $\{\alpha_k\}^{l}_{k=1}$ there exists some $|\alpha_{k'}|\ge\frac{d}{2}=\frac{1}{2}+d_0$,  then by H\"older,  (\ref{aFD2}), (\ref{iujn}) and (\ref{oij}) we have
\begin{align}
\|\partial^{\alpha_{1}}_{x}v...
\partial^{\alpha_{l}}_{x}v\|_{L^{\frac{2d}{d-1}}_{x}}
&\lesssim  \left(\prod_{k\in\{1,...,l\}\setminus \{k'\}}\|\partial^{ \alpha_{k}}_{x}v\|_{L^{\infty}_x}\right)\|\partial^{\alpha_{k'}}_xv\|_{L^{\frac{2d}{d-1}}_{x}} \nonumber\\
&\lesssim \epsilon^{\frac{1}{2}} s^{-\frac{1}{2}
(\alpha_{1}+...+\alpha_{l}-\alpha_{k'})}\|\partial^{\alpha_{k'}-d_0}_xv\|_{{\dot{H}}^{\frac{d}{2}}_{x}}\nonumber\\
&\lesssim   \epsilon^{\frac{1}{2}}s^{-\frac{1}{2}(\alpha^{1}+...+\alpha_{l}-\alpha_{k'})}s^{-\frac{1}{2}(\alpha_{k'}-d_0)}.\label{2hbvg}
\end{align}
 Hence, (\ref{2hbvg}) gives
\begin{align}\label{3hbvg}
\|\partial^{\alpha_{1}}_{x}v...
\partial^{\alpha_{l}}_{x}v\|_{L^{\frac{2d}{d-1}}_{x}}\lesssim  \epsilon^{\frac{1}{2}}_1s^{-j}.
\end{align}
{\bf Case 2.} Assume that all $\{\alpha_k\}^{l}_{k=1}$ satisfy  $1\le  \alpha_{k} \le\frac{d}{2}$.  Then by H\"older,  (\ref{aFD2}), (\ref{iujn}) and interpolation we have
\begin{align}\label{6hbvg}
\|\partial^{\alpha_{k}}_{x}v\|_{L^{p}_{x}}\lesssim  \epsilon^{\frac{1}{2}}_1s^{-\frac{1}{2}\alpha^{k}(1-\frac{\alpha^*_{k}}{p})},
\end{align}
for all $p\in [\alpha^*_{k},\infty]$. Since by definition $\alpha^*_k=\frac{d}{\alpha_k}$, we get a tight form of (\ref{6hbvg}) as
\begin{align}\label{7hbvg}
\|\partial^{\alpha_{k}}_{x}v\|_{L^{p}_{x}}\lesssim  \epsilon^{\frac{1}{2}}_1s^{-\frac{1}{2}\alpha_{k}}s^{\frac{d}{2p}}.
\end{align}
Thus for $\sum^{l}_{k=1}\frac{1}{p_{k}}=\frac{1}{r_1}$, (recall $ r_1=\frac {2d}{d-1}$), we conclude
\begin{align}
\|\partial^{\alpha_{1}}_{x}v...
\partial^{\alpha_{l}}_{x}v\|_{L^{r_1}_{x}}&\lesssim \Pi_{1\le k\le l}\|\partial^{\alpha_{k}}_{x}v\|_{L^{p_k}}\lesssim \epsilon^{\frac{1}{2}}_1s^{-\frac{1}{2}\sum^{l}_{k=1}\alpha^{k}}s^{\sum^{l}_{k=1}\frac{d}{2p_{k}}}\nonumber\\
&\lesssim \epsilon^{\frac{1}{2}}_1s^{-\frac{1}{2}j}.\label{7hbvg}
\end{align}
Meanwhile, since $\mathcal{S}^{(l')}$ is Lipchitz, we have
\begin{align}\label{4hbvg}
\|\mathcal{S}^{(l')}(v)\|_{\dot{B}^{\frac{1}{2},r_2}_{x}}&\lesssim \|v\|_{\dot{B}^{\frac{1}{2},r_2}_{x}}\lesssim \|v\|_{\dot{H}^{\frac{d}{2}}_{x}},
\end{align}
where we used Sobolev embedding and (\ref{vfcr54}) in the last inequality. Therefore, (\ref{3hbvg}) and (\ref{7hbvg}) imply the first term in the RHS of  (\ref{hbiu}) is dominated by
$\epsilon^{\frac{1}{2}}_1s^{-j}$ up to  constants $C_{j}$.

Now we turn to the second term in the RHS of (\ref{hbiu}). As before, we consider two subcases. {\bf Case 1'.}  Assume that all $\{\alpha_k\}^{l}_{k=1}$ satisfy  $1\le \alpha_{k}\le\frac{d}{2}$.  And especially we have $\alpha_{k}+\frac{1}{2}\le\frac{d}{2}$ since $d$ is odd. Then, by  (\ref{7hbvg}) and fractional Leibnitz formula, we obtain for  $\sum^{l}_{k=1}\frac{1}{p_k}=\frac{1}{2}$ and $p_k\in [\alpha^{*}_k,\infty]$ (Recall $\alpha^*_{k}=\frac{d}{\alpha_k}$) that
\begin{align}
\|\partial^{\alpha_{1}}_{x}v...
\partial^{\alpha_{l}}_{x}v\|_{\dot{H}^{\frac{1}{2}}_{x}}&\lesssim  \epsilon^{\frac{1}{2}}_1\sum_{1\le i\le l}s^{-\frac{1}{2}(\alpha_{i}+\frac{1}{2})}s^{\frac{d}{2p_i}}
\Pi_{1\le k\le l,k\neq i}s^{-\frac{1}{2}\alpha_{k}}s^{\frac{d}{2p_k}}\nonumber\\
&\lesssim  \epsilon^{\frac{1}{2}}_1s^{-\frac{1}{2}j}.\label{9hbvg}
\end{align}
{\bf Case 2'.} Assume that among $\{\alpha_k\}^{l}_{k=1}$ there exists some $k'$ such that  $\alpha_{k'}\ge\frac{d}{2}$.  Then, by  (\ref{FD2}), (\ref{aFD2}) (\ref{iujn}), (\ref{oij}) and fractional Leibnitz formula, we obtain
\begin{align}
\|\partial^{\alpha_{1}}_{x}v...
\partial^{\alpha_{l}}_{x}v\|_{\dot{H}^{\frac{1}{2}}_{x}}&\lesssim
\|\partial^{\alpha_{k'}}_{x}v\|_{\dot{H}^{\frac{1}{2}}_{x}}
\Pi_{ k\in\{l,...,k\}\setminus\{k'\}}\|\partial^{\alpha_{l}}_{x}v\|_{L^{\infty}_{x}}\nonumber\\
&+
\|\partial^{\alpha_{k'}}_{x}v\|_{L^{2}_{x}}
\||\nabla|^{\alpha_{i}{+\frac{1}{2}}}_{x}v\|_{L^{\infty}_{x}}\Pi_{ k\in\{l,...,k\}\setminus\{k',i\}}\|\partial^{\alpha_{k}}_{x}v\|_{L^{\infty}_{x}}\nonumber\\
&\lesssim\epsilon^{\frac{1}{2}}_1s^{-\frac{1}{2}(\alpha_{k'}-\frac{d}{2}+\frac{1}{2})}
\Pi_{k\in\{l,...,k\}\setminus\{k'\}}
s^{-\frac{1}{2}\alpha_{k}}\nonumber\\
&+\epsilon^{\frac{1}{2}}_1s^{-\frac{1}{2}(\alpha_{k'}-\frac{d}{2})}s^{-\frac{1}{2}(\alpha_{i}+\frac{1}{2})}\Pi_{1\le k\le l,k\neq i,k'}s^{-\frac{1}{2}\alpha_{k}}\nonumber\\
&\lesssim  \epsilon^{\frac{1}{2}}_1s^{-\frac{1}{2}j}.\label{10hbvg}
\end{align}
Therefore, (\ref{10hbvg}) and (\ref{9hbvg}) show the second term in the RHS of (\ref{hbiu}) is dominated by
$\epsilon^{\frac{1}{2}}_1s^{-j}$ up to  constants $C_{j}$. And thus (\ref{FD1}) follows.

{\bf Step 2.}  Applying (\ref{FD1}) and following the lines of our previous paper [\cite{LIZE}, Proposition 3.2, Step 1], one obtains by Lemma \ref{EH} that (\ref{3.4a}) and (\ref{FuC}) holds within $s\in[0,\bar{s})$, i.e.
\begin{align}
\sup_{s\in[0,\bar{s})}2^{\frac{d}{2}k}(1+s^{\frac{1}{2}}2^{k})^{L'}\|P_{k}v\|_{L^2_x}&\lesssim_{L'}\gamma_k\label{FuC0}\\
\sup_{s\in[0,\bar{s})}2^{\frac{d}{2}k+\sigma k}(1+s^{\frac{1}{2}}2^{k})^{l}\|P_{k}v\|_{L^2_x}&\lesssim_{L'}\gamma_k(\sigma), \mbox{ }\sigma\in [0,\vartheta],0\le l\le L'-1\label{FuC00}
\end{align}
by bilinear Littlewood-Paley decomposition, provided that $\epsilon_1>0$ is sufficiently small depending on $L,d$. Since this part is routine, we leave the details for readers.

{\bf Step 3.1} (\ref{FuC0}) shows
\begin{align*}
2^{\frac{d}{2}k} (s^{\frac{1}{2}}2^{k})^{l}\|P_{k}v\|_{L^2_x}\lesssim_{L}\gamma_k, \mbox{ } \forall \mbox{ }0\le l\le L'.
\end{align*}
Then by Bernstein inequality we get
\begin{align*}
2^{\frac{d}{2}k}\|P_{k}(|\nabla|^{j}v)\|_{L^{2}_x}&\lesssim_{j} s^{-\frac{j}{2}}\gamma_k.
\end{align*}
and thus for any $0\le j\le L'$
\begin{align}\label{saut}
\|\partial^{j}_x v\|_{{\dot H}^{\frac{d}{2}}_x}&\lesssim_{j} s^{-\frac{j}{2}}\epsilon_1.
\end{align}
Then  (\ref{FD2})  holds with  $\epsilon^{\frac{1}{2}}_1$ replaced by $\epsilon_1$:

{\bf Step 3.2} Let us improve (\ref{aFD2}).
By Gagliardo-Nirenberg  inequality we get from Step 3.1 that
\begin{align}\label{dgkn}
\|\partial_x (v-Q)\|_{{L}^{\infty}_x}&\lesssim \| (v -Q)\|^{\frac{1}{2}}_{{\dot H}^{\frac{d}{2}}_x}\|\partial^{\frac{d}{2}+2}_x(v-Q)\|^{\frac{1}{2}}_{L^{2}_x}\lesssim \epsilon_1 s^{-\frac{1}{2}}.
\end{align}
In order to prove the following improved version of  (\ref{aFD2}),
\begin{align}\label{YhubnL}
\|\partial^{j}_x v\|_{L^{\infty}_x}&\lesssim_{j} s^{-\frac{j}{2}}\epsilon_1,\mbox{ }\forall 1\le j\le L',
\end{align}
it suffices to apply
\begin{align}
\|\partial_x v\|_{L^{\infty}_x}&\lesssim \epsilon_1 s^{-\frac{1}{2}}  \label{1Ggkn} \\
\|\partial^{l+1}_x v\|_{L^{\infty}_x}&\lesssim s^{-\frac{1}{2}}\|\partial^{l}_x v(\frac{s}{2})\|_{L^{\infty}_x}+\int^{s}_{\frac{s}{2}}(s-\tau)^{-\frac{1}{2}} \|\partial^{l+1}_x v\|_{L^{\infty}_x}\|\partial_x v\|_{L^{\infty}_x}d\tau\nonumber\\
& +\sum^{l}_{q=2}\sum_{l_1+...+l_q=l+2, l_1,...,l_q\le l}\int^{s}_{\frac{s}{2}}(s-\tau)^{-\frac{1}{2}}  \|\partial^{l_1}_x v\|_{L^{\infty}_x}...\|\partial^{l_q}_x v\|_{L^{\infty}_x}d\tau.\label{Ggkn}
\end{align}
In fact, (\ref{1Ggkn}) follows from (\ref{dgkn}), and (\ref{Ggkn}) follows by Duhamel principle and smoothing estimates $\|\partial_xe^{s\Delta}f\|_{L^{\infty}_x}\lesssim s^{-\frac{1}{2}}\|f\|_{L^{\infty}_x}$.
To obtain (\ref{YhubnL}), we begin with (\ref{1Ggkn}) and do iteration by (\ref{Ggkn}). The smallness of $s^{\frac{1}{2}}\|\partial_x v\|_{L^{\infty}_x}$ enables us to absorb the first term in the RHS of (\ref{Ggkn}) to the LHS, and the left terms in RHS of   (\ref{Ggkn})  are lower   derivative order terms.

Therefore, one obtains (\ref{FD2}), (\ref{aFD2}) hold with  $\epsilon^{\frac{1}{2}}_1$ replaced by $\epsilon_1$. Hence, $\bar{s}=\infty$.

Then (\ref{FuC0}) and (\ref{FuC00}) yield (\ref{3.4a}) and (\ref{FuC}) respectively, while (\ref{1.8a}) and (\ref{zzFD})  follow from (\ref{YhubnL}) and (\ref{saut}) respectively.

\end{proof}

\begin{Lemma}(Space-time estimates)\label{8xiao}
Let $v$ be the global heat flow in Lemma \ref{j1} with initial data $v_0\in \mathcal{Q}(\Bbb R^{d},\mathcal{M})$. Then we have
\begin{align}
\| \partial_xv\|_{L^{2}_s{\dot H}^{\frac{d}{2}}_x}&\lesssim \epsilon_1\label{1mkn}.
\end{align}
\end{Lemma}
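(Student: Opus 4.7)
The plan is to reduce the estimate to frequency-localized bounds and then integrate the pointwise-in-$s$ decay provided by Lemma \ref{j1}. Concretely, by Plancherel and Littlewood--Paley theory,
\begin{align*}
\|\partial_x v\|_{L^2_s \dot{H}^{\frac{d}{2}}_x}^2
\sim \int_0^\infty \sum_{k\in\Bbb Z} 2^{dk}\cdot 2^{2k}\|P_k v(s)\|_{L^2_x}^2\,ds,
\end{align*}
so it suffices to bound the right-hand side by $\epsilon_1^2$.

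First, I would invoke the decay estimate \eqref{3.4a} from Lemma \ref{j1} with, say, $L' \gg 1$ fixed, to get
\begin{align*}
2^{\frac{d}{2}k}\|P_k v(s)\|_{L^2_x} \lesssim (1+s^{\frac{1}{2}}2^k)^{-L'}\gamma_k.
\end{align*}
Squaring and inserting into the integral above yields, for each $k$,
\begin{align*}
\int_0^\infty 2^{2k}(1+s^{\frac{1}{2}}2^k)^{-2L'}\gamma_k^2\,ds.
\end{align*}
The substitution $\tau = s\,2^{2k}$ (so $d\tau = 2^{2k}\,ds$) turns this into $\gamma_k^2\int_0^\infty (1+\tau^{\frac{1}{2}})^{-2L'}\,d\tau$, which is finite as long as $L'\ge 2$ (certainly guaranteed by our choice).

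The final step is to sum in $k$. Since $\{\gamma_k\}$ is a $\delta$-order frequency envelope of $v_0$ in $\dot{H}^{\frac{d}{2}}$, Young's convolution inequality applied to $\gamma_k = \sum_{k_1} 2^{-\delta|k-k_1|}\bigl(2^{\frac{d}{2}k_1}\|P_{k_1}v_0\|_{L^2}\bigr)$ yields
\begin{align*}
\sum_{k\in\Bbb Z}\gamma_k^2 \lesssim \sum_{k_1\in\Bbb Z}2^{dk_1}\|P_{k_1}v_0\|_{L^2_x}^2 \sim \|v_0\|_{\dot{H}^{\frac{d}{2}}_x}^2 \le \epsilon_1^2,
\end{align*}
where we used \eqref{77}. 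Combining the three displays gives \eqref{1mkn}.

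I do not anticipate a serious obstacle: the strong polynomial decay in $(1+s^{\frac{1}{2}}2^k)$ supplied by Lemma \ref{j1} is far more than what is needed for $L^2$-integrability in $s$ after the natural scaling $\tau = s\,2^{2k}$, and the frequency-envelope property handles the $k$-summation cleanly. The only mild point of care is choosing $L'$ large enough (any $L' \ge 2$ suffices here), which is consistent with the hypothesis $L' \ge (d+200)2^{d+100}$ already assumed in Lemma \ref{j1}.
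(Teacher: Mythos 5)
Your proof is correct, and it takes a genuinely different route from the paper's. The paper proves \eqref{1mkn} via an energy identity: differentiating $\|v\|^2_{\dot H^{d/2}_x}$ in $s$ along the heat flow, using the trilinear product estimate of Lemma \ref{EH} to absorb the nonlinear contribution (with a loss of $\epsilon_1$), and integrating the resulting differential inequality $\frac{d}{ds}\|v\|^2_{\dot H^{d/2}} + (1-\epsilon_1)\|\partial_x v\|^2_{\dot H^{d/2}} \le 0$ over $s\in[0,\infty)$. This makes the parabolic dissipation structure transparent and needs only the $L^\infty_s$-in-time bound $\|\partial_x v\|_{\dot H^{d/2}}\lesssim\epsilon_1$ plus the Moser estimate for $S(v)$. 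You instead bypass the energy identity entirely and exploit the \emph{pointwise-in-$s$}, frequency-localized decay \eqref{3.4a} already established in Lemma \ref{j1}, reducing the $L^2_s$ integral at each dyadic frequency to $\gamma_k^2 \int_0^\infty (1+\tau^{1/2})^{-2L'}\,d\tau$ after the scaling $\tau = s2^{2k}$, and then summing with the envelope property. Both proofs rest on Lemma \ref{j1}, so there is no circularity, and both yield the bound $\epsilon_1$. Your argument is arguably shorter because the very strong decay exponent $L'$ supplied by Lemma \ref{j1} trivializes the $s$-integral; the paper's energy-identity proof would be the one to use in a setting where such sharp frequency-localized decay is not yet available and one wants to extract the space-time bound directly from the dissipative structure.

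One minor point to record explicitly: for the $\tau$-integral to converge one needs $L' > 1$, which is guaranteed by the paper's choice $L' \ge (d+200)2^{d+100}$; and the implied constant in \eqref{3.4a} depends on $L'$, which is harmless since $L'$ is a fixed absolute constant.
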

\begin{proof}
The proof is based an energy argument and trilinear Littlewood-Paley decomposition.
By the heat flow equation,
\begin{align}\label{zxxxxxv7}
\frac{d}{ds}\| v\|^2_{ {\dot H}^{\frac{d}{2}}_x}=-\| \partial_xv\|^2_{ {\dot H}^{\frac{d}{2}}_x}+\langle|\nabla|^{\frac{d}{2}+1}v,|\nabla|^{\frac{d}{2}-1}[S(v)(\partial_xv,\partial_xv)]\rangle_{L^2_ x}
\end{align}
We have seen in Lemma \ref{j1} that
\begin{align}\label{nizzj}
 \| S(v)\|^2_{ {\dot H}^{\frac{d}{2}}_x}\lesssim \epsilon_1.
\end{align}
Let $\{\alpha_k\},\{\beta_k\}$ be defined in Lemma \ref{EH}, and define
\begin{align*}
\zeta_k(\sigma)&=\sup_{k_1\in\Bbb Z}2^{-\delta|k-k_1|}\sum_{|k'-k_1|\le 20}2^{\frac{d}{2}k'+\sigma k'}\|P_{k'}v\|_{L^{\infty}_t{L^2_x}}\\
\check{\alpha}_k &=\sup_{k'\in\Bbb Z}2^{-\delta|k-k'|}{\alpha_{k'}}
\end{align*}
Since frequency envelopes are of slow variation,
Lemma \ref{EH} shows
\begin{align*}
&\||\nabla|^{\frac{d}{2}}P_{k}[S(v)(\partial_xv,\partial_xv)]\|_{L^{\infty}_tL^2_x}
\lesssim \zeta_k(1) \left(\sum_{k_1\le k}\zeta_{k_1}(0)2^{k_1}\right)+\sum_{k_2\ge k}2^{-d|k-k_2|} \zeta_{k_2}(1)\zeta_{k_2}(0)\nonumber\\
&+ \check{\alpha}_{k}\left(\sum_{k_1\le k}2^{\frac{1}{2}k_1}\zeta_{k_1}(\frac{1}{2})\right)^2+\sum_{k_2\ge k}2^{d(k-k_2)}2^{\frac{1}{2}k_2}\breve{\alpha}_{k_2}\zeta_{k_2}(\frac{1}{2})\left[\sum_{k_1\le k_2}\zeta_{k_1}(\frac{1}{2})2^{\frac{1}{2}k_1}\right]\\
&\lesssim  2^{k}\zeta_k(1)\zeta_{k_1}(0) + \check{\alpha}_{k} 2^{k}\zeta_{k}(\frac{1}{2})^2.
\end{align*}
Therefore, by (\ref{nizzj}) and Lemma \ref{j1}, we get from $\zeta_k(1/2)\le \sqrt{\zeta_k(0)\zeta_k(1)}$ that
\begin{align*}
\sum_{k\in\Bbb Z}\||\nabla|^{\frac{d}{2}-1}P_{k}[S(v)(\partial_xv,\partial_xv)]\|^2_{L^{\infty}_tL^2_x}
&\lesssim \sum_{k\in\Bbb Z}\epsilon\|\zeta_k(1)\|^2+\epsilon\|\zeta_{k}(\frac{1}{2})\|^4\\
&\lesssim \epsilon_1\|\partial_xv\|^2_{L^{\infty}_t{\dot H}^{\frac{d}{2}}_x}.
\end{align*}
Thus (\ref{zxxxxxv7}) reduces to
\begin{align}\label{zvzB7}
\frac{d}{ds}\| v\|^2_{ {\dot H}^{\frac{d}{2}}_x}+(1-\epsilon_1)\| \partial_xv\|^2_{ {\dot H}^{\frac{d}{2}}_x}\le0
\end{align}
Integrating (\ref{zvzB7}) in $s\in [0,\infty)$ yields (\ref{1mkn}) since $\|v\|_{\dot{H}^{\frac{d}{2}}}\lesssim \epsilon_1$ by Lemma \ref{j1}.
\end{proof}

\begin{Corollary}
Let $v$ be the global heat flow in Lemma \ref{j1} with initial data $v_0\in \mathcal{Q}(\Bbb R^{d},\mathcal{M})$. Then for all $0\le a\le L'-1$,  $0\le b\le L'-2$and $0\le j\le [\frac{d}{2}-1]$, there holds
\begin{align}
\left\|\nabla^{a}_x\partial_xv(s)\right\|_{L^{\infty}_x}&\lesssim \epsilon_1 {s}^{-\frac{a+1}{{2}}}\label{70}\\
\left\|\nabla^{b}_x\partial_sv(s)\right\|_{L^{\infty}_x}&\lesssim \epsilon_1 {s}^{-\frac{a+2}{{2}}}\label{79}\\
\left\|\nabla^{j}_x\partial_xv(s)\right\|_{L^{{d}/({1+j})}_x}&\lesssim \epsilon_1 \label{270}
\end{align}
Moreover, if $ \frac{d}{2}-1\le k\le L'-1$, $p\in[2,\infty]$, we have
\begin{align}\label{370}
\left\|\nabla^{k}_x\partial_xv(s)\right\|_{L^{p}_x}&\lesssim \epsilon_1s^{-\frac{k+1}{2}+\frac{d}{2p}}
\end{align}
\end{Corollary}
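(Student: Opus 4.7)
My plan is to treat the four estimates in order, noting that (\ref{70})--(\ref{79}) will follow almost directly from Lemma \ref{j1} and the heat equation, while (\ref{270})--(\ref{370}) reduce to Sobolev embedding and frequency-localized Bernstein inequalities applied to the envelope bound (\ref{FuC0}).

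First, (\ref{70}) is simply a restatement of (\ref{1.8a}) in Lemma \ref{j1} (with $j' = a+1$), so nothing new is required. For (\ref{79}), I would use the extrinsic heat flow equation
\begin{equation*}
\partial_s v = \Delta v + S(v)(\partial_x v, \partial_x v),
\end{equation*}
apply $\nabla_x^b$, and estimate in $L^\infty_x$. The linear piece $\nabla_x^{b+2} v$ is controlled by (\ref{70}) with $a = b+1 \le L'-1$, giving the desired $\epsilon_1 s^{-(b+2)/2}$. For the quadratic piece, one expands via the chain rule into sums of terms of the form $\mathcal{S}^{(\ell)}(v)\,\partial^{\alpha_1}_x v \cdots \partial^{\alpha_q}_x v$ with $\alpha_1 + \cdots + \alpha_q = b+2$ and each $\alpha_i \ge 1$, then uses $\|\mathcal{S}^{(\ell)}(v)\|_{L^\infty_x} \lesssim 1$ together with (\ref{70}) on each factor; each factor contributes $s^{-\alpha_i/2}$, and the total becomes $\epsilon_1^q s^{-(b+2)/2}$, which is acceptable since $\epsilon_1 \le 1$.

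For (\ref{270}), I would invoke Sobolev embedding. Fixing $0 \le j \le [\tfrac{d}{2}-1]$ and setting $q = d/(j+1) \ge 2$, we have by scaling $\|\nabla^{j+1} v\|_{L^q_x} \lesssim \||\nabla|^{d/2} v\|_{L^2_x} = \|v\|_{\dot{H}^{d/2}_x} \lesssim \epsilon_1$, with the last bound from (\ref{77}) propagated by Lemma \ref{j1}.

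The main technical step is (\ref{370}), which interpolates between the $L^2$ decay encoded in (\ref{FuC0}) and the $L^\infty$ decay from (\ref{70}). Using (\ref{FuC0}),
\begin{equation*}
\|P_j v\|_{L^2_x} \lesssim_{L'} \gamma_j\, 2^{-\frac{d}{2}j}\,(1 + s^{1/2} 2^j)^{-L'},
\end{equation*}
and Bernstein gives
\begin{equation*}
\|\nabla^{k+1} P_j v\|_{L^p_x} \lesssim 2^{(k+1)j}\,2^{d j(1/2 - 1/p)} \|P_j v\|_{L^2_x} \lesssim \gamma_j\,2^{(k+1 - d/p)j}\,(1 + s^{1/2} 2^j)^{-L'}.
\end{equation*}
Since $k \ge \tfrac{d}{2}-1$ and $p \ge 2$, we have $k+1 - d/p \ge 0$. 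Summing in $j$ and splitting at the scale $2^{j_0} \sim s^{-1/2}$, the $j \le j_0$ portion is bounded by a geometric sum controlled by the value at $j_0$, while the $j > j_0$ portion is tamed by the polynomial decay $(s^{1/2}2^j)^{-L'}$ for $L'$ large; using the slow variation $\gamma_j \le \gamma_{j_0} 2^{\delta|j-j_0|}$ of the envelope and $\gamma_{j_0} \lesssim \epsilon_1$, both pieces sum to $\epsilon_1 s^{-(k+1)/2 + d/(2p)}$. The borderline $k+1 = d/p$ (only $k = \tfrac{d}{2}-1$, $p=2$) reduces to $\|v\|_{\dot H^{d/2}_x} \lesssim \epsilon_1$.

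The only genuinely delicate point is verifying that the exponents in (\ref{370}) match across both regimes $k+1 \gtreqless d/p$ with constants independent of $p$; the envelope machinery from Step 2 of Lemma \ref{j1} already handles this uniformly, so I expect no surprises.
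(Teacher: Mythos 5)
Your proposal has a genuine gap at the very first step, and it propagates through all four estimates: you never address the fact that $\nabla^{a}_x\partial_x v$ is the \emph{covariant} derivative on the pullback bundle $v^{*}T\mathcal{M}$, not the ordinary partial derivative $\partial^{a+1}_x v$. In particular, your claim that (\ref{70}) ``is simply a restatement of (\ref{1.8a})'' is false: (\ref{1.8a}) bounds $\|\partial^{j'}_x v\|_{L^\infty_x}$, while (\ref{70}) bounds the covariant quantity $\|\nabla^{a}_x\partial_x v\|_{L^\infty_x}$, and these differ by lower-order terms built from the second fundamental form and the derivatives of $v$. The paper bridges this gap with the pointwise submanifold inequality
\begin{align*}
\left|{\nabla^{a}_x\partial_{i}v}\right|\lesssim \sum^{a+1}_{j=1}\sum _{\sum^{j}_{l}\beta_l=a+1,\ \beta_l\in\Bbb Z_+}|\partial^{\beta_1}_xv|\cdots|\partial^{\beta_j}_xv|,
\end{align*}
which reduces the covariant bound to a multilinear bound on products of ordinary derivatives. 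Without this identity none of your four estimates actually concern the objects stated in the Corollary.

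Once you invoke that pointwise bound, your sketch for (\ref{70}) becomes essentially the right argument: each factor contributes $\epsilon_1 s^{-\beta_l/2}$ by (\ref{1.8a}), and the product gives $\epsilon_1^{j} s^{-(a+1)/2}$, acceptable since $\epsilon_1\le 1$. For (\ref{79}) the paper is more efficient than you: it uses the intrinsic identity $\partial_s v=\sum_i \nabla_i\partial_i v$, so $\nabla^{b}_x\partial_s v$ is directly a combination of $\nabla^{b+1}_x\partial_x v$'s and (\ref{70}) applies immediately; your route through the extrinsic equation $\partial_s v=\Delta v+S(v)(\partial_x v,\partial_x v)$ would also work, but then you must again convert back to the covariant derivative. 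For (\ref{270}), a single Sobolev embedding on $v$ is not enough once $j\ge 1$: after the pointwise reduction the left side is controlled by sums of products $\prod|\partial^{\beta_l}_x v|$, and you need H\"older with exponents $d/\beta_l$ plus Sobolev embedding on each factor, exactly as the paper notes. Finally, for (\ref{370}) your Littlewood--Paley/Bernstein scheme with the envelope (\ref{FuC0}) would estimate $\|\partial^{k+1}_x v\|_{L^p_x}$, not $\|\nabla^{k}_x\partial_x v\|_{L^p_x}$; the paper instead proves an $L^2$ bound $\|\nabla^{k}_x\partial_x v\|_{L^2_x}\lesssim \epsilon_1 s^{-(k+1)/2+d/4}$ by a two-case analysis of the partition $\sum\beta_l=k+1$ (whether all $\beta_l<\frac{d}{2}-1$ or some $\beta_{l_*}\ge\frac{d}{2}-1$), and then interpolates with the $L^\infty$ bound (\ref{70}). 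Your proposal would need that same covariant-to-ordinary conversion and a corresponding case analysis on the products, neither of which it contains.
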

\begin{proof}
Basic theories of embedded sub-manifolds show the following inequality
\begin{align}\label{p2}
\left|{\nabla^{a}_x\partial_{i}v}\right|\lesssim \sum^{a+1}_{j=1}\sum _{\sum^{j}_{l}\beta_l=a+1,{\beta_l}\in\Bbb Z_+}|\partial^{\beta_1}_xv|...|\partial^{\beta_j}_xv|.
\end{align}
Then (\ref{70}) follows by (\ref{1.8a}). (\ref{79}) follows by (\ref{70}) and the identity $\partial_s v=\sum^{d}_{i=1}\nabla_j\partial_jv$. And (\ref{270})
follows by Sobolev embedding inequalities and H\"older inequalities. Lastly, we prove (\ref{370}) by interpolating (\ref{70}) with
\begin{align}\label{pVB2}
\left\|{\nabla^{a}_x\partial_{i}v}\right\|_{L^2_x}\lesssim \epsilon_1s^{-\frac{k+1}{2}+\frac{d}{4}}.
\end{align}
In order to prove (\ref{pVB2}), we consider two subcases: {\emph{Case 1.}} All $\{\beta_l\}^{j}_{l=1}$ in
(\ref{p2}) satisfy $\beta_l<\frac{d}{2}-1$; {\emph{Case 2.}} There exists some $1\le l_*\le j$ such that $\beta_{l_*}\ge \frac{d}{2}-1$. Then (\ref{pVB2}) follows as Step 1 of Lemma \ref{j1}.

\end{proof}

\subsection{Non-critical theory for heat flows}

This subsection involves some estimates which depend  on both $\||\nabla|^{d/2}v_0\|_{L^2_x}$ and $\|dv_0\|_{L^2_x}.$ Thus all theses estimates are not in the critical level. But they are necessary for setting up our bootstrap in the next subsection. Most of the techniques in this subsection are classical and we present them in detail just for reader's convenience.

\begin{Lemma}\label{Q}
Let $v$ be the global heat flow in Lemma \ref{j1} with initial data $v_0\in \mathcal{Q}(\Bbb R^{d},\mathcal{M})$.
Then the heat flow $v$ will uniformly converge to $Q$ as $s\to\infty$.
\end{Lemma}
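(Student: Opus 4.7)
The plan is to derive the uniform convergence $v(s)\to Q$ from the frequency-localized decay (\ref{3.4a}) via a Littlewood--Paley decomposition, crucially using that $v_0\in\mathcal{Q}(\Bbb R^d,\mathcal{M})$ implies $v_0-Q\in L^2\cap H^N$ for every $N\ge 1$, so we have access to more than just the critical bound $\|v_0\|_{\dot H^{d/2}}\le\epsilon_1$.

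For each $k\in\Bbb Z$, since $P_k$ annihilates constants we have $P_k v=P_k(v-Q)$; Bernstein combined with (\ref{3.4a}) gives
\begin{align*}
\|P_k(v(s)-Q)\|_{L^\infty_x}\lesssim 2^{\frac{d}{2}k}\|P_k v(s)\|_{L^2_x}\lesssim (1+s^{1/2}2^k)^{-L'}\gamma_k,
\end{align*}
so summing a Littlewood--Paley decomposition yields $\|v(s)-Q\|_{L^\infty_x}\lesssim\sum_{k\in\Bbb Z}(1+s^{1/2}2^k)^{-L'}\gamma_k$.

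Next, the envelope $\gamma_k$ of $\{2^{dj/2}\|P_j v_0\|_{L^2}\}$ admits two complementary bounds, $\gamma_k\lesssim 2^{dk/2}\|v_0-Q\|_{L^2}$ and $\gamma_k\lesssim 2^{(d/2-N)k}\|v_0-Q\|_{\dot H^N}$, inherited from the underlying $L^2$ and $\dot H^N$ bounds on $v_0-Q$. I split the sum at $k_*:=-\tfrac12\log_2 s$ and at $k=0$; choosing $L'>d/2$ large and $N$ large, the three pieces evaluate to
\begin{align*}
\sum_{k\le k_*}(1+s^{1/2}2^k)^{-L'}\gamma_k&\lesssim\|v_0-Q\|_{L^2}\sum_{k\le k_*}2^{dk/2}\lesssim s^{-d/4}\|v_0-Q\|_{L^2},\\
\sum_{k_*<k\le 0}(s^{1/2}2^k)^{-L'}\gamma_k&\lesssim\|v_0-Q\|_{L^2}s^{-L'/2}\sum_{k_*<k\le 0}2^{(d/2-L')k}\lesssim s^{-d/4}\|v_0-Q\|_{L^2},\\
\sum_{k>0}(s^{1/2}2^k)^{-L'}\gamma_k&\lesssim\|v_0-Q\|_{\dot H^N}s^{-L'/2}\sum_{k>0}2^{(d/2-N-L')k}\lesssim s^{-L'/2}\|v_0-Q\|_{\dot H^N}.
\end{align*}

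Adding these three estimates gives $\|v(s)-Q\|_{L^\infty_x}\lesssim s^{-d/4}\|v_0-Q\|_{L^2}+s^{-L'/2}\|v_0-Q\|_{\dot H^N}\to 0$ as $s\to\infty$ (since $d\ge 3$). The delicate point is the middle band $k_*<k\le 0$, where $\gamma_k$ is only of order $\epsilon_1$ and a naive Cauchy--Schwarz would yield mere boundedness; one must combine the weak low-frequency bound $\gamma_k\lesssim 2^{dk/2}\|v_0-Q\|_{L^2}$ with the heat multiplier decay (requiring $L'>d/2$) to extract the $s^{-d/4}$ rate. This is also where the full hypothesis $v_0\in\mathcal{Q}$, supplying $L^2$-control of $v_0-Q$ in addition to critical $\dot H^{d/2}$-smallness, is used essentially.
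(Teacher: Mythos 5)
Your approach — pass directly from the frequency-localized decay (\ref{3.4a}) to uniform convergence via a Littlewood--Paley decomposition of $v-Q$ and Bernstein — is genuinely different from the paper's, which instead uses the Bochner--Weitzenb\"ock identity to get $\|\partial_s v\|_{L^2}\lesssim s^{-1/2}\|dv_0\|_{L^2}$, upgrades to an $L^\infty$ decay of $\partial_s v$ via Duhamel, integrates in $s$ to produce a uniform limit $\Theta$, and finally identifies $\Theta=Q$ using $\lim_{|x|\to\infty}v=Q$. Your route is cleaner in that it directly exhibits the limit as $Q$ without a separate identification step, and it reuses (\ref{3.4a}) and the membership $v_0-Q\in L^2\cap H^N$ rather than re-running a fresh parabolic iteration.

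There is, however, a concrete error in the bounds you claim for $\gamma_k$. The envelope is defined as $\gamma_k=\sum_{j_1}2^{-\delta|k-j_1|}2^{dj_1/2}\|P_{j_1}v_0\|_{L^2}$, so by construction it is slowly varying: $\gamma_k\geq 2^{-\delta|k-j|}\gamma_j$ for all $j$, hence $\gamma_k$ cannot decay faster than $2^{-\delta|k|}$ in either direction. Your claimed bound $\gamma_k\lesssim 2^{dk/2}\|v_0-Q\|_{L^2}$ for $k\to-\infty$ (and likewise $\gamma_k\lesssim 2^{(d/2-N)k}\|v_0-Q\|_{\dot H^N}$ for $k\to+\infty$ with $N$ large) violates this, since $d/2\gg\delta$. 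The ingredients $\|P_j(v_0-Q)\|_{L^2}\le\|v_0-Q\|_{L^2}$ and $\lesssim 2^{-Nj}\|v_0-Q\|_{\dot H^N}$ do control the \emph{un-enveloped} sequence $2^{dj/2}\|P_j v_0\|_{L^2}$, but once you convolve with the weight $2^{-\delta|k-j|}$ the decay is washed out to $\gamma_k\lesssim 2^{-\delta|k|}\bigl(\|v_0-Q\|_{L^2}+\|v_0-Q\|_{\dot H^N}\bigr)$. With this corrected bound your three-piece split still closes — the low band gives $\sum_{k\le k_*}\gamma_k\lesssim 2^{\delta k_*}=s^{-\delta/2}$, and the other bands are dominated by the same rate — so the conclusion $\|v(s)-Q\|_{L^\infty_x}\to 0$ does hold, but the polynomial rates $s^{-d/4}$ and $s^{-L'/2}$ you write down are unjustified; the method as stated only yields $s^{-\delta/2}$, which is much weaker than the rate $s^{-\frac{d+2}{4}+1}$ the paper obtains.
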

\begin{proof}
The Bochner-Weitzenb\"ock identity for $|\partial_s v|^2$ is
\begin{align}
(\partial_s-\Delta) |\partial_s v|^2+2|\nabla \partial_s v|^2=\sum^{d}_{i=1}\langle{\bf{R}}(\partial_s v,\partial_iv)\partial_sv,\partial_i v\rangle.
\end{align}
And we claim that
\begin{align}\label{npk}
\|\partial_s v\|_{L^2_x}\lesssim s^{-\frac{1}{2}}\|dv_0\|_{L^2_x}.
\end{align}
Then, by smoothing effect of heat equations and(\ref{zzFD}), we get
\begin{align}
\|\partial_s v(s)\|_{L^{\infty}_x}&\lesssim s^{-\frac{d}{2}}\left\|\partial_s v(\frac{s}{2})\right\|^2_{L^{2}_x} +\int^{s}_{\frac{s}{2}}\|dv\|^2_{L^{\infty}_x}\|\partial_s v\|^2_{L^{2}_x}\tau^{-\frac{d}{2}}d\tau\nonumber\\
&\lesssim s^{-\frac{d}{2}-1}\|\nabla v_0\|^2_{L^{2}_x}.\label{nop}
\end{align}
Hence, we conclude
\begin{align}\label{gcfdrer}
\left\|v(s_1,\cdot)-v(s_2,\cdot)\right\|_{L^{\infty}_x}&\le \int^{s_2}_{s_1}\|\partial_s v(s,\cdot)\|_{L^{\infty}_x}ds\lesssim  {s_1}^{-\frac{{d+2}}{{4}}+1}.
\end{align}
which implies $v$ converges uniformly as $s\to\infty$ since $d\ge 3$. Denote the limit map of $v$ by $\Theta:\Bbb R^d\to \mathcal{M}$. Then by $\|dv\|_{L^{\infty}_x}\lesssim s^{-\frac{1}{2}}$, we have $\Theta$ is a constant map.
(\ref{gcfdrer}) now reads as
\begin{align} \label{xhgv78}
\sup_{x\in\Bbb R^d}\left|v(s,x)-\Theta\right|\lesssim s^{-\frac{2+d}{4}+1}\|dv_0\|_{L^2_x}
\end{align}
Since $v\in \mathcal{Q}(\Bbb R^d,\mathcal{M})$ implies $\lim_{|x|\to\infty} v=Q$, (\ref{xhgv78}) shows $\Theta=Q$  by contradiction argument.

Therefore, it suffices to verify the claim (\ref{npk}).  By Duhamel principle and smoothing effect of linear heat equation,
\begin{align*}
\|\Delta v(s)\|_{L^{2}_x}&\lesssim \|e^{\frac{s}{2}\Delta}\nabla v(\frac{s}{2})\|_{L^2_x}+\int^{s}_{\frac{s}{2}} \|e^{(s-\tau)\Delta}(\nabla(S(v)|\nabla v|^2)(\tau))\|_{L^2_x}d\tau\\
&\lesssim s^{-\frac{1}{2}}\|\nabla v_0\|_{L^2_x}+\int^{s}_{\frac{s}{2}}(s-\tau)^{-\frac{1}{2}}\left(\|\nabla v\|^2_{L^{\infty}_x}\|\nabla v\|_{L^2_x}+\|\Delta v\|_{L^2_x}\|\nabla v\|_{L^{\infty}_x}\right)\\
&\lesssim s^{-\frac{1}{2}}\|\nabla v_0\|_{L^2_x}+\int^{s}_{\frac{s}{2}}(s-\tau)^{-\frac{1}{2}}\|\nabla v_0\|_{L^2_x}(\epsilon^2_1\tau^{-1}+\epsilon_1\tau^{-\frac{1}{2}}\|\Delta v\|_{L^2_x})d\tau\\
&\lesssim s^{-\frac{1}{2}}\|\nabla v_0\|_{L^2_x}+\epsilon_1\int^{s}_{\frac{s}{2}}(s-\tau)^{-\frac{1}{2}} \tau^{-\frac{1}{2}}\|\Delta v\|_{L^2_x}d\tau,
\end{align*}
where in the third line we applied
\begin{align*}
\|\nabla v\|_{L^2_x}&\lesssim \|v_0\|_{L^2_x}\\
\|\nabla v\|_{L^{\infty}_x}&\lesssim \epsilon_1 s^{-\frac{1}{2}}.
\end{align*}
Let $X(s)=\sup_{\tilde{s}\in[0,s]}\tilde{s}^{\frac{1}{2}}\|\Delta v(\tilde{s})\|_{L^2_x}$, thus
\begin{align*}
X(s)\lesssim \|d v_0\|_{L^2_x}+\epsilon_1 X(x),
\end{align*}
which shows
\begin{align*}
\|\Delta v\|_{L^2_x}\lesssim s^{-\frac{1}{2}}\|d v_0\|_{L^2_x}.
\end{align*}
And thus
\begin{align*}
\|\partial_sv\|_{L^2_x}\lesssim \|\Delta v\|_{L^2_x} +\|\nabla v\|_{L^{\infty}_x}\|\nabla v\|_{L^2_x}\lesssim s^{-\frac{1}{2}}\|d v_0\|_{L^2_x},
\end{align*}
from which (\ref{npk}) follows. So the proof has been completed.
\end{proof}

The proof of Lemma \ref{Q} shows $\|\Delta v\|_{L^{\infty}_x}$ indeed decays faster than that stated in Lemma \ref{j1} if one takes $\|\nabla v_0\|_{L^2_x}$ into consideration. These faster rates will be useful in the set up of bootstrap. And in fact decay estimates of higher order derivatives of $v$ can be obtained similarly by induction.

\begin{Lemma}\label{Q}
Let $v$ be the global heat flow in Lemma \ref{j1} with initial data $v_0\in \mathcal{Q}(\Bbb R^{d};\mathcal{M})$. Then for all $0\le j\le L' -1$, $0\le l\le L'  -2$, one has
\begin{align}
\|\nabla^{j}_x\partial_x v\|_{L^2_x}&\lesssim s^{-\frac{j}{2}}\|dv_0\|_{L^2_x}\label{08}\\
\|\nabla^{j}_x\partial_x v\|_{L^{\infty}_x}&\lesssim s^{-\frac{2j+d}{4}}\|dv_0\|_{L^2_x}\label{081}\\
\|s^{\frac{1}{2}{(j-1)}}\nabla^{L}_x\partial_x v\|_{L^{2}_sL^{2}_x}&\lesssim \|dv_0\|_{L^2_x}.\label{082}\\
\|\nabla^{l}_x\partial_s v\|_{L^2_x}&\lesssim s^{-\frac{l+1}{2}}\|dv_0\|_{L^2_x}\label{08s}\\
\|\nabla^{l}_x\partial_s v\|_{L^{\infty}_x}&\lesssim s^{-\frac{l+1}{2}-\frac{d}{4}}\|dv_0\|_{L^2_x}.\label{081s}
\end{align}
Moreover, let $\{e_l\}^{m}_{l=1}$ be an orthonormal frame for the pullback bundle $v^*T\mathcal{M}$ and $\{\psi_i\}^{d}_{i=1},\psi_s$ be the sections of trivial bundle $[0,\infty)\times \Bbb R^d$ with fiver $\Bbb R^{m}$ induced by $v$ via:
\begin{align*}
\psi^{l}_i:=\langle \partial_i v,e_l\rangle,\mbox{  }\psi^{l}_s:=\langle \partial_s v,e_l\rangle.
\end{align*}
Denote $\{D_i, D_s\}^{d}_{i=1}$ the induced covariant derivatives on the bundle $([0,\infty)\times \Bbb R^d, \Bbb R^{m})$.
Then  for any  $0\le j\le L'-1$, $0\le l\le L'-2$, we also have
\begin{align}
\|D^{j}_x\psi_x\|_{L^{2}_x}&\lesssim s^{-\frac{j}{2}}\|dv_0\|_{L^2_x}\label{09}\\
\|D^{j}_x\psi_x\|_{L^{\infty}_x}&\lesssim s^{-\frac{2j+d}{4}}\|dv_0\|_{L^2_x},\label{091}\\
\|s^{\frac{1}{2}{(j-1)}}D^{j}_x\psi_x\|_{L^{2}_sL^{2}_x}&\lesssim \|dv_0\|_{L^2_x}\label{092}\\
\|D^{l}_x\psi_s\|_{L^{2}_x}&\lesssim s^{-\frac{l+1}{2}}\|dv_0\|_{L^2_x}\label{09s}\\
\|D^{l}_x\psi_s\|_{L^{\infty}_x}&\lesssim s^{-\frac{l+1}{2}-\frac{d}{4}}\|dv_0\|_{L^2_x},\label{092s}\\
\|D^{l}_x\psi_s\|_{L^{\infty}_x}&\lesssim \epsilon s^{\frac{l+2}{2}}\label{s00}\\
\|D^{l}_x\psi_x\|_{L^{\infty}_x}&\lesssim \epsilon s^{\frac{l+1}{2}}\label{x00}.
\end{align}
where the simplified notations $\psi_x$, $D^{j}_x$ refer to differential fields $\{\psi_i\}^{d}_{i=1}$ and  respectively  various combinations of $\{D_i\}^{d}_{i=1}$ of order $j$.
\end{Lemma}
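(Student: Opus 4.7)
My plan is to establish the five extrinsic estimates for $\nabla^j_x\partial_x v$ and $\nabla^l_x\partial_s v$ by simultaneous induction on the derivative order, driven by the extrinsic heat equation
\begin{align*}
\partial_s v^q-\Delta v^q=S^q_{ab}(v)\,\partial_i v^a\,\partial_i v^b,
\end{align*}
and then transfer them to the covariant quantities $D^j_x\psi_x$, $D^l_x\psi_s$ via the isometric identification of the pullback bundle $v^\ast T\mathcal{M}$ with the trivial bundle $(\Bbb I\times\Bbb R^d;\Bbb R^m)$ supplied by the orthonormal frame $\{e_l\}$. The base case $j=0$ is non-increase of Dirichlet energy, and $j=1$, namely $\|\Delta v\|_{L^2_x}+\|\partial_s v\|_{L^2_x}\lesssim s^{-\frac{1}{2}}\|dv_0\|_{L^2_x}$, is already proved inside the previous Lemma. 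Throughout the induction I would freely use the critical-Sobolev decay bounds of Lemma \ref{j1}, in particular $\|\partial^{j'}_x v\|_{L^\infty_x}\lesssim \epsilon_1 s^{-j'/2}$, to produce the smallness that closes bootstraps.

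For the inductive $L^2$ bound, I would apply Duhamel from $s/2$ to $s$ in the form
\begin{align*}
\nabla^{j+1}v(s)=\nabla e^{(s/2)\Delta}\nabla^{j}v(s/2)-\int_{s/2}^s\nabla e^{(s-\tau)\Delta}\nabla^j\bigl(S(v)|\partial_x v|^2\bigr)(\tau)\,d\tau,
\end{align*}
use the smoothing estimate $\|\nabla e^{t\Delta}f\|_{L^2_x}\lesssim t^{-1/2}\|f\|_{L^2_x}$, and invoke the inductive hypothesis to dominate the linear piece by $s^{-j/2}\|dv_0\|_{L^2_x}$. The nonlinear piece I would expand by Leibniz into sums of the form $(DS)^{(a)}(v)\,\partial^{\beta_1}_x v\cdots\partial^{\beta_q}_x v$ and bound by placing the factor carrying the highest number of derivatives in $L^2_x$ (inductive hypothesis) and the remaining factors in $L^\infty_x$ (Lemma \ref{j1}); this produces an extra $\epsilon_1$ that is absorbed in the bootstrap. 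The $L^\infty_x$ estimate then follows by one further smoothing step $\|e^{(s/2)\Delta}f\|_{L^\infty_x}\lesssim s^{-d/4}\|f\|_{L^2_x}$ combined with the just-obtained $L^2$ bound. The estimates on $\partial_s v$ reduce to the preceding ones via $\partial_s v=\Delta v-S(v)|\partial_x v|^2$, again placing the derivative weight on $\Delta v$ and using the nonlinear Leibniz decomposition for the remainder.

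The main obstacle is the weighted space-time estimate: naive use of the $L^2$ bound only gives $s^{(j-1)/2}\|\nabla^{j+1}v\|_{L^2_x}\lesssim s^{-1/2}\|dv_0\|_{L^2_x}$, which fails to be $L^2_s$-integrable. To extract the missing integrability I would use the exact linear identity
\begin{align*}
\int_0^\infty s^{j-1}\bigl\||\nabla|^{j+1}e^{s\Delta}f\bigr\|_{L^2_x}^2\,ds=c_j\bigl\||\nabla|f\bigr\|_{L^2_x}^2,
\end{align*}
obtained by Plancherel from $|\xi|^{2j+2}\int_0^\infty s^{j-1}e^{-2s|\xi|^2}\,ds=c_j|\xi|^2$, applied to the linear part of the Duhamel representation with $f=v_0$, together with a Minkowski-type weighted estimate on the Duhamel nonlinear piece whose smallness is furnished by the $\epsilon_1$ factor coming from Lemma \ref{j1}. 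Finally, the covariant estimates (\ref{09})--(\ref{092s}) are immediate from the bundle isometry, since in the orthonormal frame $\{e_l\}$ the induced covariant derivative $D$ equals the pullback connection $\nabla$ expressed component-wise, so that $|\nabla^j_x\partial_x v|=|D^j_x\psi_x|$ and $|\nabla^l_x\partial_s v|=|D^l_x\psi_s|$ pointwise; the $\epsilon_1$-small pointwise bounds (\ref{s00})--(\ref{x00}) transfer directly from the $L^\infty$ decay estimates of Lemma \ref{j1} under the same identification.
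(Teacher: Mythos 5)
You take a genuinely different route for the main estimates (\ref{08})--(\ref{082}). The paper does not use Duhamel for the $L^2$ bounds at all: it works with the intrinsic Bochner inequality
\[
(\partial_s-\Delta)|\nabla^{j}_x\partial_xv|^2+2|\nabla^{j+1}_x\partial_xv|^2\lesssim\sum_{z\ge 3}\sum_{(1+n_1)+\cdots+(1+n_z)=j+3}|\nabla^{n_1}_x\partial_xv|\cdots|\nabla^{n_z}_x\partial_xv|\,|\nabla^{j}_x\partial_xv|,
\]
multiplies by the weight $s^{j}$, integrates in $s$ and $x$, and reads off both $X_{j,2}$ (the $L^2_x$ decay) and $Y_{j+1,2}$ (the weighted $L^2_sL^2_x$ bound) from the same parabolic energy identity, with the induction relation $Y_{\le j,2}\Rightarrow X_{j,2}$ and $X_{j,2},Y_{\le j,2}\Rightarrow Y_{j+1,2}$. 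The cubic right-hand side is absorbed by Lemma~\ref{j1}'s $L^\infty$ decay. Only the $L^\infty$ estimate (\ref{081}) is handled via Kato's inequality plus Duhamel from $s/2$, exactly as you propose. The transfers to $\partial_s v$, to $D^j_x\psi_x$, and to $D^l_x\psi_s$ are the same pointwise identifications you describe.

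The one place where your proposal has a real gap is (\ref{082}). You correctly observe that the naive estimate $s^{(j-1)/2}\|\nabla^{j}_x\partial_xv\|_{L^2_x}\lesssim s^{-1/2}\|dv_0\|_{L^2_x}$ fails to be $L^2_s$; you then propose the Plancherel identity for the free evolution $e^{s\Delta}v_0$ plus "a Minkowski-type weighted estimate on the Duhamel nonlinear piece whose smallness is furnished by $\epsilon_1$." The smallness does not help: the obstacle is integrability near $\tau=0$, not the size of the constant. If you place the top-order factor of $\nabla^{j+1}(S(v)|\partial_xv|^2)(\tau)$ in $L^2_x$ with the pointwise inductive bound and the other factors in $L^\infty_x$, you get $\lesssim\epsilon_1\tau^{-(j+2)/2}\|dv_0\|_{L^2}$, and $\int_0^s(s-\tau)^{-1/2}\tau^{-(j+2)/2}d\tau$ diverges for $j\ge1$. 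To close the loop you would instead need a weighted parabolic maximal-regularity estimate that puts the top-order factor in $L^2_sL^2_x(s^{j-1}ds)$ and exploits the inductive hypothesis $Y_{\le j,2}$, combined with the $L^\infty_{s,x}$-type bounds on the low-order factors; this is nontrivial and is precisely what the Bochner/energy identity supplies for free. Either spell out that weighted maximal-regularity step, or switch to the Bochner approach, which gets $X_{j,2}$ and $Y_{j+1,2}$ simultaneously from one integration by parts.
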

\begin{proof}
The proof is based on well-known techniques. We sketch it for reader's convenience.
(\ref{08s})-(\ref{081s}) follow from (\ref{08})-(\ref{081}) by the identity $\partial_sv=\sum^{d}_{i=1}\nabla_i\partial_iv.$
(\ref{09}) and (\ref{091}) follow from (\ref{08})-(\ref{081}) since  $|\nabla^{j}_x \partial_xv| $ controls $|D^{j}_x \psi_x |$ point-wisely. And by the same reason, (\ref{s00}), (\ref{x00}) follow by  (\ref{70}), (\ref{79}). Meanwhile, (\ref{09s}), (\ref{092s})  follow from  (\ref{09}), (\ref{091}).

Therefore, it suffices to prove (\ref{08})-(\ref{082}).
We denote
\begin{align*}
X_{j,\infty}(s)&:=\sup_{\tilde{s}\in[0,s]}
\tilde{s}^{\frac{d+2j}{4}}\|\nabla^{j}_x\partial_x v(\tilde{s})\|_{L^{\infty}_x}\\
X_{j,2}(s)&:=\sup_{\tilde{s}\in[0,s]}
\tilde{s}^{\frac{j}{2}}\|\nabla^{j}_x\partial_x v(\tilde{s})\|_{L^{2}_x}\\
Y_{j,2}(s)&:=(\int^s_0\int_{\Bbb R^d}{\tilde{s}}^{{j-1}}|\nabla^{j}_x\partial_x v(\tilde{s})|^2dxd\tilde{s})^{\frac{1}{2}}.
\end{align*}
Recall the Bochner inequality (see e.g. \cite{huangSmithhuang,huangTaohuang}):
\begin{align}\label{hgyu}
(\partial_s -\Delta)|\nabla^{j}_x\partial_x v|^2+2|\nabla^{j+1}_x\partial_x v|^2\lesssim \sum^{j+3}_{z=3}\sum_{(1+n_{1})+...+(1+n_{z})=j+3}|\nabla^{n_1}_x\partial_{x}v|...|\nabla^{n_z}_x\partial_{x}v||\nabla^{j}_x\partial_x v|.
\end{align}
We notice that the RHS of (\ref{hgyu}) can be further expanded as
\begin{align}
&(\partial_s -\Delta)|\nabla^{j}_x\partial_x v|^2+2|\nabla^{j+1}_x\partial_x v|^2\nonumber\\
&\lesssim |dv|^2|\nabla^{j}_x\partial_x v |^2+ \sum^{j+3}_{z=3}\sum_{\sum^{z}_{i=1}(1+n_{i})=j+3, \forall i, |n_i|<j}|\nabla^{n_1}_x\partial_{x}v|...|\nabla^{n_z}_x\partial_{x}v||\nabla^{j}_x\partial_x v |.\label{xiao9}
\end{align}
Then it is easy to see
\begin{align*}
&X^2_{j,2}(s)+2Y^2_{j+1,2}(s)\\
&\lesssim jY^2_{j,2}(s)+\epsilon_1Y^{2}_{j,2}(s)\\
&+\sum^{j+3}_{z=3}\sum_{\sum^{z}_{i=1}(1+n_{i})=j+3, \forall i, |n_i|<j}\int^{s}_0\tilde{s}^{{j}}\|\nabla^{j}_x\partial_x v|\|_{L^2_x}\|\nabla^{n_1}_x\partial_{x}v\|_{L^{2}_x}\|\cdot\|_{L^{\infty}_x}...
\|\nabla^{n_z}_x\partial_{x}v\|_{L^{\infty}_x}d\tilde{s}\nonumber\\
&\lesssim  Y^2_{j,2}(s)+\epsilon_1\sum^{j-1}_{a=0}Y_{j,2}(s)Y_{a,2}(s)\nonumber
\end{align*}
where we used (\ref{zzFD}) in the first line and (\ref{70}) in the last line.
Thus we have seen
\begin{align}
Y_{l,2}(s)\lesssim \|dv_0\|_{L^2_x}, \forall 1\le l\le j&\Longrightarrow X_{j,2}(s)\lesssim \|dv_0\|_{L^2_x}\\
X_{j,2}(s)+Y_{l,2}(s)\lesssim \|dv_0\|_{L^2_x}, \forall 1\le l\le j&\Longrightarrow Y_{j+1,2}(s)\lesssim \|dv_0\|_{L^2_x}.
\end{align}
These two induction relations show that  for (\ref{08}), (\ref{082}) it suffices to verify  $Y_{1,2}(s)+X_{0,2}(s)\lesssim \|dv_0\|_{L^2_x}$. By the energy identity we see
\begin{align}\label{nzzzbv}
\int^{s}_0\|\tau(v)\|^2_{L^2_x}ds'+X_{0,2}(s)\le \|dv_0\|_{L^2_x}.
\end{align}
Integration by parts gives
\begin{align*}
\|\nabla dv\|^2_{L^2_x}\lesssim \|\tau(v)\|^2_{L^2_x}+\|dv \|^{4}_{L^4_x}.
\end{align*}
By Gagliardo-Nirenberg inequality
\begin{align*}
\|dv \|^{4}_{L^4_x}\lesssim \|dv\|^2_{L^2_x}\|dv\|^2_{\dot{H}^{\frac{d}{2}}_x}.
\end{align*}
Then (\ref{nzzzbv}), (\ref{1mkn}) yield
\begin{align*}
Y_{1,2}(s)+X_{0,2}(s)\lesssim \|dv_0\|_{L^2_x}.
\end{align*}
Thus  (\ref{08}), (\ref{082}) are done.

It remains to prove (\ref{081}).
(\ref{xiao9}) and Kato's inequality show
\begin{align*}
(\partial_s -\Delta)|\nabla^{j}_x\partial_x v|\lesssim \|dv\|^2_{L^{\infty}_x}|\nabla^{j}_x\partial_x v |+
 \sum^{j+3}_{z=3}\sum_{\sum^{z}_{i=1}(1+n_{i})=j+3, \forall i, |n_i|<j}|\nabla^{n_1}_x\partial_{x}v|...|\nabla^{n_z}_x\partial_{x}v|.
\end{align*}
We prove  (\ref{081})  by induction.
Suppose that  (\ref{081}) hold for all $j'<j$. Then by Duhamel principle and smoothing effect of heat equation one has
\begin{align*}
&\|\nabla^{j}_x\partial_x v(s)\|_{L^{\infty}_x}\\
&\lesssim s^{-\frac{d}{4}}\|\nabla^{j}_x\partial_x v(s/2)\|_{L^{2}_x}
+\int^{s}_{s/2}\|dv\|^2_{L^{\infty}_x}\|\nabla^{j}_x\partial_x v\|_{L^{\infty}_x}d\tau\\
&+\sum^{j+3}_{z=3}\sum_{\sum^{z}_{i=1}(1+n_{i})=j+3, \forall i, |n_i|<j}\int^{s}_{s/2}\|\nabla^{n_1}_x\partial_{x}v\|_{L^{\infty}_x}...\|\nabla^{n_z}_x\partial_{x}v\|_{L^{\infty}_x}.  d\tau\\
&\lesssim s^{-\frac{d}{4}}\|\nabla^{j}_x\partial_x v(s/2)\|_{L^{2}_x}
+s^{-\frac{d+2j}{4}}(\int^{s}_{s/2}\|dv\|^2_{L^{\infty}_x}d\tau)X_{j,\infty}(s)\\
&+\epsilon^2_1\|dv_0\|_{L^2_x}\sum^{j+3}_{z=3}\sum_{\sum^{z}_{i=1}(1+n_{i})=j+3, \forall i, |n_i|<j}\int^{s}_{s/2}\tau^{-\frac{d+2n_1}{4}}\tau^{-\frac{n_2+1}{2}}...\tau^{-\frac{n_z+1}{2}} d\tau,
\end{align*}
where in the last line we applied induction assumption  to $\|\nabla^{n_1}_x\partial_x v\|_{L^{\infty}_x}$ and  (\ref{70}) to $\|\nabla^{n_i}_x\partial_x v\|_{L^{\infty}_x}$, $i=2,...,z$.
Thus by (\ref{zzFD}), $X_{j,\infty}(s)$ satisfies
\begin{align*}
X_{j,\infty}(s)\lesssim X_{j,2}(s)+\epsilon_1X_{j,\infty}(s)+\|dv_0\|_{L^2_x}.
\end{align*}
Hence (\ref{08}) shows $X_{j,\infty}(s)\lesssim \|dv_0\|_{L^2_x}$ and thereby our lemma follows.

\end{proof}

\begin{Lemma}\label{QQ}
Let $v$ be the global heat flow in Lemma \ref{j1} with initial data $v_0\in \mathcal{Q}(\Bbb R^{d},\mathcal{M})$.
Given limit orthonormal frames $\{e^{\infty}_l\}^{m}_{l=1}$, there exists a unique gauge $\{e_{l}\}^{m}_{l=1}$ for $v^{*}T\mathcal{M}$ such that
\begin{align}
\nabla_s e_{l}&=0 \label{hebei1}\\
\lim_{s\to\infty}e_{l}&=e^{\infty}_l, \mbox{  }\forall l=1,...,m.\label{hebei2}
\end{align}
Frames satisfying (\ref{hebei1}), (\ref{hebei2}) are called Tao's caloric gauge.
Moreover, the connection coefficients $A_x$ satisfy
\begin{align}\label{1poke}
A_i=\int^{\infty}_s\mathcal{R}(\psi_s,\psi_i)ds'
\end{align}
and the estimates
\begin{align}\label{xvb67}
\|\partial^{l}_x A_x(s)\|_{L^2_x}&\lesssim s^{-\frac{l}{2}-\frac{d}{4}+\frac{1}{2}}\|dv_0\|_{L^2_x},
\end{align}
for any $s\ge 1$, $0\le l\le L'-3+[\frac{d}{2}]$. And the frames $\{e_l\}^{m}_{l=1}$ satisfy
\begin{align}\label{fcav}
\|\partial^{j}_x(d\mathcal{P}(e_{l})-d\mathcal{P}(e^{\infty}_{l}))\|_{{\dot H}^{\frac{d}{2}}_x}&\lesssim s^{-\frac{2(j-1)+d}{4}}\|dv_0\|_{L^2_x}.
\end{align}
for any $s\ge 1$, $0\le j\le L'-3$.
\end{Lemma}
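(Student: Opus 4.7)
The plan is to first construct the caloric gauge by parallel transporting a fixed limit frame back from $s=\infty$ along the heat flow, then derive the integral formula (\ref{1poke}) from the commutator identity in the $A_s\equiv 0$ gauge, and finally bootstrap the quantitative bounds (\ref{xvb67}) and (\ref{fcav}) by feeding in the non-critical pointwise/$L^2$ decay estimates proved in the preceding two lemmas.

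For existence/uniqueness, view $\nabla_s e_l=0$ extrinsically: since $d\mathcal{P}$ commutes with $\partial_s$ modulo the second fundamental form, the equation reads $\partial_s(d\mathcal{P}(e_l))=S(v)(\partial_s v,e_l)$, an ODE in $s$ whose right-hand side is bounded by $\|\partial_s v\|_{L^\infty_x}\lesssim s^{-1/2-d/4}\|dv_0\|_{L^2_x}$ from (\ref{081s}). Because $d\ge 3$ makes this integrable near $s=\infty$, the backward Cauchy problem with terminal data $d\mathcal{P}(e_l^\infty)$ has a unique global solution on $[0,\infty)$, and orthonormality is preserved along $s$ since $\nabla_s$ is the metric connection. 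Uniqueness is immediate from uniqueness of ODEs. To derive (\ref{1poke}), note that $\nabla_s e_l=0$ forces $A_s\equiv 0$, so the commutator identity (\ref{b780}) collapses to $\partial_s A_i=\mathcal{R}(\psi_s,\psi_i)$; the boundary condition $A_i(\infty)=0$ follows because $v\to Q$ uniformly while $e_l\to e_l^\infty$ is $x$-independent, forcing $\nabla_i e_p\to 0$. Integrating from $s$ to $\infty$ yields (\ref{1poke}) up to sign.

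For (\ref{xvb67}), apply $\partial_x^l$ to (\ref{1poke}), use Minkowski in $s$, and distribute derivatives via Leibniz. The balanced term keeps all $l$ derivatives on $\psi_s$ in $L^\infty_x$, bounded by (\ref{092s}) as $(s')^{-(l+1)/2-d/4}\|dv_0\|_{L^2_x}$, paired with $\|\psi_x\|_{L^2_x}\le \|dv_0\|_{L^2_x}$ (energy monotonicity). Derivatives falling on $\mathbf{R}(v)$ produce harmless extra factors $\partial_x v$ with extra decay from (\ref{70}). The resulting integrand is $\lesssim (s')^{-(l+1)/2-d/4}\|dv_0\|_{L^2_x}^2$; integrating $s'$ from $s$ to $\infty$ and absorbing one copy of $\|dv_0\|_{L^2_x}\lesssim \epsilon_1$ into the implicit constant gives the target rate $s^{-l/2-d/4+1/2}\|dv_0\|_{L^2_x}$.

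The last bound (\ref{fcav}) is the most delicate. Using $\nabla_s e_l=0$ one gets
\[
d\mathcal{P}(e_l)(s)-d\mathcal{P}(e_l^\infty)=-\int_s^\infty S(v)(\partial_{s'}v,e_l)(s')\,ds'.
\]
Applying $|\nabla|^{d/2}\partial_x^j$, Minkowski, and the fractional Kato--Ponce rule to the triple product, the dominant contribution places all $j+\tfrac{d}{2}$ derivatives on $\partial_{s'}v$: from (\ref{08s}) combined with Bernstein one has $\|\partial_{s'}v\|_{\dot H^{j+d/2}_x}\lesssim (s')^{-(j+d/2+1)/2}\|dv_0\|_{L^2_x}$, while $S(v)$ and $e_l$ contribute $O(1)$ $L^\infty$ factors controlled by compactness of $\mathcal{M}$ plus the chain-rule estimates already established in the proof of Lemma \ref{j1}. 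Integration in $s'$ from $s$ to $\infty$ then yields precisely $s^{-(2(j-1)+d)/4}\|dv_0\|_{L^2_x}$. The main obstacle is bookkeeping the fractional derivative $|\nabla|^{d/2}$ in odd dimensions, where one cannot directly use integer covariant derivatives on $S(v)$; this is resolved as in Step 1 of Lemma \ref{j1}, splitting into cases according to whether each $\alpha_k$-th derivative of $v$ falls below or above the threshold $d/2$, applying Sobolev embedding (\ref{iujn})--(\ref{oij}) and the Besov endpoint for the Lipschitz function $S$, which yields the required $\dot H^{1/2}$ control after stripping $[d/2]$ integer derivatives.
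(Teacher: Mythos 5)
Your proposal follows the paper's overall plan, but the derivation of (\ref{xvb67}) contains an outright false inequality. Your pairing of $\|D^l_x\psi_s\|_{L^\infty_x}\lesssim s^{-(l+1)/2-d/4}\|dv_0\|_{L^2_x}$ with $\|\psi_x\|_{L^2_x}\le\|dv_0\|_{L^2_x}$ produces an integrand of size $s^{-(l+1)/2-d/4}\|dv_0\|^2_{L^2_x}$, and you then claim to absorb one power because ``$\|dv_0\|_{L^2_x}\lesssim\epsilon_1$.'' That is not available: the smallness hypothesis is on $\|v_0\|_{\dot H^{d/2}}$, and for $d\ge 3$ the energy $\|dv_0\|_{L^2_x}$ is a conserved subcritical quantity that can be arbitrarily large. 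Your argument therefore really delivers $\|dv_0\|^2_{L^2_x}$, not $\|dv_0\|_{L^2_x}$. (The paper's own computation at this point also carries an implicit extra factor of $\|dv_0\|_{L^2_x}$; since (\ref{xvb67}) is used only qualitatively, to guarantee (\ref{X1})--(\ref{X3}) for $s$ large depending on $\|dv_0\|_{L^2_x}$, the precise power is harmless for the application, but the justification you wrote for matching the stated bound is wrong.)

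For (\ref{fcav}) you take a genuinely different route from the paper. You apply $|\nabla|^{d/2}\partial^j_x$ and Kato--Ponce directly to the $s$-integral representation $d\mathcal{P}(e_l)-\chi^\infty=-\int_s^\infty({\bf D}d\mathcal{P})(\partial_{s'}v;e_l)\,ds'$, whereas the paper first expands $\partial^k_x(d\mathcal{P}(e_l))$ pointwise as sums of products of $\nabla^i_x\partial_xv$ and $\partial^{j'}_xA_x$ and then invokes (\ref{xvb67}) together with Gagliardo--Nirenberg. Both routes use the odd-$d$ case splitting, but yours hides a circularity you do not address: when the fractional derivative lands on the $d\mathcal{P}(e_l)$ factor inside the integral, you are estimating $\|\partial^j_xd\mathcal{P}(e_l)(s)\|_{\dot H^{d/2}}$ by an integral over $s'>s$ of the very same norm. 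This can be closed by a bootstrap in $s$ (one iteration of your estimate upgrades the assumed rate to $s^{-(j-2)/2-d/2}$, which strictly beats the target $s^{-(j-1)/2-d/4}$ when $d\ge 3$), but that bootstrap must be set up explicitly. The paper's algebraic decomposition avoids the problem entirely because $\partial_x(d\mathcal{P}e_l)=\psi_x\,d\mathcal{P}(e)+A_x\,d\mathcal{P}(e)$ never forces one to differentiate $d\mathcal{P}(e_l)$ under the integral sign. The remainder of your proposal (backward parallel transport from $s=\infty$, the $A_s\equiv 0$ commutator derivation of (\ref{1poke}), and the use of integrability of $\|\partial_sv\|_{L^\infty_x}\lesssim s^{-1/2-d/4}\|dv_0\|_{L^2_x}$) matches the paper up to reordering, except that ``$e_l\to e_l^\infty$ is $x$-independent, forcing $\nabla_ie_p\to0$'' skips a step: $L^\infty$ convergence of $e_l$ does not automatically give $\partial_id\mathcal{P}(e_l)\to 0$; the paper supplies this through the $C^1$ convergence estimate (\ref{zxzijkb}).
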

\begin{proof}
The existence of caloric gauge follows by the standard line: (i) Take arbitrary $\{\tilde{e}_l\}^{m}_{l=1}$
as the initial data of (\ref{hebei1}); (ii) Suppose that the solution to (\ref{hebei1}) with initial data $\{\tilde{e}_l\}^{m}_{l=1}$ is $\{\widetilde{e}_l(s,x)\}^{m}_{l=1}$. Prove that $d\mathcal{P}\widetilde{e}_l(s,x)$ converges uniformly to some $d\mathcal{P}\widetilde{e}^{\infty}_l(x)$ as $s\to\infty $; (iii) Apply an $s$ independent gauge transformation $\Lambda(x)\in O(m)$ to $\{\widetilde{e}^{\infty}_l(x)\}$ such that $\Lambda(x)\widetilde{e}^{\infty}_l(x)=e^{\infty}_l$. Then $\{\Lambda(x)\widetilde{e}_l(s,x)\}^{m}_{l=1}$ is the desired caloric gauge satisfying (\ref{hebei1}), (\ref{hebei2}).

Therefore, to prove the existence of caloric gauge, it suffices to prove the convergence in Step ({ii}).
The uniqueness is standard by the boundary condition  (\ref{hebei2}).

For the simplicity of notations, we denote $\{e_l\}$ instead of $\{\widetilde{e}_l\}$.
By caloric condition $\nabla_s e_l=0$, one has
\begin{align*}
\partial_sd\mathcal{P}(e_{l})=({\bf D}d\mathcal{P})(\partial_sv; e_{l}).
\end{align*}
where ${\bf D}$ denotes the induced connection on the bundle $\mathcal{P}^*T\Bbb R^{m}$. Thus (\ref{081s}) shows
\begin{align*}
\|d\mathcal{P}(e_{l})(s_2)-d\mathcal{P}(e_{l})(s_1)\|_{L^{\infty}_x}&\lesssim \int^{s_2}_{s_1}\|\partial_s v\|_{L^{\infty}_x}ds\lesssim \|dv_0\|_{L^2_x}\int^{s_2}_{s_1}s^{-\frac{2+d}{4}}ds\\
&\lesssim s^{-\frac{d}{4}+\frac{1}{2}}_1,
\end{align*}
which shows $d\mathcal{P}(e_{l})$ converges uniformly in $\Bbb R^d$ as $s\to\infty$. Denote the limit of
$d\mathcal{P}(e_{l})$ by $\chi^{\infty}_l$. Hence, the convergence in Step ({ii}) has been verified. And
\begin{align}\label{zijkb}
\chi^{\infty}_l=\lim_{s\to\infty}d\mathcal{P}(e_{l})(s,x)=\lim_{s\to\infty}d\mathcal{P}(e^{\infty}_{l}(Q))
\end{align}
is constant in $x$. In the rest we prove (\ref{1poke})-(\ref{fcav}).

Since $\partial_sA_i=\mathcal{R}(\psi_s,\psi_i)$, one has for $s_2>s_1\ge 1$
\begin{align*}
&\|\partial^{j}_x\left(A_x(s_2)-A_x(s_1)\right)\|_{L^2_x}\\
&\lesssim \int^{s_2}_{s_1}\|\partial^{j}_x\mathcal{R}(\psi_s,\psi_x)\|_{L^2_x}ds\\
&\lesssim \sum^{j+1}_{z=1}\sum_{j_0+j_1+...+j_z=j}\int^{s_2}_{s_1}\|D^{j_0}_x\psi_s D^{j_1}_x\psi_x...D^{j_z}_x\psi_x\|_{L^2_x}ds\\
&\lesssim \|dv_0\|_{L^2_x}\sum^{j+1}_{z=1}\sum_{j_0+j_1+...+j_z=j}
\int^{s_2}_{s_1}s^{-\frac{j_0+1}{2}}s^{-\frac{j_1}{2}-\frac{d}{4}}
s^{-\frac{j_2}{2}}...s^{-\frac{j_z}{2}}ds\\
&\lesssim s^{-\frac{j}{2}-\frac{d}{4}+\frac{1}{2}}_1\|dv_0\|_{L^2_x}
\end{align*} where in the forth line we applied (\ref{09s}) and the bounds (\ref{x00}), (\ref{s00}).
Thus $A_x(s)$ converges in $H^{k}$ for all $0\le k\le L'-2$ as $s\to\infty$. Denote $A^{\infty}_x$ the limit of $\lim_{s\to\infty}A_x(s,x)$. Then we summarize that
\begin{align}\label{poke}
&\|\partial^{j}_x\left(A_x(s)-A^{\infty}_x\right)\|_{L^2_x}\lesssim s^{-\frac{j}{2}-\frac{d}{4}+\frac{1}{2}}\|dv_0\|_{L^2_x}
\end{align}
for $s\ge 1$.

And by the identity $|\partial_x\partial_sd\mathcal{P}(e_{l})(s_2)|\lesssim |\nabla_x\partial_s v|+ |\partial_s v||A_x|+|\partial_x v||\partial_x v|$, we obtain
\begin{align*}
\|\partial_x[d\mathcal{P}(e_{l})(s_2)-d\mathcal{P}(e_{l})(s_1)]\|_{L^{\infty}_x}&\lesssim  \|dv_0\|_{L^2_x} s^{-\frac{d}{4}+\frac{1}{2}}_1,
\end{align*}
for $s_2\ge s_1\ge 1$, where we applied the bound $\|A_x(s)\|_{L^{\infty}_x}\lesssim 1$ for $s\ge 1$  given by  (\ref{poke}). Thus combining this with (\ref{zijkb}), we see $ d\mathcal{P}(e_l)$ converges to the constant vector  $\chi_{l}$ in $C^1$ topology. Particularly, there holds
\begin{align}\label{zxzijkb}
\lim\limits_{s\to \infty} \partial_xd\mathcal{P}(e_l)=0,\mbox{ } \forall \mbox{ }l=1,...,m.
\end{align}

To prove (\ref{1poke}), it suffices to verify
\begin{align}\label{0k}
A^{\infty}_x=0.
\end{align}
And  (\ref{poke}) gives  (\ref{xvb67})
if we have shown $A^{\infty}_x=0$. Hence it only remains to check (\ref{0k}). By the identity
\begin{align*}
\partial_i d\mathcal{P}(e_l)=d\mathcal{P}(\nabla_i e_{l})+({\bf D}d\mathcal{P})(\partial_iv;  e_{l}),
\end{align*}
and the isometry of $d\mathcal{P}$, we see
\begin{align}\label{pozijk}
|\nabla_i e_{l}|\lesssim |\partial_i d\mathcal{P}(e_l)|+|\partial_iv|
\end{align}
By (\ref{zxzijkb}), $ |\partial_i d\mathcal{P}(e_l)|\to 0$ as $s\to\infty$. Meanwhile, $|\partial_iv|\to 0$ as $s\to\infty$ by Lemma \ref{j1}. Thus (\ref{pozijk}) shows
\begin{align*}
\lim_{s\to\infty}|\nabla_i e_{l}|=0.
\end{align*}
Thus $A^{\infty}_{x}=0$, and (\ref{1poke}) follows. And then (\ref{xvb67}) follows by (\ref{poke}).

Point-wisely one has for $k\ge 1$,
\begin{align*}
|\partial^{k}_x(d\mathcal{P}(e_{l}))|\lesssim \sum^{k}_{z,z'=1} \sum\limits_{\sum^{z}_{l=1}(i_l+1)+\sum^{z'}_{p=1}(j_p+1)=k}  |\nabla^{i_1}_x\partial_x v| ...|\nabla^{i_z}_x\partial_x v||\partial^{j_1}_xA_x|...|\partial^{j_{z'}}_{x}A_x|.
\end{align*}
Then (\ref{xvb67}) and the decay of $|\nabla^{i}_x\partial_x v|$ previously obtained give (\ref{fcav}). Notice that when $d$ is odd we use Gagliardo-Nirenberg  inequality  to prove (\ref{fcav}).
Now,  the whole proof is completed.
\end{proof}

\section{Proof of Proposition 1.1}

Let $L'$ be the given integer in Section 3.
Set $L=L'-5$.

We begin with a simple $L^{\infty}$ bound Lemma for connections and frames.
\begin{Lemma}
For caloric gauge in Lemma \ref{QQ}, the connection coefficients  $A_x$ and frames $\{e_{l}\}$ satisfy
\begin{align}
\|\partial^{j}_xA_{x}\|_{L^{\infty}_x}&\lesssim_{L} \epsilon_1  s^{-\frac{j+1}{2}}\label{X2}\\
\|\partial^{j}_x(d\mathcal{P}(e)-\chi^{\infty})\|_{L^{\infty}_x}&\lesssim \epsilon_1 s^{-\frac{j}{2}}\label{XX2}
\end{align}
for any $s>0$, $0\le j\le L$
\end{Lemma}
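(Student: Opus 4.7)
The plan is to prove (\ref{X2}) and (\ref{XX2}) simultaneously by induction on $j$, exploiting the integral representation (\ref{1poke}) for the connection together with the spatial identity
\begin{align*}
\partial_i d\mathcal{P}(e_l) = (\mathbf{D}d\mathcal{P})(\partial_i v;\, e_l) + [A_i]^m_l\, d\mathcal{P}(e_m)
\end{align*}
for the frame. The two bounds are coupled: differentiating the curvature expression $\mathcal{R}(\psi_s,\psi_x)$ in $x$ produces factors of $\partial_x d\mathcal{P}(e)$, controlled by (\ref{XX2}); and differentiating $d\mathcal{P}(e_l)$ via the identity above produces factors of $A_x$, controlled by (\ref{X2}). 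The natural induction order is therefore to establish (\ref{X2}) and (\ref{XX2}) at level $j=0$ first, then alternately (\ref{XX2}) and (\ref{X2}) at each level $j\ge 1$.

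For the base case, (\ref{X2}) with $j=0$ follows by inserting the pointwise bounds $|\partial_s v|\lesssim |\partial^2_x v| + |\partial_x v|^2 \lesssim \epsilon_1 s^{-1}$ (from $\partial_s v = \tau(v)$ together with (\ref{1.8a})) and $|\partial_i v|\lesssim \epsilon_1 s^{-1/2}$ into (\ref{1poke}) and integrating the resulting $\epsilon_1^2 (s')^{-3/2}$ on $[s,\infty)$. For (\ref{XX2}) with $j=0$, I would integrate $\partial_s d\mathcal{P}(e_l)=(\mathbf{D}d\mathcal{P})(\partial_s v;\,e_l)$ from $s$ to $\infty$; since the purely critical bound $\|\partial_s v\|_{L^\infty}\lesssim\epsilon_1 s^{-1}$ is not integrable, one invokes the subcritical decay (\ref{081s}) from Section 3.2, giving $\|\partial_s v\|_{L^\infty}\lesssim \|dv_0\|_{L^2}s^{-(d+2)/4}$, which is integrable for $d\ge 3$.

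For the inductive step at level $j\ge 1$, I would first prove (\ref{XX2}) by differentiating the spatial identity:
\begin{align*}
\partial^j_x(d\mathcal{P}(e_l)-\chi^\infty_l) = \partial^{j-1}_x\bigl[(\mathbf{D}d\mathcal{P})(\partial_x v;\,e_l)\bigr] + \partial^{j-1}_x\bigl[[A_x]^m_l\, d\mathcal{P}(e_m)\bigr].
\end{align*}
Leibnitz expands each term into a finite sum of products; combining the induction hypotheses with (\ref{1.8a}) and (\ref{70}), a typical term with $a$ derivatives on $A_x$ and $b$ derivatives on $d\mathcal{P}(e_m)$ ($a+b=j-1$) is bounded by $\epsilon_1 s^{-(a+1)/2}\cdot s^{-b/2}\lesssim \epsilon_1 s^{-j/2}$. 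To prove (\ref{X2}) at level $j$, I would then differentiate (\ref{1poke}) under the integral; writing $\mathcal{R}(\psi_s,\psi_i)$ in the embedding as $\langle\mathbf{R}(v)(\partial_s v,\partial_i v)\,d\mathcal{P}(e_p),\,d\mathcal{P}(e_q)\rangle$ and applying Leibnitz, the chain-rule bound $|\partial^{a_0}_x\mathbf{R}(v)|\lesssim 1+\epsilon_1 s^{-a_0/2}$, the pointwise decay (\ref{79}) and (\ref{70}) for $\partial^{a_1}_x\partial_s v$ and $\partial^{a_2}_x\partial_i v$, and the just-established (\ref{XX2}) at level $\le j$ for $\partial^{a_3}_x d\mathcal{P}(e)$, yield pointwise integrand $\lesssim\epsilon_1 (s')^{-(j+3)/2}$; integration in $s'$ then gives $\epsilon_1 s^{-(j+1)/2}$.

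The main obstacle is the base case $j=0$ of (\ref{XX2}), where critical-norm bounds on $\partial_s v$ alone fail to be integrable in $s$ and one must inject the subcritical decay rate from Lemma 3.4 (or exploit the compactness of $\mathcal{M}$ to reduce to a trivial a priori bound that can be bootstrapped). Once the base is in place, the inductive step is routine Leibnitz bookkeeping coupled to the pointwise decay estimates already proven in Section 3.
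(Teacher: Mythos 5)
Your proposal is correct in substance for $j\ge 1$ and captures the right ingredients, but it takes a genuinely different and more circuitous route than the paper. The paper does not run a coupled induction alternating between (\ref{X2}) and (\ref{XX2}). Instead it proves (\ref{X2}) for all $j$ in one pass: $\partial^j_x\mathcal{R}(\psi_s,\psi_x)$ is expanded schematically into products of gauge-covariant derivatives $D^{j_0}_x\psi_s\,D^{j_1}_x\psi_x\cdots D^{j_z}_x\psi_x$ with $j_0+\cdots+j_z=j$ (precisely the expansion already exercised in the proof of (\ref{xvb67}) in Lemma \ref{QQ}), and then the $L^\infty$ bounds (\ref{s00})--(\ref{x00}) are inserted, so the integrand is $\lesssim\epsilon^2(s')^{-(j+3)/2}$ and integration gives $\epsilon^2 s^{-(j+1)/2}$ at once. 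Because $D^j_x\psi_x$ and $D^j_x\psi_s$ are gauge-independent and controlled a priori (their $L^\infty$ decay in Lemma \ref{QQ} comes from the Bochner estimates, with no reference to $A_x$ or the frame), no bootstrap or induction in $j$ is needed; (\ref{XX2}) for $j\ge1$ then follows from the schematic expansion (\ref{huasi})/(\ref{KU}) together with (\ref{X2}) and (\ref{x00}). Your version unrolls $D^j_x$ into $\partial_x$ plus $A_x$ factors, which re-creates a dependence on (\ref{X2}) and forces the alternating induction; your bookkeeping is correct, but the paper's covariant phrasing short-circuits the whole recursion.

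Regarding the $j=0$ endpoint of (\ref{XX2}): you correctly flag that $\|\partial_s v\|_{L^\infty}\lesssim\epsilon_1 s^{-1}$ is non-integrable, but the repair you suggest is not quite sufficient. Inserting the subcritical rate (\ref{081s}) yields only $\|d\mathcal{P}(e)-\chi^\infty\|_{L^\infty}\lesssim\|dv_0\|_{L^2}\,s^{1/2-d/4}$, which is not $\lesssim\epsilon_1$ uniformly as $s\to0$, and compactness of $\mathcal{M}$ gives an $O(1)$, not $O(\epsilon_1)$, bound. This is harmless for the rest of your scheme, however: whenever zero derivatives land on $d\mathcal{P}(e)$ in your inductive step you only need $|d\mathcal{P}(e)|\le 1$, so the $j\ge1$ cases of both (\ref{X2}) and (\ref{XX2}) --- which are what the downstream estimates in Lemmas \ref{j2}--\ref{kx1} actually use --- go through exactly as you describe.
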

\begin{proof}
By (\ref{x00})-(\ref{s00}), (\ref{X2}) follows by direct calculations as (\ref{xvb67}). Then (\ref{XX2}) follows from (\ref{X2}) and (\ref{x00}).
\end{proof}

\subsection{Setting of Bootstrap}
Let $s_*\ge 0$ be the smallest time such that for all $s_*\le s<\infty$, $0\le j\le L$, $0\le j'\le L$, there holds
\begin{align}
 s^{\frac{j}{2}}\|\partial^{j'}_xA_{x}\|_{\dot{H}^{\frac{d}{2}-1}_x}&\lesssim  \epsilon_1 \label{X1}\\
s^{\frac{j}{2}}\|\partial^{j}_x(d\mathcal{P}e_{l}-\chi^{\infty}_l)\|_{\dot{H}^{\frac{d}{2}}_x}&\lesssim 1 \label{X3}.
\end{align}
By (\ref{xvb67}), (\ref{fcav}), for $s$ sufficiently large depending on $\|dv_0\|_{L^2_x}$, (\ref{X1})-(\ref{X3}) hold.  Our aim is to prove $s_*=0$.

First, we improve the bounds for frames.
\begin{Lemma}\label{j2}
Let $v: [0,\infty)\times \Bbb R^d\to \mathcal{M}$ be heat flow with initial data $v_0\in \mathcal{Q}(\Bbb R^d,\mathcal{M})$. Assume also that (\ref{77}) holds.
Let $\{{e}_{l}\}^{m}_{l=1}$ be the corresponding caloric gauge with given limit $\{e^{\infty}_{l}\}^{m}_{l=1}$.  Recall that the isometric embedding $ \mathcal{M}\hookrightarrow \Bbb R^{M}$ is denoted by $\mathcal{P}$ and $\mathop {\lim }\limits_{s \to \infty } (d{\mathcal{P}})({e_p})=\chi^{\infty}_{l}$. Then if (\ref{X1})-(\ref{X3}) hold in $s\in [s_*,\infty)$, one has the improved bound
\begin{align}\label{3.4}
\|\partial^{j}_x\left((d\mathcal{P} e_{l})-\chi^{\infty}_{l}\right)\|_{\dot{H}^{\frac{d}{2}}_x}\lesssim_{j} \epsilon_1s^{-\frac{j}{2}},
\end{align}
for all $s\in[s_*,\infty)$, $0\le j\le L$,
\end{Lemma}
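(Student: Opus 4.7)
The strategy is to exploit the caloric gauge identity $\partial_s d\mathcal{P}(e_l) = ({\bf D}d\mathcal{P})(\partial_s v; e_l)$ (which follows from $\nabla_s e_l = 0$) together with the boundary condition $\lim_{s \to \infty} d\mathcal{P}(e_l) = \chi^\infty_l$ established in Lemma \ref{QQ}, producing the Duhamel representation
\[
d\mathcal{P}(e_l)(s) - \chi^\infty_l = -\int_s^\infty ({\bf D}d\mathcal{P})(\partial_{s'} v; e_l)(s')\, ds'.
\]
The plan is to apply $\partial^j_x$, take the $\dot{H}^{d/2}_x$ norm, distribute derivatives by fractional Leibniz (Kato-Ponce), and extract smallness using Lemma \ref{j1} and the bootstrap assumptions (\ref{X1}) on $A_x$ and (\ref{X3}) on the frames.

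For $1 \le j \le L$ I would work directly with the integral representation, bringing the norm inside via Minkowski. Writing $F := {\bf D}d\mathcal{P}$, the two configurations to track are (i) all $j$ spatial derivatives on $\partial_{s'} v$, yielding an integrand bounded by $\|\partial^j_x \partial_s v\|_{\dot{H}^{d/2}} \lesssim \epsilon_1 s'^{-(j+2)/2}$ via Lemma \ref{j1} applied to $\partial_s v = \Delta v + S(v)(dv,dv)$, and (ii) all derivatives on $d\mathcal{P}(e_l)$, controlled by $\|\partial_s v\|_{L^\infty} \|\partial^j_x d\mathcal{P}(e_l)\|_{\dot{H}^{d/2}} \lesssim \epsilon_1 s'^{-1} \cdot s'^{-j/2}$ using Lemma \ref{j1} and (\ref{X3}). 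Kato-Ponce covers the intermediate Leibniz configurations. Since $j \ge 1$, both model integrands $s'^{-(j+2)/2}$ and $s'^{-1-j/2}$ are integrable from $s$ to $\infty$ and yield the target bound $\lesssim \epsilon_1 s^{-j/2}$.

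The case $j = 0$ is the real obstacle, since each integrand above becomes $\epsilon_1 s'^{-1}$, which diverges at infinity. To sidestep this I would abandon the $s$-integral and argue spatially, using
\[
\partial_i d\mathcal{P}(e_l) = (A_i)^p_l\, d\mathcal{P}(e_p) + ({\bf D}d\mathcal{P})(\partial_i v; e_l)
\]
together with the norm equivalence $\|d\mathcal{P}(e_l) - \chi^\infty_l\|_{\dot{H}^{d/2}} \sim \|\partial_x d\mathcal{P}(e_l)\|_{\dot{H}^{d/2-1}}$ (valid since $\chi^\infty_l$ is constant). Splitting $d\mathcal{P}(e_p) = \chi^\infty_p + \widetilde{e}_p$, the constant piece contributes $|\chi^\infty_p| \|A_x\|_{\dot{H}^{d/2-1}} \lesssim \epsilon_1$ directly from (\ref{X1}); the remaining products are handled by the Kato-Ponce variant
\[
\|fg\|_{\dot{H}^{d/2-1}} \lesssim \|f\|_{L^d} \|g\|_{\dot{W}^{d/2-1, \frac{2d}{d-2}}} + \|g\|_{L^\infty}\|f\|_{\dot{H}^{d/2-1}},
\]
combined with the Sobolev embeddings $\dot{H}^{d/2-1} \hookrightarrow L^d$ and $\dot{H}^{d/2} \hookrightarrow \dot{W}^{d/2-1, \frac{2d}{d-2}}$ and the unit-length bound $\|\widetilde{e}_p\|_{L^\infty} \lesssim 1$. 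The smallness $\epsilon_1$ comes from $\|A_x\|_{\dot{H}^{d/2-1}}$ in one term and from $\|\partial_x v\|_{\dot{H}^{d/2-1}} \lesssim \|v\|_{\dot{H}^{d/2}} \lesssim \epsilon_1$ in the companion estimate for $({\bf D}d\mathcal{P})(\partial_i v; e_l)$.

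The main technical difficulty I expect is controlling the compositions $F(v) = {\bf D}d\mathcal{P}(v)$ in the fractional Sobolev spaces $\dot{H}^{d/2}$ and $\dot{H}^{d/2-1}$ when $d$ is odd: the naive chain rule does not suffice, and the Faa-di-Bruno expansion must be estimated term by term with a case analysis on whether any single spatial derivative order of $v$ exceeds $d/2$. As in Step~1 of the proof of Lemma \ref{j1}, I would rely on the difference characterization of the relevant Besov norm together with parallel transport along geodesics on $\mathcal{M}$ so that values of $F(v)$ at nearby base points can be compared in a bundle-invariant way. Once these composition bounds are in place, the bootstrap improvement $\lesssim \epsilon_1 s^{-j/2}$ holds uniformly for all $0 \le j \le L$ and all $s \in [s_*,\infty)$, which is precisely what is needed to eventually force $s_* = 0$.
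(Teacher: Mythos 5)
Your route is sound and genuinely different from the paper's. The paper's proof of Lemma~\ref{j2} works entirely at fixed $s$: it expands $\partial_x^{\alpha}(d\mathcal{P}(e_p)-\chi^\infty_p)$ via the iterated chain rule (\ref{huasi}) into terms carrying covariant derivatives $\nabla^a_x\partial_xv$ and $\nabla^a_xe_p$, then estimates the residual $\dot H^{1/2}$ norm through the Besov difference characterization combined with geodesic parallel transport --- the $\widetilde V$ machinery of (\ref{87gt})--(\ref{NH}) --- parallel transport being essential there precisely because bundle-valued objects like $\nabla^a_xe_p$ must be compared at $x$ and $x+h$. You instead integrate along the heat flow: the Duhamel identity $d\mathcal{P}(e_l)-\chi^\infty_l=-\int_s^\infty({\bf D}d\mathcal{P})(\partial_{s'}v;e_l)\,ds'$ leaves only ambient objects $\partial_x^j d\mathcal{P}(e_l)$, $\partial_x^j\partial_{s'}v$, and Nemytskii compositions $({\bf D}d\mathcal{P})(v)$, which are controlled by the bootstrap (\ref{X3}), Lemma~\ref{j1}, and the composition estimate of Lemma~\ref{j1} Step~1 respectively, using only standard Kato--Ponce. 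In fact, because no covariant derivatives of $e_l$ ever appear in your representation, parallel transport is not actually needed for this lemma --- so the sentence invoking it is superfluous here (it becomes unavoidable only later, in Lemma~\ref{kx1}, where (\ref{huakai1})--(\ref{huakai2}) force $\nabla^a_xe_p$-type terms into the picture). Your $j=0$ workaround via $\|d\mathcal{P}(e_l)-\chi^\infty\|_{\dot H^{d/2}}=\|\partial_xd\mathcal{P}(e_l)\|_{\dot H^{d/2-1}}$, the identity $\partial_id\mathcal{P}(e_l)=A_id\mathcal{P}(e)+({\bf D}d\mathcal{P})(\partial_iv;e_l)$, and (\ref{X1}) is correct, and amounts to the $\alpha=1$ case of (\ref{huasi}) with Kato--Ponce replacing the explicit $d_0$-fold expansion. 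The one point a careful reader should press is the claim that Kato--Ponce ``covers the intermediate Leibniz configurations'' for $j\ge1$: the tight case is $j=1$, where you must confirm every configuration carries both an $\epsilon_1$ factor and an integrable $s'$-rate; this works out because whenever $\partial_{s'}v$ receives no derivatives you may place it in $L^\infty$ (giving $\epsilon_1 s'^{-1}$) while the remaining spatial or fractional derivative falls on $({\bf D}d\mathcal{P})(v)$ or $d\mathcal{P}(e_l)$ and supplies the extra $s'^{-1/2}$ via (\ref{1.8a}), (\ref{XX2}), or (\ref{X3}).
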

\begin{proof}
As before, the case when $d$ is odd requires more efforts. From now on assume that $d=2d_0+1$ with $d_0\in\Bbb N_+$. Denote the connection on the bundle $\mathcal{P}^*T\Bbb R^{m}$ by ${\mathbf{D}}$. Denote the induced covariant derivatives on the bundle $v^{*}(\mathcal{P}^*T\Bbb R^{m})$ by $\{{\mathbb D}_{i}\}^{d}_{i=1}$. Then direct calculations show
\begin{align*}
\partial_{x_i}\left((d\mathcal{P} e_{p})-\chi^{\infty}_{p}\right)&=({\mathbb D}_{i}(d\mathcal{P}))(e_p)+d\mathcal{P}(\nabla_{i}e_{p})\\
\partial^2_{x_ix_j}\left((d\mathcal{P} e_{p})-\chi^{\infty}_{p}\right)&= \left({\bf D}^2(d\mathcal{P})\right)(\partial_{i}v, \partial_{j}v;e_p)+\left({\bf D}(d\mathcal{P})\right)(\nabla_j \partial_i v;e_p) \\
&+\left({\bf D}(d\mathcal{P})\right)(\partial_i v;\nabla_{j}e_{p})+\left({\bf D}(d\mathcal{P})\right)(\partial_j v;\nabla_{i}e_{p})+d\mathcal{P}(\nabla_j\nabla_i e_p).
\end{align*}
And schematically we write
\begin{align}\label{huasi}
\partial^{\alpha}_{x}\left((d\mathcal{P} e_{p})-\chi^{\infty}_{p}\right)=\sum^{\alpha}_{k=0}\sum_{a_0+\sum^{k}_{l=1}(a_l+1)=\alpha}\left({\bf D}^{k}(d\mathcal{P})\right)(\nabla^{a_1}_x\partial_{x}v,...,\nabla^{a_k}_x\partial_{x}v;\nabla^{a_0}_xe_p)
\end{align}
In order to estimate the $\dot{H}^{\frac{1}{2}}$ norm, it is convenient to use the difference characterization of $\dot{H}^{\frac{1}{2}}$ and  the geodesic parallel transport. Given $h\in\Bbb R^+$, for fixed $(s,x)\in[0,\infty)\times \Bbb R^d$, let $\gamma(\zeta)$ be the shortest geodesic connecting $v(s,x+h)$ and $v(s,x)$. There may exist more than one shortest geodesic, it suffices to pick up one of them.  Suppose that $\zeta$ is normalized to be the arclength parameter. For any given vector field $V$ on $v^*\mathcal{M}$, denote the parallel transport of $V$ along $\gamma(\zeta)$ by $\widetilde{V}(\gamma(\zeta))$, i.e.
\begin{align}\label{87gt}
\left\{
  \begin{array}{ll}
    \nabla_{\dot{\gamma}(\zeta)}\widetilde{V}(\gamma(\zeta))=0,& \hbox{ } \\
    \widetilde{V}\upharpoonright_{\zeta=0}=V(\gamma(0)), & \hbox{ }
  \end{array}
\right.
\end{align}
for $\zeta\in[0,{\rm dist}(v(s,x),v(s,x+h))]$. Since $\mathcal{P}$ is an isometric embedding, we see $ {\rm dist}(v(s,x),v(s,x+h))=|v(s,x)-v(s,x+h)|$. Introduce the difference operator
\begin{align}
\triangle_{h}f=f(x+h)-f(x).
\end{align}
Denote
\begin{align*}
I_1&=\left({\bf D}^{k}(d\mathcal{P})(v(s,x))\right)(\nabla^{a_1}_x\partial_{x}v,...,\nabla^{a_k}_x\partial_{x}v;\nabla^{a_0}_xe_p)\\
I_2&=\left({\bf D}^{k}(d\mathcal{P})(v(s,x+h))\right)(\widetilde{\nabla^{a_1}_x\partial_{x}v},
...,\widetilde{\nabla^{a_k}_x\partial_{x}v};\widetilde{\nabla^{a_0}_xe_p})
\end{align*}
Then (\ref{87gt}) gives (Recall that $I_1,I_2$ now take values in $\Bbb R^M$)
\begin{align*}
I_2-I_1&=\int^{|\triangle_{h}v(s)|}_0\partial_{\zeta}\left[\left({\bf D}^{k}(d\mathcal{P})(\gamma(\zeta))\right)(\widetilde{\nabla^{a_1}_x\partial_{i_1}v},...,\widetilde{\nabla^{a_k}_x\partial_{i_k}v}
;\widetilde{\nabla^{a_0}_xe_p})\right]d\zeta\\
&=\int^{|\triangle_{h}v(s)|}_0\left({\bf D}^{k+1}(d\mathcal{P})(\gamma(\zeta))\right)(\widetilde{\nabla^{a_1}_x\partial_{x}v},...,\widetilde{\nabla^{a_k}_x\partial_{x}v}
,\dot\gamma;\widetilde{\nabla^{a_0}_xe_p})d\zeta.
\end{align*}
Hence, we have point-wisely that
\begin{align*}
|I_1-I_2|&=|\triangle_{h}v(s)| \sup_{y\in\gamma}
\left|\widetilde{\nabla^{a_1}_x\partial_{x}v}\right|...\left|\widetilde{\nabla^{a_k}_x\partial_{x}v}
\right|\left|\widetilde{\nabla^{a_0}_xe_p}\right|(y).
\end{align*}
By (\ref{87gt}), we observe that $|\widetilde{V}(\gamma(\zeta))|=|{V}(\gamma(0))|$. Thus we arrive at
\begin{align}\label{oiu07}
|I_1-I_2|&\le |\triangle_{h}v(s)| \left|{\nabla^{a_1}_x\partial_{x}v}\right(x)|...\left|{\nabla^{a_k}_x\partial_{x}v}(x)
\right|\left|{\nabla^{a_0}_xe_p}(x)\right|.
\end{align}
Then, by (\ref{p2}), (\ref{oiu07}) gives
\begin{align*}
|I_1-I_2|&\lesssim |\triangle_{h}v(s)||{\nabla^{a_0}_xe_p}|\Pi_{1\le l\le k}\left(\sum^{a_l+1}_{j=1}\sum _{\sum^{j}_{i=1}|\beta_i|=a_l+1}|\partial^{\beta_1}_xv|...|\partial^{\beta_j}_xv|\right).
\end{align*}
Denote
\begin{align*}
I_3&=\left({\bf D}^{k}(d\mathcal{P})(v(s,x+h))\right)({\nabla^{a_1}_x\partial_{x}v},...,
{\nabla^{a_k}_x\partial_{x}v};{\nabla^{a_0}_xe_p})
\end{align*}
Then it is easy to see
\begin{align*}
\left|I_3-I_2\right|&\lesssim \left({\bf D}^{k}(d\mathcal{P})(v(s,x+h))\right)({\nabla^{a_1}_x\partial_{x}v}-\widetilde{{\nabla^{a_1}_x\partial_{x}v}},...,{\nabla^{a_k}_x\partial_{i_k}v};{\nabla^{a_0}_xe_p})
+...\\
&+\left({\bf D}^{k}(d\mathcal{P})(v(s,x+h))\right)({\nabla^{a_1}_x\partial_{x}v},...,
{\nabla^{a_k}_x\partial_{x}v}-\widetilde{{\nabla^{a_k}_x\partial_{x}v}};{\nabla^{a_0}_xe_p})\\
&+\left({\bf D}^{k}(d\mathcal{P})(v(s,x+h))\right)({\nabla^{a_1}_x\partial_{x}v},...,
{\nabla^{a_k}_x\partial_{x}v};{\nabla^{a_0}_xe_p}-\widetilde{{\nabla^{a_0}_xe_{p}}})
\end{align*}
To estimate this formula, we firstly estimate $|\widetilde{V}-V|$ for $V=\nabla^{k}_x\partial_xv$, $\nabla^{j}_x e_p$.  It is convenient to bound $|d\mathcal{P}(\widetilde{V})-d\mathcal{P}(V)|$ instead, because the later takes value in $\Bbb R^{M}$ and equals the former due to the isometric embedding.

{\bf Step 2.} Before bounding $|d\mathcal{P}(\widetilde{V}-V)|$ for $V=\nabla^{k}_x\partial_xv,\nabla^{j}_x e_p$, we use the extrinsic quantities $\{\partial^{j}_xv\}$ to express the intrinsic ones $\nabla^{k}_x\partial_xv$. It is easy to check (we adopt the same notation $v$ to denote both $\mathcal{P}\circ v$ and the map $v$ itself without confusion)
\begin{align}
d\mathcal{P}(\partial_i v)&=\partial_i v;\nonumber\\
d\mathcal{P}(\nabla_j\partial_i v)&=\partial_j [d\mathcal{P}(\partial_i v)]-({\bf D}d\mathcal{P})(\partial_jv;\partial_i v)\nonumber\\
&=\partial^2_{ij}v-({\bf D}d\mathcal{P})(\partial_jv;\partial_i v).\label{ok}\\
&....\nonumber
\end{align}
In Step 1, we have seen that using parallel transport
\begin{align}
&\left|({\bf D}^{k}d\mathcal{P})(V_1,...,V_k;V_0)(x+h)-({\bf D}^{k}d\mathcal{P})(V_1,...,V_k;V_0)(x)\right|\label{2gfcv}\\
&\le\sum^{k}_{i=1}\left|({\bf D}^{k}d\mathcal{P})(\widetilde{{...}},\widetilde{V_{i-1}},V_i-
\widetilde{V_i},\widetilde{V}_{i+1},...;\widetilde{V}_0)(x+h)\right|\label{3.11b}\\
&+
\left|({\bf D}^{k}d\mathcal{P})(\widetilde{V_1},...,
\widetilde{V_k};V_0-\widetilde{V_0})(x+h)\right|\label{3.11a}\\
&+\left|({\bf D}^{k}d\mathcal{P})(V_1,...,V_k;V_0)(x)-({\bf D}^{k}d\mathcal{P})(\widetilde{V_1},...,\widetilde{V_k};\widetilde{V_0})(x+h)\right|.\label{4gfcv}
\end{align}
Moreover, (\ref{4gfcv}) is dominated by
\begin{align}\label{h6}
\left(\sum^{k}_{i=0}\max_{y\in\{x,x+h\}}|{V_i}(y)|\right){\triangle_h}{v(s)}.
\end{align}
We also recall the inequality
\begin{align}\label{h7}
\left|d\mathcal{P}V(x)-d\mathcal{P}\widetilde{V}(x+h)\right|\lesssim \left(\max_{y\in\{x,x+h\}}|{V}(y)|\right)\triangle_{h}v(s).
\end{align}
Since $\mathcal{P}$ is isometric, (\ref{h7}) further yields
\begin{align}
&\left|V(x+h)-\widetilde{V}(x+h)\right|\lesssim \left(\max_{y\in\{x,x+h\}}|{V}(y)|\right)\triangle_{h}v(s)+\left|\triangle_{h}d\mathcal{P}V\right|.\label{h8}
\end{align}
Thus one has by (\ref{h6}), (\ref{h8}), (\ref{2gfcv}) that
\begin{align}
&\left|({\bf D}^{k}d\mathcal{P})(V_1,...,V_k;V_0)(x+h)-({\bf D}^{k}d\mathcal{P})(V_1,...,V_k;V_0)(x)\right|\nonumber\\
&=\sum^{k}_{i=0}\left(\prod^{k}_{l=0,l\neq i}\max_{y\in\{x,x+h\}}|{V_{l}(y)}|\right)\left[|\triangle_{h}v(s)|\max_{y\in\{x,x+h\}}|V_i(y)|+
\left|\triangle_{h}d\mathcal{P}V_i\right|\right]\nonumber\\
&+\left(\prod^{k}_{i=0}\max_{y\in\{x,x+h\}}|{V_i}(y)|\right)|\triangle_{h}v(s)|.\label{NH}
\end{align}
Therefore, applying (\ref{NH}) to (\ref{ok}) yields
\begin{align*}
&\left|({\bf D}d\mathcal{P})(\partial_jv;\partial_i v)(x+h)-({\bf D}d\mathcal{P})({\partial_jv;\partial_i v})(x)\right|\\
&\lesssim |\triangle_{h}v(s)|C^2_{ij}+\left|\partial_jv-\widetilde{\partial_jv}\right||\partial_i v|(x+h)+\left|\partial_iv-\widetilde{\partial_iv}\right||\partial_jv|(x+h)\\
&\lesssim C^2_{ij}|\triangle_{h}v|+\left(D_{ij}+C_{ij}|\triangle_{h}v|\right)C_{ij}
\end{align*}
where we denote
\begin{align*}
&C_{ij}:=\max_{y\in\{x,x+h\}}|\partial_jv(y)|+\max_{y\in\{x,x+h\}}|\partial_iv(y)|\\
&D_{ij}:=\left|\triangle_{h}\partial_jv\right|
+\left|\triangle_{h}\partial_iv\right|.
\end{align*}
We conclude for the second order intrinsic derivatives $\nabla_x\partial_x v$ that
\begin{align*}
&\left|\triangle_{h}d\mathcal{P}(\nabla_j\partial_i v)\right|\lesssim \triangle_{h}\partial^2_{ij}v+C^{2}_{ij}|\triangle_{h}(v)|+|\triangle_{h}(\partial_xv)|C_{ij}.
\end{align*}
By induction, we summarize that
\begin{align}\label{zzx1}
\left| {{\triangle_h}d\mathcal{P}(\nabla _x^k{\partial _x}v)} \right| \lesssim \mathop \sum \limits_{p = 0}^k \sum\limits_{l + \sum\limits_{\mu  = 1}^p {{j_\mu }} (1+{i_\mu })= k + 1,{j_\mu },{i_\mu },l \in\Bbb N} {C_{({i_1})}^{{j_1}}...C_{({i_p})}^{{j_p}}{\triangle_h}\partial _x^lv}.
\end{align}
where we adopt the notation
\begin{align*}
&C_{(i)}:=\sum_{i}\max_{y\in\{x,x+h\}}|\nabla^{i}_x\partial_{x}v(y)|, \mbox{ }{\rm{if}}\mbox{  }i\ge 0.
\end{align*}
Thus by (\ref{h7}), (\ref{h8}), (\ref{zzx1}), the RHS of  (\ref{3.11b}), (\ref{4gfcv})  is dominated by
\begin{align}
 \mathop \sum \limits_{p = 0}^k \sum\limits_{l + \sum\limits_{\mu  = 1}^p {{j_\mu }} (1+{i_\mu } )= k + 1,{j_\mu },{i_\mu },l \in N} {C_{({i_1})}^{{j_1}}...C_{({i_p})}^{{j_p}}{\triangle_h}\partial _x^lv}+\triangle_{h}(v)C_{(k)}.\label{z1}
\end{align}
We now turn to estimate
$\triangle_{h}d\mathcal{P}(\nabla _x^k e_{p})$. Different from the above, we express $\nabla _x^k e_{p}$ by connection coefficients $\{\partial^j_xA_i\}$ rather than by extrinsic quantities $\{\partial^{i}_xd\mathcal{P}e_{p}\}$.
Schematically, we write
\begin{align}
d\mathcal{P}(\nabla_x e)&= A_xd\mathcal{P}(e);\nonumber\\
d\mathcal{P}(\nabla^{\alpha}_xe)&
=\mathop \prod \limits_{_{\sum\limits_l {{j_l}} ({i_l} + 1) =\alpha,{i_l},{j_l} \in \mathbb{N}}}
(\partial^{i_l}A_x)^{j_l}d\mathcal{P}(e).\label{KU}
\end{align}
Then, we deduce that
\begin{align}
&\left|\triangle_{h}d\mathcal{P}(\nabla^k_x e)\right|\lesssim  \sum^{k}_{q=1} \mbox{ } \sum_{\sum^{q}_{\mu=1} j_{\mu}(i_{\mu}+1)=k, i_{\mu},j_{\mu}\in\Bbb N}D^{j_{1}}_{(i_{1})}...
D^{j_{q}}_{(i_{q})}|\triangle_{h} d\mathcal{P}(e)|\nonumber\\
&+\sum_{1\le b\le z\le k} \mbox{ } \sum_{\sum^{z}_{\nu=1} (1+n_{\nu})m_{\nu}=k-1-m_b,n_{\mu},m_{\mu}\in\Bbb N} D^{n_{1}}_{(m_{1})}...D^{n_{z}}_{(m_{z})}\left|\triangle_{h} \partial^{m_b}_xA_x\right|,\label{y1}
\end{align}
where
\begin{align*}
&D_{(j)}:=\sum^{d}_{l=1}\sum_{|\alpha|=j}\max_{y\in\{x,x+h\}}|\partial^{\alpha}_x A_l(y)|, \mbox{ }{\rm{for}}\mbox{  }j\ge 0.
\end{align*}

{\bf Step 3.} We bound $\|C_{(i)}\|_{L^p}$ and $\|D_{(i)}\|_{L^p}$  in this step.
(\ref{70}) and (\ref{270}) show for all $0\le k\le L-1$, $0\le j\le [\frac{d}{2}-1]$
\begin{align}
\|C_{(k)}\|_{L^{\infty}_x}&\lesssim \epsilon_1s^{-\frac{k+1}{2}}\label{Lp1}\\
\|C_{(j)}\|_{L^{d/(j+1)}_x}&\lesssim \epsilon_1.\label{Lp2}
\end{align}
Meanwhile, (\ref{X2}) and (\ref{X1}) show  for any   all $0\le k\le L-1$, $0\le j\le [\frac{d}{2}-1]$
\begin{align}
\|D_{(k)}\|_{L^{\infty}_x}&\lesssim \epsilon_1s^{-\frac{k+1}{2}}\label{LpA1}\\
\|D_{(j)}\|_{L^{d/(j+1)}_x}&\lesssim \epsilon_1.\label{LpA2}
\end{align}

{\bf Step 4.}
Recall $d=2d_0+1$. Inserting the bounds  (\ref{y1}) and (\ref{z1}) to (\ref{NH}) with $V_i=\nabla^{a_i}_x\partial_x v$, $i=1,...,k$, $V_0= \nabla^{a_0}_xe$, we obtain by (\ref{huasi}) that
\begin{align}
&\left|\triangle_{h}\partial^{k}_x\left((d\mathcal{P} e)-\chi^{\infty}\right)\right|\nonumber\\
&\lesssim
\sum^{k}_{l=1} \mbox{ } \sum_{\Omega_1}C^{j_{1}}_{(i_{1})}...
C^{j_{l}}_{(i_{l})}D^{j'_{1}}_{(i'_{1})}...
D^{j'_{l'}}_{(i'_{l'})}|\triangle_{h} d\mathcal{P}(e_p)|\label{1ahuasi}\\
&+\sum^{k}_{1\le b\le z}  {\sum_{\Omega_2}} C^{n_{1}}_{(m_{1})}...C^{n_{z}}_{(m_{z})}D^{n'_{1}}_{(m'_{1})}...D^{n'_{z'}}_{(m'_{z'})}\left|\triangle_{h} \partial^{m_b}_xA_x\right|
\label{2ahuasi}\\
&+\sum_{1\le b\le z\le k} \mbox{ } \sum_{\Omega_3} C^{p_{1}}_{(q_{1})}...C^{p_{z}}_{(q_{z})}D^{p'_{1}}_{(q'_{1})}...D^{p'_{z'}}_{(q'_{z'})}\left|\triangle_{h} \partial^{m_{b}}_x v\right|\label{ahuasi}
\end{align}
where the index sets $\Omega_1,\Omega_2,\Omega_3$ are defined by
\begin{align*}
\Omega_1:    &\sum^{l}_{\mu=1} j_{\mu}(1+i_{\mu})+\sum^{l'}_{\nu=1} j'_{\nu}(i'_{\nu}+1)=k; i_{\mu},j_{\mu}, i'_{\nu},j'_{\nu}\in\Bbb N\\
\Omega_2: &\sum^z_{\mu=1}(1+ m_\mu) n_\mu+ \sum ^{z'}_{\nu  = 1}(1 + m'_\nu ){n'_\nu } = k - {m_b} - 1; \mbox{  }{n_\mu },{m_\mu },{n'_\nu },{m'_\nu } \in \Bbb N\\
\Omega_3: &\sum^z_{\mu=1}(1+ q_\mu) p_\mu+ \sum ^{z'}_{\nu  = 1}(1 + q'_\nu ){p'_\nu } = k - {m_b}; \mbox{  }{p_\mu },{q_\mu },{p'_\nu },{q'_\nu } \in \Bbb N\\
\end{align*}

Then the lemma follows by the difference characterization of $\dot{H}^{\frac{1}{2}}$, Sobolev embeddings and H\"older. We present the details of this part in the following Lemma.
\end{proof}

\noindent
{\bf Remark 4.1.}{ \bf One can also prove Lemma \ref{j2} without using parallel transport.  But the following curvature bounds seem to rely heavily on parallel transport.} We will deal with these geometric quantities in the unified way set up in the proof of Lemma \ref{j2}. The parallel transport technics were previously used by Shatah \cite{Sh} and later McGahagan \cite{huangMchuang} to prove uniqueness of low regularity solutions of wave maps and Schr\"odinger map flows respectively. But using  parallel transport  to estimate fractional Sobolev norms of geometric quantities is original in this paper as far as we know.

\begin{Lemma}\label{kx1}
Let $v\in C( [0,\infty);\mathcal{Q}(\Bbb R^d,\mathcal{M}))$ be heat flow with initial data $v_0$.  Assume also that (\ref{77}) holds and (\ref{X1})-(\ref{X3}) hold in  $s_*\le s<\infty$.
Let $\{{e}_{l}\}^{m}_{l=1}$ be the corresponding caloric gauge with given limit $\{e^{\infty}_{l}\}^{m}_{l=1}$. Then
\begin{align}
\|\partial^{j}_x\mathcal{G}'\|_{L^{\infty}_t{\dot H}^{\frac{d}{2}}_x}&\lesssim_{L} \epsilon s^{-\frac{j}{2}} \label{pzji1}\\
\|\partial^{j}_x\mathcal{G}''\|_{L^{\infty}_t{\dot H}^{\frac{d}{2}}_x}&\lesssim_{L} \epsilon s^{-\frac{j}{2}}   \label{pzji2}
\end{align}
for $0\le j\le L$.
\end{Lemma}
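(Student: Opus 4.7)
The plan is to run the same template that proved Lemma \ref{j2} (extended by one covariant derivative for $\mathcal{G}''$). Observe that both $\mathcal{G}'_l$ and $\mathcal{G}''_{lp}$ are smooth tensors on $\mathcal{M}$, namely $\widetilde{\nabla}\mathbf{R}$ and $\widetilde{\nabla}^2\mathbf{R}$, evaluated at the map $v(s,x)$ and contracted with caloric frame vectors, minus their constant limits $\Xi^\infty_l,\Omega^\infty_{lp}$. Thus they have exactly the structural form of $d\mathcal{P}(e_l)-\chi^\infty_l$ treated in Lemma \ref{j2}, and it suffices to apply the machinery already set up there.

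First I would expand $\partial^j_x\mathcal{G}'$ schematically by the chain rule: each spatial derivative either falls on the tensor, producing a higher covariant derivative $\mathbf{D}^{\,p}\widetilde{\nabla}\mathbf{R}$ tested against factors $\partial_x v$, or falls on a frame vector $e_*$, yielding $\nabla_x e_*$ which by the caloric gauge identities \eqref{1poke}-\eqref{KU} is controlled by $A_x$ and $e$. This gives, schematically,
\begin{align*}
\partial^j_x\mathcal{G}' = \sum (\mathbf{D}^{p}\widetilde{\nabla}\mathbf{R})\bigl(\partial_x^{a_1}v,\ldots,\partial_x^{a_p}v;\,\nabla_x^{b_0}e_l,\nabla_x^{b_1}e_{l_0},\ldots,\nabla_x^{b_4}e_{l_3}\bigr),
\end{align*}
with $\sum a_i+\sum b_j=j$, and an analogous formula with $\widetilde{\nabla}^2\mathbf{R}$ for $\mathcal{G}''$. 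For even $d$, $\dot{H}^{d/2}_x$ has integer order and Kato-Ponce combined with the $L^p$ bounds \eqref{Lp1}-\eqref{LpA2} on $C_{(i)},D_{(i)}$ (plus the bootstrap hypotheses \eqref{X1}-\eqref{X3}) closes the estimate at once.

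The hard part is odd $d=2d_0+1$, where the target space is $\dot{H}^{d_0+1/2}$ and one cannot apply a half-derivative to a curvature tensor pulled back along $v$. I would bypass this exactly as in Lemma \ref{j2}, using the difference characterisation
\begin{align*}
\|\partial^{j+d_0}_x\mathcal{G}'\|^2_{\dot{H}^{1/2}}\simeq \iint\frac{|\triangle_h\partial^{j+d_0}_x\mathcal{G}'|^2}{|h|^{d+1}}\,dh\,dx,
\end{align*}
together with geodesic parallel transport. For fixed $(x,h)$, I transport every frame argument of the integrand at $x+h$ back to $v(s,x)$ along the shortest geodesic $\gamma$ from $v(s,x)$ to $v(s,x+h)$. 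The geodesic-transport difference is bounded by $|\triangle_h v(s)|$ times pointwise products of the transported vectors, while the transported-minus-original vectors at $x$ are controlled by the extrinsic expansions \eqref{zzx1}, \eqref{KU} and the parallel-transport inequality \eqref{NH}. This produces
\begin{align*}
|\triangle_h\partial^{j+d_0}_x\mathcal{G}'| \lesssim \sum \prod_\mu C_{(i_\mu)}^{j_\mu}\prod_\nu D_{(i'_\nu)}^{j'_\nu}\,\bigl(|\triangle_h v|+|\triangle_h\partial^m_x v|+|\triangle_h\partial^m_x A_x|\bigr).
\end{align*}

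Finally I would apply H\"older in $x$, distributing $L^\infty$ or $L^{d/(i+1)}$ norms from \eqref{Lp1}-\eqref{LpA2} across the $C_{(i)},D_{(i)}$ factors, while recognising the remaining $\iint|h|^{-d-1}|\triangle_h(\cdot)|^2$ piece as a $\dot{H}^{1/2}$ seminorm of one of $v$, $\partial^m_x v$ or $\partial^m_x A_x$, controlled by \eqref{X1}, \eqref{X3} and \eqref{08}-\eqref{081}. Matching the resulting $s$-powers yields the claimed rate $\epsilon s^{-j/2}$. The proof for $\mathcal{G}''$ is identical, the only change being that one starts from $\widetilde{\nabla}^2\mathbf{R}$ in place of $\widetilde{\nabla}\mathbf{R}$ and absorbs one extra covariant derivative, which costs nothing since $\mathcal{M}$ is compact and smooth. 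The main obstacle, as in Lemma \ref{j2}, is purely technical bookkeeping: organising the many index configurations so that every single $\triangle_h$ factor is placed on a quantity whose $\dot{H}^{1/2}$ norm is already available, while the remaining factors are absorbed in $L^p$ by small/sharp bounds.
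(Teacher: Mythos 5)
Your proposal follows essentially the same route as the paper's proof of Lemma \ref{kx1}: expand $\partial^j_x\mathcal{G}'$ by the chain rule into $\widetilde{\nabla}^{k+1}\mathbf{R}$ tested against $\nabla^{a}_x\partial_x v$ and $\nabla^{b}_x e$ factors, pass to the difference characterisation of $\dot H^{1/2}$ since $d$ is odd, use geodesic parallel transport and the inequality (\ref{NH}) together with (\ref{zzx1}), (\ref{KU}) to reduce differences to products of $C_{(i)}$, $D_{(i')}$ factors times a single $\triangle_h$ term, then close via H\"older/interpolation and the bootstrap hypotheses (\ref{X1})--(\ref{X3}). One small inaccuracy worth flagging: for the remaining $\dot H^{1/2}$ seminorm of $\partial^m_x v$ you cite (\ref{08})--(\ref{081}), which carry $\|dv_0\|_{L^2_x}$ factors rather than $\epsilon$; the paper instead invokes the critical-level bounds (\ref{zzFD}), (\ref{370}) (and, for the large-index Case 2, the $\dot H^{1/2}$-interpolated estimate (\ref{g7chen})) so that the $\epsilon$ prefactor in the claimed rate actually appears.
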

\begin{proof}
We only consider odd $d$. Recall the definition of $\mathcal{G}'$,$\mathcal{G}''$:
\begin{align*}
(\mathcal{G}')_{l}
&=(\widetilde{\nabla}{\bf R})({e_{l}};e_{l_0},e_{l_1},e_{l_2},e_{l_3})-\Gamma^{\infty}_l\\
(\mathcal{G}^{''})_{pl}&=(\widetilde{\nabla}^2{\bf R})\left({e_{p}},{e_{l}};e_{l_0},e_{l_1},e_{l_2},e_{l_3}\right)-\Omega^{\infty}_{pl}.
\end{align*}
where we view $\mathbf{R}$ as a $(0,4)$ tensor. The same arguments as Lemma \ref{j2}
show
\begin{align}
&\partial^{\alpha}_{x}(\mathcal{G}' )=\sum^{\alpha}_{k=0}\sum_{a_0+\sum^{k}_{l=1}(a_{l}+1)+\sum^{3}_{\mu=0}b_{\mu}=j}(\widetilde{\nabla}^{k+1}{\bf R})(\nabla^{a_0}_x e, \nabla^{a_1}_x\partial_x v,...,\nabla^{a_k}_x \partial_x v; \nabla^{b_0}_xe_{l_0},...,\nabla^{b_3}_xe_{l_3})\label{huakai1}\\
&\partial^{\alpha}_{x}(\mathcal{G}^{''} )=\sum^{\alpha}_{k=0}\sum_{\Omega}(\widetilde{\nabla}^{k+2}{\bf R})(\nabla^{a_0}_x e,\nabla^{a'_0}_x e,\nabla^{a_1}_x \partial_xv...,\nabla^{a_k}_x \partial_xv; \nabla^{b_0}_xe_{l_0},...,\nabla^{b_3}_xe_{l_3}),
\label{huakai2}
\end{align}
where the index set $\Omega$ is defined by
\begin{align}
\Omega:  a'_{0}+a_{0}+\sum^{k}_{l=1}(1+a_{l})+\sum^{3}_{\mu=0}b_{\mu}=\alpha.
\end{align}
Recall $d=2d_0+1$. By difference characterization of Besov spaces
and (\ref{huakai1}), to prove (\ref{pzji1}) it suffices to verify
\begin{align*}
&\left\|\tau^{-\frac{1}{2}}\sup_{|h|\le \tau}\|\triangle_{h}(\widetilde{\nabla}^{k+1}{\bf R})(\nabla^{a_0}_x e,\nabla^{a_1}_x\partial_x v,...,\nabla^{a_k}_x\partial_x v; \nabla^{b_0}_xe_{l_0},...,\nabla^{b_3}_xe_{l_3})\|_{L^{2}_{x}}\right\|_{{L^{\infty}_t}L^{2}(\tau^{-1}d\tau)}\\
&\lesssim_{L} \epsilon s^{-\frac{\beta}{2}}
\end{align*}
provided that
\begin{align*}
{a_0+\sum^{k}_{l=1}(a_{l}+1)+\sum^{3}_{\mu=0}b_{\mu}=\beta+d_0}, 0\le k\le \beta+d_0.
\end{align*}
Using the (\ref{NH}) type estimates and (\ref{y1}), we obtain
\begin{align}
&\left|\triangle_{h}(\widetilde{\nabla}^{k+1}{\bf R})(\nabla^{a_0}_x e,...,\nabla^{a_k}_x \partial_xv; \nabla^{b_0}_xe_{l_0},...,\nabla^{b_3}_xe_{l_3})\right|\\
&\lesssim \sum^{\beta+d_0}_{z,z'=1} \sum_{\Omega_1}C^{j_{1}}_{(i_{1})}...
C^{j_{z}}_{(i_{z})}D^{j'_{1}}_{(i'_{1})}...
D^{j'_{z'}}_{(i'_{z'})}|\triangle_{h} d\mathcal{P}(e)|\label{a99}\\
&+\sum^{ \beta+d_0}_{1\le b\le z', z\ge 1}  \sum_{\Omega_2} C^{n_{1}}_{(m_{1})}...C^{n_{z}}_{(m_{z})}D^{n'_{1}}_{(m'_{1})}...D^{n'_{z'}}_{(m'_{z'})}\left|\triangle_{h} \partial^{m'_b}_xA_x\right|\label{ka99}\\
&+\sum^{\beta+d_0}_{1\le b\le  z, z'\ge 1} \sum_{\Omega_3} C^{p_{1}}_{(q_{1})}...C^{p_{z}}_{(q_{z})}D^{p'_{1}}_{(q'_{1})}...D^{p'_{z'}}_{(q'_{z'})}\left|\triangle_{h}\partial^{m_{b}}_x v\right|\label{kka99}
\end{align}
where the index sets $\Omega_1,\Omega_2,\Omega_3$ are defined by
\begin{align}
\Omega_1: &\sum^{z}_{\nu=1} j_{\nu}(1+i_{\nu})+\sum^{z'}_{\mu=1} (i'_{\mu}+1)j'_{\mu}=\beta+d_0, \mbox{  } j_{\nu},i'_{\mu},j'_{\mu},i_{\nu}\in\Bbb N\label{AP1}\\
\Omega_2: &\sum^{z}_{\nu=1} (1+m_{\nu})n_{\nu}+\sum^{z'}_{\mu=1} (1+m'_{\mu})n'_{\mu}=\beta+d_0-(m'_b+1),\mbox{  }n_{\nu},n'_{\mu},m'_{\mu},m_{\nu}\in\Bbb N\nonumber\\
\Omega_3: &\sum^{z}_{\nu=1} (1+q_{\nu})p_{\nu}+\sum^{z'}_{\mu=1} (1+q'_{\mu})p'_{\mu}=\beta+d_0-m_b,\mbox{  }p_{\nu},p'_{\mu},q'_{\mu},q_{\nu}\in\Bbb N.\nonumber
\end{align}
As before, we consider two subcases.
 {\bf{Case 1.}}{ Assume that all $\{i'_{\mu}\}$, $\{i_{\nu}\}$ in (\ref{a99}) satisfy}
\begin{align}
0\le i_{\nu}\le \frac{d}{2}-1,\mbox{  }0\le i'_{\mu}\le \frac{d}{2}-1\label{a991}.
\end{align}
Then (\ref{Lp2}), (\ref{LpA2}) show for
 \begin{align}
\frac{1}{\widehat{r}_{\nu}}&=\frac{1}{2}-\frac{1}{d}(\frac{d}{2}-i_{\nu}-1)\\
\frac{1}{\widetilde{r}_{\mu}}&=\frac{1}{2}-\frac{1}{d}(\frac{d}{2}-i'_{\mu}-1)\label{995}
\end{align}
there hold
\begin{align}
\|C_{(i_{\nu})}\|_{L^{\infty}_tL^{\widehat{r}_{\nu}}_x}&\lesssim \epsilon \label{qcau}\\
\|D_{(i'_{\mu})}\|_{L^{\infty}_tL^{\widetilde{r}_{\mu}}_x}&\lesssim \epsilon \label{qau}
\end{align}
And interpolating (\ref{qcau}), (\ref{qau}) with $L^{\infty}$ bounds given by (\ref{Lp1})-(\ref{LpA1}), we have
\begin{align}
\|C_{(i_{\nu})}\|_{L^{\infty}_tL^{\overline{r}}_x}&\lesssim \epsilon s^{-\frac{1+i_{\nu}}{2}+\frac{d}{2\overline{r}}}\label{qua23}\\
\|D_{(i'_{\mu})}\|_{L^{\infty}_tL^{\underline{r}}_x}&\lesssim \epsilon s^{-\frac{1+i'_{\mu}}{2}+\frac{d}{2\underline{r}}}\label{qua2}
\end{align}
for all $\underline{r}\in[\widetilde{r}_{\mu},\infty]$ and
for all $\overline{r}\in[\widehat{r}_{\nu},\infty]$.  Without loss of generality we assume  {\bf Case 1a.} $j_1>0$ or  {\bf Case 1b.} $j'_1>0$.

In the {\bf Case 1a}, let $\{{\overline{r}}_{\nu}\}$, $\{{\underline{r}}_{\mu}\}$ be fixed exponents such that $\overline{r}_{\nu}\in[\widehat{r}_{\nu},\infty]$, $\underline{r}_{\mu}\in[\widetilde{r}_{\mu},\infty]$ and
\begin{align}\label{uhbg}
\sum^{z}_{\nu=1}\frac{d}{\overline{r}_{\nu}}j_{\nu}+\sum^{z'}_{\mu=1}
\frac{d}{\underline{r}_{\mu}}j'_{\mu}=d_0.
\end{align}
The exponents $\{\underline{r}_{\mu}\}$, $\{\overline{r}_{\nu}\}$ in (\ref{uhbg}) do exist. In fact, since $j_{1}>0$ the LHS of (\ref{uhbg}) is a continuous deceasing function with respect to any of $\overline{r}_{\nu}\in [\widehat{r}_{\nu},\infty]$, $\underline{r}_{\mu}\in[\widetilde{r}_{\mu},\infty]$ . Hence the LHS of  (\ref{uhbg}) ranges over $[0,\beta+d_0]$. Thus (\ref{uhbg}) holds with appropriate $\{\overline{r}_{\nu},\underline{r}_{\mu}\}$. Then, in Case 1a one obtains
\begin{align}
&\left\|\frac{1}{\tau^{\frac{1}{2}}}\sup_{|h|\le \tau}|C^{j_{1}}_{(i_{1})}...
D^{j_{z}}_{(i_{z})}D^{j'_{1}}_{(i'_{1})}...
D^{j'_{z'}}_{(i'_{z'})}|\triangle_{h} d\mathcal{P}(e_p)|\right\|_{L^{\infty}_tL^{2}(\tau^{-1}d\tau)L^{2}_x}\label{qqt}\\
&\lesssim
\|C_{(i_{1})}\|^{j_1}_{L^{\infty}_{t}L^{\overline{r}_1}_x}
\|C_{(i_2)}\|^{j_{2}}_{L^{\infty}_{t}L^{\overline{r}_{2}}_x}...
\|C_{(i_{z})}\|^{j_{z}}_{L^{\infty}_{t}L^{\overline{{r}}_{z}}_x}\\
&\times
\|D_{(i'_{1})}\|^{j'_1}_{L^{\infty}_{t}L^{\underline{r}_1}_x}...
\|D_{(i'_{z'})}\|^{j'_{z'}}_{L^{\infty}_{t}L^{\underline{r}_{z'}}_x}
\left\|\frac{1}{\tau^{\frac{1}{2}}}\sup_{|h|\le \tau}|\triangle_{h} d\mathcal{P}(e_p)|\right\|_{L^{\infty}_tL^{\infty}(\tau^{-1}d\tau)L^{2d}_x}\nonumber
\end{align}
where in the second line to apply H\"older inequality we used the following equality
\begin{align*}
\frac{1}{2}-\frac{1}{2d}=
\sum^{z}_{\nu=1}\frac{j_{\nu}}{\overline{r}_{\nu}}+\sum^{z'}_{\mu=1}\frac{j'_{\mu}}{\underline{r}_{\mu}}
\end{align*}
which follows from (\ref{uhbg}). Thus (\ref{qqt}), (\ref{qua2}) and (\ref{qua23}) imply
\begin{align*}
&\left\|\frac{1}{\tau}\sup_{|h|\le \tau}|C^{j_{1}}_{(i_{1})}...
C^{j_{z}}_{(i_{z})}D^{j'_{1}}_{(i'_{1})}...
D^{j'_{z'}}_{(i'_{z'})}|\triangle_{h} d\mathcal{P}(e_p)|\right\|_{L^{\infty}_tL^2_{\tau}(\Bbb R^+)L^{2}_x}
\\
&\lesssim \epsilon s^{-\frac{\beta}{2}}\left\|\frac{1}{\tau}\sup_{|h|\le \tau}|\triangle_{h} d\mathcal{P}(e_p)|\right\|_{L^{\infty}_tL^2_{\tau}(\Bbb R^+)L^{2d}_x}\\
&\lesssim \epsilon s^{-\frac{\beta}{2}}\left\| d\mathcal{P}(e_p)\right\|_{L^{\infty}_t{\dot H}^{\frac{d}{2}}_x}.
\end{align*}
where the power $\frac{\beta}{2}$ in $s^{-\frac{\beta}{2}}$ of the second line results from (\ref{uhbg}), (\ref{AP1}).
Therefore, we get by (\ref{X3})  that
\begin{align*}
\left\|\frac{1}{\tau}\sup_{|h|\le \tau}|C^{j_{1}}_{(i_{1})}...
C^{j_{z}}_{(i_{q})}D^{j'_{1}}_{(i'_{1})}...
D^{j'_{z'}}_{(i'_{z'})}|\triangle_{h} d\mathcal{P}(e_p)|\right\|_{L^{\infty}_tL^2_{\tau}(\Bbb R^+)L^{2}_x}
\lesssim \epsilon s^{-\beta/2}.
\end{align*}
The Case 1.b follows by the same way. Thus (\ref{a99}) has been done.
The left two terms (\ref{ka99})-(\ref{kka99}) can be dominated as  (\ref{a99}) by (\ref{X1}) and Lemma \ref{j1}.

 {\bf Case 2.a}{ Assume that among $\{i'_{\mu}:j'_{\mu}>0, \mu=1,...,z'\}$ in (\ref{a99}) there exists an $i'_{\mu'}$ such that }
\begin{align}
i'_{\mu'}>\frac{d}{2}-1\label{aa991}.
\end{align}
Since $\|g\|_{L^{\rho}_x}\lesssim \|g\|_{{\dot H}^{\frac{1}{2}}_{x}}$ with $$\frac{1}{\rho}=\frac{1}{2}-\frac{1}{2d},$$
we deduce by interpolation and (\ref{X1}) that
\begin{align}\label{qqt2}
\|D_{(i'_{\mu'})}\|_{L^{\infty}_{t}L^{\rho}_x}\lesssim \epsilon s^{-\frac{1}{2}(i'_{\mu'}+\frac{3}{2}-\frac{d}{2})}.
\end{align}
Without loss of generality assume that $\mu'=1$. Then in Case 2a, there holds
\begin{align}
&\left\|\frac{1}{\tau}\sup_{|h|\le \tau}|C^{j_{1}}_{(i_{1})}...
C^{j_{z}}_{(i_{z})}D^{j'_{1}}_{(i'_{1})}...
D^{j'_{z'}}_{(i'_{z'})}||\triangle_{h} d\mathcal{P}(e)|\right\|_{L^{\infty}_tL^2_{\tau}(\Bbb R^+)L^{2}_x}\nonumber\\
&\lesssim
\|C_{(i_{1})}\|^{j_1}_{L^{\infty}_{t_x}}...\|C_{(i_{z})}\|^{j_{z}}_{L^{\infty}_{t,x}}
\|D_{(i'_{1})}\|_{L^{\infty}_{t}L^{\rho}_x}\|D_{(i'_{1})}\|^{j'_{1}-1}_{L^{\infty}_{t,x}}
\left(\prod^{z'}_{\mu=2}\|D_{(i'_{\mu})}\|^{j'_{\mu}}_{L^{\infty}_{t,x}}\right)
\nonumber\\
&\left\|\frac{1}{\tau}\sup_{|h|\le \tau}|\triangle_{h} d\mathcal{P}(e_p)|\right\|_{L^{\infty}_tL^2_{\tau}(\Bbb R^+)L^{2d}_x}\nonumber\\
&\lesssim  \epsilon s^{-\frac{1}{2}(i_{1}+\frac{3}{2}-\frac{d}{2})}
s^{-\frac{j_{1}-1}{2}(i_{1}+1)}\left(\prod^{z'}_{\mu=2}s^{-\frac{j'_{\mu}}{2}(i'_{\mu}+1)}\right)
\label{9900}
\end{align}
where we applied  $L^{\infty}$ bounds given by  (\ref{X1}) and Lemma \ref{j2} in the last line.
It is easy to check that the RHS of (\ref{9900}) is exactly $\epsilon s^{-\frac{\beta}{2}}$.

{\bf Case 2.b}{ Assume that among $\{i_{\nu}:j_{\nu}>0, \nu=1,...,z\}$ in (\ref{a99}) there exists an $i_{\nu'}$ such that $i_{\nu'}>\frac{d}{2}-1$.} This follows by the same path if one has the analogy of (\ref{qqt2}) for $C_{(i)}$:
\begin{align}\label{g7chen}
\|C_{(i_{\nu'})}\|_{L^{\infty}_{t}L^{\rho}_x}\lesssim \epsilon s^{-\frac{1}{2}(i_{\nu'}+\frac{3}{2}-\frac{d}{2})}.
\end{align}
Notice that  (\ref{g7chen}) follows by (\ref{370}).

The left (\ref{ka99}), (\ref{kka99}) parts follow  by the same argument in {\bf Case 2}.

Therefore, as a summary, we have proved (\ref{pzji1}) with assuming (\ref{X1})-(\ref{X3}). The left (\ref{pzji2}) is the same.
\end{proof}

\begin{Lemma}\label{j31}
Let $v: [0,\infty)\times \Bbb R^d\to \mathcal{M}$ be heat flow with initial data $v_0\in \mathcal{Q}(\Bbb R^d,\mathcal{M})$. Assume also that (\ref{77}) holds and (\ref{X1})-(\ref{X3}) hold in  $s_*\le s<\infty$.
Let $\{{e}_{l}\}^{m}_{l=1}$ be the corresponding caloric gauge. Then for $s_*\le s<\infty$, $0\le j\le L$, the differential fields $\{\psi_i\}^{d}_{i=1}$ satisfy
\begin{align}
\|\partial^{j}_x\psi_x\|_{\dot{H}^{\frac{d}{2}-1}_x}&\lesssim_{j} \epsilon_1 s^{-\frac{j}{2}}\label{p1990}\\
\|\partial^{j}_x\psi_x\|_{L^{\infty}_x}&\lesssim_{L} \epsilon_1  s^{-\frac{j+1}{2}}.\label{p1991}
\end{align}
And the heat tension field $\psi_s$ satisfies
\begin{align}
\|\partial^{l}_x\partial_s v\|_{\dot{H}^{\frac{d}{2}-1}_x}&\lesssim_{j} \epsilon_1 s^{-\frac{l+1}{2}}\label{p1994}\\
\|\partial^{l}_x\partial_s v\|_{L^{\infty}_x}&\lesssim_{j} \epsilon_1 s^{-\frac{l+2}{2}}\label{p1995}\\
\|\partial^{l}_x\psi_s\|_{\dot{H}^{\frac{d}{2}-1}_x}&\lesssim_{l} \epsilon_1 s^{-\frac{l+2}{2}}\label{p1992}\\
\|\partial^{l}_x\psi_s\|_{L^{\infty}_x}&\lesssim_{l} \epsilon_1 s^{-\frac{l+2}{2}}.\label{p1993}
\end{align}
for any $s_*\le s<\infty$, $0\le l\le L$.
Moreover, we have
\begin{align}\label{1999}
\|\psi_s\|_{L^2_s\dot{H}^{\frac{d}{2}-1}_x}&\lesssim  \epsilon_1.
\end{align}
\end{Lemma}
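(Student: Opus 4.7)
The plan is to reduce every bound in Lemma \ref{j31} to the already-established estimates on $v$ (Lemmas \ref{j1}, \ref{8xiao}) and on the frame fluctuation $d\mathcal{P}(e_l)-\chi_l^\infty$ (Lemma \ref{j2} together with the bootstrap (\ref{X1})--(\ref{X3})). The bridge is the extrinsic identities $\psi_i^l=\partial_i v\cdot d\mathcal{P}(e_l)$ and $\psi_s^l=\partial_s v\cdot d\mathcal{P}(e_l)$, after which the frame is split into its limit and its fluctuation: $d\mathcal{P}(e_l)=\chi_l^\infty+(d\mathcal{P}(e_l)-\chi_l^\infty)$.

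\textbf{Pointwise bounds.} Estimates (\ref{p1991}), (\ref{p1993}), (\ref{p1995}) are of pointwise character. Orthonormality of $\{e_l\}$ gives $|\psi_x|\le|\partial_x v|$ and $|\psi_s|=|\partial_s v|$, and derivatives expand via the chain rule $\partial_i(d\mathcal{P}(e_l))=(\mathbf{D}d\mathcal{P})(\partial_i v;e_l)+d\mathcal{P}(\nabla_i e_l)$ into polynomials in $\partial^\bullet_x v$ and $\partial^\bullet_x A_x$; then Lemma \ref{j1} and the bootstrap bound (\ref{X2}) yield the stated decay. For $\partial^l_x\partial_s v$ in $L^\infty$, apply the algebraic inequality (\ref{p2}) to $\partial_s v=\sum_i\nabla_i\partial_i v$ and invoke (\ref{1.8a}) up to order $l+2$.

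\textbf{Fractional Sobolev bounds.} For (\ref{p1990}), write
\[
\partial^j_x\psi^l_i=\chi_l^\infty\cdot\partial^j_x\partial_i v+\sum_{j_1+j_2=j}\binom{j}{j_1}\partial^{j_1+1}_x v\cdot\partial^{j_2}_x\bigl(d\mathcal{P}(e_l)-\chi_l^\infty\bigr).
\]
The first term is $\lesssim\|\partial^{j+1}_x v\|_{\dot H^{d/2-1}}\lesssim\epsilon_1 s^{-j/2}$ by (\ref{zzFD}). Each product piece is handled by the Kato--Ponce inequality of Section 2, distributing the $d/2-1$ derivatives onto whichever factor has the cheaper bound; the $L^p$ norms needed for $\partial^\bullet_x v$ are produced by interpolating (\ref{zzFD}) with (\ref{1.8a}) via Gagliardo--Nirenberg, and those for $\partial^\bullet_x(d\mathcal{P}(e_l)-\chi_l^\infty)$ from Lemma \ref{j2} interpolated with (\ref{XX2}). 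For (\ref{p1994}), use the extrinsic equation $\partial_s v=\Delta v+S(v)(\partial v,\partial v)$ to reduce to $\|\partial^{l+2}_x v\|_{\dot H^{d/2-1}}$ plus a nonlinear piece, the latter handled exactly as in the verification of (\ref{FD1}) in Step 1 of Lemma \ref{j1}, now with the bootstrap hypotheses on $A_x$ and $e_l$ sharpening $\epsilon_1^{1/2}$ to $\epsilon_1$. The bound (\ref{p1992}) on $\psi_s$ then follows from the same frame-splitting applied to $\psi_s^l=\partial_s v\cdot d\mathcal{P}(e_l)$, combined with (\ref{p1994}) and Lemma \ref{j2}.

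\textbf{Space--time bound and main obstacle.} For (\ref{1999}), observe from $\psi_s=\sum_i(\partial_i\psi_i+A_i\psi_i)$ that $\|\psi_s\|_{\dot H^{d/2-1}_x}\lesssim\|\partial_x v\|_{\dot H^{d/2}_x}+(\text{lower order in }\partial v, A_x)$; taking the $L^2_s$ norm and invoking Lemma \ref{8xiao} yields $\|\psi_s\|_{L^2_s\dot H^{d/2-1}_x}\lesssim\epsilon_1$. The crux of the whole argument is the odd-$d$ case of the fractional Sobolev step, where the chain rule fails at fractional order. The resolution is the enumerate-and-distribute strategy already developed in Lemmas \ref{j1} and \ref{j2}: every Leibnitz-type term is routed through Gagliardo--Nirenberg or through the difference/parallel-transport representation, so that across the product the Lebesgue exponents add to the target space while the $s$-scaling adds to the claimed rate.
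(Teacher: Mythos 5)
Your proposal is correct and follows essentially the same route as the paper: reduce every bound through the identities $\psi^l_i=\partial_iv\cdot d\mathcal{P}(e_l)$, $\psi^l_s=\partial_sv\cdot d\mathcal{P}(e_l)$, split the frame into $\chi^\infty_l$ plus fluctuation, and invoke the established decay for $v$ (Lemma \ref{j1}, Lemma \ref{8xiao}) and the frame (Lemma \ref{j2}, (\ref{XX2})) via Kato--Ponce and Gagliardo--Nirenberg. The only cosmetic differences from the paper's proof are that you express $\psi_s$ via the gauged identity $\psi_s=D_i\psi_i$ rather than $\psi_s=\partial_sv\cdot d\mathcal{P}(e)$ for (\ref{1999}), and route the pointwise bound (\ref{p1991}) through $\partial^\bullet_xA_x$ instead of through $\partial^\bullet_x(d\mathcal{P}(e)-\chi^\infty)$; both substitutions are equivalent within the same framework.
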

\begin{proof}
Recall that the fixed integer $L'$ in Section 3 satisfies $L'=L+5$.
By the definition of differential field $\{\psi_x\}$ and the isometry of $d\mathcal{P}$, we obtain
\begin{align}
\psi^{l}_i&=\partial_iv\cdot d \mathcal{P}(e_{l})\\
\psi^{l}_s&=\partial_sv\cdot d \mathcal{P}(e_{l}),\label{8c97}
\end{align}
where we write $\partial_\alpha v$ instead of $\partial_\alpha(\mathcal{P}\circ v)$ for simplicity and $i=1,...,d$.
Generally one has
\begin{align}
\partial^{j}_x\psi_x=\sum_{k_1+k_2=j,k_1,k_2\in\Bbb N}\partial^{k_1+1}_xv\cdot\partial^{k_2}_x d\mathcal{P}(e)\label{jni80}\\
\partial^{j}_x\psi_s=\sum_{k_1+k_2=j,k_1,k_2\in\Bbb N}\partial^{k_1}_x\partial_sv\cdot\partial^{k_2}_x \label{jni81} d\mathcal{P}(e).
\end{align}
(\ref{XX2}) and Lemma \ref{j2} yield the bounds for $\mathcal{P}e$:
\begin{align}
\|\partial^{j}_x(\mathcal{P}(e)-\chi^{\infty})\|_{\dot{H}^{\frac{d}{2}}_x}&\lesssim \epsilon s^{-\frac{j}{2}}\label{qqqvcx}\\
\|\partial^{j}_x(\mathcal{P}(e)-\chi^{\infty})\|_{L^{\infty}_x}&\lesssim  \epsilon s^{-\frac{j}{2}}.\label{2qqqvcx}
\end{align}
Then (\ref{qqqvcx})-(\ref{2qqqvcx}), Lemma \ref{j1}, fractional Leibnitz rule and Sobolev inequalities imply (\ref{p1990}).
Meanwhile, (\ref{XX2}) and  (\ref{1.8a}) yield (\ref{p1991})
by (\ref{jni80}).

(\ref{p1995}) directly follows from the heat flow equation and (\ref{zzFD}). (\ref{jni81}) together with (\ref{p1995}) gives  (\ref{p1993}).
And by the heat flow equation, (\ref{p1994}) reduces to prove
\begin{align*}
\|\partial^{j}_x[S(v)(\partial_xv,\partial_x v)]\|_{\dot{H}^{\frac{d}{2}-1}}\lesssim \epsilon s^{-\frac{j+1}{2}},
\end{align*}
which follows by fractional Leibnitz rule, Sobolev inequalities and Lemma \ref{j1}.
Similarly, (\ref{jni81}) together with (\ref{p1994}) gives  (\ref{p1992}).
Lastly, (\ref{1999}) follows by (\ref{1mkn}) and (\ref{jni81})-(\ref{2qqqvcx}).

\end{proof}

\begin{Lemma}\label{j3h}
Let $v: [0,\infty)\times \Bbb R^d\to \mathcal{M}$ be heat flow with initial data $v_0\in \mathcal{Q}(\Bbb R^d,\mathcal{M})$. Assume also that (\ref{77}) holds and (\ref{X1})-(\ref{X3}) hold in $s\in [s_*,\infty)$.
Let $\{{e}_{l}\}^{m}_{l=1}$ be the corresponding caloric gauge. Then the connection coefficients satisfy
\begin{align}
\|\partial^{j}_xA_{x}\|_{\dot{H}^{\frac{d}{2}-1}_x}&\lesssim_{j} \epsilon^2s^{-\frac{j}{2}}\label{Nbv1}\\
\|\partial^{j}_xA_{x}\|_{L^{\infty}_x}&\lesssim_{j} \epsilon^2s^{-\frac{j+1}{2}}.\label{Nbv2}
\end{align}
for any $s\in [s_*,\infty)$, $0\le j\le L$.
\end{Lemma}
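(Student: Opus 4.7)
The plan is to exploit the explicit representation $A_i(s)=\int_s^\infty\mathcal{R}(\psi_s,\psi_i)(s')\,ds'$ established in (\ref{1poke}) of Lemma \ref{QQ}. I would expand the integrand schematically as
\[
\mathcal{R}(\psi_s,\psi_x)=\sum(\psi_s\diamond\psi_x)\,\langle\mathbf{R}(e_{l_0},e_{l_1})e_{l_2},e_{l_3}\rangle,
\]
separate the ``field'' part $\psi_s\diamond\psi_x$ from the ``curvature factor,'' and further split the curvature factor as $\Gamma^\infty+\mathcal{U}_0+\mathcal{U}_1$ via the caloric-gauge identity used in Section 1. This reduces the task to controlling products of $\psi_s,\psi_x$ in the relevant norms multiplied by curvature-dependent quantities of the $\mathcal{G}'$-type already handled by Lemma \ref{kx1}.

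I would first establish the $L^\infty$ bound (\ref{Nbv2}). Distributing $\partial_x^j$ via Leibnitz, each factor is estimated in $L^\infty$ using (\ref{p1991}) for $\psi_x$ (decay $s'^{-(k+1)/2}$), (\ref{p1993}) for $\psi_s$ (decay $s'^{-(k+2)/2}$), and the $L^\infty$ decay of frame derivatives from (\ref{XX2}) (equivalently (\ref{2qqqvcx})). Summing the rates produces an integrand bounded by $\epsilon^2 s'^{-(j+3)/2}$, which integrated over $[s,\infty)$ gives precisely $\epsilon^2 s^{-(j+1)/2}$.

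For the fractional bound (\ref{Nbv1}) I would apply $|\nabla|^{d/2-1}\partial_x^j$ under the integral and invoke Kato-Ponce to distribute derivatives across the product (field)$\cdot$(curvature factor). The field contribution is controlled by Lemma \ref{j31}; the curvature-factor contribution, which carries a fractional derivative of a quantity depending on $v$ and the moving frame, is treated exactly as in Lemma \ref{kx1} via geodesic parallel transport and the difference characterization of $\dot H^{1/2}$ (with the odd-$d$ Case 1/Case 2 split according to whether one of the factors absorbs more than $d/2-1$ derivatives). The curvature bootstrap hypothesis (\ref{X1})--(\ref{X3}) enters exactly where it did in Lemma \ref{kx1}.

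The main obstacle is the endpoint $j=0$ of (\ref{Nbv1}): distributing the $|\nabla|^{d/2-1}$ pointwise in $s'$ yields an integrand of size $\epsilon^2 s'^{-1}$, which is \emph{not} integrable over $[s,\infty)$. To close this, I would avoid placing $\dot H^{d/2-1}$ pointwise in $s'$ and instead apply Cauchy--Schwarz in the $s'$ variable, pairing the space-time estimate $\|\psi_s\|_{L^2_s\dot H^{d/2-1}_x}\lesssim\epsilon$ from (\ref{1999}) with the companion estimate $\|\partial_x v\|_{L^2_s\dot H^{d/2}_x}\lesssim\epsilon$ from Lemma \ref{8xiao} (combined with Sobolev and the $L^\infty$ bounds of Lemma \ref{j31} for the low-frequency piece). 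This yields the required $\epsilon^2$ bound, while for $j\ge 1$ the extra $s'^{-j/2}$ decay of derivative factors from Lemma \ref{j31} restores pointwise-in-$s'$ integrability and gives the stated $s^{-j/2}$ rate. Finally, the improvement of the bootstrap constant from $\epsilon_1$ (in (\ref{X1})) to $\epsilon^2$ closes the continuity argument and eventually forces $s_*=0$.
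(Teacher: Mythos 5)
Your proposal follows essentially the same route as the paper: start from the representation $A_i(s)=\int_s^\infty\mathcal{R}(\psi_s,\psi_i)\,ds'$, bound the integrand pointwise in $s'$ via Lemma~\ref{j31} together with Lemma~\ref{kx1}-type curvature estimates and integrate (giving (\ref{Nbv2}) for all $j$ and (\ref{Nbv1}) for $j\ge 1$), and then treat the $j=0$ endpoint by Cauchy--Schwarz in $s'$ pairing (\ref{1999}) with the $L^2_s\dot H^{d/2}_x$ bound on $\psi_x$ (equivalently $\partial_x v$ plus frame bounds from Lemma~\ref{8xiao}), which is exactly the key subtlety the paper's proof addresses. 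The only cosmetic difference is that the paper estimates the curvature factor $\langle\mathbf{R}(e)e,e\rangle$ directly rather than first splitting it as $\Gamma^\infty+\mathcal{U}_0+\mathcal{U}_1$, but this does not change the substance.
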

\begin{proof}
By (\ref{edf}), we have
\begin{align*}
[{A_i}]^{p}_{q}(s)= \int_s^\infty\langle {\mathbf{R}(v(s'))\left( {{\partial _s}v(s'),{\partial _i}v(s')} \right)} e_{p},e_{q}\rangle ds',
\end{align*}
which can be schematically written as
\begin{align}\label{journal}
[{A_i}]^{p}_{q}(s)= \int_s^\infty (\psi_s \psi_x)\langle{\mathbf{R}}\left( e_{l_0},e_{l_1}\right)e_{l_2},e_{l_3}\rangle ds'.
\end{align}
Following arguments of Lemma \ref{kx1} gives
\begin{align*}
\|\partial^{j}_x\langle{\mathbf{R}}\left( e_{l_0},e_{l_1}\right)e_{l_2},e_{l_3}\rangle\|_{\dot{H}^{\frac{d}{2}}_x}&\lesssim \epsilon s^{-\frac{j}{2}}\\
\|\partial^{j}_x\langle{\mathbf{R}}\left( e_{l_0},e_{l_1}\right)e_{l_2},e_{l_3}\rangle\|_{L^{\infty}_x}&\lesssim  c(j)  s^{-\frac{j}{2}},
\end{align*}
where $c(0)=1$ and $c(j)=\epsilon$ if $1\le j\le L$ in the second line. Then Lemma \ref{j31} and fractional Leibnitz rules show
\begin{align}
\left\|\partial^{j}_x\left(\psi_x \psi_s\langle{\mathbf{R}}( e_{l_0},e_{l_1})e_{l_2},e_{l_3}\rangle\right)\right\|_{\dot{H}^{\frac{d}{2}-1}_x}&\lesssim \epsilon^2 s^{-\frac{j+2}{2}}\label{Nvz1}\\
\left\|\partial^{j}_x\left(\psi_x \psi_s\langle{\mathbf{R}}( e_{l_0},e_{l_1})e_{l_2},e_{l_3}\rangle\right)\right\|_{L^{\infty}_x}&\lesssim \epsilon^2 s^{-\frac{j+3}{2}}.\label{Nvz2}
\end{align}
Thus integrating  (\ref{Nvz2}) in $s'\in[s,\infty)$ gives (\ref{Nbv2}) for all $0\le j\le L$. And integrating (\ref{Nvz1}) in $s'\in[s,\infty)$ gives (\ref{Nbv1}) for any $1\le j\le L$. Hence, it suffices to prove (\ref{Nbv1}) with $j=0$.

As before, assume that $d=2d_0+1$ is odd. Then one has by fractional Leibnitz rule and Sobolev embedding
that
\begin{align*}
&\|\psi_x \psi_s\langle{\mathbf{R}}( e_{l_0},e_{l_1})e_{l_2},e_{l_3}\rangle\|_{L^1_s{\dot H}^{\frac{d}{2}-1}}
\\
&\lesssim \sum_{\sum^{3}_{i=1}j_i=d_0-1}\|\partial^{j_1}_x\psi_x\partial^{j_2}_x \psi_s\partial^{j_3}_x\langle{\mathbf{R}}( e_{l_0},e_{l_1})e_{l_2},e_{l_3}\rangle\|_{L^1_s{\dot H}^{\frac{1}{2}}_x}\\
&\lesssim \sum_{\sum^{3}_{i=1}\beta_i=d_0-\frac{1}{2}, \beta_i\in  \Bbb N \bigcup (\Bbb N+\frac{1}{2}) }\||\nabla|^{\beta_1}\psi_x\|_{L^2_sL^{p(\beta_1)}_x}\||\nabla|^{\beta_2}_x \psi_s\|_{L^2_sL^{q(\beta_2)}_x}\\
&\mbox{  }\mbox{  }\mbox{  }\mbox{  }\mbox{  }\mbox{  }\mbox{  }\mbox{  }
\mbox{  }\mbox{  }\mbox{  }\mbox{  }\mbox{  }\mbox{  }\mbox{  }\mbox{  }
\mbox{  }\mbox{  }\mbox{  }\mbox{  }\mbox{  }\mbox{  }\mbox{  }\mbox{  }
\mbox{  }\mbox{  }\mbox{  }\mbox{  }\mbox{  }\mbox{  }\mbox{  }\mbox{  }
\times\||\nabla|^{\beta_3}\langle{\mathbf{R}}( e_{l_0},e_{l_1})e_{l_2},e_{l_3}\rangle\|_{L^{\infty}_sL^{r(\beta_3)}_x}\\
&\lesssim \|\psi_x\|_{L^2_s\dot{H}^{\frac{d}{2}}}\|\psi_s\|_{L^2_s\dot{H}^{\frac{d}{2}-1}}\\
&\lesssim \epsilon^2,
\end{align*}
where in the $p(\beta_1), q(\beta_2),r(\beta_3)$ are defined by
\begin{align*}
\frac{\beta_1}{d}=\frac{1}{p(\beta_1)}\\
\frac{1}{d}+\frac{\beta_2}{d}=\frac{1}{q(\beta_2)}\\
\frac{\beta_3}{d}=\frac{1}{r(\beta_3)}.
\end{align*}
Thus (\ref{Nbv1}) for all $0\le j\le L$ is done as well.
\end{proof}

\subsection{Proof of Proposition 1.1.}

We want to prove Proposition  1.1 by bootstrap. First, there exists a sufficiently large $s_*$ such that (\ref{X1})-(\ref{X3}) hold  for $s\in[s_*,\infty)$ because Lemma \ref{QQ} says the left hand sides of (\ref{X1}) and (\ref{X3}) decay  to 0 as $s\to\infty$. Second, by Lemma \ref{j3h}, one can push $s_*$ to be $0$.  Therefore, all the bounds stated by Lemma \ref{j1}-Lemma \ref{j3h} hold for all $s\in [0,\infty)$.

\section{ Decay estimates in block spaces of $F_k$ }

According to the definition of $F_{k}$ space, it is natural to track the following four block spaces of $F_{k}$:
\begin{align}\label{ijnzbv}
L^{\infty}_tL^{2}_x, \mbox{  }L^{p_d}_{t,x},\mbox{  } L^{p_{d}}_xL^{\infty}_t,\mbox{  }  L^{2,\infty}_{\vec e}
\end{align}
along the heat flow. We will see in Section 5 there is no need to track the $L^{2,\infty}_{\vec e}$ blocks of $F_{k}$ in the heat direction, which makes large convenience for us.

\subsection{Tracking the $L^{p_{d}}_{t,x}\bigcap L^{\infty}_tL^2_x$ block}

Let $u$ be a solution to SL. Proposition 1.1
with additional efforts yields
\begin{Corollary}\label{cXS}
Let  $u\in C([-T,T];\mathcal{Q}(\Bbb R^d,\mathcal{N}))$ be a solution to SL.
Let $v$ be the solution of heat flow with initial data $u$.  Assume also that $v$ is global
in the heat direction and $\{e_{l}\}^{2n}_{l=1}$ denotes the caloric gauge. Denote $\{\psi_i\}^d_{i=1}$ the associated differential fields. Define the frequency envelopes $\{h_{k}(\sigma)\}$ by
\begin{align*}
h_{k}(\sigma)&=\sup_{s\ge 0,k'\in\Bbb Z}(1+s2^{2k'})^{4}2^{-\delta|k-k'|}2^{(\frac{d}{2}-1)k'}2^{\sigma k'}\|P_{k'}\psi_x\|_{L^{\infty}_tL^2_x}\\
h_{k}&:=h_{k}(0).
\end{align*}
Suppose that $\{h_{k}(\sigma)\}$ satisfy
\begin{align}\label{itical}
\sum_{k\in\Bbb Z}h^2_{k}\le \epsilon_1,\mbox{  }\sum_{k\in\Bbb Z}h^2_{k}(\sigma)<\infty, \mbox{ }\forall\sigma \in [0,\vartheta].
\end{align}
Then $v$ satisfies for $s\in\Bbb R^+$, $0\le l\le L$ that
\begin{align}
\|\partial^{l}_xv\|_{L^{\infty}_t\dot{H}^{\frac{d}{2}}_{x}}&\lesssim s^{-\frac{l}{2}}\epsilon_1\label{FD}\\
\|\partial^{l}_{x}(d\mathcal{P}(e_l)-\chi^{\infty}_l)\|_{L^{\infty}_t\dot{H}^{\frac{d}{2}}_x}&\lesssim  s^{-\frac{l}{2}}\epsilon_1\label{xindv}
\end{align}
And the corresponding differential fields and connection coefficients satisfy
\begin{align}
\|\partial^{j}_x\phi_x\|_{L^{\infty}_t{\dot H}^{\frac{d}{2}-1}_x}&\lesssim_j \epsilon_1 s^{-\frac{j}{2}}\\
\|\partial^{j}_xA_{x}\|_{L^{\infty}_{t}\dot{H}^{\frac{d}{2}-1}_x}&\lesssim_{j} \epsilon_1 s^{-\frac{j}{2}}\label{a993}\\
\|\partial^{j}_x\phi_x\|_{L^{\infty}_{t}L^{\infty}_x}&\lesssim_j \epsilon_1 s^{\frac{j+1}{2}}\\
\|\partial^{j}_xA_{x}\|_{L^{\infty}_{t}L^{\infty}_x}&\lesssim_{j} \epsilon_1 s^{-\frac{j+1}{2}}
\end{align}
and for all $0\le j\le L$, $s>0$.
\end{Corollary}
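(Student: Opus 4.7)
The natural strategy is to apply Proposition~\ref{1XS} slice-wise in $t \in [-T,T]$ and then take $L^\infty_t$ at the end. Fix $t \in [-T,T]$ and view $u(t,\cdot)$ as initial data for the spatial heat flow producing $v(t,\cdot,\cdot)$. To invoke Proposition~\ref{1XS} at this $t$, one needs the small-data hypothesis $\|u(t)\|_{\dot{H}^{d/2}_x} \lesssim \epsilon_1^{1/2}$ uniformly in $t$. From \eqref{itical} with $\sigma=0$, evaluated at $s=0$, one reads off
$$\|\psi_x|_{s=0}\|^2_{L^\infty_t \dot{H}^{d/2-1}_x} \;\lesssim\; \sum_{k}h_k^2 \;\le\; \epsilon_1.$$
Using the identity $\partial_i u = \sum_l \psi_x^l\, d\mathcal{P}(e_l|_{s=0})$ together with fractional Leibniz and the crude pointwise bound $\|d\mathcal{P}(e_l)\|_{L^\infty_x} \le 1$, this translates into the desired smallness $\|u(t)\|_{\dot{H}^{d/2}_x} \lesssim \epsilon_1^{1/2}$ for every $t$.

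Once the smallness is established, Proposition~\ref{1XS} applied to $u(t)$ (with target $\mathcal{N}$) yields, at each $t$, the spatial bounds \eqref{zzFD}, \eqref{1.8a} together with the frame and gauge estimates, all with constants independent of $t$. Taking $L^\infty_t$ on both sides immediately delivers the claimed $v$-bound and the frame bound \eqref{xindv}. The estimates on $\phi_x$ and $A_x$, including \eqref{a993}, follow by the same route: Lemma~\ref{j31} and Lemma~\ref{j3h} are applied time-slice-wise, the prerequisite bootstrap \eqref{X1}--\eqref{X3} being closed at each fixed time by the argument of Section~4.2 because the smallness parameter is uniform in $t$. The $L^\infty_x$ bounds are then the pointwise-in-$t$ statements of Lemma~\ref{j31} and the $L^\infty$ component of Lemma~\ref{j3h}.

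The main technical subtlety sits in the very first step: converting the frame-dependent bound on $\psi_x|_{s=0}$ into intrinsic smallness of $\|u(t)\|_{\dot{H}^{d/2}_x}$. Since the initial frame $\{e_l|_{s=0}\}$ is not a priori close to a constant frame, a bootstrap closure is needed. The frame estimate \eqref{xindv}, which is itself among the conclusions of the Corollary, provides $\|d\mathcal{P}(e_l)-\chi_l^\infty\|_{L^\infty_t \dot{H}^{d/2}_x} \lesssim \epsilon_1^{1/2}$ and is exactly what is required to absorb the high-frequency fractional Leibniz contributions in the conversion. Alternatively one initiates the bootstrap at very large $s$, where \eqref{xindv} is automatically small by the $s^{-j/2}$ decay, and then propagates back to $s=0$ using the parabolic estimates along the heat direction established in Section~3. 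This closure is the only new ingredient beyond Proposition~\ref{1XS}; once it is in place, the remaining claims are formal consequences of the spatial theorem applied slice-wise and then supremized in $t$.
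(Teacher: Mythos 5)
Your proposal captures the correct overall scheme (convert the frame-dependent smallness in \eqref{itical} into an intrinsic smallness $\|u(t)\|_{\dot H^{d/2}_x} \lesssim \epsilon_1$, then invoke Proposition~\ref{1XS} and the lemmas of Sections~3--4 with $L^\infty_t$ supplied throughout), and you correctly identify that a bootstrap closure is the nontrivial content. However, your description of how to initialize the bootstrap has a genuine gap.

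First, as you yourself concede, the crude pointwise bound $\|d\mathcal{P}(e_l)\|_{L^\infty_x}\le 1$ is insufficient: the fractional Leibniz rule applied to $\partial_i u = \psi_i^l\, d\mathcal{P}(e_l)$ leaves a term involving $\||\nabla|^{d/2-1} d\mathcal{P}(e_l)\|$, so one genuinely needs Sobolev-level control on the frame, precisely the bound \eqref{xindv}. Second, and more seriously, your proposed resolution is circular. You suggest initiating the bootstrap at large $s$ "where \eqref{xindv} is automatically small by the $s^{-j/2}$ decay," but the $j=0$ case of \eqref{xindv} carries \emph{no} $s$-decay, and in any case \eqref{xindv} is one of the Corollary's conclusions, so it cannot be invoked to initialize its own proof. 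The paper breaks the circularity with an additional ingredient you do not mention: the SL flow conserves energy, so $\|du(t)\|_{L^2_x}=\|du_0\|_{L^2_x}$ uniformly in $t$, and this allows one to reconstruct the energy-dependent \emph{subcritical} estimates \eqref{08}--\eqref{081s} and then Lemma~\ref{QQ}'s bound \eqref{fcav} in the $L^\infty_t$ setting. That subcritical bound does decay like $s^{-(d-2)/4}$ for $d\ge3$ (even for $j=0$) with a constant depending only on $\|du_0\|_{L^2_x}$, and it is this decay, not the critical bound \eqref{xindv}, that makes the frame provably close to constant at some $\hat s<\infty$. Concretely, the paper first re-derives (under hypothesis \eqref{itical}) the $L^\infty_t$ analogues of $\|\nabla^j dv\|_{L^\infty_t L^2_x}\lesssim \|du_0\|_{L^2_x} s^{-j/2}$ and its $L^\infty_x$ counterpart, using the heat equation, Gagliardo--Nirenberg, and induction-in-$j$ absorptions; only after that does it pick $\hat s$, verify $\|d\mathcal{P}(e_l)-\chi^\infty_l\|_{L^\infty_t\dot H^{d/2}_x}\le 1$ on $[\hat s,\infty)$, conclude the smallness of $\|\partial_x v\|_{L^\infty_t\dot H^{d/2-1}_x}$ there, apply Proposition~\ref{1XS} from initial time $\hat s$ to upgrade $\le 1$ to $\lesssim \epsilon_1$, and then push $\hat s$ down to $0$ by a continuity argument. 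Without the intermediate subcritical step and the use of energy conservation, the starting point of the bootstrap is unfounded.
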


\begin{proof}
Compared with Proposition 1.1, we have assumption (\ref{itical}) here rather than
 \begin{align}\label{BOOT}
\|v\|_{L^{\infty}_t\dot{H}^{\frac{d}{2}}_x}&\le \epsilon_1.
\end{align}
 In order to transpose (\ref{itical})  to  (\ref{BOOT}), we need a tricky bootstrap argument.

First, we reconstruct the subcritical theory (energy dependent) estimates presented in Lemma \ref{Q} with assumption (\ref{itical}).
Since the energy conserves along the Schrodinger map flow we get
\begin{align}
\|du\|_{L^{\infty}_tL^2_x}\le \|du_0\|_{L^2_x}.
\end{align}
Checking the proof of (\ref{xvb67}) of Lemma \ref{QQ}, one finds only the following estimates are a-prior used:
\begin{align}
\|dv\|_{L^{\infty}_tL^{\infty}_x}&\lesssim \epsilon s^{-\frac{1}{2}}\label{y8}\\
\|\nabla^{j}dv\|_{L^{\infty}_tL^{\infty}_x}&\lesssim \epsilon s^{-\frac{j}{2}}.\label{y98}\\
\|\nabla^{j}dv\|_{L^{\infty}_tL^{2}_x}&\lesssim \|dv_0\|_{L^2_x}s^{-\frac{j}{2}}\label{y99}\\
\|\nabla^{j}dv\|_{L^{\infty}_tL^{\infty}_x}&\lesssim \|dv_0\|_{L^2_x}s^{-\frac{2j+d}{4}}.\label{y100}
\end{align}
Thus if (\ref{y8})-(\ref{y100}) are obtained with assumption (\ref{itical}), then the estimate  (\ref{xvb67}) in Lemma \ref{QQ} holds here as well.
Notice that (\ref{y8})  follows by (\ref{itical}), Gagliardo-Nirenberg inequality
\begin{align*}
\|dv\|_{L^{\infty}_tL^{\infty}_x}&\lesssim \|\phi_x\|_{L^{\infty}_tL^{\infty}_x}
\lesssim \|\phi_x\|^{\frac{1}{2}}_{L^{\infty}_t{\dot H}^{\frac{d}{2}-1}_x}\|\phi_x\|^{\frac{1}{2}}_{L^{\infty}_t{\dot H}^{\frac{d}{2}+1}_x}
\lesssim \epsilon s^{-\frac{1}{2}}.
\end{align*}
Now, we turn to prove (\ref{y98}). Denote
\begin{align*}
Z_{\infty,k}(s)=\sup_{\tilde{s}\in[0,s]}\tilde{s}^{\frac{k}{2}}\|\partial^{k}_xv(\tilde{s})\|_{L^{\infty}_{t,x}}.
\end{align*}
By the heat flow equation, one has
\begin{align*}
\|\partial^{k+1}_xv(s)\|_{L^{\infty}_{t,x}}&\lesssim
s^{-\frac{1}{2}}\|\partial^{k}_x v(s/2)\|_{L^{\infty}_{t,x}}
+\int^{s}_{\frac{s}{2}}(s-s')^{\frac{1}{2}}\|\partial^{k}_x(S(v)(\partial_x,\partial_x))\|_{L^{\infty}_{t,x}}ds'\\
&\lesssim  s^{-\frac{k+1}{2}}Z_{\infty,k}(s)
+\int^{s}_{\frac{s}{2}}(s-s')^{\frac{1}{2}}\|\partial^{k+1}_xv\|_{L^{\infty}_{t,x}}\|dv\|_{L^{\infty}_{t,x}}ds'\\
&+\sum^{k+2}_{l=2}\sum_{\sum^{l}_{i=1}j_{i}=k+2,1\le j_i\le k}\int^{s}_{\frac{s}{2}}(s-s')^{\frac{1}{2}}\|\partial^{j_1}_xv\|_{L^{\infty}_{t,x}}....\|\partial^{j_l}v\|_{L^{\infty}_{t,x}}ds'.
\end{align*}
Then we have by (\ref{y8}) that
\begin{align*}
Z_{\infty,1}(s)&\lesssim \epsilon\\
Z_{\infty,k'}(s)&\lesssim \epsilon, \mbox{ }\forall k'\le k   \Rightarrow Z_{\infty,k+1}(s)\lesssim \epsilon Z_{\infty,k+1}(s)+ \epsilon.
\end{align*}
Therefore, using this induction relation we get
\begin{align}\label{Kujer}
s^{\frac{k}{2}}\|\partial^{k}_xv(\tilde{s})\|_{L^{\infty}_{t,x}}\lesssim \epsilon.
\end{align}
And transposing this extrinsic bound to the intrinsic quantities $|\nabla^{j}\partial_x v|$  gives
 (\ref{y98}).
Denote
\begin{align*}
Z_{2,k}(s)=\sup_{\tilde{s}\in[0,s]}\tilde{s}^{\frac{k-1}{2}}\|\partial^{k}_xv(\tilde{s})\|_{L^{\infty}_{t}L^2_{x}}.
\end{align*}
Then similarly one has
by the heat flow equation that
\begin{align*}
\|\partial^{k+1}_xv(s)\|_{L^{\infty}_{t}L^2_{x}}&\lesssim
s^{-\frac{1}{2}}\|\partial^{k}_x v(s/2)\|_{L^{\infty}_{t}L^2_{x}}
+\int^{s}_{\frac{s}{2}}(s-s')^{\frac{1}{2}}
\|\partial^{k}_x(S(v)(\partial_x,\partial_x))\|_{L^{\infty}_{t}L^2_{x}}ds'\\
&\lesssim  s^{-\frac{k+1}{2}}Z_{2,k}(s)
+\int^{s}_{\frac{s}{2}}(s-s')^{\frac{1}{2}}\|\partial^{k+1}_xv\|_{L^{\infty}_{t}L^{2}_{x}}\|dv\|_{L^{\infty}_{t,x}}ds'\\
&+\sum^{k+2}_{l=2}\sum_{\sum^{l}_{i=1}j_{i}=k+2,1\le j_i\le k}\int^{s}_{\frac{s}{2}}(s-s')^{\frac{1}{2}}\|\partial^{j_1}_xv\|_{L^{\infty}_{t}L^2_{x}}\|...\|_{L^{\infty}_{t,x}}....\|\partial^{j_l}v\|_{L^{\infty}_{t,x}}ds'.
\end{align*}
Thus we obtain by (\ref{y8}), (\ref{Kujer}) that
\begin{align*}
Z_{2,1}(s)&\lesssim  \|dv_0\|_{L^{\infty}_tL^2_x}\\
Z_{2,k'}(s)&\lesssim \|dv_0\|_{L^{\infty}_tL^2_x}, \mbox{ }\mbox{  } \forall k'\le k\Rightarrow Z_{2,k+1}(s)\lesssim \epsilon Z_{2,k+1}(s)+\|dv_0\|_{L^{\infty}_tL^2_x},
\end{align*}
which gives us (\ref{y99}) by transposing to intrinsic quantities. And we end the first step by pointing that (\ref{y100}) follows by (\ref{y8})-(\ref{y99}) and applying $\|e^{s\Delta}f\|_{L^{\infty}_x}\lesssim s^{-\frac{d}{4}}\|f\|_{L^2_x} $ to $v$ written by the Duhamel principle (see Lemma \ref{Q} for the route).

Second, we set up our bootstrap to obtain ${L^{\infty}_t{\dot H}^{\frac{d}{2}}_x}$ bounds for $v$.
Since (\ref{xvb67})-(\ref{xvb68}) have been verified in the first step, there exits $\hat{s}$ which is sufficiently large such that for $s>\hat{s}$
\begin{align}\label{dcv0}
\|d\mathcal{P}(e_l)-\chi^{\infty}_l\|_{L^{\infty}_t\dot{H}^{\frac{d}{2}}_x}\le 1.
\end{align}
Then by $\partial_iv=d\mathcal{P}(e_l)\psi^{l}_i$ we get for $s>\hat{s}$
\begin{align}\label{dcv2}
\|\partial_x v\|_{L^{\infty}_t\dot{H}^{\frac{d}{2}-1}_x}\le \epsilon.
\end{align}
Then applying Proposition  1.1 with initial time $\hat{s}$ we get for all $s\ge \hat{s}$
\begin{align}\label{dcv1}
\|d\mathcal{P}(e_l)-\chi^{\infty}_l\|_{L^{\infty}_t\dot{H}^{\frac{d}{2}}_x}\lesssim \epsilon.
\end{align}
Comparing (\ref{dcv0}) with (\ref{dcv1}), we see by bootstrap that  (\ref{dcv0}) indeed holds for all $s\ge 0$.
And thus (\ref{dcv2}) holds for all $s\ge 0$ as well.
 Then our corollary follows directly by Proposition  1.1.
\end{proof}

Our main result for this section is
\begin{Proposition}\label{mill}
Let $\sigma\in[0,\vartheta]$. Assume that $u$ is a solution to SL given in Corollary \ref{cXS}. With abuse of notations, denote $\{h_{k}(\sigma)\}$ the frequency envelopes
\begin{align}
h_{k}(\sigma)&=\sup_{s\ge 0,k'\in\Bbb Z}(1+s2^{2k'})^{4}2^{-\delta|k-k'|}2^{(\frac{d}{2}-1)k'}2^{\sigma k'}\|P_{k'}\psi_x\|_{L^{\infty}_tL^2_x\bigcap L^{p_{d}}_{t,x}}\\
h_{k}&:=h_{k}(0).
\end{align}
Suppose that
\begin{align}\label{y7788}
\sum_{\Bbb Z}h^2_{k} \le  \epsilon_1.
\end{align}
Then for all $0\le \L\le \frac{1}{2}L-1$,
\begin{align}\label{ba7788}
&\|P_{k}(d\mathcal{P}(e)-\chi^{\infty})\|_{L^{p_d}_{t,x}\bigcap L^{\infty}_tL^2_x}\lesssim (1+s2^{2k})^{-\L+1}2^{-\frac{d}{2}k}2^{-\sigma k}h_{k}(\sigma).
\end{align}
Moreover, for all $0\le \L\le L-2$, $\mathcal{G}',\mathcal{G}''$ satisfy
\begin{align}
\|P_{k}(\mathcal{G}')\|_{L^{p_d}_{t,x}\bigcap L^{\infty}_tL^2_x}&\lesssim (1+s2^{2k})^{-\L+1}2^{-\frac{d}{2}k}2^{-\sigma k}h_{k}(\sigma)\label{azji1}\\
\|P_{k}(\mathcal{G}'')\|_{L^{p_d}_{t,x}\bigcap L^{\infty}_tL^2_x}&\lesssim (1+s2^{2k})^{-\L+1}2^{-\frac{d}{2}k}2^{-\sigma k}h_{k}(\sigma).\label{azji2}
\end{align}
The connection coefficients $A_x$ satisfy
\begin{align}
\|P_{k}A_x\|_{L^{p_{d}}_{t,x}\bigcap L^{\infty}_tL^2_x}\lesssim 2^{-\sigma k+k}(1+s2^{2k})^{-\L+1}h_{k,s}(\sigma)\label{7kkk8}
\end{align}
where we denote
\begin{align}\label{8h8}
h_{k,s}(\sigma)=\left\{
                  \begin{array}{ll}
                    \sum_{-j\le l\le k}h_{l}(\sigma)h_{l}, & \hbox{ }s\in[2^{2j-1},2^{2j+1}], k+j\le 0 \\
                    2^{k+j}h_{-j}h_{k}(\sigma), & \hbox{ }s\in[2^{2j-1},2^{2j+1}], k+j\ge 0
                  \end{array}
                \right.
\end{align}
\end{Proposition}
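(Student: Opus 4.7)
The approach mirrors the structure of Section 4, but upgrades the target norm from $L^{\infty}_t\dot{H}^{\frac{d}{2}}_x$ to the block norm $L^{p_d}_{t,x}\bigcap L^{\infty}_tL^2_x$ while keeping frequency localization, so that the parabolic smoothing factor $(1+s2^{2k})^{-\L+1}$ can be tracked. As in Section 4, I would run a bootstrap in $s$: assume (\ref{ba7788})--(\ref{7kkk8}) hold for $s\in[s_*,\infty)$ with some sufficiently large $s_*$ (which holds by Lemma \ref{QQ} and Corollary \ref{cXS}, since the frame converges and the differential fields decay), and then self-improve to close the bootstrap down to $s_*=0$. The starting ingredients are the pointwise and $L^{\infty}_tH^\sigma_x$ bounds already provided by Corollary \ref{cXS}, together with Lemma \ref{EH2}, which is the analogue of Lemma \ref{EH} adapted to the new block norm.

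For the frame estimate (\ref{ba7788}), I would differentiate the caloric identity $\partial_s d\mathcal{P}(e_l)=(\mathbf{D}d\mathcal{P})(\partial_s v;e_l)$, apply $P_k$, and invoke the multilinear expansion (\ref{huasi}) from Lemma \ref{j2}. Integrating from $s=\infty$ back and using the decay of $\partial_s v$ inherited from Corollary \ref{cXS} produces the factor $(1+s2^{2k})^{-\L+1}$, where the $+1$ loss reflects the one-derivative cost of the Duhamel integration. For the curvature estimates (\ref{azji1})--(\ref{azji2}), the schematic expansions (\ref{huakai1})--(\ref{huakai2}) of $\partial^\alpha_x\mathcal{G}'$ and $\partial^\alpha_x\mathcal{G}''$ carry over verbatim; the new content is that each factor $\nabla^a_x\partial_x v$ and $\nabla^b_x d\mathcal{P}(e)$ must now be estimated in an $L^{p_d}$-based norm. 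Interpolating (\ref{Lp1})--(\ref{LpA2}) with $L^{p_d}_{t,x}$ bounds supplied by the bootstrap assumption, and feeding them through Lemma \ref{EH2} together with bilinear Littlewood-Paley decomposition, closes the estimate with the same power count as in Lemma \ref{kx1}, up to the harmless loss of one power of $(1+s2^{2k})$.

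For the connection coefficient bound (\ref{7kkk8}), the identity $A_x=\int_s^\infty\mathcal{R}(\psi_s,\psi_x)\,ds'$ from (\ref{journal}) reduces the question to a bilinear estimate on $\psi_s\diamond\psi_x$ dressed by the curvature factor $\langle\mathbf{R}(e_{l_0},e_{l_1})e_{l_2},e_{l_3}\rangle$, whose frequency-localized pieces are already controlled by (\ref{azji1}). I would substitute $\psi_s=\sum_i D_i\phi_i$ via the heat flow equation to trade one power of $\partial_x$ (hence of $2^k$) for an extra heat decay factor, then apply bilinear Littlewood-Paley decomposition and Lemma \ref{1bi1} in the $L^{p_d}_{t,x}\cap L^{\infty}_tL^2_x$ components inherent in the $F_k$ norm. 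The two-branch form of $h_{k,s}(\sigma)$ in (\ref{8h8}) then arises naturally from separating the low-high, high-low, and high-high paraproducts: in the regime $k+j\le 0$ (output frequency $\lesssim s^{-1/2}$) the heat kernel places essentially no restriction and all comparable intermediate frequencies $l\in[-j,k]$ contribute, yielding $\sum_{-j\le l\le k}h_l(\sigma)h_l$; in the regime $k+j\ge 0$ the factor $(1+s'2^{2k})^{-N}$ concentrates the $s'$-integral near $s'\sim 2^{-2k}$, so only the high-high paraproduct at frequency $\sim k$ survives, producing the single term $2^{k+j}h_{-j}h_k(\sigma)$.

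The main obstacle is the last bookkeeping step: matching the bilinear Littlewood-Paley output exactly to the two-branch envelope (\ref{8h8}). This requires carefully separating high-low, low-high, and high-high interactions of $\psi_s$ with $\psi_x$, and using the $(1+s'2^{2k'})^4$ weight built into the definition of $h_{k}(\sigma)$ to absorb all low-frequency tails into a clean geometric sum. A secondary nuisance is that many multilinear estimates in Section 4 are stated only for $L^{\infty}_tL^2_x$; their $L^{p_d}_{t,x}$ counterparts require replacing the Sobolev embeddings with space-time Sobolev estimates compatible with the $p_d$ scaling, but this is routine once the interpolation exponents are selected as in (\ref{vfcr54})--(\ref{995}).
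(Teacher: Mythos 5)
Your proposal captures the high-level structure (Duhamel integration from $s=\infty$, bilinear Littlewood--Paley, a decay factor $(1+s2^{2k})^{-\L+1}$ with a one-power loss from integrating in $s$), but there is a real gap: you never establish the frequency-localized parabolic bounds on $\psi_s$, $v$, and $\partial_s v$ that are the actual engine of the proof. You invoke ``the decay of $\partial_s v$ inherited from Corollary~\ref{cXS}'', but that corollary only gives Sobolev-space and $L^\infty_x$ decay, \emph{not} the per-frequency estimates
\[
2^{\frac{d}{2}k}\|P_k v\|_{L^{p_d}_{t,x}\cap L^\infty_t L^2_x}\lesssim_M 2^{-\sigma k}h_k(\sigma)(1+2^{2k}s)^{-M},
\qquad
2^{\frac{d}{2}k-2k}\|P_k\psi_s\|_{L^{p_d}_{t,x}\cap L^\infty_t L^2_x}\lesssim 2^{-\sigma k}h_k(\sigma)(1+2^{2k}s)^{-\L}.
\]
These do not follow from the hypothesis on $h_k(\sigma)$ by fiat; they are derived in the paper in two intermediate steps which your proof skips. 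First one converts the $\psi_x$ envelope into $P_k\partial_x v$ bounds at $s=0$ via the identity $\partial_i v=\psi_i^l\,d\mathcal{P}(e_l)$ and bilinear LP. Then one runs parabolic smoothing on the extrinsic heat equation $\partial_s v=\Delta v+S(v)(\partial v,\partial v)$ using Lemma~\ref{EH2} (this is where the gain $(1+2^{2k}s)^{-M}$ is created — it is \emph{not} inherited from anywhere). Finally one transfers back to $\psi_s$ via $\psi^l_s=\partial_s v\cdot d\mathcal{P}(e_l)$. Without this chain, the Duhamel integral $\int_s^\infty(\mathbf{D}d\mathcal{P})(e;e)\psi_s\,ds'$ cannot be closed with the stated decay, and the same issue propagates to (\ref{azji1}), (\ref{azji2}) and (\ref{7kkk8}).

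Two smaller points. The multilinear expansion (\ref{huasi}) is designed for the difference characterization of $\dot H^{1/2}$ and is the wrong tool for a $P_k$-localized bound: the paper does not expand into high-order covariant derivatives here, it simply writes the Duhamel formula and applies the bilinear decompositions (\ref{UUU})--(\ref{UUU1}). And the bootstrap-in-$s$ you propose is not what the paper does in this proposition — Proposition~\ref{mill} is a \emph{forward} derivation from the assumed $\psi_x$ envelope (the only bootstrap, already absorbed into Corollary~\ref{cXS}, is the one making $\|v\|_{L^\infty_t\dot H^{d/2}}$ small); bootstrapping the conclusion does not obviously close because the critical intermediate quantity $\psi_s$ does not appear in your bootstrap hypothesis. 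Your substitution $\psi_s=\sum_i D_i\phi_i$ for (\ref{7kkk8}) is a detour: once the localized $\psi_s$ estimate above is in hand, the formula $A_i=\int_s^\infty\psi_s\diamond\psi_x\,\mathcal{G}\,ds'$ closes directly by bilinear LP with the curvature factor controlled by (\ref{azji1}).
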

\begin{proof}
Let $0\le \L\le \frac{1}{2}L-1$.
{\bf Step 1. Proof of (\ref{ba7788}).} We first verify the bounds for $\partial_x v$:
\begin{align}
2^{\frac{d}{2}k-k}\|P_{k}\partial_xv(\upharpoonright_{s=0})\|_{L^{\infty}_{t}L^2_x\bigcap L^{p_{d}}_{t,x}}&\lesssim 2^{-\sigma k}h_{k}(\sigma)\label{ball1}\\
2^{\frac{d}{2}k-k}\|P_{k}\partial_xv (s)\|_{L^{\infty}_{t}L^2_x\bigcap L^{p_{d}}_{t,x}}&\lesssim 2^{-\sigma k}h_{k}(\sigma)(1+s2^{2k})^{-\L}\label{ball2}.
\end{align}
Now, we turn to prove (\ref{ball1}). We will frequently use the following bilinear estimates:
\begin{align}
&\|P_{k}(fg)\|_{L^{p_{d}}_{t}L^{p_{d}}_x}\lesssim \|P_{[k-4,k+4]}f\|_{L^{p_{d}}_{t}L^{p_{d}}_x}\|P_{\le k-4}g\|_{L^{\infty}_{t}L^{\infty}_x}\nonumber\\
&+\sum_{k_1\ge k-4}2^{\frac{d}{2}k}\|P_{k_1}f\|_{L^{p_{d}}_{t}L^{p_{d}}_x}\|P_{k_1}g\|_{L^{\infty}_{t}L^{2}_x}
+ \sum_{k_2\le k-4}\|P_{k_2}f\|_{L^{p_{d}}_{t}L^{\infty}_x}\|P_{[k-4,k+4]}g\|_{L^{\infty}_{t}L^{p_{d}}_x}\label{UUU}\\
&\|P_{k}(fg)\|_{L^{\infty}_{t}L^{2}_x}\lesssim \|P_{[k-4,k+4]}f\|_{L^{\infty}_{t}L^{2}_x}\|P_{\le k-4}g\|_{L^{\infty}_{t}L^{\infty}_x}\nonumber\\
&+\sum_{k_1\ge k-4}2^{\frac{d}{2}k}\|P_{k_1}f\|_{L^{\infty}_{t}L^{2}_x}\|P_{k_1}g\|_{L^{\infty}_{t}L^{2}_x}
+ \sum_{k_2\le k-4}\|P_{k_2}f\|_{L^{\infty}_{t}L^{\infty}_x}\|P_{[k-4,k+4]}g\|_{L^{\infty}_{t}L^{2}_x}.
\label{UUU1}
\end{align}
By definition and Corollary 5.1,
\begin{align}
2^{\frac{d}{2}k-k}\|P_{k}\psi_x\upharpoonright_{s=0}\|_{L^{p_{d}}_{t,x}\bigcap L^{\infty}_{t}L^2_x}&\le 2^{-\sigma k}h_{k}(\sigma)\nonumber\\
2^{\frac{d}{2}k}(1+s2^{2k})^{\L}\|P_{k}(d\mathcal{P}(e)-\chi^{\infty})\|_{L^{\infty}_{t}L^2_{x}}&\le \epsilon.\label{voke1}
\end{align}
Then we obtain
by the identity $\partial_iv=\sum^{2n}_{l=1}\psi^{l}_id\mathcal{P}(e_{l})$ and bilinear estimates (\ref{UUU})-(\ref{UUU1})  that
\begin{align*}
\|P_{k}\partial_xv(\upharpoonright_{s=0})\|_{L^{p_{d}}_{t,x}\bigcap L^{\infty}_tL^2_x}&\lesssim
2^{-\frac{d}{2}k+k}2^{-\sigma k}h_k(\sigma)+2^{\frac{d}{2}k}\sum_{k_1\ge k-4}2^{-dk_1+k_1}2^{-\sigma k_1}h_{k_1}(\sigma)\\
&+ 2^{-\frac{d}{2}k}2^{d(\frac{1}{2}-\frac{1}{p_{d}})k}\sum_{k_2\le k-4}2^{k_2-\sigma k_2}2^{\frac{d}{p_{d}}k_2}h_{k_2}(\sigma).
\end{align*}
Since $d\ge 3$, by the slow variation of frequency envelopes we get for $\sigma\in[0,\vartheta]$
\begin{align}\label{voke2}
2^{\frac{d}{2}k-k}\|P_{k}\partial_xv\upharpoonright_{s=0}\|_{L^{p_{d}}_{t}L^{p_{d}}_x\bigcap L^{\infty}_tL^2_x}\lesssim
2^{-\sigma k}h_k(\sigma).
\end{align}
Thus (\ref{ball1}) is done.

Now, let us consider (\ref{ball2}). This follows by (\ref{ball1}) and the route of our previous work  [Step 1, Prop. 3.2,\cite{LIZE}].
Moreover, the argument of  [Step 1, Prop. 3.2, \cite{LIZE}] and Lemma  \ref{EH} in fact yield the following refined bounds
\begin{align}
2^{\frac{d}{2}k}\|P_{k}v\|_{L^{p_{d}}_{t,x}\bigcap L^{\infty}_tL^2_x}&\lesssim_{M}  2^{-\sigma k}\widetilde{h}_{k}(\sigma)(1+2^{2k}s)^{-M}\label{nn7}\\
\|P_{k}\partial_sv\|_{L^{p_{d}}_{t,x}\bigcap L^{\infty}_tL^2_x}&\lesssim_{M}2^{-\sigma k}(1+2^{2k}s)^{-M}2^{2k-\frac{d}{2}k}\widetilde{h}_{k}(\sigma),\label{nn8}
\end{align}
where $M\in[0,\frac{1}{2}L'-1]$, and  $\{\widetilde{h}_{k}(\sigma)\}$ are defined by
\begin{align*}
\widetilde{h}_{k}(\sigma)&=\sup_{k\in\Bbb Z}2^{-\delta|k-k'|}2^{\frac{d}{2}k'}\|P_{k'}v_0\|_{L^{p_{d}}_{t,x}\bigcap L^{\infty}_tL^2_x}.
\end{align*}
And (\ref{voke2}) shows for $\sigma \in[0,\vartheta]$,
\begin{align}\label{nn9}
 \widetilde{h}_{k}(\sigma)\lesssim h_{k}(\sigma).
\end{align}

{\bf Step 1.2.} Second, we transfer (\ref{nn8}) to bounds for $\psi_s$.

Using bilinear estimates (\ref{UUU})-(\ref{UUU1}) to control $\partial_sv(d\mathcal{P}(e)-\chi^{\infty})$, we conclude by (\ref{nn8}), (\ref{voke1}) that
\begin{align}\label{niuniu}
&(1+s2^{2k})^{\L}\|P_{k}\psi_s\|_{L^{\infty}_{t}L^2_x\bigcap L^{p_{d}}_{t,x}}
\lesssim 2^{-\frac{d}{2}k+2k}2^{-\sigma k}{h}_{k}(\sigma).
\end{align}
for all $\sigma\in[0,\vartheta]$.

{\bf Step 1.3.}
Third, we calculate $\|P_{k}(d\mathcal{P}(e)-\chi^{\infty})\|_{L^{p_{d}}_{t,x}}$. Recall the formula
\begin{align}
d\mathcal{P}(e)-\chi^{\infty}&=\int^{\infty}_s ({\bf D}d\mathcal{P})(e;e) \psi_sds'\label{NbZ}
\end{align}
 where $\chi^{\infty}$ denotes the limit at $s\to\infty$.
And we have the $L^{\infty}_tL^2_x$ bounds for $({\bf D}d\mathcal{P})(e;e)$
\begin{align}\label{Niu90}
\|\partial^{j}_x({\bf D}d\mathcal{P}(e;e)) \|_{L^{\infty}_t{\dot H}^{\frac{d}{2}}_x}\lesssim \epsilon 2^{-\frac{j}{2}s},
\end{align}
for any $0\le j\le L$.
Then applying bilinear estimates (\ref{UUU})-(\ref{UUU1}) to (\ref{NbZ}) with (\ref{niuniu}), (\ref{Niu90}) gives
\begin{align*}
&\|P_k[({\bf D}d\mathcal{P})(e;e) \psi_s]\|_{L^{\infty}_tL^2_x\bigcap L^{p_{d}}_{t,x}}\lesssim 2^{-\frac{d}{2}k}h_{k}(\sigma)2^{-\sigma k}(1+s2^{2k})^{-\L}.
\end{align*}
Therefore, we conclude by integrating the above estimates in $s'\in[s,\infty)$ that
\begin{align*}
&\|P_{k}(d\mathcal{P}(e)-\chi^{\infty})\|_{L^{p_d}_{t,x}\bigcap L^{\infty}_tL^2_x}\lesssim (1+s2^{2k})^{-\L+1}2^{-\frac{d}{2}k}2^{-\sigma k}h_{k}(\sigma)
\end{align*}
 for any $k\in\Bbb Z$, $0\le \L\le \frac{1}{2}L-1$, which particularly yields (\ref{ba7788}).

{\bf Step 2. Proof of (\ref{azji1})-(\ref{azji2}).}
The same arguments of Step 1.3 give (\ref{azji1}) and (\ref{azji2}) by using
\begin{align*}
&\mathcal{G}'=\int^{\infty}_s\psi_s\Xi^{\infty}ds'+\int^{\infty}_s\psi_s\mathcal{G}''ds'\\
&\mathcal{G}''=\int^{\infty}_s\psi_s\Theta^{\infty}ds'+\int^{\infty}_s\psi_s\mathcal{G}'''ds'\\
&2^{\frac{d}{2}k}\|P_{k}g\|_{L^{\infty}_tL^{2}_x}\lesssim \epsilon(1+s2^{2k})^{-\L}, \mbox{  }\forall g=\mathcal{G}'', \mathcal{G}'''.
\end{align*}

{\bf Step 3. Proof of (\ref{7kkk8}).}
Lastly, we prove (\ref{7kkk8}). Applying bilinear Littlewood-Paley decomposition to $\partial_iv\cdot d\mathcal{P}(e_l)=\psi^l_i$, we obtain by (\ref{nn7}) and   (\ref{Niu90}) that
\begin{align}\label{pnnnb1}
2^{\frac{d}{2}k-k}\|P_{k}\psi_x\|_{L^{p_{d}}_{t,x}\bigcap L^{\infty}_{t}L^{2}_x}\lesssim 2^{-\sigma k}h_{k}(\sigma) (1+2^{2k}s)^{-\L+1}.
\end{align}
Recall the schematic formula (\ref{journal}) of $A_x$ and the first order decomposition of $\mathcal{G}$:
\begin{align}
[{A_i}]^{p}_{q}(s)&= \sum\int_s^\infty (\psi_s \psi_x){\mathbf{R}}\left( e_{l_0},e_{l_1},e_{l_2},e_{l_3}\right)ds'\label{journall}\\
\mathcal{G}&:=\sum{\mathbf{R}}\left( e_{l_0},e_{l_1},e_{l_2},e_{l_3}\right)=\Gamma^{\infty}+\widetilde{\mathcal{G}}\Gamma^{\infty}+\Xi^{\infty}\int^{\infty}_s \psi_sds'+\int^{\infty}_s \psi_s\mathcal{G}'ds'\nonumber
\end{align}
By (\ref{azji1}) and (\ref{niuniu}), the curvature term $\widetilde{\mathcal{G}}$ satisfies
\begin{align*}
2^{\frac{d}{2}k}\|P_{k}\widetilde{\mathcal{G}}\|_{L^{p_{d}}_{t,x}\bigcap L^{\infty}_{t}L^{2}_x}\lesssim 2^{-\sigma k}h_{k}(\sigma)(1+2^{2k}s)^{-\L+1}.
\end{align*}
Thus by (\ref{niuniu}), (\ref{pnnnb1}) and (\ref{journall}), we get (\ref{7kkk8}).

\end{proof}

\subsection{Differential fields with respect to $t$ variable}

Proposition \ref{mill} has tracked the bounds of $\mathcal{G}',\mathcal{G}''$ in  $L^{p_{d}}_xL^{\infty}_t\bigcap L^{\infty}_tL^2_x$ along the heat flow direction. The left unknown block spaces of $F_k$ is $L^{p_{d}}_xL^{\infty}_{t}$.

First, we reduce the estimate of $L^{p_{d}}_xL^{\infty}_t$ norms to the space $L^{p_{d}}_{t,x}$ which is more flexible when handling with geometric quantities.
\begin{Lemma}\label{qq2}
If we have verified that for all $0\le \L\le \frac{1}{2}L-1$, $f=\mathcal{G}',\mathcal{G}''$ there holds
\begin{align}
2^{\frac{d}{2}k}\|P_{k}f\|_{L^{p_{d}}_{t,x}}&\lesssim_{L} 2^{-\sigma k}h_{k}(\sigma) (1+2^{2k}s)^{-\L+1}\label{zji1}\\
2^{\frac{d}{2}k-2k}2^{\frac{d}{2}k}\| \partial_tP_{k}f\|_{L^{p_{d}}_{t,x}}&\lesssim_{L}2^{-\sigma k}h_{k}(\sigma) .\label{zji2}
\end{align}
Then, consequently we have
\begin{align}
2^{-\frac{d}{d+2}k}\|\partial^{L}_x P_{k}f\|_{L^{p_{d}}_{x}L^{\infty}_t}\lesssim_{L} 2^{-\sigma k}h_{k}(\sigma)  (1+2^{2k}s)^{\frac{1}{p'_d}(1-\L)}
\end{align}
for all $0\le \L\le \frac{1}{2}L-1$, and $f=\mathcal{G}',\mathcal{G}''$.
\end{Lemma}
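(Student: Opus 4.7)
The goal is to promote the $L^{p_d}_{t,x}$ control on $P_k f$ provided by (\ref{zji1}) to an $L^{p_d}_x L^\infty_t$ bound; the extra regularity required to move from $L^{p_d}_t$ to $L^\infty_t$ is paid for by the time-derivative estimate (\ref{zji2}). My plan is a one-dimensional Gagliardo--Nirenberg interpolation in the $t$ variable, applied slicewise in $x$ and then integrated against $dx$.

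Concretely, fix $x$ and apply the standard one-dimensional inequality
\begin{equation*}
\|w\|_{L^\infty_t}\lesssim \|w\|_{L^{p_d}_t}^{1-1/p_d}\,\|\partial_t w\|_{L^{p_d}_t}^{1/p_d}
\end{equation*}
to $w(t):=P_k f(t,x)$. Across the endpoints of $[-T,T]$ one either extends $w$ smoothly to $\mathbb{R}_t$ by a standard cutoff, or runs the equivalent argument via the fundamental theorem of calculus, representing $|w(t)|^{p_d}$ as its time average over $[-T,T]$ plus an integral of $p_d|w|^{p_d-2}w\,\partial_t w$. Raise the resulting inequality to the $p_d$-th power, integrate in $x$, and apply H\"older with conjugate exponents $p_d/(p_d-1)$ and $p_d$ to reach the key interpolation
\begin{equation*}
\|P_k f\|_{L^{p_d}_x L^\infty_t}\lesssim \|P_k f\|_{L^{p_d}_{t,x}}^{1/p'_d}\,\|\partial_t P_k f\|_{L^{p_d}_{t,x}}^{1/p_d}.
\end{equation*}
This slicewise reduction is the only nontrivial step of the proof.

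Finally, insert (\ref{zji1}) and (\ref{zji2}) into this interpolation bound. The temporal weight $(1+s2^{2k})^{-\mathcal{L}+1}$ appears only in the first factor and is therefore raised to exactly the exponent $1-1/p_d=1/p'_d$, reproducing the claimed decay factor $(1+s2^{2k})^{(1-\mathcal{L})/p'_d}$. The residual frequency exponents combine, using $p_d=(2d+4)/d$, into something at least as small as $2^{\frac{d}{d+2}k}\cdot 2^{-\sigma k}h_k(\sigma)$, so multiplying through by $2^{-\frac{d}{d+2}k}$ yields the stated conclusion. I do not anticipate real obstacles: the entire argument rests on the one-dimensional Gagliardo--Nirenberg inequality above followed by routine exponent bookkeeping, which is why no estimate on $\vec e$-lateral norms (the block of $F_k$ that is genuinely difficult to propagate along the heat flow) is needed here.
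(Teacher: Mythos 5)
Your proposal is correct and follows essentially the same route as the paper: the paper's entire proof is the interpolation inequality $\|f\|_{L^{p_d}_x L^{\infty}_t}\lesssim \|f\|_{L^{p_d}_{t,x}}^{1/p'_d}\|\partial_t f\|_{L^{p_d}_{t,x}}^{1/p_d}$ (stated as a Gagliardo--Nirenberg plus H\"older consequence), followed by inserting (\ref{zji1}) and (\ref{zji2}) and bookkeeping the exponents. Your slicewise-in-$x$ derivation of that inequality and the observation that the resulting power of $2^k$ is more than enough to absorb the $2^{\frac{d}{d+2}k}$ loss is exactly the paper's (terse) argument spelled out.
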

\begin{proof}
By Gagliardo-Nirenberg inequality and H\"older inequality, one has
\begin{align}\label{c56}
\|f\|_{L^{p_{d}}_xL^{\infty}_t}\lesssim \|f\|^{\frac{1}{p'_{d}}}_{L^{p_{d}}_{t,x}}\|\partial_t f\|^{\frac{1}{p_d}}_{L^{p_{d}}_{t,x}},
\end{align}
where $\frac{1}{p'_d}+\frac{1}{p_d}=1$. Then the Lemma follows directly.
\end{proof}

Let $\sigma\in[0,\vartheta]$, (\ref{zji1}) has been verified in Proposition \ref{mill}.
Thus Lemma \ref{qq2} reduces  the problem to prove (\ref{zji2}). We recall the following result.
\begin{Lemma}\label{xiaxia}
Let $d\ge 3$. Assume that $u$ is a solution to SL given in Proposition \ref{mill}. Let (\ref{y7788}) hold.
Assume in addition that
\begin{align}
\|P_{k}(A_x)\|_{F_{k}\bigcap S^{\frac{1}{2}}_k}&\lesssim  2^{-\frac{d}{2}k+k} 2^{-\sigma k}h_{k,s}(\sigma) (1+2^{2k}s)^{-4}\label{key2}\\
\|P_{k}(\widetilde{\mathcal{G}})\|_{F_{k}}&\lesssim  2^{-\frac{d}{2}k} 2^{-\sigma k}h_{k}(\sigma) (1+2^{2k}s)^{-4}\label{key3}
\end{align}
Then the corresponding differential field $\phi_t$ and connection coefficient $A_t$ satisfy
\begin{align}
\|P_{k}\phi_t\|_{L^{p_{d}}_{t,x}}&\lesssim 2^{-\frac{d}{2}k+2k} 2^{-\sigma k}h_{k}(\sigma)(1+2^{2k}s)^{-2}.\label{qab17788}\\
\|P_{k}(A_t)\|_{ L^{p_{d}}_{t,x}}&\lesssim 2^{-\frac{d}{2}k+2k} 2^{-\sigma k} h_{k}(\sigma)(1+s2^{2k})^{-1}.\label{pt9}
\end{align}
\end{Lemma}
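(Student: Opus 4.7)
The plan rests on a single structural observation: in the caloric gauge, where $A_s \equiv 0$, the identity $\phi_t = \sqrt{-1}\,\phi_s$ holds \emph{for every} $s \ge 0$, not only at the Schr\"odinger time $s = 0$. At $s = 0$ this is just the translation of the SL equation $\partial_t u = J\tau(u)$ into the complex frame. For $s > 0$, the torsion-free identity $\partial_s \phi_t = D_s \phi_t = D_t \phi_s$ combined with the commutator identity (\ref{ccccccccxxxxbbbb2}) shows that $\phi_t$ satisfies the same covariant heat equation (\ref{b3}) as $\phi_s$; since the K\"ahler structure $J = \sqrt{-1}$ is parallel and commutes with $\mathbf{R}$, the difference $w := \phi_t - \sqrt{-1}\,\phi_s$ satisfies a linear covariant heat equation with vanishing initial data. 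The a-priori decay supplied by Corollary \ref{cXS} then forces $w \equiv 0$ by uniqueness.

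Granted this identification, (\ref{qab17788}) is immediate from estimate (\ref{niuniu}) in the proof of Proposition \ref{mill}, applied with decay exponent equal to $2$:
\[
\|P_k\phi_t\|_{L^{p_d}_{t,x}} = \|P_k\phi_s\|_{L^{p_d}_{t,x}} \lesssim 2^{-\frac{d}{2}k + 2k}\,2^{-\sigma k}\,h_k(\sigma)\,(1+s2^{2k})^{-2}.
\]

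For (\ref{pt9}), the starting point is the $(s,t)$-component of the commutator identity, which under $A_s \equiv 0$ reduces to $\partial_s A_t = \mathcal{R}(\phi_s, \phi_t)$. The integrated decay of $\phi_s$, $\phi_t$, and $\mathcal{G}$ supplied by (\ref{niuniu}) and (\ref{key3}) gives $A_t \to 0$ as $s \to \infty$, by a variant of (\ref{poke})--(\ref{0k}); hence $A_t(s) = -\int_s^\infty \mathcal{R}(\phi_s, \phi_t)(s')\,ds'$. Writing $\mathcal{R}(\phi_s,\phi_t) = (\phi_s \diamond \phi_t)\cdot\mathcal{G}$ schematically and separating $\mathcal{G} = \Gamma^\infty + \widetilde{\mathcal{G}}$ as in the dynamic separation of Section~1.3, one obtains a bilinear contribution from $\Gamma^\infty$ and a trilinear one from $\widetilde{\mathcal{G}}$. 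To each I would apply spatial Littlewood-Paley decomposition and control by Lemma \ref{1bi1} together with (\ref{niuniu}), (\ref{key3}), and the $\phi_t$ bound just proved. A routine computation then shows the integrand at frequency scale $k$ is bounded in $L^{p_d}_{t,x}$ by $2^{-\frac{d}{2}k + 4k}\,2^{-\sigma k}\,h_k(\sigma)\,(1+s'2^{2k})^{-M}$ for any desired $M$; integrating over $s' \ge s$ absorbs one power of $2^{2k}$ and yields the claimed $2^{-\frac{d}{2}k + 2k}\,2^{-\sigma k}\,h_k(\sigma)\,(1+s2^{2k})^{-1}$.

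The main technical nuisance will be the trilinear Littlewood-Paley bookkeeping for the $\widetilde{\mathcal{G}}$ piece: one must distribute the three factors among $L^{p_d}_{t,x}$ and $L^\infty_{t,x}$ norms (the latter accessed via Bernstein from $L^\infty_t L^2_x$), ensure that the high-high-high interactions are absorbed by the extra $(1+s'2^{2k})^{-4}$ decay in (\ref{key3}), and check that summation over frequency combinations respects the slow variation of $\{h_k(\sigma)\}$. By contrast, the $\Gamma^\infty$ piece is essentially the bilinear analysis already carried out in the proof of Proposition \ref{mill} and poses no new difficulty.
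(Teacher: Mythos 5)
The central claim of your argument---that $\phi_t = \sqrt{-1}\,\phi_s$ for \emph{every} $s \ge 0$---is false, and this breaks the proof of (\ref{qab17788}). You correctly observe that at the Schr\"odinger time $s=0$ the SL equation gives $\phi_t\upharpoonright_{s=0} = \sqrt{-1}\,D_i\phi_i = \sqrt{-1}\,\phi_s\upharpoonright_{s=0}$, and you correctly compute that both $\phi_t$ and $\phi_s$ satisfy the covariant heat equation $\partial_s\phi_\mu = \sum_i D_iD_i\phi_\mu + \sum_i\mathcal{R}(\phi_\mu,\phi_i)\phi_i$. However, the difference $w = \phi_t - \sqrt{-1}\,\phi_s$ then satisfies $\partial_s w = \sum_i D_iD_i w + \sum_i\bigl[\mathcal{R}(\phi_t,\phi_i)\phi_i - \sqrt{-1}\,\mathcal{R}(\phi_s,\phi_i)\phi_i\bigr]$, and the bracketed term is \emph{not} of the form $\mathcal{R}(w,\phi_i)\phi_i$: that would require the curvature to be complex-linear in its first slot, i.e. $\mathbf{R}(JX,Y)Z = J\,\mathbf{R}(X,Y)Z$. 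The K\"ahler conditions $\nabla J = 0$ and $[\mathbf{R}(X,Y),J]=0$ give commutation of $J$ with the curvature \emph{endomorphism}, not complex-linearity in the first argument; already for the Fubini--Study curvature one checks $\mathbf{R}(JX,Y)Z \ne J\,\mathbf{R}(X,Y)Z$. Consequently the forcing term in the equation for $w$ is neither zero nor proportional to $w$, so the uniqueness argument does not apply and $w\not\equiv 0$ in general. (This is expected: were $\phi_t = \sqrt{-1}\,\phi_s$ for all $s$, then $v(s,\cdot)$ would solve the Schr\"odinger map equation in $t$ for every $s>0$, i.e. the heat flow and the Schr\"odinger map flow would commute, which they do not; and the claimed $\phi_t$-bound (\ref{qab17788}) would inherit arbitrarily high decay from (\ref{niuniu}), contradicting the stated fixed exponent $-2$.)

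The paper's route is different and avoids this issue: it first establishes (\ref{key1}), the bound for $\phi_t$ \emph{at} $s=0$, from $\phi_t\upharpoonright_{s=0}=\sqrt{-1}\,D_i\phi_i\upharpoonright_{s=0}$ together with the $A_x\upharpoonright_{s=0}$ estimate (\ref{key11}) and bilinear Littlewood--Paley decomposition; it then \emph{propagates} this bound forward in $s$ using the covariant heat equation satisfied by $\phi_t$ and the gauge bounds (\ref{key2})--(\ref{key3}), citing [Lemma 5.6, \cite{huangBIKThuang}] and Section 4 of \cite{LIZE} for the parabolic machinery. Your treatment of (\ref{pt9}) via $\partial_s A_t = \mathcal{R}(\phi_s,\phi_t)$, vanishing at $s=\infty$, dynamic separation $\mathcal{G} = \Gamma^\infty + \widetilde{\mathcal{G}}$, and bilinear/trilinear frequency analysis is essentially the paper's argument (formula (\ref{bvvvv670}) and the computation following (\ref{9jz2})); but as written it relies on the $\phi_t$ bound you derive from the false identity, so (\ref{qab17788}) must be established by the propagation route before (\ref{pt9}) can be closed.
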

\begin{proof}
The proof of [Lemma 5.5,\cite{huangBIKThuang}] and [Section 4, \cite{LIZE}] reveals  that (\ref{key2})-(\ref{key3}) imply
\begin{align}
\|P_{k}A_x(\upharpoonright_{s=0})\|_{L^{p_{d}}_{t,x}}&\lesssim 2^{-\frac{d}{2}k+k} 2^{-\sigma k}h_{k}(\sigma).\label{key11}
\end{align}
And the proof of [Lemma 5.6,\cite{huangBIKThuang}] and [Section 4, \cite{LIZE}] reveals that (\ref{qab17788}) is a corollary of
(\ref{key2})-(\ref{key3}) if one has obtained
\begin{align}
\|P_{k}\phi_t(\upharpoonright_{s=0})\|_{L^{p_{d}}_{t,x}}&\lesssim 2^{-\frac{d}{2}k+2k} 2^{-\sigma k}h_{k}(\sigma).\label{key1}
\end{align}
Let us verify (\ref{key1}). When $s=0$, there holds $\phi_t=\sqrt{-1}D_i\phi_i$. Using (\ref{key11}) to bound $\|P_{k}(A_x\phi_x)(\upharpoonright_{s=0})\|_{L^{p_{d}}_{t,x}}$, we obtain (\ref{key1}) by bilinear Littlewood-Paley decomposition and $d\ge 3$. Thus (\ref{qab17788}) is done.

Now let us prove (\ref{pt9}). Recall the formula
\begin{align} \label{bvvvv670}
A_t(s)&=\Gamma^{\infty}\int^{\infty}_s\phi_s\diamond \phi_tds'+\int^{\infty}_s(\phi_s\diamond \phi_t)\widetilde{\mathcal{G}}ds'.
\end{align}
We have seen for $d\ge 3$ in  (\ref{niuniu})
\begin{align*}
2^{\frac{d}{2}k-2k}\|P_{k}\phi_s\|_{L^{\infty}_tL^2_x\bigcap L^{p_{d}}_{t,x}}\lesssim 2^{-\sigma k}h_{k}(\sigma)(1+s2^{2k})^{-\L}.
\end{align*}
Since $d\ge 3$, bilinear Littlewood-Paley decomposition gives for $s\in [2^{2j-1}, 2^{2j+1}]$, $k,j\in\Bbb Z$,
\begin{align}
&\|P_{k}(\phi_s\diamond\phi_t)\|_{L^{p_{d}}_{t,x}}\nonumber\\
&\lesssim\sum_{k_1\le k-4,|k_2-k|\le 4} 2^{\frac{d}{2}k_1} \| P_{k_1}\phi_s\|_{L^{\infty}_tL^2_x}\|P_{k_2}\phi_t\|_{L^{p_{d}}_{t,x}}\nonumber\\
&+\sum_{k_2\le k-4,|k_1-k|\le 4} 2^{dk(\frac{1}{2}-\frac{1}{p_{d}})}2^{\frac{d}{2}k_1} \| P_{k_1}\phi_s\|_{L^{\infty}_tL^2_x}2^{\frac{d}{p_{d}}k_2}\|P_{k_2}\phi_t\|_{L^{p_{d}}_{t,x}}\nonumber\\
&+\sum_{k_1,k_2\le k-4,|k_1-k_2|\le 8} 2^{\frac{d}{2}k}  \|P_{k_1}\phi_s\|_{L^{\infty}_tL^2_x}\|P_{k_2}\phi_t\|_{L^{p_{d}}_{t,x}}\nonumber\\
&\lesssim 1_{k+j\ge 0}2^{-\frac{d}{2}k+2k}(1+s2^{2k})^{-2}2^{-\sigma k}h_{k}(\sigma)\nonumber\\
&\mbox{  }+1_{k+j\le 0}2^{dj-4j}2^{\frac{d}{2}k}2^{-\sigma k}h_{-j}(\sigma)h_{-j}.\label{9jz2}
\end{align}
By $d\ge 3$, integrating (\ref{9jz2}) in $s'\ge [s,\infty)$ yields
\begin{align*}
\int^{\infty}_s\|P_{k}(\phi_s\diamond\phi_t)\|_{L^{\infty}_tL^2_x\bigcap L^{p_{d}}_{t,x}}ds'\lesssim  2^{-\frac{d}{2}k+2k}(1+s2^{2k})^{-1}2^{-\sigma k}h_{k}(\sigma),
\end{align*}
by which the first term in the RHS of  (\ref{bvvvv670}) is done.
Moreover, (\ref{9jz2}), (\ref{key3}) and bilinear Littlewood-Paley decomposition lead to
\begin{align}
\|P_{k}(\phi_s\diamond\phi_t)\widetilde{\mathcal{G}})\|_{L^{p_{d}}_{t,x}}
&\lesssim 1_{k+j\ge 0}h_{k}(\sigma)2^{-\frac{d}{2}k+2k}(1+s2^{2k})^{-2}\nonumber\\
&+1_{k+j\le 0}2^{dj-4j}2^{\delta|k+j|}2^{\frac{d}{2}k}2^{-\sigma k}h_{-j}(\sigma)h_{-j}\label{9jz3}
\end{align}
for any $s\in [2^{2j-1}, 2^{2j+1}]$, $k,j\in\Bbb Z$. Since $d\ge 3$, integrating (\ref{9jz3}) in $s'\ge [s,\infty)$ gives acceptable bounds for the second term in the RHS of  (\ref{bvvvv670}). Thus (\ref{pt9}) is done.
\end{proof}

The following is the main result for this section.
\begin{Proposition}\label{xia2}
Let $d\ge 3$, $\sigma\in[0,\vartheta]$. Assume that $u$ is a solution to SL given in Proposition \ref{mill}.
Then we have for all $s>\infty$, $\L\in [0,\frac{1}{2}L-1]$,
\begin{align}
\|P_{k}(\mathcal{G}')\|_{L^{\infty}_{t}L^2_x\bigcap L^{p_{d}}_{t,x}}+\|P_{k}(\mathcal{G}'')\|_{L^{\infty}_{t}L^2_x\bigcap L^{p_{d}}_{t,x}}&\lesssim 2^{-\frac{d}{2}k} 2^{-\sigma k} h_{k}(\sigma)(1+s2^{2k})^{-\L+1}\label{py1}\\
2^{-\frac{d}{d+2}k}\|P_{k}(\mathcal{G}')\|_{L^{p_{d}}_xL^{\infty}_t}&\lesssim 2^{-\frac{d}{2}k} 2^{-\sigma k} h_{k}(\sigma)(1+s2^{2k})^{(1-\L)/p'_d}.\label{py2}
\end{align}
In fact, (\ref{py1})-(\ref{py2}) hold for all $\{\mathcal{G}^{(j)}\}^{\infty}_{j=0}$, where we denote
\begin{align}\label{UJ}
 \mathcal{G}^{(j)}=(\nabla^{j}{\bf R})(\underbrace{e,...,e}_{j};\underbrace{e,...,e}_{4})-{\rm limit}.
\end{align}

\end{Proposition}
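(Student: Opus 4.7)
The plan reduces Proposition \ref{xia2} to two pieces. First, the bound (\ref{py1}) is a direct restatement of (\ref{azji1})--(\ref{azji2}) from Proposition \ref{mill}, so no new work is required there. Second, for (\ref{py2}) I invoke Lemma \ref{qq2}: its Gagliardo--Nirenberg interpolation $\|f\|_{L^{p_d}_x L^\infty_t}\lesssim \|f\|^{1/p'_d}_{L^{p_d}_{t,x}}\|\partial_t f\|^{1/p_d}_{L^{p_d}_{t,x}}$ converts an $L^{p_d}_{t,x}$ bound on $\partial_t P_k f$ into the required $L^{p_d}_x L^\infty_t$ estimate. Since (\ref{zji1}) is already in hand from Proposition \ref{mill}, the only outstanding step is the time-derivative estimate (\ref{zji2}) for $f = \mathcal{G}', \mathcal{G}''$, and inductively for each $\mathcal{G}^{(j)}$ in (\ref{UJ}).

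To establish (\ref{zji2}), I differentiate $\mathcal{G}'$ intrinsically. Since $(\mathcal{G}')_l = (\widetilde{\nabla}\mathbf{R})(e_l;e_{l_0},\dots,e_{l_3}) - \Gamma^\infty_l$ is a scalar obtained by feeding frame vectors into a tensor on $\mathcal{N}$, the time derivative lands either on $\partial_t v$ (encoded by $\phi_t$ in the frame) or on some $\nabla_t e_j = A_t\cdot e$. Plugging back in the frame and isolating the limits at $s=\infty$ via $\widetilde{\nabla}^2\mathbf{R} = \Omega^\infty + \mathcal{G}''$ and $\widetilde{\nabla}\mathbf{R} = \Gamma^\infty + \mathcal{G}'$ yields the schematic identity
\begin{align*}
\partial_t \mathcal{G}' \;=\; \phi_t\cdot(\Omega^\infty+\mathcal{G}'') \;+\; A_t\cdot(\Gamma^\infty+\mathcal{G}'),
\end{align*}
with the analogous hierarchical version $\partial_t \mathcal{G}^{(j)} = \phi_t\cdot(\Lambda^\infty_j + \mathcal{G}^{(j+1)}) + A_t\cdot(\Pi^\infty_j + \mathcal{G}^{(j)})$ at every level.

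Each term in this decomposition is now controlled by estimates already in hand. The linear pieces come from Lemma \ref{xiaxia}: (\ref{qab17788}) gives $\|P_k\phi_t\|_{L^{p_d}_{t,x}}$ with the correct extra factor $2^{2k}$, and (\ref{pt9}) supplies the analogous bound for $A_t$. The bilinear pieces $\mathcal{G}^{(j+1)}\phi_t$ and $\mathcal{G}^{(j)}A_t$ are handled by standard bilinear Littlewood--Paley decomposition into low-high, high-high, and high-low contributions, using the $L^\infty_tL^2_x\cap L^{p_d}_{t,x}$ bounds on $\mathcal{G}^{(j)}$ from (\ref{azji1})--(\ref{azji2}) and a Bernstein step to produce the $L^\infty$ factor on the low-frequency side. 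The dyadic sums converge because $d\ge 3$ supplies the summability, exactly as in the proof of Lemma \ref{xiaxia}.

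The main obstacle is that the decomposition for $\partial_t \mathcal{G}^{(j)}$ contains $\mathcal{G}^{(j+1)}$, so the hierarchy does not close at a single level; this is precisely why the proposition is phrased uniformly over all $\mathcal{G}^{(j)}$. The resolution is a downward induction on $j$: Corollary \ref{cXS}, together with Lemma \ref{kx1} and Lemma \ref{j3h}, supplies decay estimates for arbitrarily high covariant derivatives of $\mathbf{R}$ along the heat flow up to order $L$, so the recursion terminates at some large $j_{\max}$ by direct brute-force estimation. Stepping back down, the smallness hypothesis $\sum_k h_k^2 \le \epsilon_1$ absorbs the bilinear self-interactions generated by $\mathcal{G}^{(j)} A_t$, so (\ref{zji2}) closes for all $j$ simultaneously. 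Lemma \ref{qq2} then delivers (\ref{py2}) and completes the proof of Proposition \ref{xia2}.
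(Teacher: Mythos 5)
Your proposal is correct and follows essentially the same route as the paper: (\ref{py1}) is lifted from Proposition \ref{mill}, Lemma \ref{qq2} reduces (\ref{py2}) to the time-derivative estimate (\ref{zji2}), and you compute $\partial_t\mathcal{G}^{(j)}$ schematically as $\phi_t\cdot\mathcal{G}^{(j+1)}+A_t\cdot\mathcal{G}^{(j)}$ (plus the constant pieces, which you rightly spell out and the paper suppresses), then close with bilinear Littlewood--Paley estimates, the $L^\infty_tL^2_x\cap L^{p_d}_{t,x}$ bounds from Proposition \ref{mill}, and Lemma \ref{xiaxia} for $\phi_t$ and $A_t$. One small clarification worth flagging: you describe a ``downward induction'' because $\partial_t\mathcal{G}^{(j)}$ involves $\mathcal{G}^{(j+1)}$, but the recursion is actually one-step and non-circular --- proving (\ref{zji2}) at level $j$ needs only the (\ref{py1})-type bounds at levels $j$ and $j+1$, and those come uniformly from Proposition \ref{mill} (extended to all $\mathcal{G}^{(j)}$ with $j$ bounded by the fixed heat-flow regularity order $L$), not from (\ref{zji2}) at level $j+1$. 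The paper compresses this into ``by repeating the previous arguments.'' Both you and the paper also invoke Lemma \ref{xiaxia} without explicitly re-verifying its hypotheses (\ref{key2})--(\ref{key3}); that verification is deferred to the bootstrap of Section 6 (Lemma \ref{pkij90}, Corollary \ref{IJ}), and your proposal inherits the same implicit dependence, which is consistent with how the paper organizes the argument.
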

\begin{proof}
(\ref{py1}) has been proved in Proposition \ref{mill}. By Lemma \ref{qq2}, it suffices to prove (\ref{zji2}).
We have
\begin{align*}
\partial_t\mathcal{G}'={{\nabla^2\bf R}}(\partial_t v,e_x; e_x,...,e_x)+
\sum_{\sum^{4}_{z=0}j_z=1}{{\nabla\bf R}}(\nabla^{j_0}_te_x; \nabla^{j_1}_te_x,...,\nabla^{j_4}_te_x).
\end{align*}
Schematically we write
\begin{align}\label{g9}
\partial_t\mathcal{G}'=\phi_t\mathcal{G}''+A_t\mathcal{G}'.
\end{align}
Since $d\ge 3$, applying bilinear Littlewood-Paley decomposition to (\ref{g9}) we obtain from (\ref{py1}) and Lemma
\ref{xiaxia} that
\begin{align}
\|P_{k}(\phi_t\mathcal{G}')\|_{L^{p_{d}}_{t,x}}+\|P_{k}(A_t\mathcal{G}')\|_{L^{p_{d}}_{t,x}}\lesssim
2^{-\frac{d}{2}k+2k}2^{-\sigma k}h_{k}(\sigma)(1+2^{2k}s)^{-1},
\end{align}
by which (\ref{zji2}) follows. Thus the proof is completed. Lastly, we observe that (\ref{UJ})  holds  for all $j$ by repeating the previous arguments.
\end{proof}

\section{Proof of Theorem 1.1 for $d\ge 3$.}

\subsection{Before iteration}

As mentioned in Section 1, the key estimates are the $F_{k}\bigcap S^{\frac{1}{2}}$ norm of $A_x$ along the heat direction (see (\ref{no3}))  and the $F_{k}$ norm of $\widetilde{\mathcal{G}}$ along the heat direction.

Recall the expression for $\widetilde{\mathcal{{G}}}$:
\begin{align}
\widetilde{{\mathcal G}}&=\mathcal{G}-\Gamma^{\infty}=\mathcal{U}_0+\mathcal{U}_1\nonumber\\
\mathcal{U}_0&:=\Xi^{\infty}
\int^{\infty}_s(\partial_i\phi_i)ds'\label{notion1}\\
\mathcal{U}_1&:=\Xi^{\infty}\int^{\infty}_s(A_i\phi_i)ds'
+\int^{\infty}_s(\partial_i\phi_i)\mathcal{G}'ds'+\int^{\infty}_s(A_i\phi_i)\mathcal{G}'ds'\label{notion2}
\end{align}

\begin{Lemma}\label{nohj}
Let $u$ be solution to $SL$ in $C([-T,T];\mathcal{Q}(\Bbb R^d,\mathcal{N}))$. And let $\{h_{k}(\sigma)\}$ be frequency envelope such that
\begin{align}\label{m1}
2^{(\frac{d}{2}-1)k}\|\phi_x(s)\|_{F_{k}}\lesssim 2^{-\sigma k}h_{k}(\sigma)(1+s2^{2k})^{-4}.
\end{align}
Suppose that
\begin{align}\label{m0}
\|h_{k}(0)\|_{\ell^2}\le \epsilon\ll1.
\end{align}
Moreover, we assume that
\begin{align}\label{buy88}
2^{\frac{d}{2}k}\|P_{k} {\mathcal{U}}_{1}\|_{F_{k}}\lesssim   (1+2^{2k}s)^{-4}h_{k}.
\end{align}
Then we have
\begin{align}\label{noty}
2^{\frac{d}{2}k-k}\|P_{k}A_x\|_{F_{k}\bigcap S^{\frac{1}{2}}_k}\lesssim  (1+2^{2k}s)^{-4} 2^{-\sigma k}h_{k,s}(\sigma),
\end{align}
where $h_{k,s}(\sigma)$ is defined by (\ref{8h8}).
\end{Lemma}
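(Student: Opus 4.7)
\textbf{Proof Plan for Lemma \ref{nohj}.}

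The plan is to start from the schematic identity
\[
A_i(s) = \int_s^\infty (\phi_s \diamond \phi_i)(s')\,\mathcal{G}(s')\,ds'
\]
coming from (\ref{edf})/(\ref{journal}), and use the heat-flow relation $\phi_s = \partial_j\phi_j + A_j\phi_j$ to rewrite
\[
A_x(s) = \int_s^\infty \bigl(\partial_j\phi_j\bigr)\diamond\phi_x\cdot\mathcal{G}\,ds' + \int_s^\infty \bigl(A_j\phi_j\bigr)\diamond\phi_x\cdot\mathcal{G}\,ds'.
\]
Next I would insert the decomposition $\mathcal{G} = \Gamma^\infty + \mathcal{U}_0 + \mathcal{U}_1$ from (\ref{notion1})--(\ref{notion2}). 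Since $\mathcal{U}_1$ is already controlled by hypothesis (\ref{buy88}), and $\mathcal{U}_0$ is given by an explicit integral of $\partial_i\phi_i$ to which the hypothesis (\ref{m1}) applies directly, the analysis reduces to the leading-order pieces built from $\partial_j\phi_j\cdot\phi_x$ and $A_j\phi_j\cdot\phi_x$ paired with the constant tensor $\Gamma^\infty$, plus cubic remainders that will be controlled by iterating (\ref{m0}).

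The core bilinear work would then use Lemma \ref{1bi1}. For the piece $\int_s^\infty (\partial_j\phi_j)\diamond\phi_x\,ds'$ I would perform a Littlewood--Paley decomposition and apply (\ref{2bi2})--(\ref{4bi4}) with one factor being $P_{k_1}\phi_x$ and the other $P_{k_2}\partial_x\phi_x$, exploiting the hypothesis (\ref{m1}) and the extra $2^{k_2}$ from the derivative. For the piece $\int_s^\infty (A_j\phi_j)\diamond\phi_x\,ds'$ I would move this term to the left of the desired bound and close it as a bootstrap via smallness (\ref{m0}); that is, writing the right-hand side in terms of the unknown $\|P_k A_x\|_{F_k\cap S^{1/2}_k}$ and absorbing through $\varepsilon \ll 1$. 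The $s'$-integration from $s$ to $\infty$ then generates the factor $(1+s2^{2k})^{-4}$, provided the integrand carries $(1+s' 2^{2k_{\max}})^{-M}$ with $M$ large enough; this is where the $s/s'$ accounting must be done carefully, splitting according to whether the dominant frequency in the product is $\le 2^{-j}$ (so that $s' 2^{2k_{\max}} \lesssim 1$ on a short interval and the pairing $h_{l}(\sigma)h_{l}$ appears for $l \le -j$) or $\ge 2^{-j}$ (so that one factor forces $2^{k+j}h_{-j}h_k(\sigma)$). This dichotomy is exactly what produces the two branches in the piecewise definition (\ref{8h8}) of $h_{k,s}(\sigma)$.

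The main obstacle, as I see it, is the simultaneous control in the $S^{1/2}_k$ component of the norm. Unlike $F_k$, whose atomic blocks ($L^\infty_t L^2_x$, $L^{p_d}_{t,x}$, $L^{p_d}_x L^\infty_t$, $L^{2,\infty}_{\vec e}$) are directly amenable to the bilinear estimates of Lemma \ref{1bi1}, the $S^{1/2}_k$ norm involves the shifted Strichartz-type blocks $L^\infty_t L^{2_\omega}_x$, $L^{p_d}_t L^{p_{d,\omega}}_x$, $L^{p_{d,\omega}}_x L^\infty_t$ with an additional $2^{k/2}$ weight. One must therefore re-run the bilinear decomposition placing one factor in a $S^{1/2}$-block while leaving the other in $L^\infty$ or in an $F$-block, which produces several subcases depending on whether the high-low asymmetry is favorable. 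The pairings must be arranged so that the $2^{k/2}$ weight on the output is absorbed either by the Bernstein gain in the low-frequency factor or by the derivative already present in $\partial_j\phi_j$. Once this is handled, the bootstrap in the $A_x$-dependent term closes via (\ref{m0}), the $\mathcal{U}_1$-term is handled directly by (\ref{buy88}), and the remaining cubic contributions are dominated by the square of the leading estimate, giving (\ref{noty}).
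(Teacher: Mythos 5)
Your plan is essentially the paper's argument: start from $A_x = \int_s^\infty (\phi_s\diamond\phi_x)\mathcal{G}\,ds'$, split $\mathcal{G}=\Gamma^\infty+\mathcal{U}_0+\mathcal{U}_1$ (so that (\ref{m1}) controls $\mathcal{U}_0$ and (\ref{buy88}) controls $\mathcal{U}_1$, which is exactly how the paper obtains the intermediate bound (\ref{xz0889}) on $\widetilde{\mathcal{G}}$), expand $\phi_s = \partial_j\phi_j + A_j\phi_j$, run the bilinear estimates of Lemma \ref{1bi1} in $F_k\cap S^{1/2}_k$, and close the $A_j\phi_j$ contribution by a bootstrap on the best constant, which the paper formalizes as $B\lesssim 1+\epsilon B$ and you formalize as absorbing via $\epsilon\ll 1$. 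The only cosmetic difference is that the paper first packages the two hypotheses into a single $F_k$ bound on $\widetilde{\mathcal{G}}$ and then performs one bilinear estimate on $(\phi_s\diamond\phi_x)\widetilde{\mathcal{G}}$ (deferring details to \cite{LIZE}), whereas you estimate the $\Gamma^\infty$, $\mathcal{U}_0$, $\mathcal{U}_1$ pieces separately; also note the dichotomy in (\ref{8h8}) is governed by the sign of $k+j$ (output frequency versus $s\sim 2^{2j}$) rather than by the ``dominant frequency in the product,'' a minor misstatement that does not affect the strategy.
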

\begin{proof}
Since (\ref{m1}) dominates (\ref{notion1}), and (\ref{buy88}) bounds (\ref{notion2}), we get
\begin{align}\label{xz0889}
2^{\frac{d}{2}k}\|P_{k}\widetilde{\mathcal{G}}\|_{F_{k}}\le  (1+s2^{2k})^{-4} 2^{-\sigma k}h_{k}(\sigma).
\end{align}
Let $B\ge 1$ denote the smallest constant such that
\begin{align}\label{Nin}
(1+s2^{2k})^{4}2^{\frac{d}{2}k-k}\|P_{k}A_x\|_{F_{k}\bigcap S^{\frac{1}{2}}_k}\le B 2^{-\sigma k}h_{k,s}(\sigma),
\end{align}
for all $\sigma\in[0,\vartheta],k\in\Bbb Z,s>\infty$.
Then one can check
\begin{align}\label{buy77}
2^{\frac{d-2}{2}k+\sigma k}\int^{\infty}_s\|P_{k}[(\phi_s\diamond\phi_x) \widetilde{\mathcal{G}}]\|_{F_{k}\bigcap S^{\frac{1}{2}}_k}ds'\lesssim (B\epsilon+1)h_{k,s}(\sigma).
\end{align}
It is now standard to derive (\ref{buy77}) from (\ref{xz0889}) and (\ref{Nin}), see our previous paper (\cite{LIZE}, Lemma 4.1, Step 1). Therefore, we obtain
\begin{align*}
B\lesssim 1+\epsilon B,
\end{align*}
by which (\ref{noty}) follows.
\end{proof}

We now prove a stronger estimate of (\ref{buy88}).
\begin{Lemma}\label{pkij90}
Let $u$ be solution of SL satisfying Lemma \ref{nohj}.
Then for all $k\in\Bbb Z$ we have
\begin{align}\label{buyo78}
\|P_{k} {\mathcal{U}}_1\|_{F_{k}}\lesssim \epsilon 2^{-\frac{d}{2}k} 2^{-\sigma k}h_{k}(\sigma)(1+s2^{2k})^{-4}.
\end{align}
\end{Lemma}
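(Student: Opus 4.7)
The plan is to estimate each of the three integrals in (\ref{notion2}) separately in the $F_k$-norm via the bilinear estimates of Lemma \ref{1bi1}, then absorb one $s'$-integration to gain a factor of $2^{-2k}$ together with one power of $(1+s2^{2k})^{-1}$. The available inputs are: the hypothesis (\ref{m1}) for $\phi_x$, the bootstrap output (\ref{noty}) of Lemma \ref{nohj} for $A_x$, and the full $F_k$-control of $\mathcal{G}'$ obtained by combining Proposition \ref{mill} (for the $L^{\infty}_tL^2_x\cap L^{p_d}_{t,x}$ blocks) with Proposition \ref{xia2} (for the $L^{p_d}_xL^{\infty}_t$ block). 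The $L^{2,\infty}_{\vec{e}}$ block of $F_k$ need not be tracked directly for $\mathcal{G}'$: in each bilinear pairing it is the dispersive factor $\phi_x$ (or $\partial_x\phi_x$) that will carry the lateral norm, while $\mathcal{G}'$ enters only through $L^{\infty}$ or $L^2_x$-type norms.

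For the first integral $\Xi^{\infty}\int_s^\infty(A_i\phi_i)\,ds'$, I would decompose $P_k(P_{k_1}A_x\cdot P_{k_2}\phi_x)$ and apply (\ref{2bi2})--(\ref{4bi4}). In each of the three frequency regimes (high-low, low-high, and high-high $\to$ low) place the top-frequency factor in $F_{k_j}$, using (\ref{noty}) or (\ref{m1}), and the bottom-frequency factor in $L^{\infty}$ via Bernstein,
\begin{align*}
\|P_{k'}\phi_x\|_{L^{\infty}_{t,x}}\lesssim 2^{dk'/2}\|P_{k'}\phi_x\|_{L^{\infty}_tL^2_x}\lesssim 2^{k'-\sigma k'}h_{k'}(\sigma)(1+s'2^{2k'})^{-4},
\end{align*}
and analogously for $A_x$. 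The joint decay profile $(1+s'2^{2k})^{-8}$ on the integrand produces, after $\int_s^\infty\!ds'$, the sharp $(1+s2^{2k})^{-4}$; the summation over $k_1,k_2$ converges because $d\ge 3$ makes both the low-tail $\sum_{k'\le k}2^{k'}$ and the high-tail $\sum_{k'\ge k}2^{((d-1)/2-8)(k'-k)}$ geometrically small; and the $\epsilon$ gain drops out of $\|h_{k'}\|_{\ell^2}\le\epsilon$ after using the slow variation of the envelope. One subtlety is that (\ref{noty}) produces $h_{k,s'}(\sigma)$ rather than $h_k(\sigma)$; however in the regime $k+j\ge 0$ (where $s'2^{2k}\gtrsim 1$), $h_{k,s'}(\sigma)=2^{k+j}h_{-j}h_k(\sigma)$ automatically carries the needed $\epsilon$-smallness through $h_{-j}$, while in the regime $k+j\le 0$ the integration range contributes only an $O(2^{-2k})$ interval where the frequency envelope remains comparable to $h_k(\sigma)$ up to a manageable logarithmic loss absorbed by the $\delta$-order slow variation.

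For the second integral $\int_s^\infty(\partial_i\phi_i)\mathcal{G}'\,ds'$, the same Littlewood-Paley paradigm applies: the extra derivative on $\phi_i$ costs one power of $2^{\max(k_1,k_2)}$ but is immediately repaid by the $2^{-2k}$ gained from the heat integration, and the $\mathcal{G}'$-factor is controlled in $F_k$ by combining the bounds from Propositions \ref{mill}--\ref{xia2}. For the trilinear term $\int_s^\infty(A_i\phi_i)\mathcal{G}'\,ds'$, I would first regard $A_i\phi_i$ as a bilinear object whose $F_k$-norm is already controlled by the preceding step, then pair it with $\mathcal{G}'$ via Lemma \ref{1bi1} once more; alternatively, one may directly place two of the three factors in $L^{\infty}$ via Bernstein and keep the remaining one in $F_k$, since the target bound already contains one factor of $\epsilon$ and a cubic expression will supply even more smallness.

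The main obstacle will be the high-high-to-low frequency interaction in (\ref{3bi3})--(\ref{4bi4}), where the off-diagonal factor $2^{(d-1)(k_1-k)/2}$ must be balanced against the $s'$-decay $(1+s'2^{2k_1})^{-4}$ with $k_1\ge k$, and the frequency-envelope bookkeeping has to be uniform in $k_1$. The saving comes from the pointwise monotonicity $(1+s'2^{2k_1})^{-4}\le 2^{-8(k_1-k)}(1+s'2^{2k})^{-4}$, so that the resulting geometric series in $k_1-k$ converges as soon as $(d-1)/2-8<0$, which is automatic for every $d\ge 3$. Together with the $\ell^2$-smallness (\ref{m0}) and the slow variation of $\{h_k(\sigma)\}$, this closes the argument and yields exactly (\ref{buyo78}); combined with Lemma \ref{nohj} it then upgrades the bootstrap hypothesis (\ref{buy88}) into an unconditional estimate, completing the first-step parabolic analysis required in Section \ref{DFvv}.
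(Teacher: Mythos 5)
Your overall strategy coincides with the paper's: decompose $\mathcal{U}_1$ term by term, feed the bilinear estimates of Lemma \ref{1bi1}, plug in the bootstrap hypothesis (\ref{m1}), the conclusion (\ref{noty}) of Lemma \ref{nohj}, and the $\mathcal{G}'$-bounds from Propositions \ref{mill}--\ref{xia2}, split the analysis between the parabolic regimes $k+j_0\ge 0$ and $k+j_0\le 0$, and integrate in $s'$. You also correctly observe that $\mathcal{G}'$ never needs to carry the lateral $L^{2,\infty}_{\vec e}$ block, which matches the paper.

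There is, however, a concrete error that breaks the step you flag as the ``main obstacle.'' The claimed pointwise monotonicity
\begin{align*}
(1+s'2^{2k_1})^{-4}\le 2^{-8(k_1-k)}(1+s'2^{2k})^{-4}
\end{align*}
is false whenever $k_1>k$, at \emph{every} $s'>0$. Writing $r=2^{2(k_1-k)}$ and $b=s'2^{2k}$, the inequality is equivalent to $(1+rb)\ge r(1+b)$, i.e.\ to $r\le 1$, i.e.\ $k_1\le k$; so it fails identically in the high-high regime you are trying to control. If you try to salvage it, a correct trade is $(1+s'2^{2k_1})^{-N}\lesssim 2^{-2(N-M)(k_1-k)}(1+s'2^{2k})^{-M}$ with $N>M$, and even that requires $s'2^{2k}\gtrsim 1$ (i.e.\ $k+j_0\ge 0$). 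In the complementary regime $k+j_0\le 0$, both parabolic weights are $\approx 1$ and provide no geometric gain in $k_1-k$ at all. The paper obtains summability there from a different source: the explicit form of $h_{k,s}(\sigma)$ in (\ref{8h8}) and, in particular, the extra factor $2^{-(k_1+j)/2}$ in the bound (\ref{j9}) for $\|A_i\phi_i\|_{F_{k_1}}$ when $k_1+j\le 0$, which supplies the decay in $k_1$ that your claimed inequality was meant to provide. Without this, the high-high sum over $k_1\in[k,-j_0]$ in the case $k+j_0\le 0$ is not under control; the ``logarithmic loss absorbed by the $\delta$-order slow variation'' you gesture at does not, by itself, close it.

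In short: the route is the right one, and the remaining two integrals in $\mathcal{U}_1$ are treated in the same spirit as the paper, but you need to discard the false pointwise inequality, replace it with a weight-exponent trade valid only for $k+j_0\ge 0$, and invoke (\ref{j9}) and (\ref{8h8}) explicitly to handle $k+j_0\le 0$, as the paper does via the function $F_{j,k}(d)$.
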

\begin{proof} Recall that  $F_{k}\hookrightarrow L^{p_{d}}_{t,x}\bigcap L^{\infty}_tL^2_x$ in $L^2_{k}$.
Let $d\ge 3$, then Lemma \ref{nohj} and  (\ref{xz0889}) show that for $u$ in Lemma \ref{pkij90}, the assumptions (\ref{key2})-(\ref{key3}) of Lemma \ref{xiaxia} hold. Thus by Proposition \ref{xia2} we obtain (\ref{py1})-(\ref{py2}).

\emph{Bilinear estimates.}
we will use Lemma \ref{1bi1} to do bilinear estimates.
Assume that $s\in[2^{2j_0}-1,2^{2j_0+1}]$. We prove the lemma according to $k+j_0\ge 0$ or  $k+j_0\le 0$.
For $s'\in[2^{2j}-1,2^{2j+1}]$, assumption (\ref{m1}) and Lemma \ref{nohj} give
\begin{align}
\|A_i\phi_i\|_{F_{k}}&\lesssim 2^{2k-\frac{d}{2}k}2^{-\sigma k}2^{-\frac{k+j}{2}}h^{2}_{-j}h_{-j}(\sigma)\mbox{ }\mbox{ }\mbox{ }\mbox{ }{\rm{if}}\mbox{ }k+j\le 0\label{j9}\\
\|A_i\phi_i\|_{F_{k}}&\lesssim 2^{2k-\frac{d}{2}k}2^{-\sigma k}(1+2^{2k+2j})^{-{4}}h^{2}_{-j}h_{k}(\sigma)\mbox{ }\mbox{ }{\rm{if}}\mbox{ }k+j\ge 0.\label{i9}
\end{align}
Thus (\ref{notion1}) is done by integrating (\ref{j9})-(\ref{i9}) w.r.t. $j\ge j_0$.

For (\ref{notion2}), we apply Lemma \ref{1bi1}. We take the more complex term $\int^{\infty}_s(A_i\phi_i)\mathcal{G}'ds'$  of (\ref{notion2}) as the candidate, the $\partial_i\phi_i\mathcal{G}'$ term is easier. Using Lemma \ref{1bi1}, the $\rm{High\times Low}$ interaction of $(A_i\phi_i)\mathcal{G}'$ is dominated by
\begin{align}
\sum_{|k_1-k|\le4} \|P_{k}(P_{k_1}(A_i\phi_i)P_{\le k-4}\mathcal{G}')\|_{F_{k}}\lesssim \|P_{k}(A_i\phi_i)\|_{F_{k}}.
\end{align}
Thus the $\rm{High\times Low}$ part is done by (\ref{j9}), (\ref{i9}).

Now let us consider the $\rm{High\times High}$ part of $(A_i\phi_i)\mathcal{G}'$. By Lemma \ref{1bi1} and (\ref{py1}),
\begin{align}
&\sum_{|k_1-k_2|\le 8,k_1,k_2\ge k-4} \|P_{k}(P_{k_1}(A_i\phi_i)P_{k_2}\mathcal{G}')\|_{F_{k}}\nonumber\\
&\lesssim \sum_{|k_1-k_2|\le 8,k_1,k_2\ge k-4}(2^{\frac{d}{d+2}(k_1-k)}+2^{\frac{d-1}{2}(k_1-k)})
\|P_{k_1}(A_i\phi_i)\|_{F_{k}}\|P_{k_2}\mathcal{G}'\|_{L^{\infty}_x}\nonumber\\
&\lesssim \sum_{k_1,k_2\ge k-4}(2^{\frac{d}{d+2}(k_1-k)}+2^{\frac{d-1}{2}(k_1-k)})
\|P_{k_1}(A_i\phi_i)\|_{F_{k_1}}h_{k_1}(1+2^{2k_1+2j})^{-20}.\label{j00}
\end{align}
Applying (\ref{i9}), for $k+j_0\ge 0$, we get (\ref{j00}) is bounded by
\begin{align*}
&\sum_{k_1\ge k-4}(2^{\frac{d}{d+2}(k_1-k)}+2^{\frac{d-1}{2}(k_1-k)}) 2^{2k_1-\frac{d}{2}k_1}
2^{-\sigma k_1}(1+2^{2k_1+2j})^{-24}h^{2}_{-j}h_{k_1}h_{k_1}(\sigma)\\
&\lesssim  2^{2k-\frac{d}{2} k}2^{-\sigma k}(1+2^{2k+2j})^{-20}h^{3}_{k}h_{k}(\sigma).
\end{align*}
Thus the $\rm{High\times High}$ part for $k+j_0\ge 0$ is done. If $k+j\le 0$, (\ref{j00}) is dominated by
\begin{align*}
&\sum_{k_1\ge -j}\left[2^{\frac{d}{d+2}(k_1-k)}+2^{\frac{d-1}{2}(k_1-k)}\right] 2^{2k_1-\frac{d}{2}k_1}
2^{-\sigma k_1}(1+2^{2k_1+2j})^{-24}h^{2}_{-j}h_{k_1}h_{k_1}(\sigma)\\
&+\sum_{k\le k_1\le -j}(2^{\frac{d}{d+2}(k_1-k)}+2^{\frac{d-1}{2}(k_1-k)})2^{-\frac{j+k_1}{2}} 2^{2k_1-\frac{d}{2}k_1}
h^{2}_{-j}h_{-j}2^{-\sigma k_1}h_{k_1}(\sigma)\\
&\lesssim  2^{2k-\frac{d}{2} k}2^{-\sigma k}h^{2}_{k}h_{k}(\sigma)+2^{-\sigma k}h^{2}_{-j}h_{-j}(\sigma)F_{j,k}(d)
\end{align*}
where $F_{j,k}(d)$ is defined by
\begin{align*}
{{\text{F}}_{j,k}}(d){\text{ = }}\left\{ \begin{gathered}
  {2^{\frac{d}
{{d + 2}}( - j - k)}}{2^{ - 2j + \frac{d}
{2}j}} + {2^{ - \frac{{d - 1}}
{2}k}}{2^{ - \frac{3}
{2}j}},d = 3,4 \hfill \\
  {2^{ - \frac{{k + j}}
{2}}}{2^{2k - \frac{d}
{2}k}}{2^{\delta |k + j|}} + {2^{ - \frac{{d - 1}}
{2}k}}{2^{ - \frac{3}
{2}j}},d \geqslant 5 \hfill \\
\end{gathered}  \right.
\end{align*}
Then if $k+j_0\le 0$, we have
\begin{align*}
&\sum_{|k_1-k_2|\le 8, k_1,k_2\ge k-4}\int^{\infty}_s\|P_{k}[P_{k_1}(A_i\phi_i)P_{k_2}\mathcal{G}']\|_{F_{k}}ds'\\
&\lesssim \sum^{\infty}_{j\ge -k}2^{2j} 2^{2k-\frac{d}{2} k}h^{2}_{k}h_{k}(\sigma)(1+2^{2j+2k})^{-20}
+ \sum_{j_0\le j\le -k}2^{2j}2^{-\sigma k}h^{2}_{-j}h_{-j}(\sigma)2^{-\sigma k}F_{j,k}(d)\\
&\lesssim 2^{-\sigma k}2^{-\frac{d}{2} k}h^{2}_{k}h_{k}(\sigma).
\end{align*}
Hence the $\rm{High\times High}$ part for all $s>0$ is done.

Now let us consider the $\rm{Low\times High}$ part of $(A_i\phi_i)\mathcal{G}'$.
By Lemma \ref{1bi1}, (\ref{py1})-(\ref{py2})
\begin{align}
&\sum_{|k_2-k|\le 4,k_1\le k-4} \|P_{k}(P_{k_1}(A_i\phi_i)P_{k_2}\mathcal{G}')\|_{F_{k}}\nonumber\\
&\lesssim \sum_{|k-k_2|\le 4,k_1\le k-4}2^{\frac{d-1}{2}(k_1-k)}\|P_{k_1}(A_i\phi_i)\|_{F_{k_1}}\|P_{k}\mathcal{G}'\|_{L^{\infty}}
+\|P_{k_1}(A_i\phi_i)\|_{L^{\infty}}2^{\frac{d}{d+2}k}\|P_{k}\mathcal{G}'\|_{L^{p_{d}}_xL^{\infty}_t}\nonumber\\
&\lesssim \sum_{|k-k_2|\le 4,k_1\le k-4}2^{-\sigma k}h_{k}(\sigma)
\left(2^{\frac{d-1}{2}(k_1-k)}\|P_{k_1}(A_i\phi_i)\|_{F_{k_1}}
+2^{-\frac{d}{2}k}2^{\frac{d}{2}k_1}\|P_{k_1}(A_i\phi_i)\|_{F_{k_1}}\right).\label{j11}
\end{align}
Then by (\ref{j9}), for $k+j_0\ge 0$ one has (\ref{j11}) is dominated by
\begin{align*}
&(1+2^{2k+2j})^{-10}2^{-\sigma k}h_{k}(\sigma)
\sum_{-j\le k_1\le k-4}\left(2^{\frac{d-1}{2}(k_1-k)}2^{-\frac{d}{2}k_1+2k_1}+ 2^{-\frac{d}{2}k}2^{2k_1}\right)h^2_{-j}h_{k_1}
\\
&+ (1+2^{2k+2j})^{-10}2^{-\sigma k}h_{k}(\sigma)\sum_{k_1\le -j}\left(2^{\frac{d-1}{2}(k_1-k)}2^{-\frac{d}{2}k_1+2k_1}+2^{-\frac{d}{2}k}2^{2k_1}\right)
2^{-\frac{k_1+j}{2}}h^2_{-j}h_{k_1}\\
&\lesssim (1+2^{2k+2j})^{-10}2^{-\sigma k}h_{k}(\sigma)\left( 2^{-\frac{d}{2}k+2k}h^2_{-j}h_{k}
+h^3_{-j}2^{-\frac{3}{2}j}2^{-\frac{d-1}{2}k}+h^3_{-j}2^{-\frac{d}{2}k}2^{-2j}\right).
\end{align*}
Thus the $\rm{Low\times High}$ part of $(A_i\phi_i)\mathcal{G}'$ for $k+j_0\ge 0$ is done by integrating
the above in $s\ge 2^{2j_0-1}$.

By (\ref{i9}), for $k+j\le 0$ we see (\ref{j11}) is dominated by
\begin{align*}
&2^{-\sigma k}h_{k}(\sigma)
\sum_{k_1\le k}\left(2^{\frac{d-1}{2}(k_1-k)}2^{-\frac{d}{2}k_1+2k_1}+2^{-\frac{d}{2}k}2^{2k_1}\right)
2^{-\frac{k_1+j}{2}}h^2_{-j}h_{k_1}\\
&\lesssim 2^{-\sigma k}h_{k}(\sigma)
h^2_{-j}h_{k}2^{-\frac{k+j}{2}}2^{-\frac{d}{2}k+2k}.
\end{align*}
Hence, the $\rm{Low\times High}$ part of $(A_i\phi_i)\mathcal{G}'$ for $k+j_0\le 0$ is bounded by
\begin{align*}
&\int^{\infty}_s\|P^{lh}_{k}[(A_i\phi_i)\mathcal{G}']\|_{F_{k}}ds'\\
&\lesssim \sum_{j_0\le j\le -k} 2^{2j}2^{-\sigma k}h_{k}(\sigma)
h^2_{-j}h_{k}2^{-\frac{k+j}{2}}2^{-\frac{d}{2}k+2k}\\
&+\sum_{-k\le j<\infty}2^{2j}(1+2^{2k+2j})^{-10}2^{-\sigma k}h_{k}(\sigma)\left( 2^{-\frac{d}{2}k+2k}h^2_{-j}h_{k}
+h^3_{-j}2^{-\frac{3}{2}j}2^{-\frac{d-1}{2}k}+h^3_{-j}2^{-\frac{d}{2}k}2^{-2j}\right)\\
&\lesssim 2^{-\sigma k}h_{k}(\sigma)h^3_{k}2^{-\frac{d}{2}k}.
\end{align*}

So all the three interaction parts are done and our lemma follows.
\end{proof}

\begin{Corollary}\label{IJ}
Let $u\in C([-T,T];\mathcal{Q}(\Bbb R^d,\mathcal{N}))$ be solution of SL satisfying (\ref{m0}), (\ref{m1}).
Then for all $k\in\Bbb Z$ we have
\begin{align}\label{0xx9ijn}
\|P_{k} {\widetilde{\mathcal{G}}}\|_{F_{k}}\lesssim  2^{-\frac{d}{2}k} 2^{-\sigma k}h_{k}(\sigma)(1+s2^{2k})^{-4}.
\end{align}
Moreover, the connection coefficients satisfy
\begin{align}\label{0xx}
\|P_{k} A_x(s)\|_{F_{k}\bigcap S^{\frac{1}{2}}_k}\lesssim  2^{-\frac{d}{2}k+k} 2^{-\sigma k}h_{k,s}(\sigma)(1+s2^{2k})^{-4}.
\end{align}
\begin{proof}
 Define the function $\Phi:[0,T_*)\to \Bbb R^+$ by
\begin{align}
\Phi(T):=\sup_{T'\in[0,T]}\sup_{s>0,k\in\Bbb Z} 2^{\frac{d}{2}k} 2^{\sigma k}h^{-1}_{k}(\sigma)(1+s2^{2k})^{4}\|P_{k} {\mathcal{U}}_1\|_{F_{k}(T')}.
\end{align}
$\Phi(T)$ is a continuous function in $T\in [0,T_*)$ by Sobolev embeddings, Lemma \ref{aaaHeat} and the fact $\{h_{k}(\sigma)\}$ is a frequency envelope.
Let ${\bf u}(s,x)$ denote the solution to the heat flow equation with initial data $u_0$.
By the relation $F_{k}\hookrightarrow L^2_xL^{\infty}_t\bigcap L^{p_{d}}$, we see
\begin{align}
\lim_{T\downarrow 0}\Phi(T)=\sup_{s>0,k\in\Bbb Z} 2^{\frac{d}{2}k} 2^{\sigma k}h^{-1}_{k}(\sigma)(1+s2^{2k})^{4}\|P_{k} {\bf{U}}_1\|_{L^2_x},\label{iju809}
\end{align}
where ${\bf U}_1$ is defined by
\begin{align}
 {\bf{U}}_1=\int^{\infty}_s {{\phi}_s\mathcal{{G}}'}ds',
\end{align}
with all values taken at the point ${\bf u}(s',x)$ in the above integral. We have seen in the proof of Proposition \ref{mill} that the RHS of  (\ref{iju809}) is controlled by $\epsilon_1$. Thus we get
\begin{align}
\lim_{T\downarrow 0}\Phi(T)\lesssim \epsilon_1.
\end{align}
And we have seen $(\ref{buy88})\Rightarrow (\ref{buyo78})$ in Lemma \ref{pkij90}. Hence there holds
\begin{align}
\Phi(T)\lesssim 1\Rightarrow \Phi(T)\lesssim \epsilon_1.
\end{align}
By bootstrap, for all $T\in[0,T_*)$ we conclude
\begin{align*}
\Phi(T)\lesssim \epsilon_1.
\end{align*}
Then the bound (\ref{0xx9ijn}) for $\widetilde{\mathcal{G}}$ follows by adding the $\|\mathcal{U}_0\|_{F_{k}}$ part.
\end{proof}

\end{Corollary}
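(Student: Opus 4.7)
The plan is to establish both displayed bounds by a bootstrap/continuity argument in the Schr\"odinger time variable $T$, with Lemma \ref{pkij90} serving as the self-improving mechanism and Lemma \ref{nohj} delivering the $A_x$ conclusion once the curvature bound is in hand. Concretely, I would introduce a continuous auxiliary quantity
\begin{align*}
\Phi(T) := \sup_{T'\in[0,T]}\sup_{s>0,\,k\in\Bbb Z}\; 2^{\frac{d}{2}k}2^{\sigma k}h_k(\sigma)^{-1}(1+s2^{2k})^{4}\|P_k\mathcal{U}_1\|_{F_k(T')},
\end{align*}
and run a standard continuity method on $\Phi$, reducing matters to verifying (i) a small initial value at $T=0$ and (ii) a nonlinear self-improvement $\Phi(T)\lesssim 1\Rightarrow \Phi(T)\lesssim \epsilon_1$.

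For the initial value, I would exploit that as $T\downarrow 0$ the $F_k(T')$ norm collapses (via $F_k\hookrightarrow L^\infty_tL^2_x\cap L^{p_d}_{t,x}$) to a purely spatial $L^2_k$-type estimate on the heat flow evaluated at the fixed data $u_0$. For that limit, everything reduces to bounds on $\int_s^\infty \phi_s \mathcal{G}'\,ds'$ for the pure heat flow, which were already controlled by Proposition \ref{1XS} together with the decay estimates of Section 3 (in particular Lemma \ref{kx1} gives $\mathcal{G}'$ decay, and Lemma \ref{j31} gives $\phi_s$ decay). Thus $\lim_{T\downarrow 0}\Phi(T)\lesssim \epsilon_1$. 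The self-improvement step is exactly what Lemma \ref{pkij90} asserts: assuming (\ref{buy88}) (which is the content of $\Phi(T)\lesssim 1$) it produces the stronger bound (\ref{buyo78}), i.e.~$\Phi(T)\lesssim \epsilon_1$. Continuity of $\Phi$ in $T$ (via Sobolev embedding and smoothness of heat/Schr\"odinger flow on a fixed time window) then closes the bootstrap on all of $[0,T_*)$.

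Having propagated $\Phi(T)\lesssim \epsilon_1$, I add the $\mathcal{U}_0$ contribution, whose $F_k$ norm is bounded directly by integrating the assumption (\ref{m1}) in $s'\in[s,\infty)$, obtaining the curvature estimate
\begin{align*}
\|P_k\widetilde{\mathcal{G}}\|_{F_k}\;\lesssim\; 2^{-\frac{d}{2}k}2^{-\sigma k}h_k(\sigma)(1+s2^{2k})^{-4},
\end{align*}
which is (\ref{0xx9ijn}). With this in hand, the hypothesis (\ref{buy88}) of Lemma \ref{nohj} is satisfied (with constant $\lesssim \epsilon_1$), so that lemma's conclusion (\ref{noty}) is precisely the connection estimate (\ref{0xx}).

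The main obstacle, and the only nontrivial point beyond unpacking earlier work, is justifying the opening identification of $\lim_{T\downarrow 0}\Phi(T)$ with the corresponding purely heat-flow quantity: one needs the $F_k(T')$ norms to degenerate in a controlled way, and one needs Proposition \ref{mill}/Proposition \ref{xia2}-style parabolic decay of $\phi_s\mathcal{G}'$ to be already available to bound the spatial-only remainder. Everything else is mechanical, since the nonlinear self-improvement has been isolated into Lemma \ref{pkij90} and the $A_x$ reconstruction into Lemma \ref{nohj}; the bootstrap merely glues these two facts together.
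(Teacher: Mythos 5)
Your proposal matches the paper's proof essentially step for step: the same bootstrap quantity $\Phi(T)$, the same continuity-plus-self-improvement scheme with Lemma \ref{pkij90} as the nonlinear gain, the same collapse of $F_k(T')$ to a spatial $L^2$ estimate as $T\downarrow 0$ bounded by the heat-flow decay results, and the same final assembly via $\mathcal{U}_0$ and Lemma \ref{nohj}. The only cosmetic difference is that the paper attributes the $T\downarrow 0$ bound to the proof of Proposition \ref{mill} while you trace it back to Proposition \ref{1XS} and the Section 3 lemmas — these are the same underlying estimates.
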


The connection bound (\ref{0xx}) suffices to bound the evolution of $\phi_{x,t}$ along the heat direction. And for $s=0$,  (\ref{0xx}) suffices to control the evolution of $\phi_{x}$ along the Schr\"odinger direction.
We omit the details for this part in high dimensions since it is relatively easy to supplement them following our previous work [Section 5, \cite{LIZE}].
In fact, it suffices to bound the cubic terms of the form
\begin{align}
\phi_\mu\phi_\nu\widetilde{\mathcal{G} }
\end{align}
in the $F_{k},L^{p_{d}}$ and $N_k$ spaces.

Then one can get
\begin{Proposition}\label{01}
Let $\sigma\in [0,\vartheta]$
and $\epsilon_0$ be a sufficiently small constant.  Assume that  $u\in C([-T,T];\mathcal{Q}(\Bbb R^d,\mathcal{N}))$ is the solution to SL with initial data $u_0$. Let $\{c_k\}$ be an $\epsilon_0$-frequency envelope of order $\frac{1}{8}\delta$. And let $\{c_k(\sigma)\}$ be another frequency envelope of order $\frac{1}{8}\delta$ for which
\begin{align}
2^{\frac{d}{2}k}\|P_{k}  u_0\|_{L^2_x}&\le c_k\label{zzzzw1}\\
2^{\frac{d}{2}k}\|P_{k}  u_0\|_{L^2_x}&\le c_k(\sigma)2^{-\sigma k}\label{zzzzw}
\end{align}
Denote $\{\phi_i\}^{d}_{i=1}$ the corresponding differential fields of the heat flow initiated from $u$. Suppose also that at the heat initial time $s=0$,
\begin{align}\label{PI}
\sum^{d}_{i=1}2^{\frac{d}{2}k-k}\|P_{k}\phi_i\|_{G_k(T)}&\le \epsilon^{-\frac{1}{2}}_0c_k.
\end{align}
Then when $s=0$, we have for all $i=1,...,d$, $k\in\Bbb Z$,
\begin{align}
2^{\frac{d}{2}k-k}\|P_{k}\phi_i\|_{G_k(T)}&\lesssim c_k\label{mmmmmmm1}\\
2^{\frac{d}{2}k-k}\|P_{k}\phi_i\|_{G_k(T)}&\lesssim c_k(\sigma)2^{-\sigma k}.\label{mmmmmmm}
\end{align}
\end{Proposition}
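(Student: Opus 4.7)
The plan is to close the bootstrap (\ref{PI}) by passing from the parabolic/gauge control of Corollary \ref{IJ} to a dispersive estimate for the Schr\"odinger evolution (\ref{b4}). First I would observe that the bootstrap hypothesis (\ref{PI}) together with the embedding $G_k\hookrightarrow F_k$ produces a frequency envelope $h_k(\sigma)\lesssim \epsilon_0^{-1/2}c_k(\sigma)$ for $\phi_x$ at $s=0$ in the sense of Corollary \ref{IJ}. Propagating this along the heat direction using the parabolic estimates already established in Sections 3--5 (which dominate $\|P_k\phi_x(s)\|_{F_k}$ by $(1+s2^{2k})^{-4}2^{-(d/2-1)k-\sigma k}h_k(\sigma)$), one enters the hypothesis of Corollary \ref{IJ} and obtains the bounds
\begin{align*}
2^{\frac{d}{2}k}\|P_k\widetilde{\mathcal{G}}\|_{F_k}+2^{\frac{d}{2}k-k}\|P_kA_x(s)\|_{F_k\cap S^{\omega}_k}\lesssim (1+s2^{2k})^{-4}2^{-\sigma k}h_{k,s}(\sigma)
\end{align*}
for all $s\ge 0$. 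Specialising to $s=0$ gives $F_k$-control on $A_x$ and, via the heat-flow identity $A_t(s)=\int_s^{\infty}(\phi_s\diamond\phi_t)\mathcal{G}\,ds'$ together with the $L^2$ bound (\ref{no4}), the corresponding $L^2$-control on $A_t$ at $s=0$.

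The second step is to apply the Schr\"odinger equation (\ref{b4}), rewritten schematically as
\begin{align*}
(\sqrt{-1}\partial_t+\Delta)\phi_i=A_t\phi_i+\partial_x(A_x\phi_i)+A_x^2\phi_i+(\Gamma^{\infty}+\widetilde{\mathcal{G}})\phi_j\diamond\phi_j\diamond\phi_i,
\end{align*}
and to invoke the linear $G_k\leftrightarrow N_k$ estimates of Bejenaru-Ionescu-Kenig-Tataru. This reduces the entire matter to controlling the RHS in $N_k(T)$ by $\epsilon_0^{1/2} c_k(\sigma)2^{-(d/2-1)k-\sigma k}$, since then the bootstrap constant $\epsilon_0^{-1/2}$ is improved to $1$ upon dividing by $\epsilon_0^{1/2}$.

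The nonlinear estimates proceed term by term via bilinear/trilinear Littlewood-Paley decompositions identical in spirit to Lemma \ref{pkij90} and to [Section 5, \cite{LIZE}]. For $A_x\phi_i$, $A_x^2\phi_i$ and $\partial_x(A_x\phi_i)$ one uses the $F_k\cap S^{\omega}_k$ control of $A_x$ and the bootstrap $G_k$ bound on $\phi_x$; the resulting trilinear output lives in $N_k$ with an $\epsilon_0$ gain coming from the smallness of the extra $\phi_x$ factor. The curvature term $\Gamma^{\infty}\phi_j\diamond\phi_j\diamond\phi_i$ is the classical cubic nonlinearity already handled in \cite{huangBIKThuang}, while the remainder $\widetilde{\mathcal{G}}\phi_j\diamond\phi_j\diamond\phi_i$ is treated by peeling off one $\phi_x$ factor to feed into $\widetilde{\mathcal{G}}$ via (\ref{0xx9ijn}), reducing to a $4$-linear estimate with two $\epsilon_0$ gains. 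The $A_t\phi_i$ term is handled using (\ref{no4}) and H\"older in $L^2(T)$ combined with bilinear $L^{p_d}$ bounds on $\phi_i$.

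The main obstacle I anticipate is the $A_t\phi_i$ piece: unlike $A_x$, the connection coefficient $A_t$ is only controlled in $L^2(T)$ and $L^{p_d}(T)$ rather than in the full $F_k\cap S^{\omega}_k$ structure, so one cannot directly invoke the bilinear Lemma \ref{1bi1}. Instead one must place it in a mixed $L^2L^2$ or $L^{p'_d}$-type dual norm inside $N_k$ and pair with $\phi_i$ in $L^{\infty}L^2\cap L^{p_d}$ supplied by the $G_k$ bootstrap; this is the content of [Lemma 5.5--5.6, \cite{huangBIKThuang}] invoked in Lemma \ref{xiaxia} above, and once applied in the $\sigma$-weighted setting via the envelope $h_{k,s}(\sigma)$ of (\ref{8h8}), the summation in the low-frequency regime $k+j\le 0$ converges precisely because $d\ge 3$. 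Combining all contributions yields (\ref{mmmmmmm1}) and (\ref{mmmmmmm}) by bootstrap, since all nonlinear terms carry a power $\epsilon_0^{1/2}$ relative to the assumed bound (\ref{PI}).
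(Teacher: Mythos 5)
Your proposal is correct and follows essentially the same route as the paper: pass from the bootstrap bound (\ref{PI}) via $G_k\hookrightarrow F_k$ into the hypotheses of Corollary \ref{IJ}, extract the $F_k\cap S^{\omega}_k$ control on $A_x$ and the $L^2$/$L^{p_d}$ control on $A_t,\phi_t$ at $s=0$, then plug into the Schr\"odinger evolution (\ref{b4}) and close via the $G_k/N_k$ machinery of \cite{huangBIKThuang} and [Section 5, \cite{LIZE}]. The paper's proof is stated more tersely (citing those references for the final nonlinear $N_k$ estimates rather than spelling them out), but the decomposition and the role of each ingredient you describe, including the special treatment of $A_t\phi_i$, match the intended argument.
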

\begin{proof}
By bootstrap assumption (\ref{PI}) and the fact $\{c_{k}\}$ is  an  $\epsilon_0$-frequency envelope, we see
the frequency envelope
\begin{align}\label{H2}
b_{k}(\sigma)=\sup_{k'\in\Bbb Z}2^{-\delta|k-k'|}2^{\frac{d}{2}k'-k'}2^{\sigma k'}\|P_{k'}\phi_x(\upharpoonright_{s=0})\|_{G_{k'}}
\end{align}
satisfies
\begin{align}
\sum_{k\in \Bbb Z}b^2_{k}\le \epsilon_0.
\end{align}
Thus by $G_{k}\hookrightarrow F_k$, we see that the assumptions (\ref{m0}), (\ref{m1}) hold. Then applying Corollary \ref{IJ} gives at $s=0$
\begin{align}
\|P_{k}A_x(\upharpoonright_{s=0})\|_{L^{p_{d}}_{t,x}}\lesssim 2^{-\frac{d}{2}k+k}2^{-\sigma k}b_{k}(\sigma)\\
\|P_{k}A_t(\upharpoonright_{s=0})\|_{L^2_{t,x}}\lesssim 2^{-\frac{d}{2}k+2k}2^{-\sigma k}b_{k}(\sigma)\\
\|P_{k}\phi_t(\upharpoonright_{s=0})\|_{L^2_{t,x}}\lesssim 2^{-\frac{d}{2}k+2k}2^{-\sigma k}b_{k}(\sigma).
\end{align}
Using the evolution equation of $\phi_x$ along the Schr\"odigner direction and arguments of \cite{huangBIKThuang} and our previous work [Section 5,\cite{LIZE}],  one can prove
\begin{align}
b_{k}(\sigma)\lesssim c_{k}(\sigma)+\epsilon b_{k}(\sigma).
\end{align}
Thus (\ref{mmmmmmm1}) and (\ref{mmmmmmm}) are proved.
\end{proof}

\subsection{Iteration for once}

\begin{Proposition}\label{02}
Given $\sigma\in [\vartheta,2\vartheta]$.
Let $\epsilon_0$ be a sufficiently small constant.  Let $u\in C([-T,T];\mathcal{Q}(\Bbb R^d,\mathcal{N}))$ be the solution to SL with initial data $u_0$. Let $\{c_k\}$ be an $\epsilon_0$-frequency envelope of order $\frac{1}{16}\delta$. And let $\{c_k(\sigma)\}$ be another frequency envelope of order $\frac{1}{16}\delta$ which satisfies
\begin{align*}
2^{\frac{d}{2}k}\|P_{k} u_0\|_{L^2_x}&\le c_k  \\
2^{\frac{d}{2}k}\|P_{k} u_0\|_{L^2_x}&\le 2^{-\sigma k}c_k(\sigma)
\end{align*}
Denote $\{\phi_i\}^{d}_{i=1}$ the corresponding differential fields of the heat flow initiated from $u$. Suppose also that at the heat initial time $s=0$,
\begin{align}\label{H1}
2^{\frac{d}{2}k-k}\|P_{k}\phi_i(s=0)\|_{G_k(T)}&\le \epsilon^{-\frac{1}{2}}_0c_k.
\end{align}
Then when $s=0$, we have for all $i=1,...d$, $k\in\Bbb Z$,
\begin{align}
2^{\frac{d}{2}k-k}\|P_{k}\phi_i\|_{G_k(T)}&\lesssim c_k\label{4.32}\\
2^{\frac{d}{2}k-k}\|P_{k}\phi_i\|_{G_k(T)}&\lesssim 2^{-\sigma k}[c_k(\sigma)+c_k(\sigma-\vartheta)c_{k}(\vartheta)].\label{4.33}
\end{align}
\end{Proposition}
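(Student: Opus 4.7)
The statement (\ref{4.32}) is the content of Proposition \ref{01} applied with $\sigma=0$ (which is permitted since $0\in[0,\vartheta]$), so only (\ref{4.33}) requires work. The plan is to run the same bootstrap skeleton used in Section 6.1 but with the enlarged frequency envelope
\begin{align*}
b^{(1)}_k(\sigma):=b_k(\sigma)+c_k(\vartheta)\,c_k(\sigma-\vartheta),\qquad b_k(\sigma):=\sup_{k'\in\Bbb Z}2^{-\delta|k-k'|}2^{(\frac{d}{2}-1)k'+\sigma k'}\|P_{k'}\phi_x\upharpoonright_{s=0}\|_{G_{k'}(T)},
\end{align*}
where $\sigma\in[\vartheta,2\vartheta]$. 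Note $\{b^{(1)}_k(\sigma)\}$ is a frequency envelope of order $\tfrac{1}{8}\delta$ because its defining sequences are, and by Proposition \ref{01} we already have $b_k(\tau)\lesssim c_k(\tau)$ for every $\tau\in[0,\vartheta]$; in particular the lower-order part of $b^{(1)}_k(\sigma)$ is already under control.

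The heart of the argument is to upgrade Corollary \ref{IJ} to the assertion
\begin{align*}
2^{\frac{d}{2}k}\|P_k\widetilde{\mathcal G}(s)\|_{F_k}+2^{\frac{d}{2}k-k}\|P_kA_x(s)\|_{F_k\cap S^{1/2}_k}\lesssim 2^{-\sigma k}(1+s2^{2k})^{-4}\,b^{(1)}_{k,s}(\sigma).
\end{align*}
Following the pattern of Lemma \ref{nohj} and Lemma \ref{pkij90}, this reduces to the bootstrap input
$2^{\frac{d}{2}k}\|P_k\mathcal U_1\|_{F_k}\lesssim \varepsilon(1+s2^{2k})^{-4}2^{-\sigma k}b^{(1)}_k(\sigma)$,
which in turn is verified term-by-term on $\mathcal U_0$, $\mathcal U_1$ via the trilinear estimates of Lemma \ref{1bi1}. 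The new feature compared with the $\sigma\in[0,\vartheta]$ case is the treatment of $\mathrm{High}\times\mathrm{High}$ interactions in $(A_i\phi_i)\mathcal G'$ (and similar): when both factors carry derivatives beyond~$\vartheta$, I would split the $\sigma$ cost as $\vartheta+(\sigma-\vartheta)$ across the two high-frequency factors, invoking Proposition \ref{01} to bound one factor by $c_k(\vartheta)$ and the other by $c_k(\sigma-\vartheta)$. This is precisely what produces the extra summand $c_k(\vartheta)c_k(\sigma-\vartheta)$ in $b^{(1)}_k(\sigma)$, and smallness of $\sum c_k^2$ keeps the bootstrap closable.

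With the connection and curvature bounds at the enlarged envelope level in hand, the Schr\"odinger-side step is mechanical: take $s=0$ in the parabolic estimate to obtain the analogs of (\ref{ano4})--(\ref{ano5}) with $b^{(1)}_k(\sigma)$ replacing $b_k(\sigma)$, insert into the linear $G_k$ versus $N_k$ estimates of \cite{huangBIKThuang} applied to (\ref{b4}), and bound the resulting cubic nonlinearity $\phi_\mu\phi_\nu\widetilde{\mathcal G}$ by trilinear estimates exactly as in Proposition \ref{01}. This yields $b_k(\sigma)\lesssim c_k(\sigma)+c_k(\vartheta)c_k(\sigma-\vartheta)+\varepsilon b_k(\sigma)$, and absorbing the last term on the left gives (\ref{4.33}).

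The main obstacle is the bookkeeping of the $\mathrm{High}\times\mathrm{High}$ term in $\mathcal U_1$ for $\sigma\in(\vartheta,2\vartheta]$: the bilinear gain factors $2^{\frac{d-1}{2}(k_1-k)}+2^{\frac{d}{d+2}(k_1-k)}$ from Lemma \ref{1bi1} must still defeat the $\sigma$-weight once the derivatives are split between the two factors. The calculation in Lemma \ref{pkij90} shows this gain comfortably handles one $\vartheta$-loaded factor; the new point is that it must absorb the splitting $\vartheta+(\sigma-\vartheta)=\sigma$ uniformly in how the derivatives are allocated, which is why $\vartheta=1-10^{-10}$ rather than $1$ is needed to keep summability of the geometric series in $k_1-k$ sharp. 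All other interactions (Low$\times$High, High$\times$Low) are easier and reduce, via Proposition \ref{01}, to estimates already proved.
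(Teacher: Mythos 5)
Your reduction to (\ref{4.33}) and the identification of the enlarged envelope $b^{(1)}_k(\sigma)=b_k(\sigma)+c_k(\vartheta)c_k(\sigma-\vartheta)$ agree with the paper, but you place the $\vartheta+(\sigma-\vartheta)$ split in the wrong part of the argument, which leaves a genuine gap. You propose to split the $\sigma$ cost in the $\mathrm{High}\times\mathrm{High}$ interaction of $(A_i\phi_i)\mathcal G'$. That interaction is in fact harmless: both inputs sit at frequency $\gtrsim k$, so even a $\vartheta$-level envelope on a single factor already beats $2^{-\sigma k}$ at output frequency $k$. The real obstruction inside the $\mathcal U_1$ bootstrap (see the treatment of (\ref{j11}) in the proof of Lemma \ref{pkij90}) is the $\mathrm{Low}\times\mathrm{High}$ interaction $P_k(P_{k_1}(A_i\phi_i)P_{k_2}\mathcal G')$ with $k_1\le k-4$, $|k_2-k|\le4$: there the entire $2^{-\sigma k}$ weight must come from the high-frequency factor $\mathcal G'$, in particular through its $L^{p_d}_xL^\infty_t$ bound in the second term of (\ref{4bi4}), while the low-frequency $A_i\phi_i$ factor contributes no additional $k$-decay. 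This requires Proposition \ref{xia2}-type control of $\mathcal G'$ at the full level $\sigma\in[\vartheta,2\vartheta]$, which your outline never establishes; Propositions \ref{01} and \ref{xia2} only reach $[0,\vartheta]$, and your $\mathrm{High}\times\mathrm{High}$ manoeuvre does not feed back into them. The assertion that the remaining interactions ``reduce, via Proposition \ref{01}, to estimates already proved'' is precisely where the argument breaks.

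The paper's proof places the split upstream, in the extrinsic heat flow: the engine is the improved estimate of $\partial_sv$. In the trilinear nonlinearity $S(v)(\partial_xv,\partial_xv)$, the $I_1$ interaction of Lemma \ref{EH2} (high-frequency $S(v)$, two lower-frequency $\partial_xv$'s) is bounded in (\ref{iteration1}) by putting $\sigma-\vartheta$ on $S^{(1)}(v)$ (available from (\ref{P3}) since $\sigma-\vartheta\in[0,\vartheta]$), $\vartheta$ on one $\partial_xv$ (from Proposition \ref{01}), and nothing on the other; this is exactly where $c_k(\vartheta)c_k(\sigma-\vartheta)$ arises. The resulting bounds (\ref{H5}), (\ref{H8}) for $v$, $\partial_sv$ are then propagated to $\phi_s$ in (\ref{H9}), to $d\mathcal P(e)-\chi^\infty$ in (\ref{H10}), and finally to $\mathcal G'$ in both $F_k$ and $L^{p_d}_xL^\infty_t$ in (\ref{H12}), all at the enlarged level $\sigma\in[0,2\vartheta]$, \emph{before} the Section 6.1 bootstrap is rerun. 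Only with those curvature bounds in hand does the $\mathrm{Low}\times\mathrm{High}$ interaction close. Your proposal would need this intermediate heat-flow improvement step inserted before invoking Lemma \ref{pkij90}.
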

\begin{proof}
The key point and the engine for our iteration is the estimates of $\partial_s v$.
Thus we begin with improving $\partial_s v$ in Proposition 5.1 (see (5.34)).

Let $\sigma \in [\vartheta,2\vartheta]$. Applying Proposition \ref{01} with $\sigma_0\in[0,\vartheta]$, we have seen
\begin{align}
2^{\frac{d}{2}k-k}\|P_{k}\phi_x(s=0)\|_{F_{k}}&\lesssim 2^{-\sigma_0k}c_{k}(\sigma_0)\\
2^{\frac{d}{2}k-k}\|P_{k}\phi_x(s)\|_{F_{k}}&\lesssim 2^{-\sigma_0 k}c_{k}(\sigma_0)(1+2^{2k}s)^{-4},
\end{align}
which combined with Proposition 5.1 gives
\begin{align}
2^{\frac{d}{2}k}\|P_{k}(d\mathcal{P}(e)-\chi^{\infty})\|_{L^{p_{d}}_{t,x}\bigcap L^{\infty}_tL^2_x}&\lesssim 2^{-\sigma_0 k}c_{k}(\sigma_0)(1+s2^{2k})^{-l+1}\label{P1}\\
2^{\frac{d}{2}k}\|P_{k}v\|_{L^{p_{d}}_{t,x}\bigcap L^{\infty}_tL^2_x}&\lesssim 2^{-\sigma_0 k}c_{k}(\sigma_0)(1+2^{2k}s)^{-l}\label{P2}\\
\|P_{k}(S^{(1)}(v)-S^{(1)}_{\infty})\|_{L^{p_{d}}_{t,x}\bigcap L^{\infty}_tL^2_x}&\lesssim 2^{-\sigma_0 k}c_{k}(\sigma_0)(1+2^{2k}s)^{-l}\label{P3}\\
2^{\frac{d}{2}k-2k}\|P_{k}\partial_sv(s)\|_{_{L^{p_{d}}_{t,x}\bigcap L^{\infty}_tL^2_x}}&\lesssim 2^{-\sigma_0 k}c_{k}(\sigma_0)(1+2^{2k}s)^{-l}.\label{P4}
\end{align}
for $l\in[0,\frac{1}{2}L-1]$.
Define the frequency envelope $\{b_{k}(\sigma)\}$ as (\ref{H2}) with $\sigma\in[0,2\vartheta]$. Then by Proposition \ref{01},
\begin{align}\label{viuiui}
b_{k}(\sigma_0)\lesssim c_{k}(\sigma_0) ,\forall \mbox{  }\sigma_0\in[0,\vartheta].
\end{align}
Then by definition of $b_{k}(\sigma)$ and (\ref{P1}) we infer from bilinear Littlewood-Paley decomposition that
\begin{align}\label{H3}
2^{\frac{d}{2}k-k}\|P_{k}(\partial_x v)(\upharpoonright_{s=0})\|_{L^{\infty}_tL^2_x\bigcap  L^{p_{d}}_{t,x}}\lesssim 2^{-\sigma k} b_{k}(\sigma)+2^{-\sigma k}c_{k}(\vartheta)c_{k}(\sigma-\vartheta), \mbox{ }\forall \sigma \in [\vartheta,2\vartheta].
\end{align}
Let $\mathcal{J}_1(s)$ be the positive continuous function defined on $[0,\infty)$ via
\begin{align}
\mathcal{J}_{1,\sigma, l}(s)=\sup_{k\in\Bbb Z,\tilde{s}\in[0,s]}(1+s2^{2k})^{l}2^{\frac{d}{2}k }2^{\sigma k} \|P_{k}v\|_{L^{\infty}_tL^2_x\bigcap  L^{p_{d}}_{t,x}}1/b^{(1)}_{k}(\sigma)
\end{align}
where $b^{(1)}_{k}(\sigma)$ is defined by
\begin{align}
b^{(1)}_{k}(\sigma)=\left\{
                      \begin{array}{ll}
                        b_{k}(\sigma), & \hbox{ } \sigma\in[0,\vartheta]\\
                        b_{k}(\sigma)+2^{-\sigma k}c_{k}(1)c_{k}(\sigma-1)& \hbox{ }\sigma\in(\vartheta,2\vartheta]
                      \end{array}
                    \right.
\end{align}
(\ref{P2}) has shown
\begin{align}\label{tfcvghjkl}
\sup_{s\ge 0}\mathcal{J}_{1,\sigma,M}(s)\lesssim 1, \mbox{ }\forall \sigma\in [0,\vartheta].
\end{align}
Then by (\ref{H3}) and (\ref{viuiui}), we see
\begin{align}\label{idgu}
\lim_{s\to 0}\mathcal{J}_{1,\sigma,l}(s)\lesssim 1.
\end{align}
By Duhamel principle for the heat flow equation, we get
\begin{align}
\|P_{k}v\|_{L^{\infty}_{t}L^2_x\bigcap L^{p_{d}}_{t,x}}&\lesssim e^{-c(d) 2^{2k}}\|P_{k}v_0\|_{L^{\infty}_{t}L^2_x\bigcap L^{p_{d}}_{t,x}}\nonumber\\
&+\int^{s}_0 e^{-c(d)(s-\tau)2^{2k}}\|P_{k}(S(v)(\partial_xv,\partial_xv)\|_{L^{\infty}_{t}L^2_x\bigcap L^{p_{d}}_{t,x}}.\label{H4}
\end{align}
By Lemma \ref{EH2}, (\ref{P3}), (\ref{tfcvghjkl}) and the definition of $b^{(1)}_k(\sigma)$, (\ref{H4}) is dominated by
\begin{align}\label{H7}
&2^{\frac{d}{2}k}\|P_{k}(S(v)(\partial_xv,\partial_xv)\|_{L^{\infty}_{t}L^2_x\bigcap L^{p_{d}}_{t,x}}
\lesssim \mathcal{J}_{1,\sigma,l} 2^{-\sigma k+2k}b^{(1)}_{k}(\sigma).
\end{align}
for all $\sigma\in[0,2\vartheta]$, $d\ge 3$.
We remark that since (\ref{P3}) only reaches $\sigma_0\in[0,\vartheta]$, one needs to gain  $c_{k}(\vartheta)$ from $\partial_xv\partial_x v$ while applying Lemma \ref{EH2}. Precisely, this problem only occurs in the $I_1$ case (see Proof of  Lemma \ref{EH2}), and we can estimate $I_1$ as
\begin{align}
&\sum_{k_1\le k_2,k_3\ge k_2+5}\|P_{k}(P_{k_3}S(v)P_{k_1}\partial_x vP_{k_2}\partial_xv)\|_{L^{p_d}_{t,x}\bigcap L^{\infty}_tL^2_x}\nonumber\\
&\lesssim  2^{-\frac{d}{2}k}\widetilde{\alpha}_k(\sum_{k_1\le k}2^{k_1}\widetilde{\beta}_{k_1})^2\nonumber\\
&\lesssim  (1+s2^{2k})^{-l} 2^{-\frac{d}{2}k}2^{2k}2^{-(\sigma-\vartheta) k}c_{k}(\sigma-\vartheta)(\sum_{k_1\le k}2^{k_1-\vartheta k}c_{k_1}(\vartheta)) 2^{k}c_{k}\nonumber\\
&\lesssim  (1+s2^{2k})^{-l}2^{-\frac{d}{2}k}2^{2k-\sigma k}b^{(1)}_{k}(\sigma).\label{iteration1}
\end{align}
Thus we arrive at
\begin{align*}
(1+2^{2k}s)^{l}\|P_{k}v\|_{L^{\infty}_{t}L^2_x\bigcap L^{p_{d}}_{t,x}}&\lesssim (1+2^{2k}s)^{l}e^{-c(d) 2^{2k}}\|P_{k}v_0\|_{L^{\infty}_{t}L^2_x\bigcap L^{p_{d}}_{t,x}}\nonumber\\
&+(1+\epsilon\mathcal{J}_{1,\sigma,l}(s))(1+2^{2k}s)^{l}\int^{s}_0 e^{-c(d)(s-\tau)2^{2k}}(1+\tau 2^{2k})^{-l} 2^{-\sigma k+2k}b^{(1)}_{k}(\sigma)d\tau,
\end{align*}
which further shows
\begin{align*}
\mathcal{J}_{1,\sigma, l}(s)\lesssim 1+\epsilon \mathcal{J}_{1,\sigma, l}(s).
\end{align*}
Therefore, by (\ref{idgu}), one gets for all $\sigma\in[0,2\vartheta]$
\begin{align}\label{H5}
2^{\frac{d}{2}k } \|P_{k}v\|_{L^{\infty}_tL^2_x\bigcap  L^{p_{d}}_{t,x}} \lesssim (1+2^{2k}s)^{-l}2^{-\sigma k}b^{(1)}_{k}(\sigma).
\end{align}
and using the heat flow equation, (\ref{H5}) and (\ref{H7}) yield
\begin{align}\label{H8}
2^{\frac{d}{2}k-2k}\|P_{k}\partial_sv\|_{L^{\infty}_tL^2_x\bigcap  L^{p_{d}}_{t,x}}\lesssim 2^{-\sigma k} b^{(1)}_k(\sigma)(1+2^{2k}s)^{-l}.
\end{align}

Then by bilinear Littlewood-Paley decomposition, (\ref{H8}) and the frame bound (\ref{P1}), $\phi_s$ is improved to be
dominated by
\begin{align}\label{H9}
2^{\frac{d}{2}k-2k}\|P_{k}\phi_s\|_{L^{p_{d}}_{t,x}\bigcap L^{\infty}_tL^2_x}\lesssim 2^{-\sigma k} b^{(1)}_k(\sigma)(1+2^{2k}s)^{-l+1}
\end{align}
for all $\sigma\in [0,2\vartheta]$.
Then by (\ref{H9}) the frame bound (\ref{P1}) now can be ameliorated in the range of  $\sigma$ as
\begin{align}\label{H10}
2^{\frac{d}{2}k}\|P_{k}(d\mathcal{P}(e)-\chi^{\infty})\|_{L^{\infty}_{t}L^2_x\bigcap L^{p_{d}}_{t,x}}
\lesssim 2^{-\sigma k}b^{(1)}_{k}(\sigma)(1+2^{2k}s)^{-l+2}.
\end{align}
for all $\sigma\in[0,2\vartheta]$. And similarly,
\begin{align}\label{H11}
2^{\frac{d}{2}k}\|P_{k}(\mathcal{G}')\|_{L^{\infty}_{t}L^2_x\bigcap L^{p_{d}}_{t,x}}.
\lesssim 2^{-\sigma k}b^{(1)}_{k}(\sigma)(1+2^{2k}s)^{-l+2}.
\end{align}

Until now, we have improved all the results before Section 5.2 to $\sigma\in[0,2\vartheta]$. Then repeating the arguments of Section 5.2, one obtains for $d\ge 3$, $\sigma\in[0,2\vartheta]$
\begin{align}
2^{\frac{d}{2}k}\|P_{k}(\mathcal{G}')\|_{L^{\infty}_{t}L^2_x\bigcap L^{p_{d}}_{t,x}}
&\lesssim 2^{-\sigma k}b^{(1)}_{k}(\sigma)(1+s2^{2k})^{-l+2} \nonumber\\
2^{\frac{d}{d+2}k}\|P_{k}(\mathcal{G}')\|_{L^{p_{d}}_xL^{\infty}_{t} }
&\lesssim 2^{-\sigma k}b^{(1)}_{k}(\sigma)(1+s2^{2k})^{(2-l)/p'_d}.\label{H12}
\end{align}
With (\ref{H11}) and (\ref{H12}), running the bootstrap programme in Section 6.1 again gives
\begin{align}\label{H12}
b_{k}(\sigma)\lesssim c_{k}(\sigma)+\epsilon b^{(1)}_{k}(\sigma)
\end{align}
for all $\sigma\in [0,2\vartheta]$.
Then Proposition \ref{02} follows.
\end{proof}

\subsection{$j-th$ iteration and Proof of Theorem 1.2}

Repeating the above iteration scheme for $k$ times yields
\begin{Proposition}\label{03}
Let $\vartheta\in[1-10^{-9},1-10^{-10}]$ be a fixed constant. Let $\delta=\frac{1}{d10^{100}}$.
Given $\sigma\in [j\vartheta,(j+1)\vartheta]$, $j\in\Bbb N$.
Let $\epsilon_0$ be a sufficiently small constant depending only on $j,d$.  Let $u\in C([-T,T];\mathcal{Q}(\Bbb R^d,\mathcal{N}))$ is the solution to SL with initial data $u_0$. Let $\{c_k\}$ be an $\epsilon_0$-frequency envelope of order $\frac{1}{2^{j+3}}\delta$. And let $\{c_k(\sigma)\}$ be another frequency envelope of order $\frac{1}{2^{j+3}}\delta$ which satisfies
\begin{align}
2^{\frac{d}{2}k}\|P_{k} u_0\|_{L^2_x}&\le c_k\label{1wei}\\
2^{\frac{d}{2}k}\|P_{k} u_0\|_{L^2_x}&\le 2^{-\sigma k}c_k(\sigma)\label{2wei}
\end{align}
Denote $\{\phi_i\}^{d}_{i=1}$ the corresponding differential fields of the heat flow initiated from $u$. Suppose also that at the heat initial time $s=0$,
\begin{align}\label{H1}
2^{\frac{d}{2}k-k}\|P_{k}\phi_i(s=0)\|_{G_k(T)}&\le \epsilon^{-\frac{1}{2}}_0c_k.
\end{align}
Then when $s=0$, we have for all $i=1,...d$, $k\in\Bbb Z$,
\begin{align}
2^{\frac{d}{2}k-k}\|P_{k}\phi_i\|_{G_k(T)}&\lesssim  2^{-\sigma k} c^{(j)}_{k}(\sigma )\label{YU2}\\
2^{\frac{d}{2}k}\|P_{k}(d\mathcal{P}(e)-\chi^{\infty})\|_{L^{\infty}_tL^2_x}&\lesssim 2^{-\sigma k} c^{(j)}_{k}(\sigma )\label{YU1}
\end{align}
where $c^{(j)}_{k}(\sigma)$ is defined by induction:
\begin{align}
c^{(0)}_{k}(\sigma)&=c_{k}(\sigma),\mbox{ }{\rm{if}} \mbox{ }\sigma\in[0,\vartheta]\\
c^{(j+1)}_{k}(\sigma)&=c^{(j)}_{k}(\sigma), \mbox{ }{\rm{if}} \mbox{ }\sigma\in[0,j\vartheta]\\
c^{(j+1)}_{k}(\sigma)&=c_{k}(\sigma)+c^{(j)}_{k}(\sigma-\vartheta)c_{k}(\vartheta),\mbox{ } {\rm{if}} \mbox{ }\sigma\in(j\vartheta,(j+1)\vartheta].
\end{align}
\end{Proposition}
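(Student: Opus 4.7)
I will proceed by induction on $j$. The base case $j=0$ is Proposition~\ref{01} and the case $j=1$ is Proposition~\ref{02}; the inductive step mimics the scheme of Proposition~\ref{02} verbatim but takes the conclusions at level $j-1$ as the input for the bilinear/trilinear estimates. Throughout the argument the order $\frac{1}{2^{j+3}}\delta$ of the envelopes $\{c_k\},\{c_k(\sigma)\}$ is chosen so that after $j$ iterations the accumulated $\delta$-losses remain bounded by the full envelope order $\delta$ used in all other frequency envelopes (in particular in $h_k(\sigma)$, $b_k(\sigma)$, $b_k^{(m)}(\sigma)$).

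\textbf{Inductive step.} Assume (\ref{YU2})--(\ref{YU1}) hold at level $j-1$ for all $\sigma\in[0,j\vartheta]$. I now want to upgrade to $\sigma\in[j\vartheta,(j+1)\vartheta]$. First, the induction hypothesis together with Proposition~\ref{mill} and Proposition~\ref{xia2} gives, for every $\sigma_0\in[0,j\vartheta]$ and $l\le \tfrac12 L-1$,
\begin{align*}
2^{\frac{d}{2}k}\|P_k v\|_{L^\infty_tL^2_x\cap L^{p_d}_{t,x}}&\lesssim 2^{-\sigma_0 k}c^{(j-1)}_k(\sigma_0)(1+s2^{2k})^{-l},\\
2^{\frac{d}{2}k-2k}\|P_k \partial_s v\|_{L^\infty_tL^2_x\cap L^{p_d}_{t,x}}&\lesssim 2^{-\sigma_0 k}c^{(j-1)}_k(\sigma_0)(1+s2^{2k})^{-l},
\end{align*}
together with analogous bounds on $d\mathcal P(e)-\chi^\infty$, $S(v)-S_\infty$, $A_x$, $\widetilde{\mathcal G}$, $\mathcal G'$ and $\mathcal G''$. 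For $\sigma\in[j\vartheta,(j+1)\vartheta]$ I define
\[
b^{(j)}_k(\sigma):=b_k(\sigma)+2^{-\sigma k}c_k(\vartheta)\,c^{(j-1)}_k(\sigma-\vartheta)\cdot 2^{\sigma k}
\]
(the factor is arranged so that $b^{(j)}_k(\sigma)$ plays the role of $b^{(1)}_k(\sigma)$ in Proposition~\ref{02}) and introduce
\[
\mathcal J_{j,\sigma,l}(s):=\sup_{k\in\Bbb Z,\tilde s\le s}(1+\tilde s 2^{2k})^l 2^{(\frac{d}{2}+\sigma)k}\|P_k v(\tilde s)\|_{L^\infty_tL^2_x\cap L^{p_d}_{t,x}}\big/b^{(j)}_k(\sigma).
\]
By the induction hypothesis $\mathcal J_{j,\sigma_0,l}(s)\lesssim 1$ for $\sigma_0\in[0,j\vartheta]$, and a bilinear Littlewood--Paley decomposition at $s=0$ combined with the frame bound (\ref{YU1}) at level $j-1$ (applied to the identity $\partial_i v=\psi_i^\ell d\mathcal P(e_\ell)$) gives $\lim_{s\downarrow 0}\mathcal J_{j,\sigma,l}(s)\lesssim 1$ for every $\sigma\in[j\vartheta,(j+1)\vartheta]$.

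\textbf{Heart of the iteration.} I now insert these bounds into the Duhamel formula for the heat equation and invoke Lemma~\ref{EH2} exactly as in (\ref{iteration1}). The only delicate term is the high-low-low piece
\[
\sum_{k_1\le k_2\le k_3-5,\ |k_3-k|\le 4}P_k\bigl(P_{k_3}S(v)\,P_{k_1}\partial_x v\,P_{k_2}\partial_x v\bigr),
\]
where the full regularity $\sigma$ must sit on $S(v)$. I split the exponent as $\sigma=\vartheta+(\sigma-\vartheta)$ with $\sigma-\vartheta\in[(j-1)\vartheta,j\vartheta]$, place one $\partial_x v$ factor at regularity $\vartheta$ (controlled by $c_k(\vartheta)$ via Proposition~\ref{01}), the other at regularity $\sigma-\vartheta$ (controlled by $c^{(j-1)}_k(\sigma-\vartheta)$ via the induction hypothesis), and put $S(v)$ itself at regularity $\sigma$ using $\widetilde\alpha_k\lesssim \mathcal J_{j,\sigma,l}(s) 2^{-\sigma k}b^{(j)}_k(\sigma)$. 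A bootstrap argument on $\mathcal J_{j,\sigma,l}$ identical to the one in the proof of Proposition~\ref{02} then yields
\begin{align*}
2^{\frac{d}{2}k}\|P_k v\|_{L^\infty_tL^2_x\cap L^{p_d}_{t,x}}&\lesssim (1+s2^{2k})^{-l}2^{-\sigma k}b^{(j)}_k(\sigma),\\
2^{\frac{d}{2}k-2k}\|P_k \partial_s v\|_{L^\infty_tL^2_x\cap L^{p_d}_{t,x}}&\lesssim (1+s2^{2k})^{-l}2^{-\sigma k}b^{(j)}_k(\sigma),
\end{align*}
from which a bilinear Littlewood--Paley argument upgrades the frame bound to
\[
2^{\frac{d}{2}k}\|P_k(d\mathcal P(e)-\chi^\infty)\|_{L^\infty_tL^2_x\cap L^{p_d}_{t,x}}\lesssim (1+s2^{2k})^{-l+2}2^{-\sigma k}b^{(j)}_k(\sigma),
\]
and similarly for $\mathcal G',\mathcal G''$ in $L^{p_d}_xL^\infty_t$ via Lemma~\ref{qq2}.

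\textbf{Closing the Schr\"odinger step.} With these upgraded decay estimates in hand, the proofs of Corollary~\ref{IJ}, Lemma~\ref{xiaxia} and the Schr\"odinger-direction $G_k\,vs.\,N_k$ estimates of \cite{huangBIKThuang} (used in Proposition~\ref{01}) go through verbatim to give $b_k(\sigma)\lesssim c_k(\sigma)+\epsilon\, b^{(j)}_k(\sigma)$. Recalling the definition $c^{(j)}_k(\sigma)=c_k(\sigma)+c^{(j-1)}_k(\sigma-\vartheta)c_k(\vartheta)$ for $\sigma\in(j\vartheta,(j+1)\vartheta]$ and absorbing the $\epsilon\,b_k(\sigma)$ term on the left yields $b_k(\sigma)\lesssim c^{(j)}_k(\sigma)$, which is exactly (\ref{YU2}); (\ref{YU1}) then follows from (\ref{YU2}) and the frame identity $\partial_s d\mathcal P(e)=({\bf D}d\mathcal P)(\partial_s v;e)$ as in Step~1.3 of Proposition~\ref{mill}.

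\textbf{Main obstacle.} The principal difficulty is bookkeeping: every iteration costs a factor of $2^{\delta|k-k'|}$ when converting between envelopes through bilinear Littlewood--Paley decompositions, and the trilinear splitting $\sigma=\vartheta+(\sigma-\vartheta)$ must be compatible with the envelope orders at all intermediate levels. The choice of $\frac{1}{2^{j+3}}\delta$ for the order of $\{c_k\},\{c_k(\sigma)\}$ is precisely what is needed so that the accumulated slow-variation losses after $j$ iterations are absorbed into the $\delta$-order envelopes $h_k,b_k,b^{(m)}_k$ used throughout Sections 5--6. Apart from this careful bookkeeping, the inductive step is a structural repetition of the proof of Proposition~\ref{02}, and no new analytic input is required beyond Propositions~\ref{1XS}, \ref{mill}, \ref{xia2} and Lemmas~\ref{nohj}--\ref{xiaxia}.
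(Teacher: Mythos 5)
Your proof takes essentially the same route as the paper: induct on $j$, feed the level-$(j-1)$ heat-flow bounds on $v,\partial_s v, d\mathcal P(e),\mathcal G^{(b)}$ back into Duhamel/Lemma~\ref{EH2}, isolate the high$\times$low$\times$low term as in (\ref{iteration1}) by splitting $\sigma=\vartheta+(\sigma-\vartheta)$, bootstrap a quantity $\mathcal J_{j,\sigma,l}$, and close the Schr\"odinger-direction step to get $b_k(\sigma)\lesssim c_k(\sigma)+\epsilon\,b^{(j)}_k(\sigma)$. The paper's own proof packages exactly this into a formal ``logic graph'' $(\lambda_1)$--$(\lambda_8)$ with explicit bookkeeping of how many covariant derivatives of $S(v)$, $\mathcal G^{(b)}$ and $[d\mathcal P]^{(c)}$ and how much $s$-decay are consumed per iteration (the line $S^{j+2}_{4+2j}, G^{j+2}_{8+j2^j}, E^{j+2}_{4+2j}$), a point you treat implicitly by noting that Propositions~\ref{1XS}, \ref{mill} and \ref{xia2} already hold up to order $L$; that implicit step is valid but worth making explicit, since it is precisely why $L$ must be taken large (and hence $\epsilon_0$ small depending on $j$).
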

\begin{proof}
We will use the following dynamical separations:
\begin{align}
({\bf D}^{l}d\mathcal{P})(e,...,e;e)-{\rm limits}&=\int^{\infty}_s ({\bf D}^{l+1}d\mathcal{P})(e,...,e;e) \psi_sds'\\
 \mathcal{G}^{(l)}-{\rm limits}&=\int^{\infty}_s {\mathcal{G}}^{(l+1)} \psi_sds'\\
 D^{l} S(v) -{\rm limits}&=\int^{\infty}_s {D}^{l+1}S(v)\partial_s vds',
\end{align}
and we denote $({\bf D}^cd\mathcal{P})(\underbrace{e,...,e}_{c};e)$ by $[d\mathcal{P}]^{(c)}$ for simplicity in the following.
To make the statement clear we introduce following notations:
\begin{align*}
  (S_N^a):&{\left\| {\partial^{\ell}_x{D^a}S(v)} \right\|_{L_t^\infty L_x^2}} \le \varepsilon {s^{ - \ell/2}},\forall 0 \le \ell \le  {K_0} + N  \\
  (S^a_{j,N}):&{2^{\frac{d}
{2}k}}{\left\| {{P_k}{D^a}S(v)} \right\|_{L_t^\infty L_x^2 \cap L_{t,x}^{{p_d}}}} \le  {2^{ - \sigma k}}c_k^{(j)}(\sigma ){(1 + {2^{2k}}s)^{ - L}},\forall 0 \le  L \le {K_0} + N\\
  ({V_{j,N}}):&{2^{\frac{d}
{2}k}}{\left\| {{P_k}v} \right\|_{L_t^\infty L_x^2 \cap L_{t,x}^{{p_d}}}} \le  {2^{ - \sigma k}}c_k^{(j)}(\sigma ){(1 + {2^{2k}}s)^{ - K}},\forall 0 \le  K \le  {K_0} + N \hfill \\
   ({V^{0}_{j}}):&{2^{\frac{d}
{2}k}}{\left\| {{P_k}v} (\upharpoonright_{s=0})\right\|_{L_t^\infty L_x^2 \cap L^{{p_d}}_{t,x}}}
 \lesssim  {2^{ - \sigma k}}c_k^{(j)}(\sigma )\\
 (V^{s}_{j,N}):&{2^{\frac{d}
{2}k}}{\left\| {{P_k}v_s} \right\|_{L_t^\infty L_x^2 \cap L_{t,x}^{{p_d}}}} \le  {2^{ 2k- \sigma k}}c_k^{(j)}(\sigma ){(1 + {2^{2k}}s)^{ - K}},\forall 0 \le  K \le  {K_0} + N.
\end{align*}
These are heat flow quantities. Introduce the following notations for curvature parts:
\begin{align*}
  &(G^b_N):{ \| {\partial^{\ell}_x \mathcal{G}^{(b)}} \|_{L_t^\infty L_x^2}} \lesssim \varepsilon {s^{ - \ell/2}},\forall \ell\in[ 0,{K_0} + N] \hfill \\
  &( {G}^{b}_{j,N}):{2^{\frac{d}
{2}k}}{ \|  {P_k} \mathcal{{G}}^{(b)} \|_{L_t^\infty L_x^2 \cap L_{t,x}^{{p_d}}}}\lesssim {2^{ - \sigma k}}c_k^{(j)}(\sigma ){(1 + {2^{2k}}s)^{ - K}},\forall K\in[ 0,{K_0} + N] \hfill \\
  &( {G}^{b * }_{j,N}):{2^{\frac{d}
{2}k-\frac{dk}{d+2}}}{\| {{P_k} \mathcal{G}^{(b)}} \|_{L_x^{{p_d}}L_t^\infty }} \lesssim {2^{ - \sigma k}}c_k^{(j)}(\sigma ){(1 + {2^{2k}}s)^{ - K}},\forall   K\in[0, {K_0} + N]  \\
  &( {G}_{j,N}):{2^{\frac{d}
{2}k}}{\| {{P_k} \mathcal{G}^{(1)}} \|_{L^{{p_d}}\cap L_t^\infty L^2_x }}+2^{\frac{d}{d+2}k}\| {{P_k} \mathcal{G}^{(1)}} \|_{L^{{p_d}}_x L_t^\infty}\lesssim {2^{ - \sigma k}}c_k^{(j)}(\sigma ){(1 + {2^{2k}}s)^{ - K}},\forall   K\in[0, {K_0} + N]
 \end{align*}
 frame parts
 \begin{align*}
  &(E^c_N): \left\|  \partial^{\ell}_x[d\mathcal{P}]^{(c)}  \right\|_{L_t^\infty L_x^2} \lesssim\varepsilon {s^{ -{\ell}/2}},\forall  {\ell} \in[0, {K_0} + N]\hfill \\
  &(E^c_{j,N}):{2^{\frac{d}
{2}k}}\|  P_k [d\mathcal{P}]^{(c)} \|_{L_t^\infty L_x^2 \cap L_{t,x}^{{p_d}}}  \lesssim {2^{ - \sigma k}}c_k^{(j)}(\sigma ){(1 + {2^{2k}}s)^{ - K}},\forall  K \in[0, {K_0} + N],
\end{align*}
and connection parts:
\begin{align*}
&(A{O_j}){2^{\frac{d}
{2}k - k}}{\left\| {{P_k}{A_x}} \right\|_{S_k^{1/2} \cap {F_k}}} \lesssim {2^{ - \sigma k}}c_{k,s}^{(j)}(\sigma ){(1 + {2^{2k}}s)^{ - 4}}  \\
&( {\Phi^{t}_j}){2^{\frac{d}
{2}k - 2k}}({\left\| {{P_k}{A_t}} (\upharpoonright_{s=0} )\right\|_{L_{t,x}^{{p_d}}}} + {\left\| {{P_k}{\phi _t}}(\upharpoonright_{s=0} ) \right\|_{L_{t,x}^{{p_d}}}}) \lesssim {2^{ - \sigma k}}c_k^{(j)}(\sigma )\\
&( {\Phi^{x}_j}){2^{\frac{d}
{2}k - 2k}}( \left\| {{P_k}{\phi_t}}(\upharpoonright_{s=0} )\right\|_{L_{t,x}^{{p_d}} \bigcap L^{\infty}_{t}L^2_x}) \lesssim  {2^{ - \sigma k}}c_k^{(j)}(\sigma ).
\end{align*}
Now, the induction relation can be written as
 \begin{align*}
(\lambda_1)    &(S^a_N) + (V^s_{0,N}) \Rightarrow (S^{a - 1}_{0,N - 1}) ;\mbox{  }(S^{a - 1}_{0,N - 1}) + (V^s_{1,N - 1} ) \Rightarrow (S_{1,N - 2}^{a - 1})  \\
 &\underline{(S^{a - k}_{j,N - k}) + (V^s_{j,N - 1} ) \Rightarrow (S_{j,N - 2}^{a - k - 1}).\mbox{  }\mbox{  }}\\
(\lambda_2)    &\underline{(S^{0}_{j,N})  + (V_{j,N})  \Rightarrow (V^{s}_{j,N})\mbox{  }\mbox{  } \mbox{  }\mbox{  }\mbox{  }\mbox{  }\mbox{  }\mbox{  }\mbox{  }\mbox{  }\mbox{  }\mbox{  }}\\
(\lambda_3)      & (S^0_N) + (V_{0,N}) \Rightarrow (V^{o}_{0,N} ) \Rightarrow ({V^{*}_{0,N}}) \\
&\underline{(S^{0}_{j - 1,N} ) + (V^{o}_{j,N} ) \Rightarrow (V^{*}_{j,N} ) \Rightarrow ({V_{j,N}})\mbox{  }}\\
(\lambda_4) &(E^0_N) + (V^{s}_{0,N} ) \Rightarrow (\Phi^{s}_{0,N} );
\mbox{ } (E^0_{1,N} ) + (V^s_{1,N} ) \Rightarrow (\Phi^s _{1,N}) \hfill \\
& \underline{(E^0_{j,N} ) + (V^s_{j,N} ) \Rightarrow (\Phi^s _{j,N} ).\mbox{  }\mbox{  }}\\
(\lambda_5)      &(E^c_N ) + (\Phi^s _{0,N} ) \Rightarrow (E^{c-1}_{0,N - 1});
  (E^{c-1}_{0,N - 1} ) + (\Phi^s _{1,N - 1} ) \Rightarrow (E^{c-2}_{1,N - 2} ) \\
&\underline{(E^{c-k}_{j,N - k} ) + (\Phi^s_{j,N - k} ) \Rightarrow (E^{c-k-1}_{j,N - k - 1} )\mbox{  }\mbox{  }}\\
 (\lambda_6)       &(A{O_j}) + (G_{j,N}) \Rightarrow (A{O_{j + 1}})   \\
 &(\Phi^t_j ) + (G^{b + 1}_{j,N}) \Rightarrow (G^{b*}_{j,\frac{1}{2}N })  \\
 &(A{O_j}) + (G_{j,N}) \Rightarrow (\Phi^t_j )   \\
 &\underline{ (G_{j,N})\Rightarrow (A{O_j}).\mbox{  }\mbox{  }}\\
   (\lambda_7)      &({V^{0}_{j}})\Rightarrow({V_{j,N}})\\
 &\underline{(E_{j,0} )+(\Phi^{x}_{j})\Rightarrow ({V^{0}_{j}})\mbox{  }\mbox{  }}\\
   (\lambda_8)      & SL \mbox{  }equation \Rightarrow (\Phi^{x}_{j}).
 \end{align*}
And each time estimates like  (\ref{iteration1}) give additional $c_{k}(\vartheta)$ which inspires the definition of $c^{(j)}_k(\sigma)$.
Thus in order to reach $\sigma \in[j\vartheta,(j+1)\vartheta]$, the top derivative orders and the sufficient decay order  in $(S^{a}_{N}), (G^b_N), (E^c_N)$ we need are
\begin{align}\label{yghpl}
S^{j+2}_{4+2j}, G^{j+2}_{8+j2^{j}}, E^{j+2}_{4+2j}.
\end{align}
The three quantities of (\ref{yghpl}) are purely heat flow related and follow by Section 3 and Section 4 via choosing a large $L'$.
\end{proof}

\subsection{Uniform Sobolev bounds, well-posedness and asymptotic behavior}

By (\ref{03}) and (\ref{YU1}) we see
\begin{align}
2^{\frac{d}{2}k}\|P_{k}v\|_{L^{\infty}_tL^2_x}&\lesssim   c^{(j)}_{k}(\sigma )2^{-\sigma k},
\end{align}
which shows
\begin{align}
\||\nabla|^{\beta}u\|_{L^{\infty}_tL^2_x}&\lesssim  \|u_0\|_{\dot{H}^{\frac{d}{2}}_x\bigcap\dot{H}^{\beta}_x }, \mbox{ }\beta=\sigma+\frac{d}{2}.
\end{align}
Hence considering the conservation of energy we conclude
\begin{align}\label{dfgbnmbn}
\| \partial_xu\|_{L^{\infty}_tH^j_Q}&\lesssim  \|\partial_xu_0\|_{H^{j}_Q },
\end{align}
for all $j\ge [\frac{d}{2}]+1$ if $\epsilon_1$ is sufficiently small depending only on $d,j$.
Applying the local well-posedness result of \cite{huangDWhuang} or \cite{huangMchuang} shows $u$ is global. And the Sobolev bounds (\ref{dfgbnmbn}) hold uniformly in $t\in\Bbb R$.
For  well-posedness, one may copy the arguments of [Section 7.3, \cite{LIZE}] which was based on \cite{huangBIKThuang,huangTataruhuang}.
The the asymptotic behavior (\ref{FFF}) can be proved as and indeed more easily than  [Section 7.4, \cite{LIZE}].
\section*{Appendix}

{\bf Proof of Lemma \ref{EH2}.}
\begin{proof}
The proof is an adaptation of [\cite{huangBIKThuang}, Lemma 8.2]. By Lemma \ref{EH} and symmetry, it suffices to bound the $\|P_{k}(F(v)(P_{k_1}\partial_x vP_{k_2}\partial_xv)\|_{L^{p_d}_{t,x}}$ part with
$k_1\le k_2$. Then we consider three subcases:
\begin{align*}
&\sum_{k_1\le k_2}\|P_{k}(F(v)P_{k_1}\partial_x vP_{k_2}\partial_xv)\|_{L^{p_d}_{t,x}}\\
&\lesssim \sum_{k_1\le k_2,k_3\ge k_2+5}\|P_{k}(P_{k_3}F(v)P_{k_1}\partial_x vP_{k_2}\partial_xv)\|_{L^{p_d}_{t,x}} \\
&+\sum_{k_1\le k_2,|k_2-k_3|\le 4}\|P_{k}(P_{k_3}F(v)P_{k_1}\partial_x vP_{k_2}\partial_xv)\|_{L^{p_d}_{t,x}}\\
& +\sum_{k_1\le k_2,k_2\ge k-4}\|P_{k}(P_{\le k_2-5}F(v)P_{k_1}\partial_x vP_{k_2}\partial_xv)\|_{L^{p_d}_{t,x}}\\
&:=I_1+I_2+I_3.
\end{align*}
$I_1$ is dominated by
\begin{align*}
&\sum_{k_1\le k_2,k_3\ge k_2+5}\|P_{k}(P_{k_3}F(v)P_{k_1}\partial_x vP_{k_2}\partial_xv)\|_{L^{p_d}_{t,x}}\\
&\lesssim \sum_{|k-k_3|\le 4}\| P_{k_3}F(v)\|_{L^{p_{d}}_{t,x}}
\sum_{k_1\le k_2\le k-4}\|P_{k_1}2^{\frac{d}{2}k_1}\partial_x v\|_{L^{\infty}_tL^2_x}2^{\frac{d}{2}k_2}\|P_{k_2}\partial_xv \|_{L^{\infty}_{t}L^2_x}\\
&\lesssim 2^{-\frac{d}{2}k}\widetilde{\alpha}_k(\sum_{k_1\le k}2^{k_1}\widetilde{\beta}_{k_1})^2.
\end{align*}
$I_2$ is bounded by
\begin{align*}
&\sum_{k_1\le k_2-4;|k_2-k_3|\le 4; k_2,k_3\ge k-5}\|P_{k}(P_{k_3}F(v)P_{k_1}\partial_x vP_{k_2}\partial_xv)\|_{L^{p_d}_{t,x}}\\
&+\sum_{|k_1-k_2|\le 4,|k_2-k_3|\le 4;k_1,k_2,k_3\ge k-10}\|P_{k}(P_{k_3}F(v)P_{k_1}\partial_x vP_{k_2}\partial_xv)\|_{L^{p_d}_{t,x}}\\
&\lesssim 2^{\frac{d}{2}k}\sum_{k_2\ge k-5}\| P_{k_2}F(v)\|_{L^{\infty}_tL^{2}_x}\| P_{k_2}\partial_x v\|_{L^{p_{d}}_{t,x}} (\sum_{k_1\le k_2}2^{\frac{d}{2}k_1}\|P_{k_1}\partial_x vP_{k_2}\|_{L^{\infty}_{t}L^2_{x}})\\
&\lesssim 2^{\frac{d}{2}k}\sum_{k_2\ge k-5}2^{-dk_2+k_2}\widetilde{\alpha}_{k_2}\widetilde{\beta}_{k_2}  (\sum_{k_1\le k_2}2^{k_1} \widetilde{\beta}_{k_1}).
\end{align*}
$I_3$ is dominated by
\begin{align*}
&\sum_{k_1\le k_2-4, |k_2-k|\le 4}\| F(v)\|_{L^{\infty}_{t,x}}2^{\frac{d}{2}k_1}\|P_{k_1}\partial_x v\|_{L^{\infty}_{t}L^2_x}\|P_{k_2}\partial_xv)\|_{L^{p_d}_{t,x}}\\
&+\sum_{k_1\le k_2, |k_2-k_1|\le 8,k_1,k_2\ge k-4 }2^{\frac{d}{2}k}\|F(v)\|_{L^{\infty}_{t,x}}
\|P_{k_1}\partial_xv\|_{L^{\infty}_{t}L^2_x}\|P_{k_2}\partial_xv\|_{L^{p_d}_{t,x}}\\
&\lesssim 2^{-\frac{d}{2}k}\widetilde{\beta}_{k}(\sum_{k_1\le k}2^{k_1}\widetilde{\beta}_{k_1})
+2^{\frac{d}{2}k}\sum_{k_1\ge k-4}2^{2k_1-dk_1}\widetilde{\beta}^2_{k_1}.
\end{align*}
\end{proof}

Similar to  [Lemma 3.1, \cite{LIZE}], we have
\begin{Lemma}\label{aaaHeat}
Let $u\in C([-T,T];\mathcal{Q}(\Bbb R^d, \mathcal{N}))$ solve $SL$. And let $v(s,t,x)$ be the solution of heat flow  equation  with initial data $u(t,x)$. Then given any $M\in\Bbb Z_+$, $M\ge 200$, for any $0\le \sigma\le 2M$, there exist  constants $\epsilon_{M}>0$, $C_{M}>0$, $C_{M,T}$, such that if $\|u\|_{L^{\infty}_t{\dot H}^{\frac{d}{2}}_x}\le \epsilon_{M}\ll 1$,
then for any $s\ge 0$, $i=1,2,...,d$, $\rho=0,1$, $m=0,1,...,M$,
 \begin{align*}
 \|\partial^{\rho}_{t}\partial^{m}_x(v-Q)\|_{L^{\infty}_tH^{M}_x}&\le  C_{M,T}(s+1)^{-\frac{m}{2}} \\
(2^{-\frac{1}{2}k}+2^{\sigma k})2^{\frac{d}{2}k-k}\| P_{k} \phi_i\|_{L^{\infty}_tL^2_x}&\le  C_M(2^{2k}s+1)^{-30} \\
(2^{-\frac{1}{2}k} +2^{\sigma k})2^{\frac{d}{2}k-k}\| P_{k} A_i\|_{L^{\infty}_tL^2_x}&\le C_M(2^{2k}s+1)^{-28} \\
2^{m k}2^{\frac{d}{2}k-k}\| P_{k} \partial_tA_i\|_{L^{\infty}_tL^2_x}&\le  C_{M.T}(2^{2k}s+1)^{-24}\\
 2^{m k}2^{\frac{d}{2}k-k}\| P_{k} \partial_t\phi_i\|_{L^{\infty}_tL^2_x}&\le  C_{M,T}(2^{2k}s+1)^{-24} 
\end{align*}
\end{Lemma}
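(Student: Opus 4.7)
\medskip

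\noindent\textbf{Proof proposal for Lemma \ref{aaaHeat}.}
The plan is to combine the critical-regularity decay estimates already proved in Section 3--4 (Proposition 1.1 and Corollary \ref{cXS}) with the subcritical smoothness furnished by the hypothesis $u\in C([-T,T];\mathcal{Q}(\Bbb R^d,\mathcal{N}))$. Indeed, $u\in\mathcal{Q}$ together with the local Cauchy theory of \cite{huangDWhuang,huangMchuang} yields, for each $K\in\Bbb N$, an a priori bound $\|u(t)-Q\|_{L^\infty_t H^K_x}\le C_{K,T}$; all $T$-dependent constants in the conclusion will be traced to this input.

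\textbf{Step 1 (bound on $v$).} For fixed $t\in[-T,T]$, Proposition \ref{1XS} applied slicewise gives $\|\partial^j_x v(s,t)\|_{\dot H^{d/2}_x}\lesssim\epsilon_M s^{-j/2}$ and $\|\partial^j_x v\|_{L^\infty_x}\lesssim\epsilon_M s^{-j/2}$. Interpolating these critical decays with the $t$-uniform subcritical norm $\|u(t)\|_{H^{M+200}_x}\le C_{M,T}$ (propagated by SL from $u_0\in\mathcal{Q}$) and using the smoothing $\|e^{s\Delta}f\|_{H^{M+j}_x}\lesssim s^{-j/2}\|f\|_{H^M_x}$ through the Duhamel form of the heat flow equation delivers the first inequality with $\rho=0$. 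For $\rho=1$, differentiate SL to obtain $\partial_t u=J\tau(u)$, so $\partial_t u\in L^\infty_t H^{M+100}_x$ with constant $C_{M,T}$; the map $\partial_t v$ then solves a linearized heat equation with the datum $\partial_t u$, and the same interpolation argument closes this case.

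\textbf{Step 2 (frequency-localized $\phi_i,A_i$).} The $\sigma=0$ piece together with the parabolic factor $(1+s2^{2k})^{-30}$ is exactly the content of Lemmas \ref{j31}--\ref{j3h} (applied to the $t$-slice heat flow), whose proofs give arbitrarily many powers of $(1+s2^{2k})^{-1}$ provided $L'$ is taken large enough. The extra envelope factor $(2^{-k/2}+2^{\sigma k})$ is handled separately at the two ends of the dyadic axis: for $k\gg 1$ the $2^{\sigma k}$ weight is absorbed by the subcritical Sobolev norm $\|u(t)\|_{\dot H^{d/2+\sigma}_x}\le C_{M,T}$ combined with Bernstein; for $k\ll 0$ the factor $2^{-k/2}$ is absorbed using the energy bound $\|du(t)\|_{L^2_x}\le\|du_0\|_{L^2_x}$ together with the formula $A_i=\int_s^\infty\mathcal{R}(\psi_s,\psi_i)\,ds'$ from \eqref{edf}. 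Thus both the $\phi_i$ and $A_i$ bounds reduce, by bilinear Littlewood--Paley decomposition, to the already-established critical estimates combined with the $t$-uniform subcritical Sobolev bounds on $u$.

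\textbf{Step 3 (time-differentiated bounds).} Differentiating in $t$ the heat-direction equation \eqref{b3} shows that $\partial_t\phi_i$ satisfies a parabolic equation whose heat-initial datum at $s=0$ is controlled, via the SL relation $\phi_t=\sqrt{-1}\sum_jD_j\phi_j$ and $\partial_t u=J\tau(u)$, by second-order spatial derivatives of $u$ and hence by $C_{M,T}$-bounded subcritical norms. Rerunning the frequency-envelope argument of Section \ref{DFvv} verbatim, with this time-differentiated source, produces the decay factor $(1+s2^{2k})^{-24}$; the same scheme applied to $\partial_t A_i$ through \eqref{edf} (differentiated in $t$) gives the final inequality. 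The main obstacle, and essentially the only one, is that no critical-level control on $\partial_t u$ is available, so the constants in (4)--(5) are necessarily $T$-dependent; this is exactly why the statement records $C_{M,T}$ rather than $C_M$. Since every step reduces to bookkeeping of known $t$-slice parabolic estimates against $T$-dependent subcritical Sobolev norms, the proof is largely a routine adaptation of [Lemma 3.1, \cite{LIZE}] and the details of Section \ref{DFvv} to the higher-dimensional setting.
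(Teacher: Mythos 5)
Your proposal follows the route one would expect: apply the heat-flow decay machinery (Proposition \ref{1XS}, Lemma \ref{j1}, Proposition \ref{mill}) slicewise in $t$, lean on the $t$-uniform subcritical Sobolev regularity supplied by $u\in C([-T,T];\mathcal{Q})$, and differentiate the SL/heat system in $t$ to reach the $\partial_t$-estimates. That is indeed the intended adaptation of [Lemma 3.1, \cite{LIZE}]; the paper itself offers no proof and simply points to that reference, so your outline is the natural one.

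Two points worth tightening. First, in Step 2 the attribution to Lemmas \ref{j31}--\ref{j3h} is not quite right: those give the physical-space decay $s^{-j/2}$, not the dyadic factor $(1+s2^{2k})^{-30}$; the frequency-localized parabolic decay actually comes from the Duhamel-based argument behind (\ref{3.4a})--(\ref{FuC}) in Lemma \ref{j1} and the bilinear Littlewood--Paley decompositions in Proposition \ref{mill}, so the right references are those. Second, your argument for the high-frequency weight $2^{\sigma k}$ in bounds (2)--(3) passes through $\|u(t)\|_{\dot H^{d/2+\sigma}_x}\le C_{M,T}$, which produces $T$-dependent constants, whereas the lemma records $C_M$ there (with $C_{M,T}$ reserved for the $\partial_t$ bounds). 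Since only finiteness of these constants is needed for the continuity of $\Phi(T)$ in Corollary \ref{IJ}, this mismatch is benign, but if you want to land precisely on the stated constants you should note that the $C_M$ in (2)--(3) is still allowed to depend on the subcritical Sobolev norms of the fixed solution $u$ (or, alternatively, acknowledge that your method gives $C_{M,T}$). Neither issue constitutes a genuine gap in the argument.
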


\end{document}